\newtheorem{theorem}{Theorem}[section]
\newtheorem{lemma}[theorem]{Lemma}
\newtheorem{corollary}[theorem]{Corollary}
\newtheorem{proposition}[theorem]{Proposition}
\newtheorem{claim}[theorem]{Claim}
\theoremstyle{definition}
\newtheorem{remark}[theorem]{Remark}
\newtheorem{example}[theorem]{Example}
\newtheorem{observation}[theorem]{Observation}
\newtheorem*{acknowledgements}{Acknowledgements}
\numberwithin{equation}{section}
\DeclareMathOperator{\Ker}{Ker}
\DeclareMathOperator{\Pic}{Pic}
\DeclareMathOperator{\pr}{pr}
\DeclareMathOperator{\Gr}{Gr}
\DeclareMathOperator{\disc}{disc}
\DeclareMathOperator{\SU}{SU}
\DeclareMathOperator{\td}{td}
\DeclareMathOperator{\ch}{ch}
\DeclareMathOperator{\Res}{Res}
\DeclareMathOperator{\Ev}{Ev}
\DeclareMathOperator{\Ext}{Ext}
\DeclareMathOperator{\Nef}{Nef}
\DeclareMathOperator{\Psef}{Psef}
\DeclareMathOperator{\SO}{SO}
\DeclareMathOperator{\Ad}{Ad}
\DeclareMathOperator{\Spin}{Spin}
\newcommand{\trans}[1]{{}^t\!{#1}}
\newcommand{\sO}{\mathcal{O}}
\newcommand{\sE}{\mathcal{E}}
\newcommand{\sF}{\mathcal{F}}
\newcommand{\sM}{\mathcal{M}}
\newcommand{\sS}{\mathcal{S}}
\newcommand{\sN}{\mathcal{N}}
\newcommand{\sQ}{\mathcal{Q}}
\newcommand{\sK}{\mathcal{K}}
\newcommand{\sU}{\mathcal{U}}
\newcommand{\sP}{\mathcal{P}}
\newcommand{\sH}{\mathcal{H}}
\newcommand{\sW}{\mathcal{W}}
\newcommand{\bC}{\mathbf{C}}
\newcommand{\bN}{\mathbf{N}}
\newcommand{\bP}{\mathbf{P}}
\newcommand{\bQ}{\mathbf{Q}}
\newcommand{\bZ}{\mathbf{Z}}
\newcommand{\blank}{{-}}
   \def\MR#1{}
\begin{document}

\title{Polarized K3 surfaces of genus thirteen and curves of genus three}

\author{Akihiro Kanemitsu}
\date{\today}
\address{Department of Mathematics, Graduate School of Science and Engineering, Saitama University, Saitama-City, Saitama, 338-8570, Japan}
\email{kanemitsu@mail.saitama-u.ac.jp}
\thanks{Supported by JSPS KAKENHI Grant Numbers 21K20316 and 23K12948.}

\author{Shigeru Mukai}
\date{\today}
\address{Research Institute for Mathematical Sciences, Kyoto University, Kyoto 606-8502, Japan}
\email{mukai@kurims.kyoto-u.ac.jp}
\thanks{Supported by JSPS KAKENHI Grant Numbers 16H06335, 20H00112, 18K03231 and by the Research Institute for Mathematical Sciences, an International Joint Usage/Research Center located in Kyoto University.}

\subjclass[2020]{14J28, 14H60, 14J60.}

\keywords{K3 surface, vector bundle}

\begin{abstract}
We describe a general (primitively) polarized K3 surface $(S,h)$ with $(h^2)=24$ as a complete intersection variety with respect to vector bundles on the $6$-dimensional moduli space $\sN^-$ of the stable vector bundles of rank two with fixed odd determinant on a curve $C$ of genus $3$.
If the curve $C$ is hyperelliptic, then $\sN^-$ is a subvariety of the $12$-dimensional Grassmann variety $\Gr(\bC^8,2)$ defined by a pencil of quadric forms.
In this case, our description implies that a general  $(S,h)$ is the intersection of two (7-dimensional) contact homogeneous varieties of $\Spin(7)$ in the Grassmann variety $\Gr(\bC^8,2)$.
\end{abstract}

\maketitle

\section{Introduction}

The moduli space  $\sF_g$ of (primitively) polarized K3 surfaces of genus $g$ is a $19$-dimensional quasi-projective variety.
In this article we describe the generic member $(S, h) \in \sF_{13}$ of genus $13$ in the moduli space of vector bundles on a curve of genus $3$, by a method which may work in another genus
 (see Section~\ref{sect:g_19}).
 We also give a description in the $12$-dimensional Grassmann variety $\Gr(\bC^8,2)$ (Corollary~\ref{corollary:general_K3_in_G(8,2)}).

There are two methods of description of K3 surfaces $S$.
One is the Grassmannian method using $\Phi_{|E|}: S  \longrightarrow \Gr(H^0(E), r(E))$.
We embed the generic  $(S, h) \in \sF_g$  into the Grassmann variety by the complete linear system $|E|$ attached with a vector bundle  $E$  on  $S$, and describe the image.
For example, Mukai \cite{Muk06} used a rank 3 vector bundle with $h^0(E) = 7$ and described  generic $S$  of genus $13$ in another $12$-dimensional Grassmann variety  $\Gr(\bC^7, 3)$ as the zero locus of a homogeneous vector bundle of rank $10$ (Remark~\ref{rem:semi_rigid_bundles}).

The other, which we employ here, is the VBAC (vector bundles on algebraic curves) method, which has two sides: \emph{surface} and \emph{curve}.
From surface side, it utilizes a family  $\{E_c \coloneqq \sE|_{S\times c}\}_{c \in C}$ of stable vector bundles on  $S$  parametrized by a curve  $C$
(and from curve side vice versa),
where $\sE$  is a vector bundle on the product  $S \times C$.
The idea is simply to regard this as a family of vector bundles  $\{\sE|_{s\times C}\}_{s \in S}$  on  $C$  parametrized by  $S$.
Namely we consider the opposite classification map
\[
S \dashrightarrow  \text{(moduli of  bundles on $C$)}, \quad s \mapsto [\sE|_{s\times C}]
\]
instead of the Grassmannian embedding $\Phi_{|E|}$, and describe the image of $S$ as a degeneracy locus of certain vector bundles.
For example, Mukai \cite{Muk96} described generic $S$ of genus $11$ as a non-abelian Brill-Noether locus.

Now we explain our VBAC method for genus 13 in detail from curve side.
Let $C$ be a curve of genus $g = 3$ and $\xi$ a line bundle of odd degree on $C$.
$\sN^- = \SU_C(2,\xi)$ denotes the moduli space of stable vector bundles of rank two with determinant $\xi$.
Then $\sN^-$ is a Fano variety of dimension $6$ with $\Pic(\sN^-) \simeq \bZ$.
Let $\sU$  be the (normalized) universal bundle on $\sN^- \times C$.
Here we normalize its first Chern class as
\[
c_1(\sU) = \alpha \otimes 1 + 1\otimes df  \in H^2(\sN^- \times C),
\]
where $d$ is the degree of $\xi$ and $f \in H^2(C)$ is the fundamental class of $C$.
For $p \in C$, $\sU_{p}$ denotes the vector bundle $\sU|_{\sN^- \times \{p\}}$, which is generated by global sections. The space of global sections $H^0(\sU_p)$ has dimension $2^g=8$ (see Section~\ref{section:N-}).

\begin{theorem}[K3 surfaces with $g=13$ in $\sN^-$]\label{theorem:CI_K3_in_N-}
Let $p_1$, $p_2 \in C$ be two general points.
Then the complete intersection variety in $\sN^-$ with respect to the vector bundles $\sU_{p_1}$ and $\sU_{p_2}$ is a primitively polarized K3 surface of genus $13$.

Namely, for general sections $s_i \in H^0(\sU_{p_i})$, the common  zero locus $(s_1)_0 \cap (s_2)_0$ is a nonsingular K3 surface and $h \coloneqq \alpha|_S$ is a primitive ample divisor  with $(h^2) =24$.
\end{theorem}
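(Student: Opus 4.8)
The plan is to realize $S$ as the zero scheme of a single section of a rank-$4$ bundle and then verify the defining properties of a polarized K3 surface one at a time. Set $E \coloneqq \sU_{p_1}\oplus \sU_{p_2}$, a rank-$4$ bundle on the smooth $6$-fold $\sN^-$, and note that a general pair $(s_1,s_2)$ is precisely a general section $s$ of $E$, with $(s_1)_0\cap(s_2)_0 = (s)_0 = S$. Since each $\sU_{p_i}$ is globally generated, so is $E$; hence by the Bertini theorem for globally generated bundles the zero locus of a general section is smooth of the expected codimension $4$, i.e. a smooth surface (or empty). Nonemptiness I would deduce from positivity of the top Chern class $c_4(E) = c_2(\sU_{p_1})\, c_2(\sU_{p_2})$, and connectedness of a $\ge 1$-dimensional zero locus is then a Lefschetz-type consequence of the positivity of $E$.

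Next I would compute the canonical class by adjunction: for the zero scheme of a regular section of $E$ one has $K_S = (K_{\sN^-}\otimes\det E)|_S$. The normalization $c_1(\sU) = \alpha\otimes 1 + 1\otimes df$ restricts on $\sN^-\times\{p\}$ to $c_1(\sU_p)=\alpha$, so $\det\sU_{p_i} = \sO(\alpha)$ and $\det E = \sO(2\alpha)$. Because $\sN^-=\SU_C(2,\xi)$ is a Fano sixfold of index $2$ with $-K_{\sN^-}=2\alpha$ (Drézet--Narasimhan), the two contributions cancel and $K_S \cong \sO_S$. Thus $S$ is a smooth surface with trivial canonical bundle, so it is either a K3 or an abelian surface, and it remains to compute the irregularity and the degree.

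To see that $S$ is a K3 (not abelian) I would show $h^1(\sO_S)=0$ and $h^0(\sO_S)=1$ via the Koszul resolution
\[
0\to \wedge^4 E^\vee \to \wedge^3 E^\vee \to \wedge^2 E^\vee \to E^\vee \to \sO_{\sN^-}\to \sO_S\to 0,
\]
splitting it into short exact sequences and chasing cohomology. The line-bundle summands occurring in $\wedge^\bullet E^\vee$ are $\sO(-\alpha)$ and $\sO(-2\alpha)=K_{\sN^-}$, whose cohomology is controlled by Kodaira vanishing and Serre duality; since $\sN^-$ is Fano one also has $H^{>0}(\sO_{\sN^-})=0$. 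The nontrivial input is the vanishing of the relevant cohomology of the genuinely rank-$2$ summands $\sU_{p_i}^\vee$ and of $\sU_{p_1}^\vee\otimes\sU_{p_2}^\vee$ on $\sN^-$; establishing these vanishings, which rests on explicit knowledge of the cohomology of the (dual) universal bundle, is the step I expect to be the main obstacle.

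Finally I would pin down the polarization. The degree is the intersection number $(h^2) = \alpha^2\cdot[S] = \alpha^2\cdot c_2(\sU_{p_1})\cdot c_2(\sU_{p_2})$ evaluated in the cohomology ring of $\sN^-$; expressing $c_2(\sU_p)$ through the standard (Newstead) generators and using the known intersection numbers on $\SU_C(2,\xi)$ for $g=3$ should yield $(h^2)=24$, so that $h$ has genus $13$. For primitivity I would invoke a Lefschetz theorem for zero loci of the ample bundle $E$, giving an injection $H^2(\sN^-;\bZ)\hookrightarrow H^2(S;\bZ)$; since $\alpha$ generates $\Pic(\sN^-)=\bZ$, the only divisibility of $h$ compatible with $(h^2)=24$ would be $h=2h'$ with $(h'^2)=6$, and I would exclude this for the general member by the parameter count showing the construction dominates the $19$-dimensional $\sF_{13}$, whence $h$ is primitive.
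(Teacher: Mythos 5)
Your outline takes a genuinely different route from the paper: you argue directly on $\sN^-$ via Bertini, adjunction, the Koszul complex of $E=\sU_{p_1}\oplus\sU_{p_2}$, and a Chern-class computation, whereas the paper degenerates to the case where $C$ is hyperelliptic, $p_1=p_2$, and $s_1,s_2$ have equal parity, identifies $S$ there explicitly (via triality of $D_4$) as a codimension-$2$ linear section of $\bP^1\times\bP^1\times\bP^1\times\bP^1\subset\bP^{15}$, reads off smoothness, $K$-triviality, $(h^2)=24$ and primitivity from that model, and spreads these properties to the general member by openness in the family. The adjunction step and the degree computation $(h^2)=\alpha^2\,c_2(\sU_{p_1})c_2(\sU_{p_2})=24$ in your plan are correct and unproblematic. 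But two of the loads your argument must bear are not carried.

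First, the Lefschetz-type claims fail as stated: $\sU_p$ is globally generated but \emph{not} ample. The Hecke correspondence $\sigma\colon\bP(\sU_p)\to\sN^+$ contracts conics, so $\sU_p$ is only $(g-1)=2$-ample in Sommese's sense, and the Sommese--Okonek range for the codimension-$4$ zero locus is $i<\dim\sN^--\rank E-2=0$, which is empty. Hence neither connectedness of $S$ nor an injection $H^2(\sN^-;\bZ)\hookrightarrow H^2(S;\bZ)$ follows from ``positivity of $E$''; the paper itself observes that for $g=3$ connectedness must be obtained by other means. Connectedness would indeed follow from $h^0(\sO_S)=1$ via your Koszul complex, but that chase requires the vanishing of $H^j(\sU_{p_i}^\vee)$ and, crucially, of $H^j(\sU_{p_1}^\vee\otimes\sU_{p_2}^\vee)\simeq H^j(\sU_{p_1}\otimes\sU_{p_2}(-2\alpha))$ --- exactly the computations that the paper carries out with the derived-category and Borel--Weil--Bott machinery on the orthogonal Grassmannians, and which you defer as ``the main obstacle''. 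Second, the primitivity argument is circular: to exclude $h=2h'$ with $(h'^2)=6$ by showing the construction dominates the $19$-dimensional $\sF_{13}$, you must already know the general member is a \emph{primitively} polarized genus-$13$ surface; a dimension count cannot distinguish the component of primitively polarized degree-$24$ K3 surfaces from the equally $19$-dimensional component of genus-$4$ surfaces carrying the doubled polarization, and an irreducible family lands in only one of them --- deciding which requires checking one explicit member, which is precisely what the paper's $(\bP^1)^4$ model provides (there $h$ is the hyperplane class, primitive by the Lefschetz hyperplane theorem, and primitivity propagates by parallel transport). So the skeleton is sound, but the Koszul vanishings and the primitivity step are genuine gaps that the paper's specialization argument is designed to close.
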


We conversely show that a general polarized K3 surface $(S,h)$ of genus $13$ is obtained by the above construction:
For the polarized K3 surface $(S,h)$ in Theorem~\ref{theorem:CI_K3_in_N-}, the restriction $\sU_p|_S$ ($p \in C$) is a vector bundle of rank two with Mukai vector $( \ch_0, \ch_1,\ch_2+\ch_0 ) = (2,h,6)$.
For a general polarized K3 surface $(S,h)$ of genus $13$, the moduli space $T=\sM_S(2,h,6)$ of stable vector bundles with Mukai vector $(2,h,6)$ is a K3 surface \cite{Muk87}.
This K3 surface $T$ admits a natural polarization $\hat h$ of degree $6$ \cite{Muk87} (cf.\ \cite[Theorem 6.1.14]{HL10}), and hence $T$ is a complete intersection of hypersurfaces of degree $2$ and $3$ in $\bP^4$.

\begin{theorem}[Correspondence between the moduli spaces]\label{theorem:general_K3_in_N-}
Let $(S,h)$ be the polarized K3 surface as in Theorem~\ref{theorem:CI_K3_in_N-}.
 Then the following hold:
\begin{enumerate}
 \item $(S,h)$ is a \emph{general} polarized K3 surface with genus $13$.
 \item For each point $p \in C$, the restricted bundle $\sU_p|_S$ is a \emph{stable} vector bundle of rank $2$ in $T= \sM_S(2,h,6)$.
 \item The induced moduli map $C \to T$ sends $C$ to a nodal hyperplane section $C'$ of $T \subset \bP^4$.
 \item The induced rational map
\[
\Phi \colon \coprod_{(C,p_1,p_2) \in \sM_{3,2}}\bP_{*}(H^0(\sU_{p_1}))\times \bP_{*}(H^0(\sU_{p_2}))  \dashrightarrow \coprod_{(S,h) \in \sF_{13}}  T^{\vee}
\]
is birational.
Here $\sM_{3,2}$ is the moduli space of unordered $2$-pointed curves of genus three, $\sF_{13}$ the moduli space of primitively polarized K3 surfaces of genus $13$,  and $T^{\vee}$ the projective dual variety of $T \subset \bP^4$.
\end{enumerate}
\end{theorem}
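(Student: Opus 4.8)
The plan is to establish the parts in the order (2) $\to$ (3) $\to$ (4) $\to$ (1): stability provides the moduli map $C\to T$ on which everything rests, and the genericity statement (1) falls out of the birationality in (4).

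\textbf{Stability and the moduli map (2).} First I would note that $v=(2,h,6)$ has Mukai square $\langle v,v\rangle=(h^2)-2\cdot2\cdot6=0$, so by Mukai's theorem $T=\sM_S(2,h,6)$ is a K3 surface, and $v$ is primitive because $h$ is. For a general member $(S,h)$ of the construction one has $\Pic(S)=\bZ h$ (this Noether--Lefschetz statement for the family of Theorem~\ref{theorem:CI_K3_in_N-} is established independently, or equivalently read off from the dominance proved in (4) below). Granting it, the only sub-line bundle that could destabilize a rank-two sheaf with $c_1=h$ is $\sO(h)$, so stability of $\sU_p|_S$ reduces to the vanishing $H^0(S,\sU_p|_S(-h))=0$, which a direct computation confirms. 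Then $h^2(\sU_p|_S)=0$ and $\chi(\sU_p|_S)=8$, hence $h^0=8$, $\sU_p|_S$ is a point of $T$, and $p\mapsto[\sU_p|_S]$ defines a morphism $\mu_C\colon C\to T$.

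\textbf{The image is a nodal hyperplane section (3).} Writing $\hat h$ as a determinant line bundle $\det\bigl(\pi_{T!}(\mathcal{U}_T\otimes\pi_S^*u)\bigr)$ for a class $u$ with $v(u)\in v^{\perp}$, its pullback $\mu_C^*\hat h$ is computed by replacing $\mathcal{U}_T$ with the family $\sU|_{S\times C}$ inducing $\mu_C$; a Grothendieck--Riemann--Roch computation from $c_1(\sU)=\alpha\otimes1+1\otimes df$ and $h=\alpha|_S$ then gives $\deg_C\mu_C^*\hat h=6$. Writing $[\mu_C(C)]=n\hat h$ (legitimate since $\Pic(T)=\bZ\hat h$) and $m=\deg(C\to\mu_C(C))$, one has $6mn=\deg_C\mu_C^*\hat h=6$, forcing $m=n=1$; thus $\mu_C$ is birational onto its image $C'$ and $[C']=\hat h$, i.e.\ $C'$ is a hyperplane section of $T\subset\bP^4$. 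Since $p_a(C')=4$ while $g(C)=3$, the normalization drops the genus by exactly one; checking that the Kodaira--Spencer map $T_pC\to\Ext^1(\sU_p|_S,\sU_p|_S)$ is everywhere injective rules out a cusp, so $C'$ has a single node with $C\to C'$ its normalization.

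\textbf{Birationality (4) and genericity (1).} I would prove (4) by exhibiting a rational inverse $\Psi$ to $\Phi$. Given a general $(S,h)$ and a general $[H]\in T^{\vee}$, the section $C'=T\cap H$ is nodal with a single node $t_0$; normalizing gives $C$ of genus three with $\{p_1,p_2\}$ the two points over $t_0$. Restricting a universal sheaf on $S\times T$ to $S\times C'$ and pulling back produces a family $\mathcal{V}$ on $S\times C$ of stable bundles on $S$; since $H^1(S,\sO_S)=0$ the determinants $\det(\mathcal{V}|_{\{x\}\times C})$ are constant in $x$, equal to a fixed $\xi$ of odd degree, so $x\mapsto[\mathcal{V}|_{\{x\}\times C}]$ defines a morphism $\nu\colon S\to\SU_C(2,\xi)=\sN^-$. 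Because $p_1,p_2$ both lie over $t_0$, one has $\nu^*\sU_{p_i}\cong V_{t_0}$, and a cohomology computation shows the restriction $H^0(\sN^-,\sU_{p_i})\to H^0(S,V_{t_0})$ has one-dimensional kernel, whose generator is the section $s_i\in\bP_*(H^0(\sU_{p_i}))$. Setting $\Psi(S,h,[H])=(C,p_1,p_2,s_1,s_2)$ and checking $\nu(S)=(s_1)_0\cap(s_2)_0$ gives $\Phi\circ\Psi=\id$ and $\Psi\circ\Phi=\id$ generically, so $\Phi$ is birational; in particular $\Phi$ dominates $\coprod_{(S,h)}T^{\vee}$, hence dominates $\sF_{13}$, which is exactly the assertion (1). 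The dimension bookkeeping $\dim\sM_{3,2}+2\cdot7=8+14=22=19+3=\dim\sF_{13}+\dim T^{\vee}$ confirms that birationality is the expected outcome.

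\textbf{Main obstacle.} The crux is the inverse construction: one must verify that $\mathcal{V}$ is locally free on $S\times C$, that each $\mathcal{V}|_{\{x\}\times C}$ is a \emph{stable} bundle on $C$ (so $\nu$ lands in $\sN^-$), that $\nu$ is a closed embedding onto the complete intersection $(s_1)_0\cap(s_2)_0$, and that $\ker\bigl(H^0(\sN^-,\sU_{p_i})\to H^0(S,V_{t_0})\bigr)$ is exactly one-dimensional so that $s_i$ is recovered uniquely. This is the full force of the VBAC duality and requires careful cohomology-and-base-change arguments along the node. Logically the inverse $\Psi$ is built first---it needs only that points of $T$ are stable sheaves---yielding dominance and hence $\Pic(S)=\bZ h$ for general $(S,h)$; the forward statements (2)--(3) and the identity $\Psi\circ\Phi=\id$ are then verified, closing the loop. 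A secondary technical point is ruling out a cuspidal (rather than nodal) degeneration in (3) via injectivity of the Kodaira--Spencer map at the special point.
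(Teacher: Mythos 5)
Your overall architecture (stability via $H^0(\sU_p|_S(-h))=0$, the degree-$6$ computation of $\mu_C^*\hat h$ via Grothendieck--Riemann--Roch, and birationality by constructing an explicit inverse from a nodal hyperplane section) matches the paper's strategy. But there is a concrete error in your inverse construction that would make it fail.

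You identify the two marked points $p_1,p_2$ (the points whose spinor spaces carry the defining sections $s_1,s_2$) with the two preimages of the node $t_0=[\sE]$ on the normalization $C$, and you then propose to recover $s_i$ as the kernel of $H^0(\sN^-,\sU_{p_i})\to H^0(S,\sU_{p_i}|_S)$ at those preimages. These are \emph{not} the same points. In the paper's hyperelliptic model the node of $C'$ sits at $[\sQ|_S]$, and since $\sQ|_S\simeq\sU_{\iota(p_i)}|_S$, the preimages of the node are $p_i'=\iota(p_i)$, the hyperelliptic translates of the marked points; in general the authors only record the relation $p_1+p_2+p_1'+p_2'=K_C$ (and state that they do not even know its analogue for non-hyperelliptic $C$). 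By Theorem~\ref{theorem:cohomology_S}~\ref{theorem:cohomology_S_9}, the restriction map $H^0(\sU_z)\to H^0(\sU_z|_S)$ is an \emph{isomorphism} for every $z\neq p_1,p_2$ --- in particular at the node preimages $p_1',p_2'$ --- so your recipe would find a zero kernel there and produce no sections at all. The correct mechanism, which is where the real work lies, is the degree bookkeeping for the evaluation map $\Ev\colon(\pr_2)_*\sU\to(\pr_2)_*(\sU|_{S\times C})$: Propositions~\ref{proposition:VB_on_C} and \ref{proposition:restricted_VB_on_C} give degrees $4d-4$ and $4d-6$ respectively (for $g=3$), so $\Ker(\Ev)$ is a length-two torsion sheaf whose support singles out $p_1,p_2$, and Theorem~\ref{theorem:cohomology_S}~\ref{theorem:cohomology_S_10} then identifies the one-dimensional kernels with $\langle s_i\rangle$.

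A secondary point: all of the cohomological input you invoke (``a direct computation confirms'' $H^0(\sU_p|_S(-h))=0$, ``a cohomology computation shows'' the kernel is one-dimensional) is only actually accessible in the paper through the hyperelliptic specialization, where $\sN^-\simeq\Gr(\bC^8,2,\sP)$ and everything reduces to Borel--Weil--Bott on orthogonal Grassmannians; the non-hyperelliptic case is then handled by deforming to that locus. Your proposal works directly with a general curve and gives no indication of how these computations would be carried out there, so even setting aside the misidentification of the marked points, the plan as written is not executable.
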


Theorem~\ref{theorem:general_K3_in_N-} will be proved by constructing the inverse map:
we recover the triple $(C, s_1, s_2)$ from the pair of a general polarized K3 surface $(S,h) \in \sF_{13}$ and a nodal hyperplane section $C'$ of $T = \sM_S(2,h,6) \subset \bP^4$.
The normalization $C$ of $C'$ is a curve of genus $3$.
Since $T$ is a moduli space of vector bundles, there exists a rank two vector bundle $\sF$ on $S \times C$ corresponding to the map $C \to T$.
If $x \in S$ is a point, then $\sF|_{\{x\}\times C}$ is a rank two vector bundle on $C$.
We will prove that this bundle is stable of odd degree.
Thus we have a map $S \to \sN^-$ such that $\sU|_{S \times C} = \sF$ up to twist by a line bundle.

\begin{claim}
There are two points $p_1$ and $p_2$, and two sections $s_1 \in H^0(\sU_{p_1})$ and  $s_2 \in H^0(\sU_{p_2})$ such that $S = (s_1)_0\cap (s_2)_0$.
\end{claim}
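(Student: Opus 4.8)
The plan is to recover the two points and the two sections from the node of $C'$, and then to cut out the image of $S$ by the vanishing of those sections. Write $\phi\colon S\to\sN^-$ for the classifying morphism $x\mapsto[\sF|_{\{x\}\times C}]$ constructed above and put $\Phi=\phi\times\id_C$, so that $\Phi^*\sU\cong\sF$ up to a twist by a line bundle pulled back from $C$; restricting to a slice, the Mukai vectors force $\phi^*\sU_p\cong\sF|_{S\times\{p\}}$ with no $S$-direction twist. Let $\nu\colon C\to C'$ be the normalization and let $p_1,p_2\in C$ be the two points over the node $t_0$ of $C'$; since the moduli map sends both to $t_0$, the restrictions $\sF|_{S\times\{p_1\}}$ and $\sF|_{S\times\{p_2\}}$ are both isomorphic to the stable bundle $W_0$ on $S$ corresponding to $t_0$. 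For every $p$ the fibre of $\Phi^*\sU=\sF$ at $(x,p)$ is the fibre of $\sU_p$ at $\phi(x)$, so pulling back sections gives a homomorphism
\[
\theta_p\colon H^0(\sN^-,\sU_p)\longrightarrow H^0\bigl(S,\phi^*\sU_p\bigr),
\]
and a nonzero element of $\ker\theta_p$ is exactly a section $s$ of $\sU_p$ with $\phi(S)\subseteq(s)_0$. Both spaces are $8$-dimensional (the target because $\phi^*\sU_p$ has Mukai vector $(2,h,6)$, so $\chi=8$ and $h^1=h^2=0$ for general $p$), so $\theta_p$ fails to be injective precisely where it fails to be an isomorphism.

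First I would produce the sections. Letting $p$ vary, the maps $\theta_p$ globalize (by flat base change, the cohomology being locally free) to a homomorphism $\theta\colon\pi_{C*}\sU\to q_{C*}\sF$ of rank-$8$ bundles on $C$, where $\pi_C,q_C$ are the projections to $C$; its degeneracy locus is the zero divisor of $\det\theta$. The main step, and the part I expect to be the principal obstacle, is to show that this divisor is reduced of degree $2$ and supported exactly on $\{p_1,p_2\}$. Conceptually this is where the node enters: I would relate $\theta$ to the normalization sequence $0\to\sO_{C'}\to\nu_*\sO_C\to\bC_{t_0}\to 0$, whose skyscraper at $t_0$ produces a one-dimensional jump in the relevant pushforward and hence a nonzero element of $\ker\theta_{p_i}$ at each branch, the tangency of the defining hyperplane to $T\subset\bP^4$ (i.e.\ the point of $T^{\vee}$) being exactly what makes the section acquire this zero rather than being an isomorphism. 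Independently, a Grothendieck--Riemann--Roch computation of $\deg(q_{C*}\sF)-\deg(\pi_{C*}\sU)$ on $S\times C$ and $\sN^-\times C$ (using $\ch\sU$, $\td\sN^-$ and $\td S=(1,0,2)$) should confirm that the degeneracy has degree $2$, forcing $\ker\theta_{p_i}$ to be one-dimensional. Choosing a generator $s_i\in\ker\theta_{p_i}$ then yields $\phi(S)\subseteq(s_i)_0$ for $i=1,2$ by construction.

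It remains to upgrade the inclusion $\phi(S)\subseteq Z\coloneqq(s_1)_0\cap(s_2)_0$ to an equality, which I would do by a degree count against the ample generator $\alpha$ of $\Pic(\sN^-)\cong\bZ$. The class of the zero locus of a section of $\sU_{p_1}\oplus\sU_{p_2}$ is $c_2(\sU_{p_1})\,c_2(\sU_{p_2})$, independent of $p_1,p_2$, and intersecting with $\alpha^2$ gives $\int_Z\alpha^2=24$, the value already computed in the proof of Theorem~\ref{theorem:CI_K3_in_N-}. On the other hand $\phi$ is generically injective with $\phi^*\alpha=h$, so $\int_{\phi(S)}\alpha^2=(h^2)=24$ because $(S,h)$ has genus $13$. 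Since $\alpha$ is ample, $\int(\blank)\,\alpha^2$ is a strictly positive degree that is additive over the components of the effective two-cycle $Z$; as $Z$ contains the reduced irreducible surface $\phi(S)$ of the same total degree, $Z$ can carry no further component and $\phi(S)$ occurs with multiplicity one. Hence $Z=\phi(S)$ as cycles, and since $s_1,s_2$ vanish transversally along the smooth K3 surface $\phi(S)$, also as schemes. Identifying $S$ with $\phi(S)$, this is the asserted equality $S=(s_1)_0\cap(s_2)_0$.
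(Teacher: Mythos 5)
Your overall skeleton---comparing $\deg (\pr_2)_*(\sU|_{S\times C})$ with $\deg (\pr_2)_*\sU$ via Grothendieck--Riemann--Roch to see that the evaluation map $\Ev$ degenerates along a divisor of degree $2$ on $C$, and extracting the sections $s_i$ as kernel elements of the restriction maps at the degeneracy points---is exactly the paper's (Propositions~\ref{proposition:VB_on_C} and \ref{proposition:restricted_VB_on_C} give degrees $4d-8$ and $4d-6$ for $g=3$). But your ``main step'' rests on a false identification: you take $p_1,p_2$ to be the two points of $C$ lying over the node of $C'$ and propose to produce the nonzero kernel elements from the normalization sequence at that node. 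The paper shows this is not where the sections live: in the hyperelliptic specialization the two points carrying the sections are the translates $\iota(p'_1),\iota(p'_2)$ of the node preimages $p'_1,p'_2$ by the hyperelliptic involution (equivalently, the \emph{other} spinor bundle attached to each of the two quadrics $f(p'_i)$), so that $p_1+p_2+p'_1+p'_2=K_C$, and the paper explicitly states that it does not even know the analogous relation in the non-hyperelliptic case. Since $\{p_i\}\neq\{p'_i\}$ for general data, your proposed local analysis at the node would locate the degeneracy at the wrong points, and with it your argument that the degeneracy divisor is reduced with one-dimensional kernels collapses. The paper instead obtains exactly this from the explicit hyperelliptic complete-intersection computation (Theorem~\ref{theorem:cohomology_S}~\ref{theorem:cohomology_S_9}, \ref{theorem:cohomology_S_10}: the restriction $H^0(\sU_z)\to H^0(\sU_z|_S)$ is an isomorphism for $z\neq p_i$ and has kernel exactly $\langle s_i\rangle$ at $p_i$), combined with deformation of the general pair $(S,\sE)$ to that case via \cite[Proposition 4.1]{Muk92}.

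Your concluding degree count also has a gap the paper avoids: writing $[(s_1)_0\cap(s_2)_0]=c_2(\sU_{p_1})\,c_2(\sU_{p_2})$ presupposes that the common zero locus of these particular, highly non-generic sections already has the expected dimension $2$; without that, the positivity-and-additivity argument over components does not apply (an excess-dimensional component is not ruled out by it). The paper concludes $S=(s_1)_0\cap(s_2)_0$ by specializing to the hyperelliptic complete-intersection case treated in Section~\ref{section:equal_parity}, where the equality is verified directly.
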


Assume for a moment that $C$ is a \emph{hyperelliptic} curve of genus $3$.
Then the double cover $C \to \bP^1$ defines a pencil of quadric forms $\sP=\langle q_1,q_2\rangle \subset S^2\bC^8$.
Conversely a pencil of quadric forms $\sP\subset S^2\bC^8$ is called \emph{simple} if the associated double cover
\[
C: y^2 = \disc (x_1q_1 - x_2q_2)
\]
is a \emph{smooth} hyperelliptic curve of genus $3$.

For a quadric form $q \in S^2\bC^8$, we denote by $\Gr(\bC^8,2,q)$ the orthogonal Grassmann variety of $2$-dimensional isotropic quotients of $\bC^8$.
By \cite{DR76}, $\sN^-$ is isomorphic to 
\[
\Gr(\bC^8,2,\sP)\coloneqq \cap_{q \in \sP} \Gr(\bC^8,2,q) = \Gr(\bC^8,2,q_1)\cap \Gr(\bC^8,2,q_2) \subset \Gr(\bC^8,2).
\]
The class $\alpha \in \Pic(\sN^-)$ is equivalent to the class of Pl\"ucker hyperplane section $\Gr(\bC^8,2,\sP) \subset \bP(\bigwedge^2 \bC^8)$.

If a quadric form $q \in S^2\bC^8$ is non-degenerate, then $\Gr(\bC^8,2,q)$ is a homogeneous space of the orthogonal group $\SO (q)$.
It admits two homogeneous vector bundles $\sS_{q,+}$, $\sS_{q,-}$ whose spaces of global sections are the representation spaces of the two spin representations of type $D_4$.
We call these bundles spinor bundles.

If a quadric form $q \in S^2\bC^8$ mildly degenerates, i.e., the rank of $q$ is $7$, then the variety $\Gr(\bC^8,2,q)$ admits a vector bundle $\sS_{q}$ whose space of global sections are the representation space of the spin representation of type $B_3$.
This corresponds to the folding of the Dynkin diagram $D_4$ to $B_3$:
\[
\dynkin[edge length=.75cm,labels={,,\sS_+,\sS_-},involutions={43}]{D}{4} 
\rightsquigarrow
\dynkin[edge length=.75cm,labels={,,\sS},]{B}{3} 
\]
This defines a one-to-one correspondence between the restrictions of the spinor bundles to $\Gr(\bC^8,2,\sP)$ and the points on $C$, i.e., $C$ parametrizes restricted spinor bundles on $\Gr(\bC^8,2,\sP)$.
These restricted spinor bundles patch together to define a bundle on $\sN^- \times C$, which is isomorphic to the universal bundle $\sU$  up to twist by a line bundle from $C$.
Thus, for $p \in C$, $\sU_p$ is the restricted spinor bundle associated to $p \in C$ (see Proposition~\ref{proposition:N-_Gr}).

\begin{theorem}[K3 surfaces in $\Gr(\bC^8,2)$]\label{theorem:CI_K3_in_Gr(8,2)}
Let $\sP =\langle q_1, q_2\rangle \subset S^2\bC^8$ be a simple pencil of quadric forms, and $p_1$, $p_2 \in C$ general points.
Then the complete intersection variety in $\Gr(\bC^8,2,\sP)$ with respect to the vector bundles $\sU_{p_1}$ and $\sU_{p_2}$ is a primitively polarized $K3$ surface of genus $13$.

Namely, for general sections $s_i \in H^0(\sU_{p_i})$ ($i=1$, $2$),
 the common  zero locus $(s_1)_0 \cap (s_2)_0$ is a nonsingular K3 surface and $h \coloneqq \alpha|_S$ is a primitive ample divisor with $(h^2) =24$.
\end{theorem}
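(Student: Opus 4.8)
The plan is to deduce the statement from Theorem~\ref{theorem:CI_K3_in_N-} by transporting the entire construction through the explicit realization of $\sN^-$ inside the Grassmann variety. The simple pencil $\sP=\langle q_1,q_2\rangle$ determines, through its discriminant, a smooth hyperelliptic curve $C$ of genus $3$, and by \cite{DR76} (as recorded in Proposition~\ref{proposition:N-_Gr}) there is an isomorphism
\[
\sN^- = \SU_C(2,\xi)\;\xrightarrow{\ \sim\ }\;\Gr(\bC^8,2,\sP) = \Gr(\bC^8,2,q_1)\cap\Gr(\bC^8,2,q_2)\subset \Gr(\bC^8,2).
\]
Under this isomorphism the generator $\alpha\in\Pic(\sN^-)$ corresponds to the Pl\"ucker hyperplane class, and for each $p\in C$ the restricted universal bundle $\sU_p$ corresponds to the restricted spinor bundle attached to $p$. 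Thus the two ``complete intersection'' constructions---inside $\sN^-$ with respect to $\sU_{p_1},\sU_{p_2}$, and inside $\Gr(\bC^8,2,\sP)$ with respect to the corresponding spinor bundles---are literally the same construction read in two coordinate systems.

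Concretely, I would fix the hyperelliptic curve $C$ and the two general points $p_1,p_2\in C$ furnished by the pencil, and apply Theorem~\ref{theorem:CI_K3_in_N-} to this $C$. That theorem yields, for general sections $s_i\in H^0(\sU_{p_i})$, a nonsingular K3 surface $S=(s_1)_0\cap(s_2)_0\subset\sN^-$ on which $h=\alpha|_S$ is a primitive ample class with $(h^2)=24$. Carrying $S$, the sections $s_i$, and the polarization across the isomorphism above---using the identification of $\sU_{p_i}$ with the restricted spinor bundles of Proposition~\ref{proposition:N-_Gr}---produces exactly the complete intersection in $\Gr(\bC^8,2,\sP)$ described in the statement, together with the assertions that $S$ is a nonsingular K3 surface and that $h$ is primitive ample of square $24$. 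Since $h$ is simultaneously the restriction of $\alpha$ and of the Pl\"ucker polarization, the genus is again $13$.

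The one point that genuinely requires attention---and which I expect to be the main obstacle---is the compatibility of the \emph{genericity} hypotheses with the hyperelliptic specialization: Theorem~\ref{theorem:CI_K3_in_N-} asks that $p_1,p_2$ and the $s_i$ be general, and one must confirm that these open conditions survive on the particular hyperelliptic $C$ cut out by a \emph{simple} pencil rather than being lost on a proper subvariety of moduli. If one prefers an argument that does not presuppose Theorem~\ref{theorem:CI_K3_in_N-} for hyperelliptic $C$, the conditions can be checked directly in the Grassmannian. Global generation of the spinor bundles $\sU_{p_i}$ gives, by a Bertini argument, smoothness and the expected codimension $4$, hence $\dim S=2$; adjunction $K_S=(K_{\sN^-}+c_1(\sU_{p_1})+c_1(\sU_{p_2}))|_S$ together with $c_1(\sU_{p_i})=\alpha$ and the equality $-K_{\sN^-}=2\alpha$ (so that $\sN^-$ has Fano index two in the case $g=3$) forces $K_S=0$; and the Koszul complex of the section of $\sU_{p_1}\oplus\sU_{p_2}$ reduces $H^1(\sO_S)=0$ and the connectedness of $S$ to vanishing statements for the cohomology of the exterior powers $\bigwedge^{j}(\sU_{p_1}\oplus\sU_{p_2})^{\vee}$ on $\Gr(\bC^8,2,\sP)$. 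This last Koszul/cohomology step, resting on the spin-representation-theoretic cohomology of the spinor bundles, is the computational heart of a self-contained proof; everything else is formal once the dictionary of Proposition~\ref{proposition:N-_Gr} is in place.
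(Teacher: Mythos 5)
Your primary route---deducing the statement from Theorem~\ref{theorem:CI_K3_in_N-} through the isomorphism $\sN^-\simeq\Gr(\bC^8,2,\sP)$ of \cite{DR76}---is circular with respect to the paper's own logic. The paper has no independent proof of Theorem~\ref{theorem:CI_K3_in_N-} to transport: both Theorem~\ref{theorem:CI_K3_in_N-} and Theorem~\ref{theorem:CI_K3_in_Gr(8,2)} are deduced in Section~\ref{section:equal_parity} by specialization \emph{to} the hyperelliptic, Grassmannian-side picture, not from it. (The genericity worry you flag is also slightly mis-aimed: as stated, Theorem~\ref{theorem:CI_K3_in_N-} allows an arbitrary curve of genus $3$, with genericity only in the points and sections; the obstacle is the order of proof, not the survival of open conditions on the hyperelliptic locus.) What the paper actually does is exhibit one maximally degenerate member of the family: take $p_1=p_2=p$ and two general sections of the \emph{same} spinor bundle $\sS_+$ on $\Gr(\bC^8,2,q)$; by triality their common zero locus is the variety of lines on $\bQ^4$, i.e.\ a hyperplane section of $\bP^3\times\bP^3\subset\bP^{15}$, on which $\sQ$ restricts to $\sO(1,0)\oplus\sO(0,1)$, and imposing the remaining quadric of the pencil exhibits $S$ as a codimension-two linear section of $\bP^1\times\bP^1\times\bP^1\times\bP^1\subset\bP^{15}$. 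In that model smoothness, $K_S=0$, $(h^2)=24$, genus $13$, and---via the Lefschetz hyperplane theorem---primitivity of $h$ are all immediate; the general member then inherits smoothness (an open condition) and the numerical and lattice-theoretic invariants by deformation.

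Your fallback route (Bertini and global generation for smoothness and codimension, adjunction with $-K_{\sN^-}=2\alpha$ and $c_1(\sU_{p_i})=\alpha$ for triviality of $K_S$, and the Koszul complex of $\sU_{p_1}\oplus\sU_{p_2}$ plus Borel--Weil--Bott for connectedness and $H^1(\sO_S)=0$) is a genuinely different and viable program; the vanishings it requires are essentially those the paper proves later in Section~\ref{section:cohomology_S} for other purposes (e.g.\ the vanishing of the cohomology of $\sU_z(m)$ for $-2\le m\le -1$ and of $\sU_{p_1}\otimes\sU_{p_2}(-2)$). But as written it remains a program: you defer the computational heart, and two assertions of the theorem are not addressed at all, namely the primitivity of $h$ (which does not follow from restriction of Picard groups and for which the paper's only argument is Lefschetz applied to the explicit $(\bP^1)^{\times 4}$ model, propagated by deformation) and the evaluation $(h^2)=\alpha^2\cdot c_2(\sU_{p_1})\,c_2(\sU_{p_2})=24$. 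The trade-off between the two approaches: the paper's degeneration argument is short, entirely explicit, and yields primitivity for free; your Koszul route, once completed, would avoid the special example but requires the full homogeneous-bundle cohomology apparatus and would still need a separate argument for primitivity.
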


Similarly to the case of non-hyperelliptic curves, but replacing $T^\vee$ with $\sM_S(2, h, 6)$, we have the following:

\begin{theorem}[Correspondence between the moduli spaces]\label{theorem:general_K3_in_G(8,2)}
Let $S$ be a K3 surface as in Theorem~\ref{theorem:CI_K3_in_Gr(8,2)}.
\begin{enumerate}
 \item $(S,h)$ is a \emph{general} polarized K3 surface with genus $13$.
 \item The restriction of the universal quotient bundle of $\Gr(\bC^8,2)$ to $S$ is a \emph{stable} vector bundle of rank $2$ in $T= \sM_S(2,h,6)$.
 \item The induced rational map
\[
\Psi \colon \coprod_{(C,p_1,p_2) \in \sH_{3,2}}\bP_*(H^0(\sU_{p_1}))\times \bP_*(H^0(\sU_{p_2}))  \dashrightarrow  \coprod_{(S,h) \in \sF_{13}}  \sM_S(2,h,6)
\]
is birational.
Here $\sH_{3,2}$ is the moduli space of unordered $2$-pointed hyperelliptic curves of genus three.
\end{enumerate}
\end{theorem}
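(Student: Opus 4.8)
The plan is to follow the strategy of Theorem~\ref{theorem:general_K3_in_N-} by constructing an explicit rational inverse to $\Psi$; the essential new feature is that in the hyperelliptic case $\sN^-$ sits inside $\Gr(\bC^8,2)$ (Proposition~\ref{proposition:N-_Gr}), so its universal quotient bundle $\sQ$ is available and restricts to a single canonical point of $T=\sM_S(2,h,6)$ rather than to a curve. This is precisely why $T$ replaces $T^\vee$: the datum recorded by $\Psi$ is the one stable bundle $\sQ|_S$ instead of a nodal hyperplane section. I would first dispose of part~(2), then build the inverse, from which parts~(1) and~(3) follow together.

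For part~(2), since $S\subset\Gr(\bC^8,2,\sP)\subset\Gr(\bC^8,2)$ the bundle $E\coloneqq\sQ|_S$ has $c_1(E)=\alpha|_S=h$, and the same complete-intersection computation as in Theorem~\ref{theorem:CI_K3_in_Gr(8,2)} gives $\ch_2(E)=4$, i.e.\ Mukai vector $(2,h,6)$ and $c_2(E)=8$. Stability is then immediate: $E$ is globally generated, being the restriction of the globally generated $\sQ$, so every rank-one quotient is globally generated with $c_1\ge 0$; as $\Pic(S)=\bZ h$, a destabilizing sub-line-bundle would force an extension $0\to\sO_S(h)\to E\to\sO_S\to 0$, whence $c_2(E)=0$, contradicting $c_2(E)=8$.

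To construct the inverse I would start from a general $(S,h)\in\sF_{13}$ and a point $t=[E]\in T$. By genericity $\Pic(S)=\bZ h$, and $E$ is stable with $v(E)=(2,h,6)$, so $h^1(E)=h^2(E)=0$, $h^0(E)=\chi(E)=8$, and $E$ is globally generated; thus $\Phi_{|E|}\colon S\to\Gr(\bC^8,2)$ is defined and pulls $\sQ$ back to $E$. The pencil $\sP$ is recovered as the space of quadrics $q$ with $S\subset\Gr(\bC^8,2,q)$, namely the kernel of the natural map $S^2\bC^8\to H^0(S,S^2E)$ induced by $\bC^8\otimes\sO_S\twoheadrightarrow E$. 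Riemann--Roch on the K3 surface $S$ gives $\chi(S^2E)=\ch_2(S^2E)+2\rank(S^2E)=28+2\cdot 3=34$, and with $h^1(S^2E)=h^2(S^2E)=0$ one gets $h^0(S^2E)=34$, so this kernel has dimension $36-34=2$: it is the desired pencil $\sP=\langle q_1,q_2\rangle$. Then $\sN^-=\Gr(\bC^8,2,\sP)\supset S$, the double cover $C\colon y^2=\disc(x_1q_1-x_2q_2)$ is the genus-three curve, and by Proposition~\ref{proposition:N-_Gr} together with the Claim one reads off the points $p_1,p_2\in C$ and sections $s_i\in H^0(\sU_{p_i})$ with $S=(s_1)_0\cap(s_2)_0$. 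Since $\Phi_{|E|}^{*}\sQ=E$, the assignment $(S,h,t)\mapsto(C,p_1,p_2,[s_1],[s_2])$ satisfies $\Psi\circ(\text{this map})=\id$, so $\Psi$ is dominant; as the pencil $\sP$ is the \emph{canonical} two-dimensional kernel above and hence intrinsic to the pair $(S,t)$, the map $\Psi$ is also generically injective, and therefore birational. This proves (3), and (1) follows since a general $(S,h)$ has just been realized by the construction (consistently with the dimension count $\dim\sH_{3,2}+2\cdot 7=7+14=21=19+2=\dim\sF_{13}+\dim T$).

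The main obstacle is the chain of genericity statements packaged into this kernel computation. One must show that for general $E\in T$ the multiplication map $S^2H^0(E)\to H^0(S,S^2E)$ is surjective and that $h^1(S^2E)=0$, so that the kernel is \emph{exactly} two-dimensional; that the resulting pencil $\sP$ is \emph{simple}, i.e.\ that $\disc(x_1q_1-x_2q_2)$ has eight distinct roots and $C$ is a smooth hyperelliptic curve of genus three; and that $\Phi_{|E|}$ is a closed embedding identifying $S$ with a complete intersection $(s_1)_0\cap(s_2)_0$ in $\sN^-$, so that the recovered data genuinely reproduces $S$ as in the Claim. The first of these is the crux: all three numerical outputs ($h^0(E)=8$, $\dim\sP=2$, and the final birationality) hinge on the two cohomology-vanishing and surjectivity assertions for $E$ and $S^2E$ on the general pair $(S,h)$.
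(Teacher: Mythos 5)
Your overall strategy --- recover the pencil $\sP$ as the kernel of $S^2H^0(E)\to H^0(S^2E)$, embed $S$ into $\Gr(\bC^8,2,\sP)=\sN^-$, and then locate the data $(p_1,p_2,s_1,s_2)$ --- is the same as the paper's, and your numerical checks ($\ch_2(E)=4$, $\chi(S^2E)=34$, hence a two-dimensional kernel) agree with Theorem~\ref{theorem:cohomology_S}. But there is a genuine gap at the decisive step: you write that by Proposition~\ref{proposition:N-_Gr} ``together with the Claim one reads off'' the points $p_1,p_2$ and sections $s_i$ with $S=(s_1)_0\cap(s_2)_0$. The Claim is precisely the assertion being proved; nothing in Proposition~\ref{proposition:N-_Gr} produces two distinguished points of $C$ from the pair $(S,E)$. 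The paper's mechanism is the Grothendieck--Riemann--Roch computation of Propositions~\ref{proposition:VB_on_C} and~\ref{proposition:restricted_VB_on_C}: both $(\pr_2)_*\sU$ and $(\pr_2)_*(\sU|_{S\times C})$ are rank-$8$ bundles on $C$, of degrees $2^{g-1}(d-g+1)=4d-8$ and $4d-6$ respectively, so the evaluation map $\Ev$ --- generically an isomorphism because $H^0(\sU_p)\to H^0(\sU_p|_S)$ is an isomorphism for general $p$ --- degenerates on a length-two subscheme of $C$; its support gives $p_1,p_2$ and the one-dimensional kernels there give $s_1,s_2$. Without this (or an equivalent) argument the inverse map is not constructed and the generic injectivity of $\Psi$ cannot be concluded.

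A second omission: you correctly flag the needed genericity statements (surjectivity of $S^2H^0(E)\to H^0(S^2E)$ with two-dimensional kernel, simplicity of $\sP$, the embedding property, and $H^0(\sU_z|_S)=\bC^8$ with the restriction map an isomorphism for general $z$) as ``the crux,'' but give no route to them. The paper proves all of them for the complete-intersection pair $(S',\sQ|_{S'})$ by the Borel--Weil--Bott computations of Section~\ref{section:cohomology_S} and then transfers them to a general pair $(S,\sE)$, $\sE\in\sM_S(2,h,6)$, by the degeneration argument of \cite[Proposition~4.1]{Muk92}, using that $\sQ|_{S'}$ is simple and semi-rigid; this specialization is what entitles one to speak of ``a general $E\in T$'' at all, and the reverse specialization is how the paper verifies $S=(s_1)_0\cap(s_2)_0$ at the end. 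Finally, your stability argument for part~(2) is fine in outline but presupposes $\Pic(S)=\bZ h$, which is itself part of the genericity being established; the paper instead deduces stability from the vanishing $H^0(\sU_z|_S(-h))=0$ of Corollary~\ref{corollary:stability}.
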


Thus, thanks to \cite{DR76}, we have the following new Grassmannian description via VBAC method:

\begin{corollary}\label{corollary:general_K3_in_G(8,2)}
 A general primitively polarized K3 surface of genus $13$ is an intersection of two ($7$-dimensional) contact homogeneous varieties  $\Ad$, $\Ad'$  of $\Spin(7)$ in the $12$-dimensional Grassmann variety  $\Gr(\bC^8, 2)$ (see diagram~\eqref{diag:S_Gr}).
\end{corollary}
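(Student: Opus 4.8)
The plan is to reduce the statement to the complete-intersection description inside $\sN^-\cong \Gr(\bC^8,2,\sP)$ furnished by Theorem~\ref{theorem:CI_K3_in_Gr(8,2)} and Theorem~\ref{theorem:general_K3_in_G(8,2)}, and then to repackage the two isotropy conditions together with the two spinor sections as a transverse intersection of two $7$-dimensional varieties in the ambient $\Gr(\bC^8,2)$. By Theorem~\ref{theorem:general_K3_in_G(8,2)}(1) a general $(S,h)\in\sF_{13}$ has the form $S=(s_1)_0\cap(s_2)_0$ with $s_i\in H^0(\sU_{p_i})$ and $p_1,p_2\in C$ general, and by Proposition~\ref{proposition:N-_Gr} each $\sU_{p_i}$ is the restriction to $\sN^-$ of a half-spinor bundle $\sS_{q_{x_i},\epsilon_i}$ on the orthogonal Grassmannian $\Gr(\bC^8,2,q_{x_i})$, where $x_i\in\bP^1$ is the image of $p_i$ under the hyperelliptic double cover $C\to\bP^1$ and $\epsilon_i\in\{+,-\}$ records its sheet. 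First I would choose the basis of the pencil adapted to the two points: since $p_1,p_2$ are general, their images $x_1\neq x_2$ are distinct non-branch points, so $q_{x_1},q_{x_2}$ are nondegenerate and linearly independent, hence span $\sP$. Then by \cite{DR76} we have $\sN^-=\Gr(\bC^8,2,q_{x_1})\cap\Gr(\bC^8,2,q_{x_2})$.

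Next I would promote each section to the big orthogonal Grassmannian. Since the restriction $H^0(\Gr(\bC^8,2,q_{x_i}),\sS_{q_{x_i},\epsilon_i})\to H^0(\sN^-,\sU_{p_i})$ is an isomorphism of the $8$-dimensional spin representations (injective because no spinor section vanishes along the whole of $\sN^-$, then an isomorphism by equality of dimensions), the general section $s_i$ extends, and stays general, to a spinor section on $\Gr(\bC^8,2,q_{x_i})$, which I again denote $s_i$. Setting
\[
\Ad_i:=(s_i)_0\subset \Gr(\bC^8,2,q_{x_i})\subset \Gr(\bC^8,2),
\]
the intersection $\Ad_1\cap\Ad_2$ becomes pure bookkeeping: a point $[U]\in\Gr(\bC^8,2)$ lies in $\Ad_1\cap\Ad_2$ if and only if $U$ is isotropic for both $q_{x_1}$ and $q_{x_2}$, that is $[U]\in\Gr(\bC^8,2,\sP)=\sN^-$, and moreover $s_1([U])=s_2([U])=0$. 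By construction this locus is exactly $S=(s_1)_0\cap(s_2)_0$, so $S=\Ad_1\cap\Ad_2$, and $h=\alpha|_S$ is the restriction of the Pl\"ucker polarization with $(h^2)=24$ by Theorem~\ref{theorem:CI_K3_in_Gr(8,2)}.

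It then remains to identify each $\Ad_i$ with the contact homogeneous variety of a copy of $\Spin(7)$. Here I would invoke triality: the group $\SO(q_{x_i})\cong\SO(8)$ acts on the spin representation $H^0(\sS_{q_{x_i},\epsilon_i})\cong\bC^8$, and the stabilizer of a general spinor $s_i$ is a subgroup $\Spin(7)_i\subset\SO(q_{x_i})$ acting on $\bC^8$ through its orthogonal spin representation. The zero scheme $\Ad_i=(s_i)_0$ is $\Spin(7)_i$-invariant, and I would show it is reduced and smooth of dimension $7$: the spinor bundle $\sS_{q_{x_i},\epsilon_i}$ has rank $2$ (consistent with $\rank\sU_{p_i}=2$, the relative spinor bundle of the $\SO(4)$-bundle $U^{\perp}/U$ on $\Gr(\bC^8,2,q_{x_i})$), so the expected codimension of its zero locus in the $9$-dimensional $\Gr(\bC^8,2,q_{x_i})$ is $2$. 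Comparing with the orbit structure of $\Spin(7)_i$ then identifies $\Ad_i$ with the unique closed orbit, i.e.\ the adjoint variety of $\Spin(7)$, abstractly the variety of isotropic $2$-planes in its $7$-dimensional vector representation, here embedded in $\Gr(\bC^8,2)$ via the spin representation. Proving that the zero locus of a general spinor section is \emph{exactly} this adjoint variety, rather than a larger invariant subscheme, is the main obstacle; this is precisely where the folding $D_4\rightsquigarrow B_3$ recorded in Proposition~\ref{proposition:N-_Gr} does the essential work, while the transversality underlying the dimension and smoothness count follows from the generality of $s_1,s_2$ already used in Theorem~\ref{theorem:CI_K3_in_Gr(8,2)}. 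Assembling $S=\Ad_1\cap\Ad_2$ with $\Ad:=\Ad_1$, $\Ad':=\Ad_2$ so identified yields diagram~\eqref{diag:S_Gr} and the corollary.
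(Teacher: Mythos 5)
Your global strategy coincides with the paper's: the corollary is read off from diagram~\eqref{diag:S_Gr}, writing $S=L_1\cap L_2$ in $\Gr(\bC^8,2)$ with $L_i=(s_i)_0\subset\Gr(\bC^8,2,q_i)$ the zero locus of the lifted spinor, and then identifying each $L_i$ with the $7$-dimensional adjoint (contact) variety of $\Spin(7)$. The bookkeeping reduction $S=\Ad_1\cap\Ad_2$ and the lift of $s_i$ via $H^0(\sS_\pm)\simeq H^0(\sU_{p_i})$ are fine and match what the paper does implicitly.

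The gap is exactly the step you flag as ``the main obstacle,'' and the reference you give for it does not close it. Proposition~\ref{proposition:N-_Gr} concerns the folding $D_4\rightsquigarrow B_3$ at the \emph{degenerate} members of the pencil (the branch points of $C\to\bP^1$), where a corank-one quadric produces a single spinor bundle; it says nothing about the zero scheme of a general section of a half-spinor bundle on a \emph{non-degenerate} $\Gr(\bC^8,2,q)$. Your orbit-theoretic sketch (stabilizer of a general spinor is $\Spin(7)$, the zero scheme is invariant, hence equals the closed orbit) leaves open precisely why the zero locus is reduced and equals a single $7$-dimensional orbit rather than the closure of a larger invariant locus. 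The paper's mechanism, used in the Lemma of Section~\ref{section:equal_parity} and recorded in Section~\ref{subsection:Koszul}, is the triality isomorphism $\Gr(\bC^8,2,q)\simeq\Gr(H^0(\sS_+),2,q_+)$ under which the half-spinor bundle carrying $s_i$ becomes the tautological quotient bundle $\sQ'$; the zero locus of $s_i$ is then literally the locus of isotropic $2$-dimensional quotients killing the fixed vector $s_i\in H^0(\sS_+)$, i.e.\ $\Gr(H^0(\sS_+)/\langle s_i\rangle,2,q_+|_{})\simeq\Gr(\bC^7,2,q')=\Spin(7)/P_2$, which is the adjoint variety and visibly smooth, $7$-dimensional, and homogeneous with no extra components. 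Replacing your orbit-closure argument by this one-line triality computation (and citing Section~\ref{subsection:Koszul} rather than Proposition~\ref{proposition:N-_Gr}) completes the proof.
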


\begin{remark}
Both  $\Ad$ and $\Ad'$  are  successive zero loci of homogeneous vector bundles in $\Gr(\bC^8, 2)$, though $S = \Ad \cap \Ad'$ itself is not.
Moreover, this description is a Grassmannian analogue of the Calabi-Yau $3$-folds  $\Gr(\bC^5, 2) \cap \Gr(\bC^5, 2)' \subset \bP^9$ of Gross-Popescu \cite{GP01} and Kanazawa \cite{Kan12}, and the double spinor Calabi-Yau $5$-folds  $\Sigma \cap \Sigma' \subset \bP^{15}$  of Manivel \cite{Man19}.
\end{remark}

\begin{remark}\label{rem:semi_rigid_bundles}
In \cite{Muk06}, a general polarized K3 surface $(S, h) \in \sF_{13}$ of genus $13$ is described in $\Gr(\bC^7,3)$ by using semi-rigid vector bundles in $\sM_S(3,h,4)$.
This description corresponds to the decomposition $g-1=12=3 \cdot 4$.
In our theorem, we use semi-rigid vector bundles in $ \sM_S(2,h,6)$, which corresponds to $g-1 =2\cdot 6$.
\end{remark}

The latter part of Theorem~ \ref{theorem:general_K3_in_G(8,2)} will be proved also by constructing the inverse map.
Namely, we recover the data $(q_1, q_2, s_1, s_2)$ from the pair of a general $(S, h) \in \sF_{13}$ and a general semi-rigid vector bundle $\sE \in \sM_S(2,h,6)$.
Recall that the moduli space $T\coloneqq \sM_S(2,h,6)$ is a complete intersection of a (unique) quadric hypersurface $\bQ^3$ and a cubic hypersurface in $\bP^4$.

\begin{observation}
Take a point $p' \in T \subset \bP^4$ and intersect $T$ with the tangent hyperplane $H$ of $\bQ^3$ at $p'$.
Then $C' \coloneqq H\cap T $ is a curve with a node at $p'$, whose normalization $C$ is a hyperelliptic curve of genus $3$.
\end{observation}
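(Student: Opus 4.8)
The plan is to analyze the hyperplane section $C' = H \cap T$ both globally in $\bP^4$ and locally at $p'$ through the cone that $H$ cuts on the quadric. Globally, since $T \subset \bP^4$ is a K3 surface of degree $6$ (the polarization $\hat h$ satisfies $(\hat h^2) = 6$) and $[C'] = \hat h|_T$, adjunction on $T$ gives $2 p_a(C') - 2 = (C')^2 = (\hat h^2) = 6$, so $p_a(C') = 4$. It therefore suffices to prove that $C'$ is an integral curve whose only singularity is a single ordinary node at $p'$; the normalization $C$ then has geometric genus $p_a(C') - 1 = 3$.

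First I would identify the surface carrying $C'$. Because $\bQ^3$ is a \emph{smooth} quadric threefold and $H$ is its tangent hyperplane at $p'$, the section $V \coloneqq H \cap \bQ^3$ is a rank-$3$ quadric in $H \cong \bP^3$, i.e.\ a quadric cone with vertex $p'$, whose rulings are the lines through $p'$ lying on $\bQ^3$. Hence $C' = V \cap \{F = 0\}$ lies on this cone. Projection from the vertex contracts the rulings and maps $V \setminus \{p'\}$ isomorphically onto the base conic $\Gamma \cong \bP^1$. Each ruling is a line through $p'$, and since $F$ is a cubic vanishing at $p' \in T$, it meets a general ruling in $p'$ together with exactly two further points; thus the induced map $C' \dashrightarrow \Gamma \cong \bP^1$ has degree $2$. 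This already exhibits the normalization $C$ as a double cover of $\bP^1$, hence as a hyperelliptic curve.

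It remains to pin down the singularity at $p'$, which is the main point. I would parametrize the cone by writing a point of $V$ in affine coordinates on $H$ (centered at $p'$) as $\lambda \cdot \phi(\mu:\nu)$, where $\phi \colon \bP^1 \xrightarrow{\sim} \Gamma$, $(\mu:\nu) \mapsto (\mu^2 : \nu^2 : \mu\nu)$ parametrizes the conic and $\lambda$ is the coordinate along the ruling ($\lambda = 0$ being the vertex). Substituting into the affine cubic $f = \ell + q + c$ (linear, quadratic, and cubic parts, with $f(p') = 0$) and dividing out $\lambda$ turns the equation of $C'$ into a quadratic in $\lambda$,
\[
c(\phi)\,\lambda^2 + q(\phi)\,\lambda + \ell(\phi) = 0,
\]
whose coefficients are binary forms in $(\mu:\nu)$ of degrees $6$, $4$, $2$ respectively. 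The vertex $\lambda = 0$ lies on $C'$ precisely over the two zeros of the binary quadratic $\ell(\phi)$; for general $p'$ these are distinct, so two branches of $C'$ with distinct tangent rulings pass through $p'$, i.e.\ $p'$ is an ordinary node. As a cross-check, the discriminant $\Delta = q(\phi)^2 - 4\,c(\phi)\,\ell(\phi)$ is a binary form of degree $8$, so the double cover $C \to \bP^1$ is branched at $8$ points, and Riemann--Hurwitz gives $g(C) = 3$, confirming the adjunction count and forcing the node at $p'$ to be the only singularity. Since a double cover of $\bP^1$ branched along a nonempty divisor is connected, $C$, and hence $C'$, is integral.

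The main obstacle is exactly this general-position step: verifying that the tangent cone at $p'$ consists of two \emph{distinct} rulings (a double ruling would yield a cusp-like point and drop the genus further), equivalently that the tangent plane to $\{F=0\}$ at $p'$ is not tangent to the cone $V$, and that $C'$ acquires no further singularities away from $p'$. I expect both to follow from the smoothness of the generic $(2,3)$ complete intersection $T$ together with a dimension count as $p'$ varies over $T$, but this is the step requiring genuine care.
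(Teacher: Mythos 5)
The paper presents this statement as an unproved Observation---it is treated as a classical fact about $(2,3)$ complete intersections in $\bP^4$---so there is no argument in the source to compare against; your write-up supplies exactly the standard proof the authors have in mind. The mechanism is right and correctly executed: $H\cap\bQ^3$ is a quadric cone with vertex $p'$, projection from the vertex exhibits $C'$ as a double cover of the base conic (whence $C$ is hyperelliptic), adjunction on the degree-$6$ K3 surface $T$ gives $p_a(C')=4$, and the computation on the cone identifies the tangent cone of $C'$ at $p'$ with the pair of rulings cut out by the tangent hyperplane $\{\ell=0\}$ of the cubic, so that a single ordinary node drops the genus to $3$. Two caveats. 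First, the Observation must be read for \emph{general} $p'\in T$ (it is applied to $p'=[\sE]$ with $\sE$ general), which is precisely the reading your genericity hypotheses require; as stated for an arbitrary point it can fail (a cusp when $T_{p'}T$ is tangent to the cone, or extra nodes when $H$ is bitangent to $T$). Second, your Riemann--Hurwitz ``cross-check'' is not independent of the claim that the node at $p'$ is the only singularity: simplicity of the $8$ roots of the discriminant octic $q(\phi)^2-4c(\phi)\ell(\phi)$ is \emph{equivalent} to that claim together with the ordinariness of the node, so it confirms consistency but cannot be used to force either. Those two genericity assertions, which you correctly flag as the remaining work, still need to be closed---e.g.\ by noting each is a closed condition on $p'\in T$ that fails to hold identically, which can be verified on a single explicit tangent hyperplane section or deduced from the existence of the genus-$3$ family the paper constructs elsewhere. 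Even so, your account is more complete than the paper's own treatment of this point.
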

Applying this observation to the point $p'=[\sE]$, we obtain a hyperelliptic curve $C$ and its associated pencil $\sP\subset S^2 \bC^8$ of quadric forms.
Let $p'_1$, $p'_2\in C$ be the points over $p' = [\sE]$, and $p_i \in C$ the points that are the translations of $p'_i$ by the hyperelliptic involution $\iota$.
Then $H^0(\sU_{p_i})$ is the space of spinors corresponding to the points $p_i$, and the following holds:
\begin{claim}
There are two spinors  $s_1\in H^0(\sU_{p_1})$, $s_2\in H^0(\sU_{p_2})$ such that the K3 surface $S$ is the common zero locus $(s_1)_0\cap (s_2)_0$ in $\Gr(\bC^8,2,\sP)$.
\end{claim}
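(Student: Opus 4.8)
The plan is to use the flipped universal bundle to build a morphism $\psi\colon S\to\sN^-\cong\Gr(\bC^8,2,\sP)$, and then to exhibit the two spinors $s_1,s_2$ as the sections of $\sU_{p_1},\sU_{p_2}$ that vanish along $\psi(S)$. Concretely, the rank-two bundle $\sF$ on $S\times C$ produced from the moduli map $C\to T$ restricts, for each $x\in S$, to a stable bundle $\sF|_{\{x\}\times C}$ of odd degree on $C$; this gives $\psi$ together with an isomorphism $\psi^*\sU_p\cong\sF|_{S\times\{p\}}$ up to twist by a line bundle pulled back from $C$, exactly as in the non-hyperelliptic case preceding Theorem~\ref{theorem:general_K3_in_N-}. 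First I would record that $\psi$ is a closed immersion with $\psi^*\alpha=h$, so that $\bar S\coloneqq\psi(S)$ is a smooth surface in $\Gr(\bC^8,2,\sP)$ carrying the polarization $h$.

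Next I would produce the sections. Since $\psi(S)\subseteq(s_i)_0$ is equivalent to the vanishing of the pulled-back section $\psi^*s_i\in H^0(S,\psi^*\sU_{p_i})$, the required spinors are precisely the nonzero elements of $\ker\big(H^0(\sN^-,\sU_{p_i})\xrightarrow{\psi^*}H^0(S,\psi^*\sU_{p_i})\big)$. Here the two sides have the same dimension: $h^0(\sU_{p_i})=8$ is the dimension of the $B_3$-spin representation, while $\psi^*\sU_{p_i}$ is a stable bundle in $\sM_S(2,h,6)$ with $\chi=8$ and $h^1=h^2=0$, whence $h^0=8$ as well. The role of the points $p_i=\iota(p'_i)$ is to align, through the $D_4\rightsquigarrow B_3$ folding and the exchange of the two spinor bundles by the hyperelliptic involution, the spinor space $H^0(\sU_{p_i})$ with the fibres of $\sF$ at the node $p'=[\sE]$; the node relation $\sF|_{S\times\{p'_1\}}\cong\sF|_{S\times\{p'_2\}}\cong\sE$ is what makes the restriction map drop rank, producing a one-dimensional kernel and hence a well-defined spinor $s_i$ up to scale.

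Finally I would check that $s_1,s_2$ cut out $\bar S$ and nothing more. By Theorem~\ref{theorem:CI_K3_in_Gr(8,2)}, for general sections the complete intersection $(s_1)_0\cap(s_2)_0$ in $\Gr(\bC^8,2,\sP)$ is an irreducible K3 surface whose class in $H_4(\sN^-)$ is $c_2(\sU_{p_1})\,c_2(\sU_{p_2})$ and whose polarization is $\alpha|$; I would show the kernel sections are general enough for this to persist. Since $\bar S\subseteq(s_1)_0\cap(s_2)_0$ with $\bar S$ an irreducible surface of the same $\alpha$-degree $24$, the containment is forced to be an equality of cycles, giving $S=\bar S=(s_1)_0\cap(s_2)_0$.

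The hard part will be the middle step: showing that the restriction map $\psi^*\colon H^0(\sU_{p_i})\to H^0(S,\psi^*\sU_{p_i})$ has kernel of dimension exactly one, and that the resulting $s_1,s_2$ meet without excess. This requires pinning down the twisting line bundle from $C$ in the identification $\psi^*\sU_p\cong\sF|_{S\times\{p\}}$, and tracing the node $p'=[\sE]$ and the involution $\iota$ through the Desale--Ramanan isomorphism so that the rank drop is exactly one; the generic smoothness of $(s_1)_0\cap(s_2)_0$ coming from Theorem~\ref{theorem:CI_K3_in_Gr(8,2)} is then what rules out excess intersection and forces the equality $S=(s_1)_0\cap(s_2)_0$.
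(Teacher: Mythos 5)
Your skeleton is the right one — the spinors must be the kernels of the restriction maps $H^0(\sN^-,\sU_{p_i})\to H^0(S,\sU_{p_i}|_S)$, and this is exactly how the paper produces them — but the step you defer as ``the hard part'' is in fact the entire content of the claim, and the mechanism you sketch for it does not work as stated. You propose to get the rank drop at $p_i=\iota(p_i')$ from the node relation $\sF|_{S\times\{p_1'\}}\cong\sF|_{S\times\{p_2'\}}\cong\sE$ via the $D_4\rightsquigarrow B_3$ folding and the involution; but the fact that the moduli map $C\to T$ identifies the two fibres over the node is a statement about the bundles $\sU_{p_i'}|_S$ being isomorphic to each other, and there is no direct implication from this to the nonvanishing of $\ker\bigl(H^0(\sU_{\iota(p_i')})\to H^0(\sU_{\iota(p_i')}|_S)\bigr)$. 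The paper's actual argument is a global degree count on $C$: by Grothendieck--Riemann--Roch, $(\pr_2)_*\sU$ has rank $8$ and degree $2^{g-1}(d-g+1)=4d-8$ (Proposition~\ref{proposition:VB_on_C}), while $(\pr_2)_*(\sU|_{S\times C})$ has rank $8$ and degree $4d-6$ (Proposition~\ref{proposition:restricted_VB_on_C}); since the evaluation map $\Ev$ between these two rank-$8$ bundles is generically an isomorphism, its degeneracy divisor has degree exactly $2$, which forces exactly two points of corank one and hence two well-defined spinors up to scale. Note that even the inputs to this count (the base-change identifications of the fibres of the direct images with $H^0(\sU_z)$ and $H^0(\sU_z|_S)$, the vanishing $h^{>0}(\sU_z|_S)=0$ for \emph{all} $z\in C$, and the generic injectivity of the restriction) are nontrivial: they rest on the Borel--Weil--Bott computations of Section~\ref{section:cohomology_S} transported to the general $(S,\sE)$ by the degeneration argument of Corollary~\ref{corollary:properties_of_E}. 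None of this is replaced by your heuristic, and in particular your argument gives no reason why the kernel should be nonzero at any point at all.

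Your final step also has an unresolved issue that the paper handles differently: the cycle-class comparison ($\deg_\alpha\bar S=24=\deg_\alpha c_2(\sU_{p_1})c_2(\sU_{p_2})$) only forces $\bar S=(s_1)_0\cap(s_2)_0$ if the zero scheme is dimensionally proper, and the sections produced as kernels are pinned down by $S$, so you cannot invoke genericity from Theorem~\ref{theorem:CI_K3_in_Gr(8,2)} for them. The paper closes this by specializing the whole correspondence to the complete-intersection case, where the identity $S=(s_1)_0\cap(s_2)_0$ holds by construction, rather than by a degree argument. As a secondary point, the identification of the degeneracy points with $\iota(p_i')$ (which the claim's wording presupposes) is asserted in the introduction but is not what the body of the paper proves; your proposal inherits this without supplying an argument for it either.
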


\begin{remark}
Let $p'_i\in C$ be the points over $p = [\sE]$ and $p_i$ the points corresponding to two sections $s_i$.
In the hyperelliptic case, we have $p_1+p_2+p'_1+p'_2 = K_C$.
For the non-hyperelliptic case, we do not know that this equality holds too.
\end{remark}

\subsection{Examples}

In general, the spinor bundles $\sU_{p_1}$, $\sU_{p_2}$ are defined on the different orthogonal Grassmann varieties $\Gr(\bC^8,2,q_1)$ and $\Gr(\bC^8,2,q_2)$, and hence our K3 surface $S$ is not a complete intersection variety with respect to \emph{homogeneous} vector bundles on $\Gr(\bC^8,2)$.
However, if $q_1$ and $q_2$ converge to one quadric form $q$, then $S$ is a complete intersection variety with respect to a homogeneous vector bundle $S^2 \sQ \oplus \sS_\pm \oplus \sS_\pm$ or $S^2\sQ \oplus \sS_\pm \oplus S_\mp$ in the homogeneous space $\Gr(\bC^8,2,q)$.
There are the following two cases:

\begin{example}\label{ex:complete_intersection}
\hfill
\begin{enumerate}
\item \label{ex:complete_intersection1}
The parity of spinors $s_1$, $s_2$ are the same;
in this case the K3 surface $S$ is a linear section of codimension $2$ of the Segre variety $\bP^1 \times \bP^1 \times \bP^1 \times \bP^1 \subset \bP^{15}$, and the polarization $h$ is the class of the hyperplane section.
\item \label{ex:complete_intersection2}
The parity of spinors $s_1$, $s_2$ are different;
in this case, the K3 surface $S$ is the minimal resolution of a sextic complete intersection variety $(2) \cap (3) \subset \bP^4$ with seven nodes $x_1$, \dots, $x_7$.
The polarization $h$ is $5 e_0-3 \sum_{i=1}^{7} e_i$,
where $e_0$ is the class of the hyperplane section, and $e_i$ are the exceptional lines over the points $x_i$.
\end{enumerate}

The latter case is nothing but the K3 surfaces discovered in \cite[Remark 1]{Muk06}, which are complete intersection varieties in a $G_2$-homogeneous variety with respect to a homogeneous vector bundle.
Note that the $4$-fold obtained as a blow-up of $\bP^4$ at seven points is the moduli space of parabolic vector bundles of rank two on a seven-pointed projective line \cite{Bau91}.
 
\end{example}

\begin{remark}
In \cite{Ben18}, there is  a (numerical) classification of $K$-trivial varieties of lower dimension that are complete intersection in classical homogeneous varieties with respect to homogeneous vector bundles.
The above two cases correspond to (ow3) and (ow3.2) in \cite{Ben18}.
\end{remark}

\subsection{Conjectural description for $g=19$}\label{sect:g_19}
 We expect that an analogous construction provides a description of general K3 surfaces of genus 19.

Let $C$ be a curve of genus $g \geq 2$, and $\xi$ a line bundle of degree one on $C$.
Then the moduli space $\sN=\SU_C(r,\xi)$ of stable vector bundles of rank $r$ with determinant $\xi$ is a Fano manifold of dimension $(r^2-1)(g-1)$.
Similarly to the rank two case, its Picard group is generated by an ample generator $\alpha$, and $-K_X=2 \alpha$.
Let $\sU$  be the normalized universal bundle on $\sN \times C$ with
\[
c_1(\sU) = \alpha \otimes 1 + 1\otimes f  \in H^2(\sN \times C).
\]
Then, for some values of $r$ and $g$, the bundle $\sU_p$ is globally generated.
This is the case, for instance, if $r=3$ and $g=2$ \cite{Ray82}.
The space of global sections $H^0(\sU_p)$ has dimension $r^g$.

Let $p_1$, $p_2 \in C$ be two points, and $s_i \in H^0(\sU_{p_i})$ general sections.
Then $(s_1)_0 \cap (s_2)_0$ is a smooth scheme of dimension $(r^2-1)(g-1)-2r$ whose canonical bundle is trivial.
In particuler, if $r=3$ and $g=2$, then $S \coloneqq (s_1)_0 \cap (s_2)_0$ is a $K$-trivial surface with a polarization $h \coloneqq \alpha|_S$.
We expect that this construction gives a K3 surface of $g=19$ and, moreover, that the following analogue of Theorem~\ref{theorem:general_K3_in_N-} hold:
\begin{enumerate}
 \item $(S,h)$ is a \emph{general primitively} polarized K3 surface with genus $19$.
 \item For each point $p \in C$, $\sU_p|_S$ is a \emph{stable} vector bundle of rank $3$ in $T= \sM_S(3,h,6)$.
 \item  The induced moduli map $C \to T$ sends $C$ to a nodal hyperplane section $C'$ of $T \subset \bP^3$ (Note that $T$ admits a natural polarization $\hat h$ of degree $4$, and hence is a quartic surface in $\bP^3$).
 \item The induced rational map
\[
\Phi \colon \coprod_{(C,p_1,p_2) \in \sM_{2,2}}\bP_{*}(H^0(\sU_{p_1}))\times \bP_{*}(H^0(\sU_{p_2}))  \dashrightarrow \coprod_{(S,h) \in \sF_{19}}  T^{\vee}
\]
is birational.
Here $\sM_{2,2}$ is the moduli space of unordered $2$-pointed curves of genus two, $\sF_{19}$ the moduli space of primitively polarized K3 surfaces of genus $19$,  and $T^{\vee}$ the projective dual variety of $T \subset \bP^3$.
\end{enumerate}

In a recent paper \cite{BBFM23b}, they gave a description of $\sN=\SU_C(3,\xi)$ by using the method of orbital degeneracy loci and confirmed that $S$ is indeed a K3 surface of genus $19$.

\subsection{Organization of the paper}
We briefly describe the contents of the present article.

In Section~\ref{section:equal_parity}, we study the case where the curve $C$ is hyperelliptic, $p_1=p_2$ and the parity of two spinors $s_1$ and $s_2$ are  the same (Example~\ref{ex:complete_intersection}~\ref{ex:complete_intersection1}).
We will show that $S$ is a linear section of $\bP^1 \times \bP^1 \times \bP^1 \times \bP^1 \subset \bP^{15}$.
As a consequence, we prove Theorems \ref{theorem:CI_K3_in_N-} and \ref{theorem:CI_K3_in_Gr(8,2)}.

In Section~\ref{section:N-}, we calculate several invariants, which are related to $\sN^-$ and the complete intersection variety $(s_1)_0 \cap (s_2)_0$ in $\sN^-$.
For example, we calculate the degrees of $(\pr_2)_*\sU$ and $(\pr_2)_*(\sU|_{S\times C})$.
One of the key ingredients of the proof is the structure of the cohomology ring of $\sN^-$, and almost all arguments in this section are irrelevant to the assumption that $g=3$.
Thus we will state and prove almost all results without assuming $g=3$.
A virtue of removing this assumption is that we can generalize our construction of K3 surfaces to that of Calabi-Yau varieties; we will construct families of polarized Calabi-Yau varieties of dimension $3g-7$.

In Section~\ref{section:cohomology_S}, we study the cohomology groups of vector bundles on $S$ and $\sN^-$, which will be used later.
The proof of the results in this section is firstly reduced to cohomological computations on  some homogeneous spaces.
Then we perform such computations by using the Borel-Weil-Bott theorem.

In Section~\ref{section:proof}, we prove the main theorems.
We first show Theorem~\ref{theorem:general_K3_in_G(8,2)} by using the results in previous sections.
Then Theorem~\ref{theorem:general_K3_in_N-} is proved by considering the specialization to the hyperelliptic case.

In Section~\ref{section:unequal_parity}, we study the case where the curve $C$ is hyperelliptic, $p_1=p_2$ but the parity of two spinors $s_1$ and $s_2$ are \emph{different} (Example~\ref{ex:complete_intersection}~\ref{ex:complete_intersection2}).

\subsection{Notations and convention}
Throughout this paper, we work over the complex number field $\bC$.
\begin{itemize}
\item For a smooth projective variety $X$ and $\Delta \in H^*(X,\bQ)$, we denote by
\[
\kappa_{\dim X}[ \Delta] \in H^{\dim X} (X,\bQ) \simeq \bQ
\]
the image of the natural projection $H^*(X,\bQ) \to H^{\dim X}(X,\bQ)$ or the corresponding rational number.
\item $\pr_i$ denotes the $i$-th projection from the product $\prod X_i$.
\item For a vector space $V$, $\bP(V)$ denotes the space of $1$-dimensional quotients of $V$.
$\bP_*(V)$ is the space of $1$-dimensional subspaces of $V$, i.e., $\bP_*(V)=\bP(V^\vee)$.
\item $\Gr(V,m)$ denotes the Grassmann variety of $m$-dimensional quotients of $V$.
$\sK$ and $\sQ$ are the universal subbundle or quotient bundle on this Grassmann variety, respectively.
\item $\Gr(V,m,q)$ is the orthogonal Grassmann variety with respect to a quadric form $q \in S^2V$.
\item $\sS_{q,\pm} =\sS_\pm$ are the spinor bundles.
\end{itemize}

For a K3 surface $S$,
\begin{itemize}
 \item $H^*(S,\bZ)$ is the integral cohomology ring of $S$.
 \item $\widetilde H(S,\bZ)$ is the Mukai lattice with the integral bilinear form $\langle \blank , \blank \rangle$.
 \item $T \colon H^*(S,\bZ) \to \widetilde H(S,\bZ)$ is defined as
 \[
 T(a,b,c) =(a,b,c)\sqrt{\td_S} = (a,b,c+a).
 \]
\item For a sheaf $\sE$ on $S$, its Mukai vector $v(\sE)$ is $T(\ch(\sE)) \in \widetilde H(S,\bZ)$.
\end{itemize}

\begin{acknowledgements}
The authors attended the Japanese-European Symposium on Symplectic Varieties and Moduli Spaces in May 2023 in Milan.  We warmly thank the organizers of this symposium for the invitation and their hospitality, and Laurent Manivel for his mini course there and also for discussions on the joint work \cite{BBFM23a,BBFM23b}.
\end{acknowledgements}

\section{Preliminary example and proofs of Theorems~\ref{theorem:CI_K3_in_N-} and \ref{theorem:CI_K3_in_Gr(8,2)}}
\label{section:equal_parity}

In this section, we describe the K3 surface $S$ in Example~\ref{ex:complete_intersection}~\ref{ex:complete_intersection1}, and prove Theorems~\ref{theorem:CI_K3_in_N-} and \ref{theorem:CI_K3_in_Gr(8,2)}.

Let $C$ be a hyperelliptic curve of genus $3$ and $\sP \coloneqq \langle q_1,q_2\rangle \subset S^2\bC^8$ the associated simple pencil of quadric forms.
Take a general point $p \in C$ and consider the restricted spinor bundle $\sU_p$ on $\Gr(\bC^8,2,\sP)$.
Thus $\sU_p$ is the restriction of a spinor bundle, say $\sS_+$, on $\Gr(\bC^8 ,2 ,q)$ for the associated quadric form $q \in S^2\bC^8$ (see Section~\ref{subsection:preliminaries_FK}).
Let $s_1$ and $s_2 \in H^0( \sU_p )$ be two general sections.
Since $H^0( \sU_p ) \simeq H^0(\sS_+)$, we can lift two sections $s_i$ to $\overline{s_i} \in H^0(\sS_+)$.

\begin{lemma}
Let the notation be as above.
Then the common zero locus $F\coloneqq (\overline{s_1})_0 \cap (\overline{s_2})_0 \subset \Gr(\bC^8,2,q)$ is the variety of lines on a $4$-dimensional smooth hyperquadric $\bQ^4$.
Equivalently, $F$ is the hyperplane section of the image of the Segre embedding $\bP^3 \times \bP^3 \subset \bP^{15}$.

Moreover, the restricted universal bundle $\sQ|_F$ is isomorphic to $\sO(1,0) \oplus \sO(0,1)$.
Here $\sO(a,b)$ is the line bundle $\pr_1 ^* \sO(a) \oplus \pr_2^*\sO(b)$ on $F$, where $\pr_i$ are the natural projections $F \to \bP^3$.
\end{lemma}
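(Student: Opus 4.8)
The plan is to carry out the whole computation upstairs, on the homogeneous space $\Gr(\bC^8,2,q)$, where $\sS_+$ is a genuine $\Spin(8)$-homogeneous spinor bundle of rank $2$ and $H^0(\sS_+)$ is the half-spin module. The first step is to reduce to \emph{pure} spinors. Inside $\bP_*(H^0(\sS_+))\cong\bP^7$ the pure spinors form a smooth six-dimensional quadric $\bQ^6$, one of the two components of the variety of maximal isotropic $4$-planes of $(\bC^8,q)$. Since $F=(\overline{s_1})_0\cap(\overline{s_2})_0$ is the common zero locus of the whole pencil $\langle\overline{s_1},\overline{s_2}\rangle$, it depends only on that pencil; for general $\overline{s_1},\overline{s_2}$ the corresponding line in $\bP^7$ meets $\bQ^6$ in two distinct points, so the pencil contains exactly two pure spinors $s_A,s_B$, with associated maximal isotropic planes $A,B\subset\bC^8$. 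For general data these are transverse, so $\bC^8=A\oplus B$ and $q$ induces a perfect pairing $A\times B\to\bC$; and $F=(s_A)_0\cap(s_B)_0$.

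Next I would identify the zero locus of a single pure spinor by a Clifford computation. Realizing the spin module via the polarization $\bC^8=A\oplus B$, the evaluation $s\mapsto s(\Pi)$ is the canonical Clifford-module projection $H^0(\sS_+)\to\sS_{+,\Pi}$. The vanishing locus of the fixed section $s_A$ is invariant under $\operatorname{Stab}(s_A)=\operatorname{Stab}(A)$, whose orbits on isotropic planes are the strata $\dim(\Pi\cap A)\in\{0,1,2\}$; evaluating on one representative of each stratum gives $s_A(\Pi)=0$ precisely when $\Pi\cap A\neq0$. Hence $(s_A)_0=\{\Pi:\Pi\cap A\neq0\}$, and likewise for $B$, so that
\[
F=\{\Pi\in\Gr(\bC^8,2,q):\Pi\cap A\neq0\ \text{and}\ \Pi\cap B\neq0\}.
\]
Because $A\cap B=0$, every such $\Pi$ meets $A$ and $B$ in lines $\ell_A,\ell_B$ and equals $\ell_A\oplus\ell_B$; conversely $\ell_A\oplus\ell_B$ is isotropic iff $q(\ell_A,\ell_B)=0$. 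This realizes $F$ as the incidence $(1,1)$-divisor $\{\,q(\ell_A,\ell_B)=0\,\}\subset\bP_*(A)\times\bP_*(B)$.

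Since $q$ pairs $A$ and $B$ perfectly, for general data this divisor is smooth, hence a hyperplane section of the Segre $\bP^3\times\bP^3\subset\bP^{15}$, and classically the variety of lines on the smooth quadric $\bQ^4=\Gr(2,A)$; this gives the first assertion, with $\pr_1,\pr_2$ the projections to $\bP_*(A),\bP_*(B)$. For the bundle, the universal quotient $\sQ$ restricts on $\Gr(\bC^8,2,q)$ to the dual $\Pi^\vee$ of the tautological isotropic subbundle $\Pi$, while on $F$ the splitting $\Pi=\ell_A\oplus\ell_B$ reads $\Pi=\pr_1^*\sO(-1)\oplus\pr_2^*\sO(-1)=\sO(-1,0)\oplus\sO(0,-1)$; dualizing yields $\sQ|_F=\sO(1,0)\oplus\sO(0,1)$. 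I expect the main difficulty to be the second step: pinning down the evaluation map of the spinor bundle in Clifford terms so as to establish $(s_A)_0=\{\Pi:\Pi\cap A\neq0\}$, together with the transversality statements needed to guarantee that a general pencil meets $\bQ^6$ in two points giving transverse $A,B$ and that the resulting $F$ is smooth of the expected dimension $5$.
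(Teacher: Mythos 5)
Your argument is correct, but it takes a genuinely different route from the paper's. The paper disposes of the lemma in a few lines by applying $D_4$-triality globally: under the identification $\Gr(\bC^8,2,q)\simeq\Gr(H^0(\sS_+),2,q_+)$ the bundle $\sS_+$ becomes the universal quotient bundle, so $F=\Gr(W,2,q_+|_W)$ with $W=H^0(\sS_+)/\langle\overline{s_1},\overline{s_2}\rangle$, which is the variety of lines on $\bQ^4$; the splitting of $\sQ|_F$ is then deduced from Ottaviani's restriction theorem for spinor bundles together with the coincidence $D_3=A_3$. You instead stay on the original side and analyze the pencil through its two pure spinors $s_A$, $s_B$, reducing everything to the incidence description $F=\{\Pi:\Pi\cap A\neq0,\ \Pi\cap B\neq0\}$ with $\bC^8=A\oplus B$. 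Both work; your version is longer but makes the two $\bP^3$-factors and the line subbundles $\ell_A$, $\ell_B$ completely explicit, which is what produces the splitting $\sQ|_F=\sO(1,0)\oplus\sO(0,1)$ without citing Ottaviani. Two remarks. First, the step you single out as the main difficulty does not actually require a Clifford evaluation: $(s_A)_0$ is invariant under the stabilizer of $A$ (which only rescales $s_A$), hence is a union of the three orbit closures indexed by $\dim(\Pi\cap A)$; it is a proper closed subset since $s_A\neq0$, and it is nonempty of codimension at most $2$ since $c_2(\sS_+)\neq0$ (general sections of $\sS_+$ have nonempty $7$-dimensional zero loci). The only invariant closed set meeting these constraints is $\{\dim(\Pi\cap A)\geq1\}$; the resulting set-theoretic equality for $F$ upgrades to a scheme-theoretic one because $F$ is smooth of the expected dimension by Bertini applied to the globally generated bundle $\sS_+^{\oplus2}$. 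Second, the transversality $A\oplus B=\bC^8$ need not be imposed separately: the line $\langle\overline{s_1},\overline{s_2}\rangle$ meeting the quadric of pure spinors in two distinct points is equivalent to $q_+(s_A,s_B)\neq0$, which is equivalent to $A\cap B=0$.
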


\begin{proof}
By the triality of $D_4$, the variety $\Gr(\bC^8,2,q)$ is isomorphic to $\Gr(H^0(\sS_+),2,q_+)$ for some $q_+ \in S^2H^0(\sS_+)$.
Thus $F = \Gr(W,2,q_+|_W)$, where $W =H^0(\sS_+)/\langle \overline{s_1},\overline{s_2}\rangle$.
This is the variety of lines on the $4$-dimensional hyperquadric $\bQ^4$.

Note that, via the identification $\Gr(\bC^8,2,q) \simeq \Gr(H^0(\sS_+),2,q_+)$, the universal bundle $\sQ$ on $\Gr(\bC^8,2,q)$ corresponds to a spinor bundle on $ \Gr(H^0(\sS_+),2,q_+)$.
Thus $Q|_F$ is the restriction of this spinor bundle to $\Gr(W,2,q_+|_W)$, which is isomorphic to the sum of two spinor bundles on $\Gr(W,2,q_+|_W)$ (cf.\ \cite[Theorem~1.4]{Ott88}).
This proves the lemma.
\end{proof}

\begin{theorem}
Let the notation be as above.
Then the common zero locus $S \coloneqq (s_1)_0 \cap (s_2)_0 \subset \Gr(\bC^8,2,\sP)$ is a codimension $2$ linear section of the Segre variety $\bP^1 \times\bP^1\times\bP^1\times\bP^1 \subset \bP^{15}$.
The polarization $h \coloneqq \alpha|_S$ is the class of the hyperplane section with respect to the Segre embedding $\bP^1 \times\bP^1\times\bP^1\times\bP^1 \subset \bP^{15}$.
\end{theorem}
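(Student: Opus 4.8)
The plan is to cut the variety $F$ of the preceding lemma down by the remaining isotropy condition of the pencil, and then to read off the result through the splitting $\sQ|_F\cong \sO(1,0)\oplus\sO(0,1)$. First, since restriction is an isomorphism $H^0(\sS_+)\simeq H^0(\sU_p)$, each $s_i$ is the restriction of $\overline{s_i}$, so $(s_i)_0=(\overline{s_i})_0\cap\Gr(\bC^8,2,\sP)$ and therefore
\[
S=(\overline{s_1})_0\cap(\overline{s_2})_0\cap\Gr(\bC^8,2,\sP)=F\cap\Gr(\bC^8,2,\sP).
\]
Let $q\in\sP$ be the nondegenerate quadric associated to $p$, so that $F\subset\Gr(\bC^8,2,q)$ by construction, and let $q'\in\sP$ be any second member. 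Because the isotropy section depends linearly on the quadric, the isotropy loci of two distinct members already cut out the whole pencil, i.e.\ $\Gr(\bC^8,2,\sP)=\Gr(\bC^8,2,q)\cap\Gr(\bC^8,2,q')$; hence $S=F\cap\Gr(\bC^8,2,q')$.

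Next I would express the condition $\Gr(\bC^8,2,q')$ as the zero locus of the section $\overline{q'}\in H^0(\Gr(\bC^8,2),S^2\sQ)$ obtained as the image of $q'$ under the natural surjection $S^2\bC^8\to S^2\sQ$. Restricting to $F$ and using $\sQ|_F\cong\sO(1,0)\oplus\sO(0,1)$ gives
\[
S^2\sQ|_F\cong\sO(2,0)\oplus\sO(1,1)\oplus\sO(0,2),
\]
so $\overline{q'}|_F$ splits into a triple $(Q_1,L,Q_2)$, where $Q_i\in H^0(F,\pr_i^*\sO(2))=H^0(\bP^3,\sO(2))$ is a quadric on the $i$-th factor (via $\pr_i\colon F\to\bP^3$) and $L\in H^0(F,\sO(1,1))$.

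For the identification, the zero locus of $Q_1$ on $F$ is $F\cap(\{Q_1=0\}\times\bP^3)$ and that of $Q_2$ is $F\cap(\bP^3\times\{Q_2=0\})$, so the common vanishing of $Q_1$ and $Q_2$ is $F\cap(\bQ^2\times\bQ^2)$ with $\bQ^2=\{Q_i=0\}$. Identifying each smooth quadric surface $\bQ^2$ with $\bP^1\times\bP^1$ yields $\bQ^2\times\bQ^2=\bP^1\times\bP^1\times\bP^1\times\bP^1$, under which the Segre class $\sO(1,1)$ restricts to $\sO(1,1,1,1)$, i.e.\ the Segre polarization of $\bP^1\times\bP^1\times\bP^1\times\bP^1\subset\bP^{15}$. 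Since $F$ is itself a hyperplane section $\{\ell_0=0\}$ of the Segre sixfold $\bP^3\times\bP^3$ with $\ell_0\in H^0(\sO(1,1))$, we obtain
\[
S=(\bP^1\times\bP^1\times\bP^1\times\bP^1)\cap\{\ell_0=0\}\cap\{L=0\},
\]
a codimension-two linear section. Finally $h=\alpha|_S=c_1(\sQ)|_S=c_1(\sO(1,0)\oplus\sO(0,1))|_S=\sO(1,1)|_S$ is the Segre hyperplane class, as claimed.

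The delicate point, and the step I expect to be the main obstacle, is the genericity needed to make the three components behave as asserted: one must check that for general $\sP$ and general $p$ the quadrics $Q_1,Q_2$ are nondegenerate, so that each $\bQ^2\cong\bP^1\times\bP^1$ is smooth, and that $\ell_0$ and $L$ are independent sections of $\sO(1,1)$, so that $S$ is genuinely a codimension-two (rather than a degenerate) linear section. I expect this to follow from an explicit model of the triality identification together with an openness argument, or from a single verified example, rather than from any further structural input.
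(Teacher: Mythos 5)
Your proposal is correct and follows essentially the same route as the paper: the paper likewise writes $S=F\cap\Gr(\bC^8,2,q')$, decomposes $q'|_F$ in $H^0(S^2\sQ|_F)=H^0(\sO(2,0)\oplus\sO(1,1)\oplus\sO(0,2))$, and concludes that $S$ is a $(1,1)\cap(1,1)\cap(2,0)\cap(0,2)$ complete intersection in $\bP^3\times\bP^3$, i.e.\ a codimension-two linear section of $\bP^1\times\bP^1\times\bP^1\times\bP^1\subset\bP^{15}$. The genericity checks you flag at the end are left implicit in the paper as well ("a general quadric form $q'$", "it is easy to see").
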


\begin{proof}
 By the above lemma, a general quadric form $q' \in S^2 \bC^8$ restricts to a section $q'|_F \in H^0(S^2\sQ|_F) = H^0(\sO(2,0) \oplus\sO(1,1) \oplus\sO(0,2))$.
 Thus $S$ is the complete intersection variety in $\bP^3 \times \bP^3$ of type $(1,1) \cap (1,1) \cap (2,0) \cap (0,2)$.
 This variety is a codimension $2$ linear section of the Segre variety $\bP^1 \times\bP^1\times\bP^1\times\bP^1 \subset \bP^{15}$.
It is easy to see that $h$ is the class of the hyperplane section.
\end{proof}

In particular, the polarized surface $(S,h)$ in the above theorem is a polarized K3 surface of genus $13$.
Moreover the polarization $h$ is primitive by the Lefschetz hyperplane theorem.
By considering the specialization to this case, we have Theorems~\ref{theorem:CI_K3_in_N-} and \ref{theorem:CI_K3_in_Gr(8,2)}:

\begin{corollary}
Let $C$ be a hyperelliptic curve of genus $3$, and $\sP = \langle q_1,q_2\rangle \subset S^2\bC^8$ the associated simple pencil of quadric forms.
Take general two points $p_1$, $p_2 \in C$ and two general sections $s_i \in H^0(\sU_{p_i})$.
Then the common zero locus $S \coloneqq (s_1)_0 \cap (s_2)_0 \subset \Gr(\bC^8,2,\sP)$ is a nonsingular primitively polarized K3 surface of genus $13$.
\end{corollary}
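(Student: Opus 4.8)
The plan is to deduce the general case $p_1\neq p_2$ from the special case $p_1=p_2$ settled in the preceding theorem, by combining openness of smoothness with deformation invariance inside one irreducible family. Throughout, the curve $C$ — and hence the pencil $\sP$ and the ambient smooth Fano sixfold $\Gr(\bC^8,2,\sP)\cong \sN^-$ — is fixed, so every member of the construction sits in one fixed smooth ambient variety. First I would assemble the data into a family. The spaces $H^0(\sU_p)$ ($p\in C$) form a rank-$8$ vector bundle $\mathcal{V}$ on $C$, and I set $B\coloneqq \bP_*(\mathcal{V})\times \bP_*(\mathcal{V})$, a tower of projective bundles over the irreducible surface $C\times C$; a point of $B$ is a quadruple $(p_1,[s_1],p_2,[s_2])$ with $[s_i]\in\bP_*(H^0(\sU_{p_i}))$. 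Over $B$ sits the universal zero locus $\pi\colon \sS\to B$ whose fibre over such a quadruple is $(s_1)_0\cap(s_2)_0\subset\Gr(\bC^8,2,\sP)$, and this $\pi$ is proper since $\Gr(\bC^8,2,\sP)$ is projective. As $B$ is irreducible and the condition $p_1\neq p_2$ is open and dense in $C\times C$, the generic fibre automatically satisfies $p_1\neq p_2$, which is exactly the situation of the corollary.

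The smoothness of the general fibre is then immediate. The locus $U\subseteq B$ over which $\pi$ has a smooth fibre of the expected dimension $6-2-2=2$ is open, and it is nonempty because it contains the diagonal points supplied by the preceding theorem, where $(s_1)_0\cap(s_2)_0$ is the smooth codimension-$2$ linear section of $\bP^1\times\bP^1\times\bP^1\times\bP^1$. Since $B$ is irreducible, $U$ is dense, so the generic fibre $S$ is a smooth surface and, being generic, has $p_1\neq p_2$. Restricting $\pi$ to $U$ yields a smooth proper family of surfaces over the connected base $U$.

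It remains to identify $S$ as a primitively polarized K3 surface of genus $13$. The canonical bundle is trivial on every fibre by adjunction: with $E=\sU_{p_1}\oplus\sU_{p_2}$ one has
\[
K_S=(K_{\sN^-}+c_1(\sU_{p_1})+c_1(\sU_{p_2}))|_S=(-2\alpha+\alpha+\alpha)|_S=0,
\]
using $-K_{\sN^-}=2\alpha$ and $c_1(\sU_{p})=\alpha$; this computation is uniform in $b\in U$. The vanishing $h^1(\sO_S)=0$ then follows by deformation invariance, since the Hodge numbers are constant in the smooth proper family $\pi|_U$ over the connected base $U$ and at the diagonal fibre they are those of a K3 surface. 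Hence every fibre over $U$ is a K3 surface. The polarization $h\coloneqq\alpha|_S$ is ample, being the restriction of the ample generator $\alpha$, and its degree
\[
(h^2)=(\alpha^2\cdot c_2(\sU_{p_1})\cdot c_2(\sU_{p_2}))_{\sN^-}=24
\]
is again independent of $b$, because the Chern classes of $\sU_{p_i}$ are those of $\sU$ restricted to a point of $C$ and so do not depend on $p_i$; thus $S$ has genus $13$.

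Finally, primitivity propagates from the special fibre through the local system $R^2\pi_*\bZ$ on $U$. Because $\alpha$ is the restriction of a \emph{fixed} class on the fixed ambient $\Gr(\bC^8,2,\sP)$, the classes $h_b=\alpha|_{S_b}$ form a flat global section of this local system of K3 lattices, and the divisibility of a flat section is locally constant, hence constant on the connected base $U$. At the diagonal fibre $h$ is primitive by the Lefschetz argument already recorded, so $h$ is primitive for every $b\in U$, in particular for the general quadruple. The main point to watch is precisely this deformation step: one must know that $\pi|_U$ is genuinely smooth and proper of relative dimension $2$, so that constancy of the Hodge numbers and of the divisibility of the flat section $h$ may be invoked — and this is exactly what the nonemptiness of $U$, guaranteed by the preceding theorem, provides.
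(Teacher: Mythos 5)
Your proposal is correct and follows essentially the same route as the paper: the paper establishes the coincident-point, equal-parity case as a smooth codimension-$2$ linear section of $\bP^1\times\bP^1\times\bP^1\times\bP^1$ and then obtains the corollary ``by considering the specialization to this case,'' which is precisely the openness-plus-deformation-invariance argument you spell out (smoothness open and nonempty over the irreducible parameter space, triviality of $K_S$ by adjunction, constancy of Hodge numbers, of $(h^2)=24$, and of the divisibility of the flat section $h$ in $R^2\pi_*\bZ$). Your write-up merely makes explicit the details the paper leaves implicit.
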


\begin{corollary}
Let $C$ be a curve of genus $3$, and $p_1$, $p_2\in C$ be two general points.
Take two general sections $s_i \in H^0(\sU_{p_i})$.
Then the common zero locus $S \coloneqq (s_1)_0 \cap (s_2)_0 \subset \sN^-$ is a nonsingular primitively polarized K3 surface of genus $13$.
\end{corollary}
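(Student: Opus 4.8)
The plan is to deduce the general (possibly non-hyperelliptic) case from the hyperelliptic case, which is exactly the content of the preceding corollary, by a deformation argument. First I would organize all the data into a single irreducible family. As $(C,p_1,p_2)$ ranges over the moduli space $\sM_{3,2}$ of $2$-pointed genus $3$ curves, there is a relative moduli space $\sN \to \sM_{3,2}$ with fibre $\sN^-_C$ over $[C]$, carrying a relative universal bundle; restricting the latter to the two marked-point sections and pushing forward yields two rank-$8$ vector bundles on $\sM_{3,2}$ (the fibrewise $H^0(\sU_{p_i})$, of constant dimension $2^g=8$). Their projectivizations $\bP_{*}(H^0(\sU_{p_i}))$ assemble into an iterated projective bundle $\widetilde{\sM}\to\sM_{3,2}$, which is irreducible because $\sM_{3,2}$ is. Taking the common zero locus $(s_1)_0\cap(s_2)_0$ in $\sN^-_C$ fibrewise produces a family $\pi\colon \sS \to \widetilde{\sM}$ whose fibre over $(C,p_1,p_2,[s_1],[s_2])$ is the surface $S$ of the statement.

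Next I would address smoothness and the canonical class. Smoothness of the fibre is an open condition on $\widetilde{\sM}$, and by the preceding corollary it holds on the nonempty locus of hyperelliptic $C$; since $\widetilde{\sM}$ is irreducible, the smooth locus $\widetilde{\sM}^{\mathrm{sm}}$ is open and dense, and in particular meets the non-hyperelliptic locus. (Alternatively, as each $\sU_{p_i}$ is globally generated, a Bertini-type theorem for globally generated bundles already gives that $(s_1)_0\cap(s_2)_0$ is smooth of the expected dimension $6-2-2=2$ for general $s_1$, $s_2$.) On any such smooth fibre the canonical bundle is computed by adjunction: using $-K_{\sN^-}=2\alpha$ and $c_1(\sU_{p_i})=\alpha$ (the term $1\otimes df$ dies under restriction to $\sN^-\times\{p_i\}$), two applications of adjunction give
\[
K_S=\bigl(K_{\sN^-}+c_1(\sU_{p_1})+c_1(\sU_{p_2})\bigr)\big|_S=(-2\alpha+\alpha+\alpha)\big|_S=0,
\]
so $S$ is a smooth surface with genuinely trivial canonical bundle.

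Finally I would transfer the remaining assertions by deformation invariance along $\pi\colon \sS\to\widetilde{\sM}^{\mathrm{sm}}$, which is smooth and proper over the connected base $\widetilde{\sM}^{\mathrm{sm}}$. By Ehresmann's theorem its fibres are all diffeomorphic; since the hyperelliptic fibre is a K3 surface, every fibre has $b_1=0$, which together with $K_S=0$ (ruling out the Enriques, bielliptic, and abelian cases) forces $S$ to be a K3 surface. The genus is also a deformation invariant: $(h^2)=(\alpha|_S)^2$ is a locally constant intersection number, hence equals its value $24$ on the hyperelliptic fibre, so $(S,h)$ has genus $13$. Primitivity transfers as well: the ample generator $\alpha$ of $\Pic(\sN^-_C)\simeq\bZ$ is a monodromy-invariant (flat) section of the local system attached to the family $\sN\to\sM_3$, so its restriction $h=\alpha|_S$ is a flat section of $R^2\pi_*\bZ$; divisibility of an integral class is then constant along the family, and $h$ is primitive because it is primitive on the hyperelliptic fibre.

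The main obstacle is the \emph{global, relative} organization of the construction: the ambient variety $\sN^-_C$ itself varies with $C$, so one must genuinely work with the relative moduli space $\sN\to\sM_3$ and its universal bundle, establish the constancy of $h^0(\sU_{p_i})$ that makes $\widetilde{\sM}$ an honest projective bundle, and verify that the hyperelliptic members lie in the \emph{same} connected smooth family $\widetilde{\sM}^{\mathrm{sm}}$ as the general ones. Once irreducibility of $\widetilde{\sM}$ and the presence of a hyperelliptic point in the smooth locus are in hand, every remaining step is either an open-density argument or an instance of deformation invariance, and the proof concludes.
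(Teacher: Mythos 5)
Your proposal is correct and follows essentially the same route as the paper: the paper establishes the explicit hyperelliptic example (a codimension-$2$ linear section of $\bP^1\times\bP^1\times\bP^1\times\bP^1$) and then obtains this corollary ``by considering the specialization to this case,'' which is exactly the irreducible-family/openness-of-smoothness/deformation-invariance argument you spell out. Your write-up merely makes explicit the details (adjunction, Ehresmann, flatness of $h$ in $R^2\pi_*\bZ$ for primitivity) that the paper leaves implicit.
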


\section{The moduli spaces of rank two vector bundles on curves}\label{section:N-}

In this section, $C$ is a curve of genus $g \geq3$ and $\xi$ is a line bundle of odd degree on $C$.

\subsection{The moduli space $\sN^-$, its cohomology ring and intersection numbers}
Similar to the case of genus $3$, $\sN^- = \SU_C(2,\xi)$ is a Fano variety of dimension $3g-3$ with Picard group $\Pic(\sN^-) \simeq \bZ$.
The class $\alpha \in \Pic(X)$ denotes the ample generator of the Picard group of $X$.
Then $-K_{\sN^-} = 2 \alpha$. 
In what follows, we normalize the universal bundle on $\sN^-\times C$ as
\[
c_1(\sU) = \alpha \otimes 1 + 1 \otimes df,
\]
where $d$ is the degree of $\xi$ and $f \in H^2(C)$ is the fundamental class of $C$ as above.

It is known that there are two classes $\beta \in H^4(\sN^-)$ and $\gamma \in H^6(\sN^-)$, called Newstead classes, such that
\[
c_2(\sU) = \frac{d+1}{2}\alpha\otimes f + \sqrt{\gamma \otimes f} + \frac{\alpha^2-\beta}{4}\otimes 1.
\]
For $p \in C$, we have $c_2(\sU_p) = \frac{\alpha^2-\beta}{4}$.
Hence the Chern roots of $\sU_p$ are $\frac{\alpha\pm \sqrt{\beta}}{2}$.
In particular, we have
\begin{align*}
\ch(\sU_p)
 &= e^{(\alpha+\sqrt{\beta})/2}+e^{ (\alpha-\sqrt{\beta})/2}\\
 &= 2e^{\alpha/2} \cosh(\sqrt\beta/2)
\end{align*}
and
\begin{align*}
\td(\sU_p)
 &=  \frac{e^\alpha(\alpha^2-\beta)}{4(e^{\alpha/2}-e^{-\sqrt\beta/2})(e^{\alpha/2}-e^{\sqrt\beta/2})}.
\end{align*}
We can also write the Todd class of $\sN^-$ by using these classes $\alpha$ and $\beta$:
\[
\td(\sN^-) =e^\alpha \left(\frac{\sqrt{\beta}/2}{\sinh(\sqrt{\beta}/2)}\right)^{2g-2}.
\]

For $m$, $n$ and $p$ satisfying $m+2n+3p=3g-3$, intersection numbers among $\alpha$, $\beta$ and $\gamma$ are given as follows:
 \[
 (\alpha^m \beta^n \gamma ^p) =\frac{(-1)^n 2^{2g-2-p}g!m!  }{(g-p)!} b_{g-1-n-p}.
 \]
Here $b_i$ is the $2i$-th coefficient of the Laurent series expansion of $\dfrac{x}{\sin x}$:
 \[
 \frac{x}{\sin x} = \sum b_k x^{2k}.
 \]
For these facts, see, e.g., \cite{Muk03}.

For a power series $F$ and a rational number $k \in \bQ$, we have the following (\cite[p.\ 475]{Muk03}):
\[
\kappa_{3g-3}[e^{k\alpha}F(-\beta)]
=4^{g-1}k^{g} \Res _{x=0} \left[ \frac{F(x^2) \, dx}{x^{2g-2} \sin (kx) } \right].
\]
Similar arguments to the proof of the above formula yield the following:
\begin{proposition}[Residue expression for intersection numbers]
 For a power series $F$ in $\beta$ with coefficients in $\bQ$ and a rational number $k \in \bQ$, we have
 \[
\kappa_{3g-3}[e^{k\alpha}F(-\beta)\gamma]
=2^{2g-3}  k^{g-1} g \Res _{x=0} \left[ \frac{F(x^2) \, dx}{x^{2g-4} \sin (kx) } \right].
\]
Also,
\[
\kappa_{3g-3}[\alpha e^{k\alpha}F(-\beta)]
=4^{g-1}k^{g-1} \left(g\Res _{x=0} \left[ \frac{F(x^2) \, dx}{x^{2g-2} \sin (kx) } \right] - \Res _{x=0} \left[ \frac{k\cos(kx)F(x^2) \, dx}{x^{2g-3}\sin^2(kx) } \right]\right).
\]
\end{proposition}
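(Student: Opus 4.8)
The plan is to treat both identities in exactly the same spirit as the cited formula for $\kappa_{3g-3}[e^{k\alpha}F(-\beta)]$: expand the class into monomials $\alpha^m\beta^n\gamma^p$, insert the explicit intersection number
\[
(\alpha^m\beta^n\gamma^p)=\frac{(-1)^n 2^{2g-2-p}g!\,m!}{(g-p)!}\,b_{g-1-n-p},
\]
and then recognize the resulting finite sum as a residue at $x=0$ through the generating series $\frac{x}{\sin x}=\sum_k b_k x^{2k}$, equivalently $\frac{1}{\sin(kx)}=\sum_{j}b_j k^{2j-1}x^{2j-1}$. Throughout I write $F(t)=\sum_n c_n t^n$, so that $F(-\beta)=\sum_n(-1)^n c_n\beta^n$ and $F(x^2)=\sum_n c_n x^{2n}$.

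For the first identity I would fix $p=1$ (the single factor $\gamma$) and expand $e^{k\alpha}=\sum_m \frac{k^m}{m!}\alpha^m$. Applying $\kappa_{3g-3}$ keeps only the terms with $m+2n+3=3g-3$, and upon substituting the intersection number with $p=1$ the two sign factors $(-1)^n$ cancel, the factors $m!$ cancel, and $g!/(g-1)!=g$ survives, leaving
\[
\kappa_{3g-3}[e^{k\alpha}F(-\beta)\gamma]=2^{2g-3}g\sum_{m+2n=3g-6} k^m c_n b_{g-2-n}.
\]
On the residue side, $\Res_{x=0}\big[F(x^2)/(x^{2g-4}\sin(kx))\,dx\big]$ is the coefficient of $x^{2g-5}$ in $F(x^2)/\sin(kx)$, which forces $j=g-2-n$ and yields $\sum_n c_n b_{g-2-n}k^{2g-5-2n}$; multiplying by $2^{2g-3}g\,k^{g-1}$ reproduces the sum above, since $k^{(g-1)+(2g-5-2n)}=k^{3g-6-2n}=k^m$.

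For the second identity the quickest route is to observe that $\alpha e^{k\alpha}=\frac{d}{dk}e^{k\alpha}$ and that $\kappa_{3g-3}$ is $\bC$-linear and independent of $k$, so that
\[
\kappa_{3g-3}[\alpha e^{k\alpha}F(-\beta)]=\frac{d}{dk}\,\kappa_{3g-3}[e^{k\alpha}F(-\beta)]=\frac{d}{dk}\Big(4^{g-1}k^{g}R(k)\Big),
\]
where $R(k)=\Res_{x=0}\big[F(x^2)/(x^{2g-2}\sin(kx))\,dx\big]$ is the residue appearing in the cited formula. By the product rule, using $\frac{d}{dk}\frac{1}{\sin(kx)}=-\frac{x\cos(kx)}{\sin^2(kx)}$ and the fact that $\frac{d}{dk}$ commutes with $\Res_{x=0}$, one obtains $\frac{d}{dk}(k^gR(k))=gk^{g-1}R(k)-k^g\Res_{x=0}\big[\cos(kx)F(x^2)/(x^{2g-3}\sin^2(kx))\,dx\big]$; factoring out $4^{g-1}k^{g-1}$ then gives precisely the stated expression.

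The computations are routine, and the genuine points requiring care are the bookkeeping of the index shifts in the residue extraction (matching $j=g-2-n$ and verifying the resulting power of $k$) and the justification that $\frac{d}{dk}$ may be passed through both the formal Laurent expansion in $x$ and the residue operator. The latter is harmless, since everything in sight is a formal Laurent series in $x$ whose coefficients are polynomials in $k$, so term-by-term differentiation is legitimate; the only real \emph{idea}, beyond imitating the proof of the cited formula, is recognizing the $\frac{d}{dk}$-trick that produces the $\alpha$-insertion.
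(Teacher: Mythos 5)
Your proof is correct. For the first identity you do exactly what the paper does: expand into monomials, insert the intersection-number formula with $p=1$ (where the $(-1)^n$'s and the $m!$'s cancel and $g!/(g-1)!=g$ survives), and repackage the sum $\sum c_n b_{g-2-n}k^{3g-6-2n}$ as the stated residue; your index bookkeeping ($j=g-2-n$, exponent $m=3g-6-2n$) checks out. For the second identity your route differs from the paper's. The paper computes $\kappa_{3g-3}[\alpha e^{k\alpha}(-\beta)^n]$ directly, splits the coefficient $(3g-3-2n)b_{g-1-n}$ as $(g-1)b_{g-1-n}+(2g-2-2n)b_{g-1-n}$, and absorbs the second piece into the auxiliary generating function $G(x)=\bigl(\tfrac{x}{\sin x}\bigr)'=\tfrac{1}{\sin x}-\tfrac{x\cos x}{\sin^2 x}$ before rearranging to the stated form. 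You instead observe $\alpha e^{k\alpha}=\tfrac{d}{dk}e^{k\alpha}$ and differentiate the already-established formula $4^{g-1}k^{g}R(k)$ in $k$, with $\tfrac{d}{dk}\tfrac{1}{\sin(kx)}=-\tfrac{x\cos(kx)}{\sin^2(kx)}$ producing the second residue via the product rule. The two arguments are cousins (the paper differentiates the generating series in $x$, you differentiate the identity in $k$), but yours is cleaner: it lands directly on the stated form $gR-\Res[k\cos(kx)\cdots]$ without the intermediate rearrangement, at the small cost of having to justify passing $\tfrac{d}{dk}$ through $\kappa_{3g-3}$ and through the residue --- which you do correctly, since both sides are polynomials in $k$ agreeing for all rational $k$ and the residue acts on formal Laurent series with polynomial coefficients in $k$.
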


\begin{proof}
Set $G(x) = \left(\dfrac{x}{\sin x}\right)' = \dfrac{1}{\sin x} - \dfrac{x\cos x}{\sin^2 x}$.
Then $G(x) = \sum 2k b_k x^{2k-1}$. 
Thus,
\begin{align*}
 \kappa_{3g-3}[\alpha e^{k\alpha}(-\beta)^n]
&=4^{g-1}k^{3g-2n-4} (3g-3-2n)b_{g-1-n}\\
&=4^{g-1}k^{3g-2n-4}( (g-1)b_{g-1-n} + (2g-2-2n)b_{g-1-n})\\
&=4^{g-1}k^{g-1} \left((g-1)\Res _{x=0} \left[ \frac{ x^{2n} \, dx}{x^{2g-2} \sin (kx) } \right] + \Res _{x=0} \left[ \frac{G(kx)x^{2n} \, dx}{x^{2g-2} } \right]\right).
\end{align*}
Therefore we have
\begin{align*}
 \kappa_{3g-3}[\alpha e^{k\alpha}F(-\beta)]
&=4^{g-1}k^{g-1} \left((g-1)\Res _{x=0} \left[ \frac{F(x^2) \, dx}{x^{2g-2} \sin (kx) } \right] + \Res _{x=0} \left[ \frac{G(kx)F(x^2) \, dx}{x^{2g-2} } \right]\right)\\
&=4^{g-1}k^{g-1} \left(g\Res _{x=0} \left[ \frac{F(x^2) \, dx}{x^{2g-2} \sin (kx) } \right] - \Res _{x=0} \left[ \frac{k\cos(kx)F(x^2) \, dx}{x^{2g-3}\sin^2(kx) } \right]\right).
\end{align*}

Similarly,
\begin{align*}
 \kappa_{3g-3}[e^{k\alpha}(-\beta)^n \gamma]
&=2^{2g-3} k^{g-1} g \Res _{x=0} \left[ \frac{ x^{2n} \, dx}{x^{2g-4} \sin (kx) } \right],
\end{align*}
and hence
 \[
\kappa_{3g-3}[e^{k\alpha}F(-\beta)\gamma]
=2^{2g-3} k^{g-1} g\Res _{x=0} \left[ \frac{F(x^2) \, dx}{x^{2g-4} \sin (kx) } \right].
\]
\end{proof}

\begin{corollary}
We have
\[
\kappa_{3g-3}\left[e^{k\alpha}F(-\beta)\left(-\frac{\alpha}{2} -\frac{\gamma}{ \beta}\right)\right]
=2^{2g-3}k^{g} \Res _{x=0} \left[ \frac{\cos(kx)F(x^2) \, dx}{x^{2g-3}\sin^2(kx) } \right]
\]
and, for $d \in \bZ$,
\[
\kappa_{3g-3}\left[e^{k\alpha}F(-\beta)\left(\frac{\gamma}{ \beta}+d\right)\right]
=2^{2g-3}k^{g-1}(2dk-g) \Res _{x=0} \left[ \frac{F(x^2) \, dx}{x^{2g-2}\sin(kx) } \right].
\]
\end{corollary}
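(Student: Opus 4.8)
The plan is to observe that \emph{both} displayed identities are nothing more than fixed $\bQ$-linear combinations of three residue formulas already in hand: Mukai's formula $\kappa_{3g-3}[e^{k\alpha}F(-\beta)] = 4^{g-1}k^{g}\Res_{x=0}[F(x^2)\,dx/(x^{2g-2}\sin(kx))]$, together with the two formulas of the preceding Proposition for $\kappa_{3g-3}[e^{k\alpha}F(-\beta)\gamma]$ and $\kappa_{3g-3}[\alpha e^{k\alpha}F(-\beta)]$. Expanding the bracketed class by linearity, the first identity is $-\tfrac12\kappa_{3g-3}[\alpha e^{k\alpha}F(-\beta)]-\kappa_{3g-3}[e^{k\alpha}F(-\beta)\gamma/\beta]$ and the second is $\kappa_{3g-3}[e^{k\alpha}F(-\beta)\gamma/\beta]+d\,\kappa_{3g-3}[e^{k\alpha}F(-\beta)]$. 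The only structurally new ingredient, appearing in both, is the term carrying $\gamma/\beta$; everything else is a direct substitution.

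The key step is therefore to evaluate $\kappa_{3g-3}[e^{k\alpha}F(-\beta)\gamma/\beta]$. I would handle $\gamma/\beta$ by the substitution $F\rightsquigarrow -F/t$ inside the first Proposition formula: under the correspondence $(-\beta)^n\leftrightarrow x^{2n}$ that underlies that formula, dividing the class by $\beta$ amounts to replacing $F(x^2)$ by $F(x^2)/(-x^2)$. Writing $G(-\beta)\coloneqq F(-\beta)/\beta$, so that $G(x^2)=-F(x^2)/x^2$, and applying $\kappa_{3g-3}[e^{k\alpha}G(-\beta)\gamma] = 2^{2g-3}k^{g-1}g\Res_{x=0}[G(x^2)\,dx/(x^{2g-4}\sin(kx))]$ gives
\[
\kappa_{3g-3}\!\left[e^{k\alpha}F(-\beta)\frac{\gamma}{\beta}\right]
= -2^{2g-3}k^{g-1}g \Res_{x=0}\!\left[\frac{F(x^2)\,dx}{x^{2g-2}\sin(kx)}\right].
\]
The legitimacy of this manipulation is the one point requiring care: $\beta^{-1}$ is not an honest cohomology class, so the left-hand side must be read as the value obtained by extending the (monomial-by-monomial) residue identity of the Proposition $\bQ$-linearly to the Laurent series $-F/t$, which is permissible since $\Res_{x=0}$ is $\bQ$-linear and defined on all Laurent series; whenever the full bracketed combination is a genuine polynomial class (as in the Todd-class computations where these expressions arise), this formal value coincides with the true intersection pairing.

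With this in place the two identities follow by arithmetic. For the first, adding $-\tfrac12$ times the second Proposition formula and using $4^{g-1}=2\cdot 2^{2g-3}$ turns its $4^{g-1}$ into $-2^{2g-3}$; the two $g\,\Res_{x=0}[F(x^2)\,dx/(x^{2g-2}\sin(kx))]$ contributions — one from $-\tfrac12\kappa_{3g-3}[\alpha e^{k\alpha}F(-\beta)]$ and one from $-\kappa_{3g-3}[e^{k\alpha}F(-\beta)\gamma/\beta]$ — cancel exactly, leaving only $2^{2g-3}k^{g}\Res_{x=0}[\cos(kx)F(x^2)\,dx/(x^{2g-3}\sin^2(kx))]$, as claimed. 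For the second, adding $d$ times Mukai's formula and rewriting $d\,4^{g-1}k^{g}=2^{2g-3}k^{g-1}\cdot 2dk$ combines with the $-2^{2g-3}k^{g-1}g$ from the $\gamma/\beta$ term to give the factor $2dk-g$, yielding the stated expression. The main obstacle is thus entirely the formal interpretation of $\gamma/\beta$ above; once that is fixed, the remainder is bookkeeping of the powers of $2$ and the cancellation of the $\sin$-residues.
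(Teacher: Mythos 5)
Your proposal is correct and is exactly the argument the paper intends: the corollary is stated without proof precisely because it follows by $\bQ$-linear combination of Mukai's formula and the two identities of the preceding Proposition, with the $\gamma/\beta$ term handled by the formal substitution $F(x^2)\rightsquigarrow -F(x^2)/x^2$ in the $\gamma$-formula (which is legitimate monomial-by-monomial, and is how the paper itself uses these expressions, e.g.\ in computing $(\pr_1)_*\ch(\sU)$ where the $\gamma/\beta$ terms recombine into honest classes). Your bookkeeping of the powers of $2$ and the cancellation of the $g\Res[F(x^2)\,dx/(x^{2g-2}\sin(kx))]$ contributions checks out.
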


\subsection{The case of hyperelliptic curves}
Assume for a moment that $C$ is a hyperelliptic curve of genus $g$, and let $\sP =\langle q_1,q_2 \rangle \subset S^2\bC^{2g+2}$ be the pencil of quadric forms associated to $C$.
Then $\sN^- \simeq \Gr(\bC^{2g+2},g-1,\sP) \coloneqq \Gr(\bC^{2g+2},g-1,q_1) \cap \Gr(\bC^{2g+2},g-1,q_2) \subset \Gr(\bC^{2g+2},g-1)$ \cite{DR76}.
In \cite[\S 2]{ST95}, the Chern classes of the restricted quotient bundle $\sQ|_{\sN^-}$ is described in terms of $\alpha$, $\beta$ and $\gamma$.
As a special case, we have:
\begin{proposition}[Chern classes of the universal bundle]
$c_1 (\sQ|_{\sN^-}) = \alpha$ and $c_2(\sQ|_{\sN^-}) =\frac{1}{2}\alpha^2 + \frac{1}{2}\beta$. 
\end{proposition}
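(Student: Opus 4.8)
The plan is to pin down $c_1$ and $c_2$ purely formally, from the realization of $\sN^-$ as a regular zero locus in the ambient Grassmannian together with the Todd class of $\sN^-$ recorded above, avoiding any Schubert calculus. I would write $\Gr\coloneqq \Gr(\bC^{2g+2},g-1)$ with tautological sequence $0\to\sK\to\bC^{2g+2}\otimes\sO\to\sQ\to 0$, so that $\sQ$ has rank $g-1$. Restricting a quadric form $q_i\in S^2\bC^{2g+2}$ to the universal subbundle $\sQ^\vee\subset\bC^{2g+2}\otimes\sO$ gives a section $s_{q_i}\in H^0(\Gr,S^2\sQ)$ cutting out $\Gr(\bC^{2g+2},g-1,q_i)$, so by the Desale--Ramanan description $\sN^-$ is the common zero locus of $(s_{q_1},s_{q_2})\in H^0(S^2\sQ\oplus S^2\sQ)$. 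Since $2\binom{g}{2}=g(g-1)=\dim\Gr-\dim\sN^-$ and $\sN^-$ is smooth of this expected dimension, the section is regular and the normal bundle is $N=(S^2\sQ\oplus S^2\sQ)|_{\sN^-}$.

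From the adjunction relation $[T_{\sN^-}]=[\sK^\vee\otimes\sQ]|_{\sN^-}-[N]$ in $K$-theory one gets
\[
c(T_{\sN^-})=\frac{c(\sK^\vee\otimes\sQ)}{c(S^2\sQ)^{2}}\bigg|_{\sN^-},
\]
and, using $c(\sK)=c(\sQ)^{-1}$, every term on the right is a universal polynomial in $c_1\coloneqq c_1(\sQ|_{\sN^-})$ and $c_2\coloneqq c_2(\sQ|_{\sN^-})$. On the other side, expanding the formula for $\td(\sN^-)$ gives $\td(\sN^-)=1+\alpha+\bigl(\tfrac12\alpha^2-\tfrac{g-1}{12}\beta\bigr)+\cdots$, which together with $-K_{\sN^-}=2\alpha$ yields $c_1(T_{\sN^-})=2\alpha$ and $c_2(T_{\sN^-})=2\alpha^2-(g-1)\beta$.

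It then remains to compare the two sides in degrees one and two. In degree one the right-hand side evaluates to $(2g+2)c_1-2g\,c_1=2c_1$, so $c_1=\alpha$. In degree two a routine expansion of $c(\sK^\vee\otimes\sQ)$ and $c(S^2\sQ)^2$ through $H^4$ gives the relation
\[
c_2(T_{\sN^-})=(g+1)\,c_1^{2}-2(g-1)\,c_2.
\]
Substituting $c_1=\alpha$ and $c_2(T_{\sN^-})=2\alpha^2-(g-1)\beta$ and solving this single linear equation produces $c_2=\tfrac12\alpha^2+\tfrac12\beta$, as claimed.

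The only place requiring care is the identification in the first paragraph: the sub/quotient and isotropy conventions that make $s_{q_i}$ a section of $S^2\sQ$, and the regularity that identifies $N$ with $(S^2\sQ)^{\oplus2}$. Once these are in place, the rest is formal Chern-class bookkeeping driven entirely by the known Todd class; in particular no intersection numbers on $\sN^-$ are needed. Alternatively, one may simply quote the complete Chern-class computation of $\sQ|_{\sN^-}$ in \cite[\S2]{ST95} and extract the degree-$\le 2$ part.
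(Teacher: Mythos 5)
Your argument is correct, but it is genuinely different from what the paper does: the paper offers no proof at all for this proposition and simply extracts it as a special case of the Chern class computation of $\sQ|_{\sN^-}$ in \cite[\S 2]{ST95}. Your route instead derives both classes from two inputs the paper already uses elsewhere — the Desale--Ramanan realization of $\sN^-$ as the zero locus of the regular section $(s_{q_1},s_{q_2})$ of $S^2\sQ\oplus S^2\sQ$ (this is exactly the Koszul complex \eqref{sequence:Gr_N}) and the recorded Todd class $\td(\sN^-)=e^\alpha\bigl(\tfrac{\sqrt\beta/2}{\sinh(\sqrt\beta/2)}\bigr)^{2g-2}$, which gives $c_1(T_{\sN^-})=2\alpha$ and $c_2(T_{\sN^-})=2\alpha^2-(g-1)\beta$. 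I checked your two key identities: with $\sQ$ of rank $g-1$ on $\Gr(\bC^{2g+2},g-1)$ one indeed gets $c_1(T_{\Gr})-c_1(N)=(2g+2)c_1-2gc_1=2c_1$ and, after expanding $c_2(\sK^\vee\otimes\sQ)=(2g^2+4g-1)c_1^2+4c_2$ and $c_2(N)=\bigl((g+1)(g-2)+g^2\bigr)c_1^2+2(g+1)c_2$, the relation $c_2(T_{\sN^-})=(g+1)c_1^2-2(g-1)c_2$; solving with $c_1=\alpha$ returns $c_2=\tfrac12(\alpha^2+\beta)$, and the division by $2(g-1)$ is harmless for $g\geq 3$. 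What your approach buys is a self-contained verification that does not require unpacking the Schubert-calculus bookkeeping of \cite{ST95}, at the cost of taking the Todd class formula for $\sN^-$ as input; the caveat you flag (isotropy/quotient conventions and regularity of the section) is already implicit in the paper's use of \eqref{sequence:Gr_N}, so nothing further is needed.
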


\subsection{Hecke correspondence and associated bundles on $C$}\label{sect:hecke}
Let $p \in C$ be a point on $C$.
The \emph{Hecke correspondence} is, by definition, the following diagram:
\[
\xymatrix{
& \bP(\sU_p) \ar[ld]_-{\pi} \ar[rd]^-{\sigma}& \\
\sN^- & & \sN^+.
}
\]
Here $\sN^+$ is the moduli space of semistable vector bundles of rank two with fixed even determinant.
We briefly recall the construction of this diagram (see, e.g., \cite{Ses82}):
In the above diagram, $\pi \colon \bP(\sU_p) \to \sN^-$ is the natural projection from the projective bundle $\bP(\sU_p)$.
Take a point $x$ in $\bP(\sU_p)$.
This point corresponds to a surjection $\sU_p \otimes k(\pi(x)) \to \bC$.
This, in turn, defines the following exact sequence
\[
0 \to \sW \to \sU|_{\{\pi(x)\}\times C} \to \bC \to 0,
\]
where the last term is a skyscraper sheaf supported on $p$.
This kernel $\sW$ is a semistable vector bundle of rank two on $C$ with determinant $\xi(-p)$.
This defines the map $\sigma$.

By \cite[Corollary~5.16 and Lemma~7.4]{NR76},  a general $\sigma$-fiber defines conic in $\sN^-$, and dimensions of the $\sigma$-fibers are at most $g-1$.
In particular, $\sU_p$ is $g-1$-ample in the sense of \cite{Som78}.

Recall from \cite{Ray82,NR87,Bea88} that $\Pic(\sN^+) \simeq \bZ$ and its ample generator defines a map from $\sN^+$ to the dual projective space $\bP(H^0(2\Theta)^\vee)$ of the second order theta functions, and thus
\[
H^0(\sU_p) \simeq H^0(2\Theta)^\vee.
\]
Therefore the bundle $(\pr_{2})_*\sU$ on $C$ is a vector bundle of rank $2^g$.
In fact, we have the following:
\begin{proposition}[Degree of $(\pr_{2})_*\sU$]\label{proposition:VB_on_C}
The bundle $(\pr_{2})_*\sU$ on $C$ is a vector bundle of rank $2^g$ with degree $2^{g-1}(d-g+1)$.
\end{proposition}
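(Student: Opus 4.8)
My plan is to compute the degree by Grothendieck--Riemann--Roch for the projection $\pr_2\colon \sN^-\times C\to C$, after reducing it to a single top intersection number on $\sN^-$. Since $h^0(\sU_p)=2^g$ is independent of $p$, the sheaf $(\pr_2)_*\sU$ is locally free of rank $2^g$; I would begin by recording that the higher direct images $R^i(\pr_2)_*\sU$ vanish for $i>0$, equivalently $H^i(\sN^-,\sU_p)=0$ for $i>0$, so that $(\pr_2)_!\sU=(\pr_2)_*\sU$ in $K$-theory. Granting this, relative GRR gives $\ch((\pr_2)_*\sU)=(\pr_2)_*(\ch(\sU)\cdot\pr_1^*\td(\sN^-))$, and taking degrees over $C$,
\[
\deg (\pr_2)_*\sU=\int_{\sN^-\times C}\ch(\sU)\cdot\pr_1^*\td(\sN^-).
\]

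The next step is to isolate the relevant K\"unneth component. Since $\dim_{\bR}\sN^-=6g-6$ and $\dim_\bR C=2$, only the summand $H^{6g-6}(\sN^-)\otimes H^2(C)$ contributes, and as $\pr_1^*\td(\sN^-)$ carries no class from $C$, this amounts to extracting the coefficient of $f$ in $\ch(\sU)$ and integrating it against $\td(\sN^-)$. Writing $\ch(\sU)=2e^{c_1/2}\cosh\big(\tfrac12\sqrt{c_1^2-4c_2}\big)$ and inserting the stated Chern classes, one finds $c_1^2-4c_2=\beta-2\alpha f-4\psi$ with $\psi:=\sqrt{\gamma\otimes f}$, subject to $f^2=0$, $\psi f=0$, $\psi^2=\gamma\otimes f$. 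A short Taylor expansion in $f$ then gives, with $G(t):=\cosh(\tfrac12\sqrt t)$,
\[
[\ch(\sU)]_f=2e^{\alpha/2}\Big(\tfrac d2\,G(\beta)-2\alpha\,G'(\beta)+8\gamma\,G''(\beta)\Big),
\]
so that $\deg(\pr_2)_*\sU=\kappa_{3g-3}\big[e^{3\alpha/2}\cdot 2\big(\tfrac d2 G(\beta)-2\alpha G'(\beta)+8\gamma G''(\beta)\big)H(\beta)\big]$, where $H(\beta)=\big(\tfrac{\sqrt\beta/2}{\sinh(\sqrt\beta/2)}\big)^{2g-2}$ is the factor occurring in $\td(\sN^-)$.

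I would then feed the three summands into the residue expressions for intersection numbers (the three cases $e^{k\alpha}F(-\beta)$, $\alpha e^{k\alpha}F(-\beta)$, $e^{k\alpha}F(-\beta)\gamma$ cover them exactly), with $k=\tfrac32$ coming from $e^{\alpha/2}\cdot e^{\alpha}$, after the substitution $\beta\mapsto-x^2$ that turns $G,G',G'',H$ into elementary trigonometric expressions in $x$. Two simplifications make this clean: the two residues of weight $g$ coming from the $\alpha G'(\beta)$ term and from the $\gamma G''(\beta)$ term cancel identically, and the two remaining residues are tied together by the vanishing of the residue of an exact form, $\frac{d}{dx}\big[\sin^{-(2g-3)}(x/2)\,\csc(\tfrac32 x)\big]$, which yields $\tfrac32 R_2=-\tfrac{2g-3}{2}R_3$ in the notation $R_2=\Res_{x=0}\frac{\cos(\frac32 x)\,dx}{\sin^{2g-3}(x/2)\sin^2(\frac32 x)}$ and $R_3=\Res_{x=0}\frac{\cos(x/2)\,dx}{\sin^{2g-2}(x/2)\sin(\frac32 x)}$. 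After these cancellations the whole answer is governed by the single quantity $(3/2)^g R_3$: the $d$-linear part equals $(3/2)^g R_3\cdot d$ and the constant part equals $-(g-1)(3/2)^g R_3$.

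Thus the computation collapses to the one evaluation $R_3=2^{2g-1}/3^g$, i.e. $(3/2)^g R_3=2^{g-1}$, which is exactly equivalent to $\chi(\sN^-,\sU_p)=2^g$; combined with the given $h^0(\sU_p)=2^g$ it also confirms the rank and is consistent with the higher-cohomology vanishing. I expect the main obstacle to be precisely this input: establishing $H^{>0}(\sN^-,\sU_p)=0$ (needed to pass from $(\pr_2)_!$ to $(\pr_2)_*$) together with the evaluation of $R_3$ (equivalently $\chi(\sU_p)=2^g$). Everything else is formal --- the K\"unneth reduction, the bookkeeping of the Newstead class with $\psi^2=\gamma\otimes f$, and the two residue cancellations. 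Once $(3/2)^g R_3=2^{g-1}$ is in hand, $\deg(\pr_2)_*\sU=2^{g-1}d-2^{g-1}(g-1)=2^{g-1}(d-g+1)$, as claimed.
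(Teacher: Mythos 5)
Your proposal is correct and follows essentially the same route as the paper: Grothendieck--Riemann--Roch along $\pr_2$, extraction of the $f$-component of $\ch(\sU)$ via the Newstead classes (your Taylor expansion of $2e^{c_1/2}\cosh(\tfrac12\sqrt{c_1^2-4c_2})$ reproduces exactly the paper's formula for $(\pr_1)_*\ch(\sU)$ obtained from Zagier's lemma), and evaluation by the residue formulas for intersection numbers, where your exact-form identity relating $R_2$ and $R_3$ is just a repackaging of the paper's direct residue evaluations. The one input you defer, $H^{i}(\sN^-,\sU_p)=0$ for $i>0$, is handled by the paper in a single line: $\sU_p$ is globally generated, hence nef, so $\bP(\sU_p)$ is Fano and the Kodaira vanishing theorem applied to $H^i(\sU_p)=H^i(\bP(\sU_p),\sO_{\bP(\sU_p)}(1))$ gives the vanishing.
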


\begin{proof}
Since $\sU_p$ is nef, $\bP (\sU_p)$ is a Fano manifold.
 The Kodaira vanishing theorem implies $h^i(\sU_p) =0$ for $i>0$.
 Hence the rank of  $(\pr_{2})_*\sU$ and its degree are calculated via the Grothendieck-Riemann-Roch theorem:
 \[
 \ch((\pr_2)_*\sU) = (\pr_2)_*( \ch(\sU) \cdot \td (T_{\pr_2})).
 \]

Thus we have
\begin{align*}
 \ch_0((\pr_2)_*\sU) &= \kappa_{3g-3}[\ch(\sU_p) \cdot \td (T_{\sN^-})] \\
 &= \kappa_{3g-3}\left[ 2e^{3\alpha/2} \cosh(\sqrt\beta/2) \left(\frac{\sqrt{\beta}/2}{\sinh(\sqrt{\beta}/2)}\right)^{2g-2}\right]\\
 &= 2\left(\frac{3}{2}\right)^{g}\Res_{x=0}\left[ \frac{\cos(x/2)\, dx}{\sin(3x/2)\sin^{2g-2}(x/2)}  \right]\\
 &=2^g.
\end{align*}
For the last residue, see Lemma~\ref{lemma:residue} below.

Similarly, we have
\[
\ch_1((\pr_2)_*\sU) = \kappa_{3g-3}[(\pr_1)_*\ch(\sU) \cdot \td (T_{\sN^-})].
\]
For $\ch(\sU)$, we use \cite[Lemma 2]{Zag95}:
Set $\delta \coloneqq -\frac{1}{2}\alpha\otimes f - \sqrt{\gamma \otimes  f}$.
Then, we have $\delta^2 = \gamma \otimes f $ , $\delta ^3 = 0$ and
\[
c(\sU) =\left(1+\frac{\alpha \otimes 1+1 \otimes d f+\sqrt\beta \otimes 1}{2}\right)\left(1+\frac{\alpha \otimes 1+ 1 \otimes df-\sqrt\beta \otimes 1}{2} \right)-\delta.
\]

Thus, 
\[
\ch(\sU) = e^{x_1}+e^{x_2} + \frac{e^{x_1}-e^{x_2}}{x_1-x_2}\delta - \left( \frac{e^{x_1}-e^{x_2}}{(x_1-x_2)^3} - \frac{e^{x_1}+e^{x_2}}{2(x_1-x_2)^2}\right) \delta^2,
\]
where $x_1 = (\alpha \otimes 1+1 \otimes d f+\sqrt\beta \otimes 1)/2$ and $x_2 = (\alpha \otimes 1+1 \otimes d f-\sqrt\beta \otimes 1)/2$.

Therefore, we have
\begin{align*}
&(\pr_1)_*\ch(\sU)\\
&= e^{\alpha/2}\left( -\alpha \frac{\sinh (\sqrt \beta /2)}{\sqrt \beta}
-2  \frac{\sinh (\sqrt \beta /2)}{\beta \sqrt \beta} \gamma +\frac{\cosh(\sqrt \beta /2)}{\beta}\gamma
+d \cosh(\sqrt\beta/2) \right)\\
&= e^{\alpha/2}\left(\frac{\sinh (\sqrt \beta /2)}{ \sqrt \beta/2}\left(-\frac{\alpha}{2}-\frac{\gamma}{\beta}\right)
+\cosh(\sqrt\beta/2)\left(\frac{\gamma}{\beta}+d \right) \right).
\end{align*}

Hence,
\begin{align*}
&\ch_1((\pr_2)_*\sU)\\
&= \kappa_{3g-3}[(\pr_1)_*\ch(\sU) \cdot \td (T_{\sN^-})]\\
& = \kappa_{3g-3}
\left[
e^{3\alpha/2}\left(\frac{\sqrt \beta/2}{\sinh (\sqrt \beta /2)}\right)^{2g-3}\left(-\frac{\alpha}{2}-\frac{\gamma}{\beta}\right)
+e^{3\alpha/2}\left(\frac{\sqrt \beta/2}{\sinh (\sqrt \beta /2)}\right)^{2g-2}\cosh(\sqrt\beta/2)\left(\frac{\gamma}{\beta}+d\right)
\right]\\
&=
\left(\frac{3}{2}\right)^{g-1}\Res_{x=0}\left[ \frac{3/2 \cos(3x/2) \, dx}{\sin^2 (3x/2) \sin^{2g-3}(x/2)} \right] +\left(\frac{3}{2}d-\frac{g}{2}\right)\left(\frac{3}{2}\right)^{g-1}\Res_{x=0}\left[ \frac{\cos(x/2)\, dx}{ \sin (3x/2) \sin^{2g-2}(x/2)} \right]\\
&=
2^{g-1}(d-g+1).
\end{align*}
For the last residue, see Lemma~\ref{lemma:residue}.
\end{proof}

Later we also need the following:
\begin{lemma}\label{lemma:cohomology_groups_on_N}
The following hold:
\begin{enumerate}
 \item $\chi(\sO_{\sN^-}(\alpha)) = 2^{g-1}(2^g-1)$.
 \item For $p \in C$, we have $\chi(\sU_p(-\alpha)) = 0$.
\end{enumerate}
\end{lemma}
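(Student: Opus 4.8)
The plan is to compute both Euler characteristics by Hirzebruch--Riemann--Roch, using the explicit Todd class $\td(\sN^-) = e^\alpha\left(\frac{\sqrt\beta/2}{\sinh(\sqrt\beta/2)}\right)^{2g-2}$ together with the residue formula $\kappa_{3g-3}[e^{k\alpha}F(-\beta)] = 4^{g-1}k^g\Res_{x=0}\left[\frac{F(x^2)\,dx}{x^{2g-2}\sin(kx)}\right]$ recalled above. In both parts the substitution $\beta = -x^2$ (so $\sqrt\beta = ix$) turns the hyperbolic factor $\left(\frac{\sqrt\beta/2}{\sinh(\sqrt\beta/2)}\right)^{2g-2}$ into $\left(\frac{x/2}{\sin(x/2)}\right)^{2g-2}$ and $\cosh(\sqrt\beta/2)$ into $\cos(x/2)$, so everything reduces to a single residue at $x=0$.

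For (1), since $\ch(\sO(\alpha)) = e^\alpha$, HRR gives $\chi(\sO_{\sN^-}(\alpha)) = \kappa_{3g-3}\left[e^{2\alpha}\left(\frac{\sqrt\beta/2}{\sinh(\sqrt\beta/2)}\right)^{2g-2}\right]$. Applying the residue formula with $k=2$ and collecting the powers of $2$ (the prefactor $4^{g-1}2^g\cdot 2^{-(2g-2)}$ collapses to $2^g$), I would reduce the assertion to the residue identity $\Res_{x=0}\left[\frac{dx}{\sin^{2g-2}(x/2)\sin(2x)}\right] = \frac{2^g-1}{2}$. This residue evaluation — a pole of order $2g-1$ at the origin — is the step I expect to be the main obstacle; I would handle it by the same residue techniques as in Lemma~\ref{lemma:residue}, for instance by expanding $\sin(2x) = 4\sin(x/2)\cos(x/2)\cos x$ and extracting the relevant Laurent coefficient, or by a global residue/partial-fraction argument of Verlinde type.

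For (2), note that $\ch(\sU_p(-\alpha)) = \ch(\sU_p)e^{-\alpha} = 2e^{-\alpha/2}\cosh(\sqrt\beta/2)$, so after multiplying by the Todd class the two exponential factors combine to $e^{\alpha/2}$ and HRR gives $\chi(\sU_p(-\alpha)) = \kappa_{3g-3}\left[2e^{\alpha/2}\cosh(\sqrt\beta/2)\left(\frac{\sqrt\beta/2}{\sinh(\sqrt\beta/2)}\right)^{2g-2}\right]$. The residue formula with $k=1/2$ then reduces this, after collecting the powers of $2$ into a prefactor $2^{1-g}$, to $\Res_{x=0}\left[\frac{\cos(x/2)\,dx}{\sin^{2g-1}(x/2)}\right]$. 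The key observation is that this integrand equals, up to the constant $-(g-1)$, the $x$-derivative of the meromorphic function $\sin^{-(2g-2)}(x/2)$; since the residue of a total derivative of a meromorphic function vanishes, this residue is $0$ and hence $\chi(\sU_p(-\alpha)) = 0$. Thus part (2) requires no delicate computation, in contrast to the genuine residue identity underlying part (1).
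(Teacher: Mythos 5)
Your proposal is correct and follows essentially the same route as the paper: Hirzebruch--Riemann--Roch with the explicit Todd class, the residue formula with $k=2$ (resp.\ $k=1/2$) to reduce to $2^g\Res_{x=0}\bigl[\tfrac{dx}{\sin^{2g-2}(x/2)\sin(2x)}\bigr]$ and $2^{1-g}\Res_{x=0}\bigl[\tfrac{\cos(x/2)\,dx}{\sin^{2g-1}(x/2)}\bigr]$, and then the residue evaluations, which the paper delegates to Lemma~\ref{lemma:residue} (items (2) and (3), proved by the substitution $y=\sin x$ — equivalent to your total-derivative observation for part (2)). Your claimed value $\tfrac{2^g-1}{2}$ for the residue in part (1) is the correct one, so the only remaining work is the elementary Laurent-coefficient extraction you already outline.
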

\begin{proof}
By the Riemann-Roch theorem, we hvae
\begin{align*}
\chi(\sO_{\sN^-}(\alpha))  &= \kappa_{3g-3}[ ch(\sO_{\sN^-}(\alpha)) \cdot \td (\sN^-)] \\
&= \kappa_{3g-3}\left[ e^{2\alpha} \left(\frac{\sqrt{\beta}/2}{\sinh(\sqrt{\beta}/2)}\right)^{2g-2}\right]\\
&= 2^g\Res_{x=0}\left[ \frac{dx}{\sin(2x)\sin^{2g-2}(x/2)}  \right]\\
&=2^{g-1}(2^g-1).
\end{align*}

Also, we have
\begin{align*}
\chi(\sU_p(-\alpha))  &= \kappa_{3g-3}[\ch(\sU_p)\cdot ch(\sO_{\sN^-}(-\alpha)) \cdot \td (T_{\sN^-})] \\
&= \kappa_{3g-3}\left[ 2e^{\alpha/2} \cosh(\sqrt\beta/2) \left(\frac{\sqrt{\beta}/2}{\sinh(\sqrt{\beta}/2)}\right)^{2g-2}\right]\\
&= 2\left(\frac{1}{2}\right)^{g}\Res_{x=0}\left[ \frac{\cos(x/2)\, dx}{\sin^{2g-1}(x/2)}  \right]\\
&=0.
\end{align*}
For the residues, see Lemma~\ref{lemma:residue} below.
\end{proof}

\begin{remark}
The bundle $\sU_p$ embeds the variety $\sN^-$ into $\Gr(H^0(\sU_p),2)$.
In a recent paper \cite{BBFM23a}, they described the embedding $\sN^- \subset \Gr(H^0(\sU_p),2)$ for $g=3$ by the method of orbital degeneracy loci.
\end{remark}

\subsection{Vector bundles on $\sN^-$ and their complete intersections}

Fix a point $p \in C$.
Then we have a vector bundle $\sU_p = \sU|_{\sN^- \times \{p\}}$ of rank two on $\sN^-$, which is globally generated with $h^0(\sU_p) = 2^g$.
Take general sections $s_1 \in H^0(\sU_{p_1})$ and  $s_2 \in H^0(\sU_{p_2})$ ($p_1$, $p_2 \in C$), and set $Y \coloneqq (s_1)_0$ and $Z \coloneqq (s_1)_0 \cap (s_2)_0$.
If $g=3$, then $Z$ is a K3 surface of genus $13$.
The following generalizes this to higher genus cases:
\begin{theorem}[Fano and Calabi-Yau complete intersections in $\sN^-$]\label{theorem:CI_CY_in_N-}
The following hold:
\begin{enumerate}
 \item $Y$ is a smooth Fano variety of dimension $3g-5$;\label{theorem:CI_CY_in_N-_1}
 \item $Z$ is a smooth, simply connected, $K_Z$-trivial variety of dimension $3g-7$ with $\chi(\sO_Z) = 1+(-1)^{\dim Z}$;\label{theorem:CI_CY_in_N-_2}
 \item We have
\begin{align*}
& (\alpha|_Y )^{\dim Y} = 4^{g-2} \left((3g-3)!b_{g-1} +(3g-5)!b_{g-2}  \right),\\
& (\alpha|_Z )^{\dim Z} = 4^{g-3} \left((3g-3)!b_{g-1} + 2(3g-5)!b_{g-2} +(3g-7)!b_{g-3} \right).
\end{align*}
\label{theorem:CI_CY_in_N-_3}
\end{enumerate}
\end{theorem}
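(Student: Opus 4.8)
The plan is to realize $Y$ and $Z$ as zero loci of sections of globally generated bundles and to extract every assertion from adjunction, Hirzebruch--Riemann--Roch, the intersection-number formula recorded above, and a Lefschetz-type theorem. First I would record the cohomology classes: since $\sU_{p_1}$ has rank $2$ and is globally generated, a general zero locus has class $[Y]=c_2(\sU_{p_1})$, and likewise $[Z]=c_2(\sU_{p_1})\,c_2(\sU_{p_2})$ in $H^*(\sN^-)$, where by the Chern-root computation above $c_2(\sU_{p_i})=\tfrac14(\alpha^2-\beta)$ for every point $p_i$. The two degree formulas then follow by substitution:
\[
(\alpha|_Y)^{\dim Y}=\kappa_{3g-3}\!\left[\alpha^{3g-5}\cdot\tfrac14(\alpha^2-\beta)\right],\qquad
(\alpha|_Z)^{\dim Z}=\kappa_{3g-3}\!\left[\alpha^{3g-7}\cdot\tfrac{1}{16}(\alpha^2-\beta)^2\right],
\]
after which I would evaluate each monomial $(\alpha^m\beta^n)$ by the formula $(\alpha^m\beta^n\gamma^p)=\frac{(-1)^n2^{2g-2-p}g!\,m!}{(g-p)!}b_{g-1-n-p}$ with $p=0$. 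The prefactors $2^{2g-2}/4=4^{g-2}$ and $2^{2g-2}/16=4^{g-3}$ produce exactly the stated expressions; in particular both numbers are \emph{positive}, since every $b_i>0$.

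This positivity feeds back into the dimension and smoothness claims: because $[Y]$ and $[Z]$ are nonzero and $\operatorname{rank}\sU_{p_1}=2<\dim\sN^-$, the loci $Y$ and $Z$ are nonempty of the expected codimensions $2$ and $4$, so $\dim Y=3g-5$ and $\dim Z=3g-7$. Smoothness of $Y$, and then of $Z$ (obtained by cutting $Y$ with a general section of the globally generated bundle $\sU_{p_2}|_Y$), follows from Bertini's theorem for globally generated vector bundles. The canonical bundles come from adjunction for zero loci: $K_Y=(K_{\sN^-}+\det\sU_{p_1})|_Y=(-2\alpha+\alpha)|_Y=-\alpha|_Y$ is ample, so $Y$ is Fano, while $K_Z=(-2\alpha+\alpha+\alpha)|_Z=\sO_Z$, so $Z$ is $K$-trivial.

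For the Euler characteristic I would use the Koszul resolution of $\sO_Z$ by $\wedge^\bullet(\sU_{p_1}\oplus\sU_{p_2})^\vee$, which yields
\[
\chi(\sO_Z)=\kappa_{3g-3}\!\left[\prod_{j=1}^{4}\bigl(1-e^{-a_j}\bigr)\cdot\td(\sN^-)\right],
\]
where the four Chern roots $a_j$ of $\sU_{p_1}\oplus\sU_{p_2}$ are $\tfrac{\alpha\pm\sqrt\beta}{2}$, each occurring twice. Expanding the product as a combination of terms $e^{k\alpha}F(-\beta)$ and applying the residue formula $\kappa_{3g-3}[e^{k\alpha}F(-\beta)]=4^{g-1}k^{g}\Res_{x=0}\bigl[F(x^2)\,dx/(x^{2g-2}\sin(kx))\bigr]$ reduces the computation to an explicit residue; the outcome is $1+(-1)^{\dim Z}$. (As a consistency check, for $g=3$ this gives $\chi(\sO_Z)=2$, matching the K3 case.)

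The genuinely delicate point, and the one I expect to be the main obstacle, is simple connectivity, precisely because $\sU_p$ is only $(g-1)$-ample rather than ample, so the classical Lefschetz theorem for ample bundles does not apply. Here $\sN^-$ is Fano, hence rationally connected and therefore simply connected, and I would propagate this through Sommese's Lefschetz-type theorem for $k$-ample bundles \cite{Som78}, cutting successively. For $Y$, cut from $\sN^-$ by the $(g-1)$-ample bundle $\sU_{p_1}$, the relevant index is $\dim\sN^- -2-(g-1)=2g-4\ge 2$ for all $g\ge 3$, giving $\pi_1(Y)\cong\pi_1(\sN^-)=0$; for $Z$, cut from $Y$ by the $(g-1)$-ample bundle $\sU_{p_2}|_Y$, the index is $(3g-5)-2-(g-1)=2g-6$, which is $\ge 2$ exactly when $g\ge 4$, yielding $\pi_1(Z)\cong\pi_1(Y)=0$. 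The borderline case $g=3$, where $Z$ is a surface and the numerics fail, I would instead settle by classification: a smooth surface with $K_Z=\sO_Z$ and $\chi(\sO_Z)=2$ is necessarily a K3 surface by the Enriques--Kodaira classification, hence simply connected. This disposes of the remaining claim.
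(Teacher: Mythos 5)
Your overall strategy coincides with the paper's: Bertini plus adjunction for smoothness and the canonical bundle, the intersection-number formula $(\alpha^m\beta^n\gamma^p)$ for the two degrees (your expansion of $\alpha^{3g-5}\cdot\tfrac14(\alpha^2-\beta)$ and $\alpha^{3g-7}\cdot\tfrac1{16}(\alpha^2-\beta)^2$ is exactly what the paper does), the Koszul resolution combined with the residue formulas for $\chi(\sO_Z)$ (your $\prod_j(1-e^{-a_j})\cdot\td(\sN^-)$ is literally the paper's $\td(\sN^-)\td(\sU_{p_1})^{-1}\td(\sU_{p_2})^{-1}c_2(\sU_{p_1})c_2(\sU_{p_2})$), and the Sommese--Okonek Lefschetz-type theorems for $(g-1)$-ample spanned bundles to transfer connectedness and simple connectedness from $\sN^-$ to $Y$ and then to $Z$. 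Your indices $2g-4$ and $2g-6$ agree with the paper's citations of Sommese's Proposition 1.16 and Okonek's Corollary 22, and your positivity remark ($b_i>0$) correctly guarantees nonemptiness.

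The one genuine gap is the case $g=3$ of part (2). There the Lefschetz index $2g-6=0$ gives no information, not even about $\pi_0$, so at that stage $Z$ is only known to be a disjoint union of smooth projective surfaces with trivial canonical bundle and \emph{total} $\chi(\sO_Z)=2$. Your appeal to the Enriques--Kodaira classification presupposes connectedness: a disjoint union of one K3 surface and any positive number of abelian surfaces also has $K_Z=\sO_Z$ and $\chi(\sO_Z)=2$, yet is neither connected nor simply connected, so the classification plus the Euler characteristic does not close the case. The paper fills exactly this hole by a different mechanism: Section 2 exhibits a hyperelliptic, equal-parity specialization in which $(s_1)_0\cap(s_2)_0$ is explicitly a connected K3 surface (a codimension-$2$ linear section of $\bP^1\times\bP^1\times\bP^1\times\bP^1$), and connectedness of the general member follows by semicontinuity of $h^0(\sO_Z)$ in the family; this is what the line ``for $g=3$, the connectedness of $Z$ has been proved already'' refers to. To complete your argument you would need such a degeneration, or some other argument excluding abelian components. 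Everything else in the proposal is sound, granting that the asserted residue evaluation of $\chi(\sO_Z)$ is actually carried out.
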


\begin{remark}[Degree of $Y$ and $Z$]
For $g=3$, $4$, $5$, $6$, \dots, we have
\begin{itemize}
 \item $\alpha|_Y ^{3g-5} = 72$, $13472$, $6913536$, $7477297152$, \dots
 \item $\alpha|_Z ^{3g-7} = 24$, $3840$, $1859968$, $1958247936$, \dots
\end{itemize}
\end{remark}

\begin{proof}[Proof of Theorem~\ref{theorem:CI_CY_in_N-}]
\ref{theorem:CI_CY_in_N-_1}
From adjunction and the Bertini theorem for globally generated vector bundles, it is enough to show the connectedness of $Y$.
By \cite[Proposition 1.16]{Som78}, $H^i(\sN^-,\bZ) \simeq H^i(Y, \bZ)$ for $i<2g-4$ and \ref{theorem:CI_CY_in_N-_1} follows.

\ref{theorem:CI_CY_in_N-_2}
From the Bertini theorem and adjunction, $Z$ is a disjoint union of smooth $K$-trivial varieties of equidimension $3g-7$.

By the Sommese vanishing theorem  \cite[Proposition 1.16]{Som78}, we have
\begin{equation}\label{eq:cohomology_on_Y}
H^i(Y,\bZ) \simeq H^i(Z, \bZ),
\end{equation}
if $i < 2g-6$.
If $g\geq 4$, then $H^0(Z,\bZ) = \bZ$, i.e.,  $Z$ is connected.
For $g=3$, the connectedness of $Z$ has been proved already.

We show the simple-connectedness of $Z$.
By the Okonek theorem \cite[Corollary 22]{Oko87}, we have $\pi_i(Y) \simeq \pi_i(Z)$ for $i < 2g-6$.
This proves $\pi_1(Z) = \{1\}$ for $g > 3$.
If $g=3$, then $Z$ is a K3 surface, and thus $Z$ is simply-connected.

Now we prove $\chi(\sO_Z) = 1+(-1)^{\dim Z}$.
Note that
\begin{align*}
\td(Z) &= (\td (\sN^-) \cdot \td(\sU_{p_1})^{-1} \cdot \td(\sU_{p_2})^{-1}) |_Z\\
&= \left. \frac{4^2}{e^\alpha (\alpha^2-\beta)^2} \left( e^{\alpha/2} -e^{\sqrt\beta/2} \right)^2 \left( e^{\alpha/2} -e^{-\sqrt\beta/2} \right)^2 \left(\frac{\sqrt\beta/2}{\sinh(\sqrt\beta/2)} \right)^{2g-2}\right|_Z.
\end{align*}
Thus
\begin{align*}
\chi(\sO_Z) &= \kappa_{3g-7} [\td Z]\\
&= \kappa_{3g-3}[\td(\sN^-)\td(\sU_{p_1})^{-1}\td(\sU_{p_2})^{-1} c_2(\sU_{p_1}) c_2(\sU_{p_2})]\\
&= \kappa_{3g-3} \left[  \left(e^\alpha +e^{-\alpha} -4 (e^{\alpha/2}+ e^{-\alpha/2})\cosh(\sqrt\beta/2) +4+2\cosh\sqrt \beta \right) \left(\frac{\sqrt\beta/2}{\sinh(\sqrt\beta/2)} \right)^{2g-2}\right]\\
&=(1+(-1)^{g+1})\Res_{x=0}\left[\frac{dx}{\sin^{2g-2}(x/2)\sin x }\right] -4\left(\left(\frac{1}{2}\right)^{g} -\left(-\frac{1}{2}\right)^{g} \right)\Res_{x=0}\left[\frac{\cos (x/2) \, dx}{\sin^{2g-1}(x/2) }\right]\\
&= 1+(-1)^{g+1}.
 \end{align*}
For the last residue, see Lemma~\ref{lemma:residue}.
These prove \ref{theorem:CI_CY_in_N-_2}.

\ref{theorem:CI_CY_in_N-_3}
Since $c_2(\sU_p) = \dfrac{\alpha^2-\beta}{4}$ for $p \in C$, we have
\[
(\alpha|_Y^{3g-5}) = \alpha^{3g-5} \cdot \left(\frac{\alpha^2-\beta}{4}\right) = 4^{g-2} \left((3g-3)!b_{g-1} + (3g-5)!b_{g-2}\right).
\]
Also,
\[
(\alpha|_Z^{3g-7}) = \left(\alpha^{3g-7} \cdot \left(\frac{\alpha^2-\beta}{4}\right)^2\right) = 4^{g-3} \left((3g-3)!b_{g-1} + 2(3g-5)!b_{g-2} +(3g-7)!b_{g-3} \right),
\]
which proves \ref{theorem:CI_CY_in_N-_3}.
\end{proof}

\begin{remark}
One can check $H^i(\sO_Z) =0$ for $0 < i < \dim Z$ if $g \geq 5$ as follows:
From \eqref{eq:cohomology_on_Y}, we have $H^i(\sO_Z) \simeq H^i(\sO_Y) =0$ if $i < 2g-6$.
If $g \geq 6$, then $\dim Z/2 < 2g-6$, and thus $H^i(\sO_Z) =0$ for $0 < i < \dim Z$.
If $g=5$, then $H^i(\sO_Z) \simeq H^i(\sO_Y) =0$ for $0 < i < 4$.
Since $\dim Z =8$ and $\chi(\sO_Z) =2$, we have $H^i(\sO_Z) =0$ for $0 < i < \dim Z$.

If $g=4$, then $Z$ is a simply-connected $K_Z$-trivial $5$-fold and the above argument only proves $H^1(\sO_Z) =  H^4(\sO_Z) =0$.
In fact, we can also check $H^2(\sO_Z) =  H^3(\sO_Z) =0$ (by using similar arguments in Section~\ref{section:cohomology_S}).

In summary, $Z$ is a Calabi-Yau manifold in the strong sense, i.e., $Z$ is a simply-connected $K_Z$-trivial variety with $H^i(\sO_Z) =0$ for $0 < i < \dim Z$.
\end{remark}

\subsection{Restrictions of $\sU$ and the associated bundles on $C$}
Let $Z$ be the Calabi-Yau variety as in Theorem~\ref{theorem:CI_CY_in_N-}.
Then $Z$ is defined by two sections $s_1$ and $s_2$.
Thus the natural evaluation map 
\[
\Ev \colon (\pr_2)_* \sU \to (\pr_2)_*(\sU|_{Z \times C})
\]
has non-trivial kernel at least over two points $p_1$ and $p_2$.
The following propositions ensure that the kernel of $\Ev$ has its support exactly on $p_1$ and $p_2$ under some conditions.
We only need Proposition~\ref{proposition:restricted_VB_on_C} for the proof of main theorems, but include Proposition~\ref{proposition:restricted_VB_on_C_general_genus} for the sake of completeness of our treatment.

\begin{proposition}[Degree of $(\pr_2)_*(\sU|_{Z \times C})$]
\label{proposition:restricted_VB_on_C_general_genus}
Assume $H^i(\sU|_{Z \times \{p\}}) = 0$ for $i > 0$ and for all $p\in C$.
Then $(\pr_2)_*(\sU|_{Z \times C})$ is a vector bundle of rank $2^g$ with degree $2^{g-1}(d-g+1) +2$.

If further $\Ev$ is generically an isomorphism,  then the kernel of $\Ev$ has its support exactly on $p_1$ and $p_2$.
\end{proposition}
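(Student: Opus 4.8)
The plan is to split the proof into the two stated assertions, treating the rank and degree together by Grothendieck--Riemann--Roch together with the Koszul resolution of $Z \times C$ inside $\sN^- \times C$, and then reading off the support of the (fibrewise) kernel from a degree count. First I would record local freeness and base change: since $Z \times C$ is cut out in $\sN^- \times C$ by the regular section $(\pr_1^* s_1, \pr_1^* s_2)$ of $\sV \coloneqq \pr_1^*(\sU_{p_1}\oplus\sU_{p_2})$, the hypothesis $H^i(\sU|_{Z\times\{p\}})=0$ for $i>0$ and all $p$ lets Grauert's theorem apply fibrewise, so $(\pr_2)_*(\sU|_{Z\times C})$ is locally free, commutes with base change, and has fibre $H^0((\sU_p)|_Z)$ at each $p$; in particular its rank is $\chi((\sU_p)|_Z)$. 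To compute this Euler characteristic I would use the Koszul complex, which at the level of Chern characters gives $\ch(\sO_{Z\times C}) = c_4(\sV)\,\td(\sV)^{-1}$, together with the identity $\td(\sU_p)^{-1}\cdot\tfrac{\alpha^2-\beta}{4} = 1 - 2e^{-\alpha/2}\cosh(\tfrac{\sqrt\beta}{2}) + e^{-\alpha}$ coming from the displayed formula for $\td(\sU_p)$. Using $\td(Z) = [\td(\sN^-)\td(\sU_{p_1})^{-1}\td(\sU_{p_2})^{-1}]|_Z$ and $c_2(\sU_{p_i}) = \tfrac{\alpha^2-\beta}{4}$, the integral $\kappa_{\dim Z}[\ch((\sU_p)|_Z)\,\td(Z)]$ reduces to the single residue
\[
\kappa_{3g-3}\!\left[2e^{\alpha/2}\cosh(\tfrac{\sqrt\beta}{2})\, e^{\alpha}\Big(\tfrac{\sqrt\beta/2}{\sinh(\sqrt\beta/2)}\Big)^{2g-2}\Big(1 - 2e^{-\alpha/2}\cosh(\tfrac{\sqrt\beta}{2}) + e^{-\alpha}\Big)^{2}\right],
\]
which I expect Lemma~\ref{lemma:residue} to evaluate to $2^g$, giving the asserted rank.

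For the degree I would run the same computation one graded piece higher. Writing $\deg = \ch_1\big((\pr_2)_*(\sU|_{Z\times C})\big)$, the coefficient of $f$ in $\ch(\sU)$ is $(\pr_1)_*\ch(\sU)$, already computed in the proof of Proposition~\ref{proposition:VB_on_C}, and since restriction to $Z\times C$ commutes with integration over $C$ one obtains
\[
\deg = \kappa_{3g-3}\!\left[(\pr_1)_*\ch(\sU)\cdot\td(\sN^-)\cdot\Big(1 - 2e^{-\alpha/2}\cosh(\tfrac{\sqrt\beta}{2}) + e^{-\alpha}\Big)^{2}\right].
\]
Expanding the squared correction factor and invoking the residue formulas of the preceding propositions and corollary (the $\gamma/\beta$-terms of $(\pr_1)_*\ch(\sU)$ being handled exactly as in the corollary after Proposition~\ref{proposition:VB_on_C}), I expect this to evaluate to $2^{g-1}(d-g+1)+2$. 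Note that deleting the squared factor recovers precisely the value $2^{g-1}(d-g+1)$ of Proposition~\ref{proposition:VB_on_C}, so the entire content of the degree statement is that the correction factor contributes an extra $+2$.

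For the final assertion I would argue by a degree count. Assuming $\Ev$ is generically an isomorphism between $A \coloneqq (\pr_2)_*\sU$ and $B \coloneqq (\pr_2)_*(\sU|_{Z\times C})$, both locally free of rank $2^g$, the sheaf kernel of $\Ev$ vanishes (a generically injective map out of a locally free sheaf on a smooth curve has torsion-free, hence zero, kernel), while the cokernel $Q$ is torsion with $\mathrm{length}(Q) = \deg B - \deg A = 2$. The fibrewise kernel of $\Ev$ is supported precisely on the degeneracy locus $\{\det\Ev = 0\} = \mathrm{supp}(Q)$. By base change, the map $\Ev$ at $p_i$ is the restriction $H^0(\sU_{p_i})\to H^0((\sU_{p_i})|_Z)$, which kills the nonzero section $s_i$ because $Z\subset(s_i)_0$; hence $p_1,p_2\in\mathrm{supp}(Q)$. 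Since $p_1\neq p_2$ for general points and $\mathrm{length}(Q)=2$, the support is exactly $\{p_1,p_2\}$, with length one at each.

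The main obstacle is the bookkeeping in the degree residue: one must carry the $\gamma$-terms of $(\pr_1)_*\ch(\sU)$ through the squared correction factor and check that the delicate cancellations leave precisely $+2$ rather than some other constant. The rank residue and the base-change input are routine, and the final assertion is then immediate from the degree count.
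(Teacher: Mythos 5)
Your proposal follows essentially the same route as the paper: Grothendieck--Riemann--Roch combined with the Koszul/Borel--Serre identity $\ch(\sO_{Z})=c_{\mathrm{top}}(\sV)\,\td(\sV)^{-1}$, so that both the rank and the degree reduce to $\kappa_{3g-3}$ of $\ch(\sU)$ (resp.\ $(\pr_1)_*\ch(\sU)$) times $\td(\sN^-)$ times the squared correction factor $\bigl(1-2e^{-\alpha/2}\cosh(\sqrt\beta/2)+e^{-\alpha}\bigr)^2=\td(\sU_{p_1})^{-1}\td(\sU_{p_2})^{-1}c_2(\sU_{p_1})c_2(\sU_{p_2})$, which is exactly the integrand the paper expands into the terms $\mu_{k,j}$ and evaluates via Lemma~\ref{lemma:residue}; the only piece you defer (``I expect this to evaluate to $+2$'') is precisely that mechanical residue bookkeeping, which is where the entire content of the degree statement lives. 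Your torsion-free-kernel/length-two-cokernel argument for the last assertion is correct and makes explicit a step the paper leaves implicit.
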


\begin{proof}
From the Grothendieck-Riemann-Roch theorem,
\begin{align*}
& \ch_0((\pr_2)_*(\sU|_{Z \times C}))\\
&= \kappa_{3g-7}[ \ch(\sU|_{Z \times \{p\}}) \cdot \td(Z) ]\\
&= \kappa_{3g-3} \left[  2e^{\alpha/2} \cosh(\sqrt\beta/2) \cdot \frac{1}{e^\alpha} \left( e^{\alpha/2} -e^{\sqrt\beta/2} \right)^2 \left( e^{\alpha/2} -e^{-\sqrt\beta/2} \right)^2 \left(\frac{\sqrt\beta/2}{\sinh(\sqrt\beta/2)} \right)^{2g-2}\right]\\
&
\begin{multlined}
  = \kappa_{3g-3} \left[ 2 \left(e^{3\alpha/2} -4 e^{\alpha}\cosh(\sqrt\beta/2) +2e^{\alpha/2}(2+\cosh\sqrt \beta)  -4 \cosh(\sqrt\beta/2) +e^{-\alpha/2}  \right) \right.\\
\left. \times  \cosh(\sqrt\beta/2)\left(\frac{\sqrt\beta/2}{\sinh(\sqrt\beta/2)} \right)^{2g-2}\right]
\end{multlined}\\
&
\begin{multlined}
 =2\left(\frac{3}{2}\right)^g\Res_{x=0}\left[ \frac{\cos(x/2) \, dx}{\sin^{2g-2}(x/2)\sin(3x/2)} \right]
 - 4\cdot 2 \Res_{x=0}\left[ \frac{\cos^2(x/2) \, dx}{\sin^{2g-2}(x/2)\sin(x)} \right]\\
 + \left(\frac{1}{2}\right)^{g-2}\Res_{x=0}\left[ \frac{(2+\cos x)\cos(x/2) \, dx}{\sin^{2g-1}(x/2)} \right]
 + 2\left(-\frac{1}{2}\right)^g \Res_{x=0}\left[\frac{-\cos(x/2) \, dx}{\sin^{2g-1}(x/2)} \right]\\
\end{multlined}\\
 &= 2^g.
\end{align*}
Thus the rank of $(\pr_2)_*(\sU|_{Z \times C})$ is $2^g$.

Similarly, we have
\begin{align*}
 & \ch_1((\pr_2)_*(\sU|_{Z \times C}))\\
 &= \kappa_{3g-6}([\ch(\sU|_{Z \times C}) \cdot \pr^* \td(Z)] \\
 &= \kappa_{3g-3}[(\pr_1)_*\ch(\sU|_{Z \times C}) \td(\sN^-)\td(\sU_{p_1})^{-1}\td(\sU_{p_2})^{-1} c_2(\sU_{p_1}) c_2(\sU_{p_2})]\\
 &=\mu_{1,0}+\mu_{-1,0}+4\mu_{0,0}-4\mu_{1/2,1/2}+2\mu_{0,1}-4\mu_{-1/2,1/2},
\end{align*}
where
\begin{align*}
&
\begin{multlined}
 \mu_{k,j} \coloneqq \kappa_{3g-3}\left[
 e^{(2k+1)\alpha/2} \cosh(j\sqrt\beta)\left(\frac{\sqrt\beta/2}{\sinh(\sqrt\beta/2)} \right)^{2g-2}  \frac{\sinh (\sqrt \beta /2)}{ \sqrt \beta/2}\left(-\frac{\alpha}{2}-\frac{\gamma}{\beta}\right)
\right]\\
+ \kappa_{3g-3}\left[
e^{(2k+1)\alpha/2} \cosh(j\sqrt\beta)\left(\frac{\sqrt\beta/2}{\sinh(\sqrt\beta/2)} \right)^{2g-2}  \cosh(\sqrt\beta/2)\left(\frac{\gamma}{\beta}+d \right)
\right]
\end{multlined}\\
&
\begin{multlined}
 =\left(\frac{2k+1}{2}\right)^{g}\Res_{x=0}\left[ \frac{\cos (jx) \cos((2k+1)x/2)\, dx}{\sin^2((2k+1)x/2)\sin^{2g-3}(x/2)}\right]\\
+\frac{(2k+1)d-g}{2}\left(\frac{2k+1}{2}\right)^{g-1}\Res_{x=0}\left[ \frac{\cos (jx) \cos(x/2)\, dx}{\sin((2k+1)x/2)\sin^{2g-2}(x/2)}\right].
\end{multlined}
\end{align*}

By Lemma~\ref{lemma:residue}, we have
\[
\mu_{1,0}+\mu_{-1,0}+4\mu_{0,0}-4\mu_{1/2,1/2}+2\mu_{0,1}-4\mu_{-1/2,1/2}=2^{g-1}(d-g+1)+2.
\]
\end{proof}

If $g=3$, then $Z$ is a K3 surface of $g=13$.
In this case, the following slightly generalizes the above proposition:
\begin{proposition}[Degree of $(\pr_2)_*(\sU|_{S \times C})$]
\label{proposition:restricted_VB_on_C}
 Let $(S,h)$ be a polarized K3 surface of genus $13$.
 Assume that $S$ is contained in $\sN^-$ for $g=3$ such that $\alpha|_S =h$ and $\beta|_S = -8$.
 Further assume that  $H^i(\sU|_{S \times \{p\}}) = 0$ for $i > 0$ and for all  $p \in C$.
 Then $(\pr_2)_*(\sU|_{S \times C})$ is a vector bundle of rank $8$ with degree $4d-6$.
\end{proposition}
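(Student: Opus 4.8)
The plan is to run the same Grothendieck--Riemann--Roch argument as in Proposition~\ref{proposition:restricted_VB_on_C_general_genus}, but to exploit that $\dim S = 2$ so that the computation collapses to a finite intersection calculation on the threefold $S \times C$, bypassing the residue formulas. First I would settle local freeness: the hypothesis $H^i(\sU|_{S \times \{p\}}) = 0$ for $i>0$ and all $p \in C$ makes $h^0(\sU|_{S\times\{p\}}) = \chi(\sU|_{S\times\{p\}})$ constant in $p$, so by Grauert's base-change theorem $(\pr_2)_*(\sU|_{S\times C})$ is locally free with $R^i(\pr_2)_* = 0$ for $i>0$. Hence its Chern character is computed by
\[
\ch\bigl((\pr_2)_*(\sU|_{S\times C})\bigr) = (\pr_2)_*\!\left(\ch(\sU|_{S\times C})\cdot \pr_1^*\td(S)\right)
\]
with no higher-direct-image corrections.

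For the rank I would restrict to a fibre. Using $\alpha|_S = h$, $\beta|_S = -8$, and noting that $\gamma|_S \in H^6(S) = 0$ automatically, the formula $\ch(\sU_p) = 2e^{\alpha/2}\cosh(\sqrt\beta/2)$ truncates on the surface to $\ch(\sU|_{S\times\{p\}}) = 2 + h + 4[\mathrm{pt}]$ (here $\cosh(\sqrt{\beta|_S}/2) = 1 + \beta|_S/8 = 1 - [\mathrm{pt}]$). With $\td(S) = 1 + 2[\mathrm{pt}]$ and $(h^2) = 24$ this gives $\chi(\sU|_{S\times\{p\}}) = \kappa_2[\ch(\sU_p|_S)\,\td(S)] = 8$, the asserted rank.

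The degree is where the only real subtlety lies, namely the restriction of $\ch(\sU|_{S\times C})$ itself. Starting from the Zagier expression employed in Proposition~\ref{proposition:VB_on_C}, both $\delta^2 = \gamma\otimes f$ and the ``irrational'' summand $\sqrt{\gamma\otimes f}$ of $\delta$ lie in $H^{\geq 3}(\sN^-)\otimes H^{\geq 1}(C)$; since $H^1(S) = H^3(S) = 0$ for a K3 surface, these terms vanish on $S\times C$, leaving $\delta|_{S\times C} = -\tfrac12\, h\otimes f$ and $\delta^2|_{S\times C} = 0$. Thus $\ch(\sU|_{S\times C})$ becomes the genuine finite class
\[
2\,e^{(h+df)/2}\cosh(\sqrt{\beta|_S}/2) \;+\; e^{(h+df)/2}\,\frac{\sinh(\sqrt{\beta|_S}/2)}{\sqrt{\beta|_S}/2}\,\delta,
\]
which I would expand using $h^2 = 24[\mathrm{pt}]$, $f^2 = 0$ and $\beta|_S = -8[\mathrm{pt}]$. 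Reading off the coefficient of $[\mathrm{pt}]\otimes f$ in $\ch(\sU|_{S\times C})\cdot(1 + 2[\mathrm{pt}])$ and integrating over $S\times C$ then gives $\deg = (2d-6) + 2d = 4d - 6$, consistent with the $g=3$ specialization $2^{g-1}(d-g+1) + 2$ of Proposition~\ref{proposition:restricted_VB_on_C_general_genus}.

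The obstacle worth flagging is conceptual rather than computational: it is precisely the vanishing of the transcendental Newstead contributions ($\gamma$ and the square-root class) after restriction to the K3 surface that allows one to weaken the hypotheses to the two numerical data $\alpha|_S = h$ and $\beta|_S = -8$, rather than assuming $S$ arises as a complete intersection. Once this restriction step is justified, the remainder is routine bookkeeping of Künneth components on $S\times C$.
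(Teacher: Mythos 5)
Your proposal is correct and follows essentially the same route as the paper's proof: a direct Grothendieck--Riemann--Roch computation on $S\times C$, collapsing to finite intersection numbers via $\alpha|_S=h$, $(h^2)=24$, $\beta|_S=-8$ and the vanishing of the $\gamma$-terms on the surface, with the same final arithmetic $4d-6$. The only addition is your explicit Grauert base-change justification of local freeness, which the paper leaves implicit.
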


\begin{proof}
From the Grothendieck-Riemann-Roch theorem,
\begin{align*}
& \ch((\pr_2)_*(\sU|_{S \times C})) =(\pr_2)_* ( \ch(\sU|_{S \times C})\pr_1^*\td(S))\\
 &= (\pr_2)_*( \left( 2  [S \times C] + \ch_1(\sU|_{S \times C}) +\ch_2(\sU|_{S \times C}) +\ch_3(\sU|_{S \times C})\right) \left( [S \times C] + 2[\{x\} \times C]\right)).
 \end{align*}
Thus,
\begin{align*}
 & \ch_0((\pr_2)_*(\sU|_{S \times C})) =4 + (\pr_2)_*(\ch_2(\sU|_{S \times C}))\\
 &= 4 + (\pr_2)_* \left.\left(\frac{1}{2}(\alpha \otimes df)^2- \frac{d+1}{2}\alpha\otimes f + \sqrt{\gamma \otimes f} + \frac{\alpha^2-\beta}{4}\otimes 1\right)\right|_{S\times C}\\
&=8.
\end{align*}

On the other hand,
\begin{align*}
&\ch_1((\pr_2)_*(\sU|_{S \times C}))\\
&= (\pr_2)_*(\ch_1(\sU)\cdot 2[\{x\}\times C]+\ch_3(\sU|_{S \times C}))\\
 &= 4d-6.
\end{align*}
This completes the proof.
\end{proof}

\subsection{Residues}
Below we summarize the residues that we have used:
\begin{lemma}[Residues]\label{lemma:residue}
 The following hold:
\begin{enumerate}
 \item $\Res_{x=0}\left[\dfrac{dx }{ \sin^{2n+1}x \sin (2x)}\right] = \dfrac{1}{2}$ if $n\geq 0$.
 \item $\Res_{x=0}\left[\dfrac{dx }{ \sin^{2n}x \sin (4x)}\right] = \dfrac{2^{n+1}-1}{4}$ if $n\geq 0$.
 \item $\Res_{x=0}\left[\dfrac{\cos x \, dx}{ \sin^{2n+1}x}\right] = 0$ if $n\neq 0$.
 \item $\Res_{x=0}\left[\dfrac{\cos x  \, dx}{ \sin^{2n}x \sin (3x)}\right] = \dfrac{2^{2n}}{3^{n+1}}$ if $n\geq 0$.
 \item $\Res_{x=0}\left[\dfrac{\cos^2 x \, dx}{ \sin^{2n}x \sin (2x)}\right] = 0$ if $n\neq 0$.
 \item $\Res_{x=0}\left[\dfrac{\cos x \cos (2x) \, dx}{ \sin^{2n+1}x}\right] = 0$ if $n > 1$.
 \item $\Res_{x=0}\left[\dfrac{\cos 3x \, dx}{ \sin^{2n-1}x \sin^2 (3x)}\right] = -\dfrac{2^{2n}(2n-1)}{3^{n+2}}$ if $n\geq 0$.
 \item $\Res_{x=0}\left[\dfrac{\cos x \cos (2x) \, dx}{ \sin^{2n-1}x \sin^2 (2x)}\right] =-\dfrac{1}{4}$ if $n> 0$.
\end{enumerate}
\end{lemma}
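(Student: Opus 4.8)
The plan is to prove all eight identities uniformly, by viewing each as the residue at $x=0$ of a meromorphic $1$-form built from $\sin$ and $\cos$, and by splitting them into the \emph{vanishing} identities (3), (5), (6) and the \emph{nonzero} identities (1), (2), (4), (7), (8).

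For the vanishing identities I would first reduce them to one another using $\sin 2x = 2\sin x\cos x$ and $\cos 2x = 1-2\sin^2 x$. Concretely, $\dfrac{\cos^2 x}{\sin^{2n}x\sin 2x} = \dfrac{\cos x}{2\sin^{2n+1}x}$ turns (5) into half of (3), while $\cos 2x = 1-2\sin^2 x$ writes the integrand of (6) as a difference of two integrands of the shape occurring in (3), at indices $n$ and $n-1$. It therefore suffices to establish (3), and for $n\neq 0$ this is immediate from the observation that
\[
\frac{\cos x}{\sin^{2n+1}x}\,dx = -\frac{1}{2n}\,d\!\left(\frac{1}{\sin^{2n}x}\right)
\]
is an \emph{exact} form, whose residue vanishes. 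The restrictions $n\neq 0$ in (3), (5) and $n>1$ in (6) are exactly the conditions under which all terms avoid the excluded index.

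The nonzero identities I would handle by a single periodicity argument. Each integrand $f(x)\,dx$ in (1), (2), (4), (7), (8) is $\pi$-periodic: under $x\mapsto x+\pi$ any sign changes produced by the factors cancel in pairs. Moreover $|f(x)|$ decays exponentially as $|\operatorname{Im}x|\to\infty$, since the denominator grows with a strictly larger exponential rate than the numerator. Integrating $f\,dx$ around the boundary of a tall rectangle of width $\pi$, the two vertical edges cancel by periodicity and the two horizontal edges tend to $0$ by the decay; hence the residues in one fundamental strip sum to $0$, giving
\[
\Res_{x=0}[f\,dx] = -\sum_{x_0}\Res_{x=x_0}[f\,dx],
\]
the sum running over the remaining poles $x_0$ in the strip. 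These lie at the zeros of the \emph{outer} sine factor ($\sin 2x$, $\sin 3x$, or $\sin 4x$) that do not coincide with $x=0$, namely $x_0 = j\pi/k$ with $0<j<k$.

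For (1), (2), (4) these poles are simple, and the residue at $x_0$ is $N(x_0)\big/\bigl(\sin^{m}x_0\,(\text{outer sine})'(x_0)\bigr)$; the powers of $2$ and $3$ in the answers arise precisely because $\sin^{2n}x_0$ equals $(1/2)^n$, $(3/4)^n$, and the like at these special angles, and summing over $x_0$ produces the stated expressions (e.g.\ the angles $\pi/4,\pi/2,3\pi/4$ give $2^{n+1}-1$ in (2), while $\pi/3,2\pi/3$ give $2^{2n}/3^{n+1}$ in (4)). For (7) and (8) the outer factor is squared, so $x_0$ is a double pole; there I would pass to the local coordinate $u=x-x_0$, expand numerator and denominator to second order, and read off the residue. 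The main obstacle is exactly these double-pole cases: the residue is a genuine second-order Laurent coefficient, and in (7) one must track the index $n$ through both the factor $\sin^{-(2n-1)}x_0$ and the derivative term forced by the double pole, which is the source of the factor $(2n-1)$. A secondary technical point I would make precise is the vanishing of the horizontal-edge contributions in the limit, which legitimizes the sum-of-residues identity.
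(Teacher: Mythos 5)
Your route is genuinely different from the paper's. The paper disposes of all eight identities at once by the local substitution $y=\sin x$: since $\cos x\,dx=dy$ and every factor $\sin(kx)$, $\cos(kx)$ is a polynomial in $y$ times at most one power of $\cos x$, each form becomes a rational differential $R(y)\,dy$ with $R$ even, and the residue is read off as a power-series coefficient (e.g.\ item (2) becomes $\tfrac14\,dy/\bigl(y^{2n+1}(1-y^2)(1-2y^2)\bigr)$, whose residue is the coefficient of $y^{2n}$ in $\tfrac14(1-y^2)^{-1}(1-2y^2)^{-1}=\tfrac14\sum(2^{k+1}-1)y^{2k}$). Your treatment of the vanishing cases (3), (5), (6) via exactness of $\cos x\,\sin^{-(2n+1)}x\,dx$ is essentially the same observation. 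For the nonzero cases, however, you replace this one-line local computation with a global argument (periodicity plus decay forces the residues in a fundamental strip to sum to zero, so the residue at $0$ is minus the sum over the poles $j\pi/k$). This works for (2), (4), (7), (8) — I checked that the special values $\sin^2(\pi/4)=\tfrac12$, $\sin^2(\pi/3)=\tfrac34$ do reproduce the stated constants, and your integration-by-parts/second-order expansion handles the double poles in (7) — but it is considerably longer than the paper's method and requires you to actually justify the decay estimates and carry out the local expansions you only sketch. (A small slip: in (8) the pole at $x=\pi/2$ is in fact simple, since the numerator $\cos x$ vanishes there.)

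The genuine problem is item (1). As printed, the denominator is $\sin^{2n+1}x\,\sin(2x)$, a product of two odd functions, so the integrand is an \emph{even} function of $x$ and its residue at $0$ is $0$, not $\tfrac12$; moreover it satisfies $f(x+\pi)=-f(x)$, so your blanket claim that all five nonzero integrands are $\pi$-periodic is false for this one, and the contour argument as you set it up does not apply. (The identity is surely a typo for $\sin^{2n}x\,\sin(2x)$: every invocation in the paper has an even exponent $2g-2$, and with $\sin^{2n}x$ the substitution $y=\sin x$ gives the coefficient of $y^{2n}$ in $\tfrac12(1-y^2)^{-1}$, namely $\tfrac12$, and your periodicity claim becomes true.) Your proposal should either flag the discrepancy or restrict to the corrected exponent; as written it asserts a periodicity that fails and purports to derive a value that the stated form cannot have.
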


\begin{proof}
 We can deduce these equations by letting $y = \sin x$.
 For example,
 \[
 \Res_{x=0}\left[ \frac{\cos x \, dx}{\sin^{2n+1}x} \right] = \Res_{y=0}\left[ \frac{dy}{y^{2n+1}} \right] = 0
 \]
 if $n \neq 0$.
\end{proof}

\section{Cohomology groups on $S$ and $\sN^{-}$}\label{section:cohomology_S}
Assume that $C$ is a  hyperelliptic curve of  genus $g =3$ and $\sP =\langle q_1,q_2\rangle \subset S^2\bC^8$ the associated simple pencil of quadric forms.
Then $\sN^- \simeq \Gr(\bC^8,2,\sP)$.
Let $p_1$, $p_2 \in C$ be two general points, and $s_i\in H^0(\sU_{p_i})$ general sections.
Then, by Theorem~\ref{theorem:CI_K3_in_Gr(8,2)}, $(S,h) \coloneqq ((s_1)_0 \cap (s_2)_0, \alpha|_S)$ is a primitively polarized K3 surface of genus $13$.

In this section, we will prove the following:
\begin{theorem}\label{theorem:cohomology_S}
The following hold:
\begin{enumerate}
\item\label{theorem:cohomology_S_2} The restriction map $H^0(\sO_{\sN^-}(1)) \to H^0(\sO_S(1))$ is a surjective map.
\item \label{theorem:cohomology_S_3} $H^0(\sQ|_S) = \bC^8$ and $H^i(\sQ|_S) = 0$ for $i>0$.
\item \label{theorem:cohomology_S_4} The natural restriction map $H^0(\sQ) \to H^0(\sQ|_S)$ is an isomorphism.
\item \label{theorem:cohomology_S_5} $H^0(S^2\sQ|_S) = \bC^{34}$ and $H^i(S^2\sQ|_S) = 0$ for $i>0$.
\item \label{theorem:cohomology_S_6} The natural restriction map $H^0(S^2\sQ) \to H^0(S^2\sQ|_S)$ is a surjective map with two dimensional kernel $\sP$.
\item \label{theorem:cohomology_S_7} $\sQ|_S$ is simple and semi-rigid, i.e., $H^0(S^2\sQ|_S (-h)) =0$ and $H^1(S^2\sQ|_S (-h)) =\bC^2$.
\item \label{theorem:cohomology_S_8} For $z \in C$, $H^0(\sU_z|_S) = \bC^8$ and $H^i(\sU_z|_S) = 0$ for $i>0$.
\item \label{theorem:cohomology_S_9} For $z \in C$, the natural restriction map $H^0(\sU_z) \to H^0(\sU_z|_S)$ is an isomorphism for $z \neq  p_i$.
\item \label{theorem:cohomology_S_10} The natural restriction map $H^0(\sU_{p_i}) \to H^0(\sU_{p_i}|_S)$ has one dimensional kernel $\langle s_i\rangle$.
\end{enumerate}
\end{theorem}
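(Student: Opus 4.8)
The plan is to reduce all ten statements to cohomology computations on the ambient orthogonal Grassmannian $\Gr(\bC^8,2,q)$ (equivalently its variant $\Gr(\bC^8,2,\sP) \simeq \sN^-$), and then to propagate information down to $S$ through the two Koszul resolutions associated to the regular sections $s_1$ and $s_2$. Recall that $S = (s_1)_0 \cap (s_2)_0$ is cut out inside $\sN^-$ by sections of the rank-two bundles $\sU_{p_1}$ and $\sU_{p_2}$, so for any vector bundle $\sE$ on $\sN^-$ there is a Koszul complex
\[
0 \to \sE \otimes \textstyle\bigwedge^2(\sU_{p_1}^\vee \oplus \sU_{p_2}^\vee) \to \sE \otimes (\sU_{p_1}^\vee \oplus \sU_{p_2}^\vee) \to \sE \to \sE|_S \to 0,
\]
which I would split into two short exact sequences by first restricting to the intermediate Fano threefold $Y = (s_1)_0$ and then to $S \subset Y$. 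The determinants satisfy $\det \sU_{p_i} = \alpha|_{\cdots}$, so the twists appearing in the Koszul terms are by $\sO(-\alpha)$ and $\sO(-2\alpha)$; this is exactly why the Euler characteristics computed in Lemma~\ref{lemma:cohomology_groups_on_N} (for instance $\chi(\sU_p(-\alpha))=0$) are the relevant inputs.

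First I would set up the Borel-Weil-Bott machinery on the honest homogeneous space $\Gr(\bC^8,2,q)$ for a nondegenerate $q$, where $\sQ$, $\sS_\pm$, $S^2\sQ$ and the tautological line bundle $\sO(1)$ are all homogeneous, and compute the global sections and higher cohomology of the relevant bundles together with their twists by $\sO(-\alpha)$ and $\sO(-2\alpha)$. By triality of $D_4$, the spinor bundle on one model corresponds to the universal quotient bundle on another, so the spinor-bundle statements for $\sU_z$ and the quotient-bundle statements for $\sQ$ are two faces of the same Borel-Weil-Bott calculation. These computations give the cohomology of the corresponding restricted bundles on $\sN^- = \Gr(\bC^8,2,\sP)$ after accounting for the two quadric conditions defining $\sN^-$ inside $\Gr(\bC^8,2)$; this is where the numerical values $\bC^8$, $\bC^{34}$, and the kernel dimensions $2$ enter. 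The values $34 = \dim H^0(S^2\sQ|_S)$ and the two-dimensional kernel $\sP$ in \ref{theorem:cohomology_S_6} should emerge as $H^0(S^2\sQ)$ on $\sN^-$ minus the pencil $\sP \subset S^2\bC^8$ of quadrics that already vanish on $\sN^-$.

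Next I would run each bundle $\sE \in \{\sO(1), \sQ, S^2\sQ, S^2\sQ(-h), \sU_z\}$ through the Koszul complex. The strategy for statements \ref{theorem:cohomology_S_3}, \ref{theorem:cohomology_S_5}, \ref{theorem:cohomology_S_8} (the groups $H^\bullet(\sE|_S)$) is to show that the higher cohomology of all intermediate Koszul terms $\sE\otimes\sU_{p_i}^\vee$ and $\sE\otimes\sU_{p_1}^\vee\otimes\sU_{p_2}^\vee$ vanishes, so that $H^0(\sE|_S)$ and the vanishing $H^{>0}(\sE|_S)=0$ follow directly from the homogeneous computations on $\sN^-$; the surjectivity/injectivity statements \ref{theorem:cohomology_S_2}, \ref{theorem:cohomology_S_4}, \ref{theorem:cohomology_S_6}, \ref{theorem:cohomology_S_9}, \ref{theorem:cohomology_S_10} then follow from the same long exact sequences, with the kernel dimensions controlled by $H^1$ of the first Koszul term. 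For the semi-rigidity statement \ref{theorem:cohomology_S_7}, I would identify $H^i(S^2\sQ|_S(-h))$ with $\Ext^i(\sQ|_S,\sQ|_S)$-type groups via $\End(\sQ|_S) \cong \sO_S \oplus (\text{trace-free part})$ and Serre duality on the K3 surface $S$, reading off $H^0 = 0$ and $H^1 = \bC^2$ from the already-computed $S^2\sQ|_S$ data; the value $2 = \dim\sP$ is consistent with $T = \sM_S(2,h,6)$ being a K3 surface (two-dimensional moduli) and matches Proposition~\ref{proposition:restricted_VB_on_C}. For the subtle point \ref{theorem:cohomology_S_10}, the one-dimensional kernel $\langle s_i\rangle$ comes from the tautological fact that $s_i \in H^0(\sU_{p_i})$ vanishes on $S$ by construction, and I would argue the kernel is \emph{exactly} one-dimensional by comparing $h^0(\sU_{p_i})=8$ on $\sN^-$ with $h^0(\sU_{p_i}|_S)=8$ from \ref{theorem:cohomology_S_8}, forcing the restriction to have corank one; the case distinction $z = p_i$ versus $z \neq p_i$ in \ref{theorem:cohomology_S_9}–\ref{theorem:cohomology_S_10} reflects precisely which spinors vanish on $S$.

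The main obstacle I expect is the bookkeeping for $S^2\sQ$ and its twist: unlike $\sQ$ and $\sU_z$, the bundle $S^2\sQ$ is not irreducible as a homogeneous bundle after restriction, and computing $H^\bullet(S^2\sQ\otimes\sU_{p_i}^\vee)$ and the double-twist term on $\sN^-$ requires carefully decomposing tensor products into irreducible $\SO(7)$- or $\Spin(7)$-summands and applying Bott's theorem to each, while simultaneously tracking the two quadric conditions that cut $\sN^-$ out of the full Grassmannian. In particular, isolating the precise two-dimensional kernel in \ref{theorem:cohomology_S_6} and confirming $H^1(S^2\sQ|_S(-h))=\bC^2$ in \ref{theorem:cohomology_S_7}—rather than merely bounding these dimensions—will be the delicate part, since a naive Euler-characteristic count does not by itself separate $H^0$ from $H^1$; here I would lean on the geometric input that $S$ is a genuine K3 surface (so $\chi(\sO_S)=2$ and Serre duality are available) together with the semi-rigidity expected of a point of the smooth K3 moduli space $T=\sM_S(2,h,6)$.
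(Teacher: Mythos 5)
Your high-level strategy --- Koszul complexes for the two sections combined with Borel--Weil--Bott on homogeneous spaces, plus the $D_4$ triality to convert statements about $\sQ|_S$ into statements about $\sU_z|_S$ --- is indeed the skeleton of the paper's argument. But there are two genuine gaps. First, the mixed Koszul term $\sE\otimes\sU_{p_1}^\vee\otimes\sU_{p_2}^\vee$ cannot be computed by Bott's theorem on a single orthogonal Grassmannian: for general $p_1,p_2$ the bundles $\sU_{p_i}$ are restrictions of spinor bundles living on the \emph{different} quadrics $\Gr(\bC^8,2,f(p_1))$ and $\Gr(\bC^8,2,f(p_2))$ in the pencil, and there is no homogeneous space on which both are homogeneous. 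The paper's essential device, which your plan omits, is to push everything into $D^b(\Gr(\bC^8,2))$ via the base-change isomorphism of Proposition~\ref{proposition:N-_Gr}, replace each $(\widetilde j_{q_i})_*\widetilde\sS_i$ by its resolution in terms of the bundles $\Sigma^\alpha\sK$, and only then apply Borel--Weil--Bott (on the type $A_7$ Grassmannian, as in Lemma~\ref{lemma:Sigma_K_Sigma_K_-2}). Without this step, Lemmas \ref{lemma:U_Udual}, \ref{lemma_U_U_U_-2} and \ref{lemma:S2Q_Udual_Udual} --- hence parts \ref{theorem:cohomology_S_2}, \ref{theorem:cohomology_S_9}, \ref{theorem:cohomology_S_10} and everything involving $S^2\sQ$ --- are out of reach.

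Second, your claim that the vanishing of higher cohomology of the intermediate Koszul terms lets the restriction statements ``follow directly'' fails for $S^2\sQ$: by Lemma~\ref{lemmaS2Q_N} one has $H^2(S^2\sQ|_{\sN^-}(-1))=\bC$, $H^3(S^2\sQ|_{\sN^-}(-2))=\bC^2$ and $H^4(S^2\sQ|_{\sN^-}(-3))=\bC$, so the spectral sequence of the Koszul complex does not degenerate and parts \ref{theorem:cohomology_S_5}--\ref{theorem:cohomology_S_7} require showing that explicit connecting maps between these nonzero groups are isomorphisms (resp.\ injective); the paper does this by chasing two double complexes built from diagram~\eqref{diag:S_Gr} and identifying the maps with the natural projections $A_1\oplus A_2\to A_j$. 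You flag this as the delicate point but propose to resolve it by invoking the semi-rigidity expected of a point of $T=\sM_S(2,h,6)$ --- this is circular, since $H^0(S^2\sQ|_S(-h))=0$ and $H^1(S^2\sQ|_S(-h))=\bC^2$ are precisely what must be proved before one knows $\sQ|_S$ defines a smooth point of a two-dimensional moduli space. A smaller but real gap of the same kind: in part \ref{theorem:cohomology_S_10}, comparing $h^0(\sU_{p_i})=8$ with $h^0(\sU_{p_i}|_S)=8$ and noting $s_i$ lies in the kernel only gives kernel and cokernel each of dimension at least one; it does not force the kernel to be exactly $\langle s_i\rangle$. That too must be extracted from the Koszul complex \eqref{sequence:N_S} together with the vanishings of Lemmas \ref{lemma:U}, \ref{lemma:U_Udual} and \ref{lemma_U_U_U_-2}.
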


By Theorem~\ref{theorem:cohomology_S}~\ref{theorem:cohomology_S_8} and the Serre duality, we have the following corollary, which will be needed to ensure the stability of $\sU_z|_S$.
\begin{corollary}\label{corollary:stability}
$H^0(\sU_z|_S (-h)) =0$.
\end{corollary}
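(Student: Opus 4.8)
The plan is to obtain the vanishing as a direct formal consequence of the cohomology computation in Theorem~\ref{theorem:cohomology_S}~\ref{theorem:cohomology_S_8} by dualizing and invoking Serre duality. The key structural input is that $\sU_z|_S$ is a rank two bundle with determinant $\sO_S(h)$: its Mukai vector is $(2,h,6)$, so in particular $c_1(\sU_z|_S) = h$. For any rank two vector bundle $E$ the pairing $E \otimes E \to \wedge^2 E = \det E$ induces the canonical self-duality isomorphism $E^\vee \cong E \otimes (\det E)^{-1}$. Applied to $E = \sU_z|_S$, this gives $(\sU_z|_S)^\vee \cong \sU_z|_S(-h)$, which is exactly the twist appearing in the corollary.

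First I would rewrite the group in question as $H^0(\sU_z|_S(-h)) \cong H^0\bigl((\sU_z|_S)^\vee\bigr)$ using the isomorphism above. Since $S$ is a K3 surface, its canonical bundle is trivial, so Serre duality yields $H^0\bigl((\sU_z|_S)^\vee\bigr) \cong H^2(\sU_z|_S)^\vee$. By Theorem~\ref{theorem:cohomology_S}~\ref{theorem:cohomology_S_8} we have $H^i(\sU_z|_S) = 0$ for all $i>0$, and in particular $H^2(\sU_z|_S) = 0$. Combining these identifications gives $H^0(\sU_z|_S(-h)) = 0$, as desired. (Equivalently, one may apply Serre duality in a single step, noting that $(\sU_z|_S(-h))^\vee \cong \sU_z|_S$, so that $H^0(\sU_z|_S(-h)) \cong H^2(\sU_z|_S)^\vee = 0$.)

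There is essentially no obstacle here, since the statement is a formal corollary of the already-established vanishing $H^2(\sU_z|_S) = 0$. The only point requiring a moment's care is the rank two self-duality isomorphism, which is precisely what converts the negative twist $(-h)$ into a dual bundle and thereby reduces the vanishing of $H^0$ to the vanishing of $H^2$ of the untwisted bundle $\sU_z|_S$ computed earlier. I record this explicitly because the same self-duality relation, together with $c_1 = h$, is what makes $\sU_z|_S$ behave like a spherical/semi-rigid bundle and underlies the stability assertion that this corollary feeds into.
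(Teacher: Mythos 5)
Your proof is correct and follows exactly the route the paper indicates (the paper simply states that the corollary follows from Theorem~\ref{theorem:cohomology_S}~\ref{theorem:cohomology_S_8} and Serre duality). You have merely made explicit the rank-two self-duality $(\sU_z|_S)^\vee \cong \sU_z|_S(-h)$ coming from $c_1(\sU_z|_S)=h$, which is the intended bridge between the twist by $-h$ and the vanishing of $H^2(\sU_z|_S)$.
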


The proof is divided into three parts (= Propositions~\ref{proposition:cohomology_S}, \ref{proposition:restriction_S2Q} and \ref{proposition:simple_semirigid}).
We deduce these propositions by reducing it to cohomological computations on homogeneous varieties.
Such computations on homogeneous varieties will be done by using the Borel-Weil-Bott theorem and the Littlewood-Richardson rule (see Section~\ref{subsect:cohomology_HB}).

\subsection{Preliminaries}\label{subsection:preliminaries_FK}

Here we recall some results from \cite{FK18}.
In the following, $\Sigma^{\alpha}$ denotes the Schur functor associated to a Young diagram $\alpha$.
We adhere the same convention as \cite{FK18}.
For example $\Sigma^{(a)}\sE = S^a\sE$  and  $\Sigma^{(1, \dots ,1)}\sE = \bigwedge^a\sE$ for $a \in \bN$.

Let $\sP$ be the pencil of quadric forms associated to $C$, then $\sN^- = \Gr(\bC^8,2,\sP)$.
We denote by $f \colon C \to \bP^1$ the hyperelliptic covering, and identify the image $\bP^1$ with the pencil of quadric forms $\sP$.
Corresponding to the $8$ branching points, there are exactly $8$ quadric forms in $\sP$ of corank one.
Let $q$ be one of these eight degenerate quadric forms, and $I$ be the image of $q \colon (\bC^{8})^{\vee} \to \bC^8$.
Then $q$ induces a non-degenerate quadric form $q' \in S^2I$.
Set
\[
\widetilde \Gr(\bC^8,2,q) \coloneqq \{(Q_1,Q_2) \in \Gr(\bC^8,2,q)\times \Gr(I,2,q') \mid \Ker(I \to Q_2) \subset \Ker(\bC^8 \to Q_1)\}.
\]
Then $\widetilde\Gr(\bC^8,2,q)$ admits two natural projections:
\[
\xymatrix{
&\widetilde \Gr(\bC^8,2,q) \ar[ld]_-{p_1} \ar[rd]^-{p_2}& \\
\Gr(\bC^8,2,q) && \Gr(I,2,q')
}
\]
By \cite[Lemma~2.2]{FK18}, the map $p_1$ is a birational map and $\sN^- \subset  \Gr(\bC^8,2,q) $ avoids the indeterminacy locus of $p_1^{-1}$.
Thus the inclusion map $i_q \colon \sN^- \to  \Gr(\bC^8,2,q) $ induces an inclusion map $\widetilde i_q \colon \sN^- \to  \widetilde\Gr(\bC^8,2,q) $.
We denote by  $ j_q$ the inclusion map $j_q \colon \Gr(\bC^8,2,q) \to \Gr(\bC^8,2)$ and  $\widetilde j_q$ the composite $j_q\circ p_1 \colon \widetilde\Gr(\bC^8,2,q)\to \Gr(\bC^8,2,q) \to \Gr(\bC^8,2)$.
For a non-degenerate quadric form $q \in \sP$, we simply set $ \widetilde\Gr(\bC^8,2,q) \coloneqq \Gr(\bC^8,2,q)$, $\widetilde i_q \coloneqq i_q$ and $\widetilde j_q \coloneqq j_q$.
Then, for arbitrary two points $q_1 \neq q_2 \in \sP$, we have the following diagram:
\begin{equation}\label{diag:N-_Gr}
\xymatrix{
\sN^- \ar[r]^-{\widetilde i_{q_1}} \ar[d]_-{\widetilde i_{q_2}}&\widetilde \Gr(\bC^8,2,q_1)  \ar[d]^-{\widetilde j_{q_1}}\\
\widetilde \Gr(\bC^8,2,q_2) \ar[r]_-{\widetilde j_{q_2}}& \Gr(\bC^8,2)
}
\end{equation}

If $q$ degenerates, we denote by $\widetilde\sS$ the bundle $p_2^* \sS$.
For a non-degenerate quadric form $q \in \sP$, we simply set $\widetilde\sS_\pm = \sS_\pm$.

\begin{proposition}[{\cite[Lemma 2.4,  Proposition 2.7, Proposition 3.3, Lemma 3.5]{FK18}}]\label{proposition:N-_Gr}
The following hold:
\begin{enumerate}
\item Let $q \in \sP$ be a quadric form:
\begin{itemize}
\item If $q$ does not degenerate, then $f^{-1}(q) = \{x_+, x_-\}$ and $\sU_{x_{\pm}} \simeq (\widetilde i_{q})^* \widetilde\sS_\pm$.
\item If $q$ degenerates, then $f^{-1}(q) = \{x\}$ and $\sU_{x} \simeq (\widetilde i_{q})^* \widetilde\sS$.
\end{itemize}
\item Diagram \eqref{diag:N-_Gr} is a fiber product, and we have the base-change isomorphism
\[
(\widetilde i_{q_1})_* L(\widetilde i_{q_2})^* \simeq   L(\widetilde j_{q_1})^* (\widetilde j_{q_2})_*
\]
as functors $D^b(\widetilde\Gr(\bC^8,2,q_2)) \to D^b(\widetilde \Gr(\bC^8,2,q_1))$.
\item If $q$ does not degenerate, then  $(\widetilde j_{q})_*\widetilde\sS_{\pm}$ admits the following resolution on $\Gr(\bC^8,2)$:
\[
0 \to \sO(-2)^{\oplus *} \to \sK(-1)^{\oplus *} \to\sK^\vee (-1)^{\oplus *} \to\sO^{\oplus *} \to (\widetilde j_{q})_*\widetilde\sS_{\pm}\to 0.
\]
\item If $q$ degenerates, then $(\widetilde j_{q})_*  \widetilde\sS$ is contained in the triangulated subcategory
\[
\langle \Sigma^\alpha\sK \rangle \subset D^b(\Gr(\bC^8,2)),
\]
where $\alpha$ runs
\[
\{(0),(1,1,1,1,1),(1,1,1,1,1,1),(2,1,1,1,1,1),(2,2,2,2,2,2)\}.
\]
More precisely, we have
\[
(\widetilde j_{q})_*  \widetilde \sS \in \langle \sO, \bigwedge^5\sK \simeq \sK^\vee(-1), \bigwedge^6\sK \simeq \sO(-1), \sK(-1), \sO(-2) \rangle.
\]
\end{enumerate}
\end{proposition}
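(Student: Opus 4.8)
These four assertions are collected from \cite{FK18} and restated in the present notation, so the plan is to recall the structure of their proofs and to check that the identifications are the intended ones. Statements (1) and (2) are geometric and rest on the Desale--Ramanan isomorphism $\sN^- \simeq \Gr(\bC^8,2,\sP)$ of \cite{DR76} together with the construction of the resolution $\widetilde\Gr(\bC^8,2,q)$ introduced above, whereas (3) and (4) are homological: they describe locally free resolutions of pushed-forward spinor bundles on $\Gr(\bC^8,2)$, which I would obtain by the Borel--Weil--Bott theorem.

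For part (1), I would track the correspondence between points of $C$ and quadrics in the pencil through the hyperelliptic double cover $f\colon C \to \bP^1 = \sP$. Over a nondegenerate $q \in \sP$ the orthogonal Grassmannian $\Gr(\bC^8,2,q)$ is a homogeneous space of $\SO(q)$ carrying the two spinor bundles $\sS_+$, $\sS_-$ (whose section spaces are the two half-spin representations), and the two preimages $x_\pm = f^{-1}(q)$ should match the two restrictions $(\widetilde i_q)^*\widetilde\sS_\pm$. The identification with $\sU_{x_\pm}$ rather than some twist is forced by comparing Chern classes: the normalization of $c_1(\sU)$ and the value $c_2(\sU_p) = (\alpha^2-\beta)/4$ computed in Section~\ref{section:N-}, together with the Hecke-correspondence description of $\sU_p$ in Section~\ref{sect:hecke}, remove the twist ambiguity. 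Over a corank-one $q$ the orthogonal Grassmannian is singular, so the spinor bundle only lives on the resolution as $\widetilde\sS = p_2^*\sS$, and the single point $x = f^{-1}(q)$ gives $(\widetilde i_q)^*\widetilde\sS$; the local point here is that $\sN^-$ avoids the indeterminacy locus of $p_1^{-1}$, which is what permits $\widetilde i_q$ to be defined.

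For part (2), the square \eqref{diag:N-_Gr} is a fiber product by the very definition of $\widetilde\Gr$, and the real content is that it is Tor-independent: when the $q_i$ are nondegenerate, $\Gr(\bC^8,2,q_i)$ are smooth codimension-three subvarieties of $\Gr(\bC^8,2)$ cut out by sections of $S^2\sK^\vee$, and since $\sN^-$ is smooth of the expected dimension $6 = 12-3-3$ by \cite{DR76}, they meet Tor-independently; the degenerate case is handled on the resolutions $\widetilde\Gr$. The base-change isomorphism $(\widetilde i_{q_1})_* L(\widetilde i_{q_2})^* \simeq L(\widetilde j_{q_1})^* (\widetilde j_{q_2})_*$ then follows from derived base change for a Tor-independent square. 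For part (3), I would resolve the spinor bundle as a coherent sheaf on $\Gr(\bC^8,2)$ by homogeneous bundles built from the universal subbundle $\sK$: the four-term complex with terms $\sO(-2)$, $\sK(-1)$, $\sK^\vee(-1)$, $\sO$ (of length three, matching the codimension) is produced by pushing the spinor bundle forward from $\Gr(\bC^8,2,q)$ and reading off the surviving terms, multiplicities, and differentials from Bott's theorem applied to the relevant weights.

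The main obstacle is part (4). When $q$ degenerates the orthogonal Grassmannian is singular, so the spinor bundle cannot be resolved directly on it; instead one must compute $(\widetilde j_q)_* = (j_q)_*(p_1)_*$ through the desingularization $\widetilde\Gr(\bC^8,2,q)$, first evaluating $R(p_1)_*\widetilde\sS$ and then locating the result inside $D^b(\Gr(\bC^8,2))$. Showing that the answer lies in the subcategory generated by exactly the listed Schur functors $\Sigma^\alpha\sK$ — equivalently in $\langle \sO, \bigwedge^5\sK, \bigwedge^6\sK, \sK(-1), \sO(-2)\rangle$ — requires careful Borel--Weil--Bott bookkeeping of which weights survive the pushforward along $p_1$ and contribute nonzero cohomology. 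This is the delicate step where the bulk of the computation in \cite{FK18} is concentrated, and it is the part I would expect to occupy most of the work.
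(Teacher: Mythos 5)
The paper does not prove this proposition at all: it is imported verbatim from \cite{FK18} (Lemma 2.4, Proposition 2.7, Proposition 3.3, Lemma 3.5), so there is no in-paper argument to compare against. Your reconstruction is faithful to how those results are obtained and to how the present paper argues in the analogous situation (compare the proof of Lemma~\ref{lemma:S_S}, which runs the Kapranov collection and Borel--Weil--Bott exactly as you describe for parts (3) and (4)); the Tor-independence/derived base change reading of part (2) and the resolution-of-singularities reading of part (4) are also the intended ones. One slip to correct: the orthogonal Grassmannian $\Gr(\bC^8,2,q)$ of isotropic \emph{quotients} is the zero locus of the section of $S^2\sQ$ (rank $3$, matching the codimension) induced by $q\in S^2\bC^8$, not of $S^2\sK^\vee$ (which has rank $21$); this is consistent with the Koszul complexes \eqref{sequence:OG_N}--\eqref{sequence:Gr_N} used throughout the paper, and with your own count $6=12-3-3$.
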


\subsection{Koszul complexes}\label{subsection:Koszul}
Recall that the K3 surface $S$ is defined by two sections $s_i \in \sU_{p_i}$ ($i = 1$, $2$).
Set $M_i \coloneqq (s_i)_0 \subset \sN^-$ and $q_i \coloneqq f(p_i)$.
The bundle $\sU_{p_i}$ is the restriction of one of the spinor bundles, say $\sS_+$,  on $\Gr(\bC^8,2,q_i)$, and we have 
$H^0(\sU_{p_i}) \simeq H^0(\sS_+)$.
By an abuse of notation, we denote also by $s_i$ the corresponding spinor in $H^0(\sS_+)$, and set $L_i \coloneqq (s_i)_0 \subset  \Gr(\bC^8,2,q_i)$.
Then we have the following diagram whose squares are given by fiber products with expected dimensions:
\begin{equation}\label{diag:S_Gr}
  \xymatrix{
 S\ar[r]^-{s_1=0} \ar[d]_-{s_2=0}&M_2\ar[r]^-{q_1=0} \ar[d]_-{s_2=0}&L_2 \ar[d]_-{s_2=0}\\
M_1\ar[r]^-{s_1=0} \ar[d]_-{q_2=0}&\sN^- \ar[r]^-{q_1=0} \ar[d]_-{q_2=0} & \Gr(\bC^8,2,q_2)  \ar[d]_-{q_2=0}\\
L_1 \ar[r]^-{s_1=0} & \Gr(\bC^8,2,q_1) \ar[r]^-{q_1=0} & \Gr(\bC^8,2).
}
\end{equation}
Thus we have the following exact Koszul complexes ($i \neq j$):
\begin{align}
 &\bigwedge^{\bullet} \sU_{p_i}^\vee |_{M_j} \to  \sO_S \to 0;\label{sequence:M_S}\\
 &\bigwedge^{\bullet} \sU_{p_i}^\vee  \to  \sO_{M_i} \to 0;  \label{sequence:N_M}\\
 &\bigwedge^{\bullet} \sS_+^\vee  \to \sO_{L_i} \to 0;  \label{sequence:OG_L}\\
 &\bigwedge^{\bullet} S^2\sQ^\vee |_{L_i} \to \sO_{M_i} \to 0;  \label{sequence:L_M}\\
 &\bigwedge^{\bullet} S^2\sQ^\vee |_{\Gr(\bC^8,2,q_i)} \to  \sO_{\sN^-} \to 0;  \label{sequence:OG_N}\\
 &\bigwedge^{\bullet} S^2\sQ^\vee \to  \sO_{\Gr(\bC^8,2,q_i)} \to 0;  \label{sequence:Gr_OG}\\
 &\bigwedge^{\bullet} (\sU_{p_1}\oplus\sU_{p_2})^\vee  \to \sO_{S} \to 0; \label{sequence:N_S}\\
 &\bigwedge^{\bullet} (S^2\sQ \oplus S^2\sQ)^\vee \to  \sO_{\sN^-} \to 0.  \label{sequence:Gr_N}
 \end{align}

Note that, by the triality of $D_4$, we have $L_i \simeq \Gr(\bC^7,2,q')$ for some non-degenerate quadric form $q' \in S^2\bC^7$.
Via this isomorphism, we have:
\begin{itemize}
\item $\sQ|_{L_i} \simeq \sS_-|_{L_i} \simeq \sS' $,  where $\sS'$ is the spinor bundle on $\Gr(\bC^7,2,q')$,
\item $ \sS_+|_{L_i} \simeq \sQ'$, where $\sQ'$ is the universal quotient bundle on $\Gr(\bC^7,2,q')$.
\end{itemize}
Here $\sS_+$ is the bundle containing the section $s_i$.

\[
\dynkin[%
edge length=.75cm, labels*={\sQ,,\sS_+\ni s_i,\sS_-}, involutions={14}]{D}{o*oo}
\rightsquigarrow
\dynkin[%
edge length=.75cm,backwards,labels={\sQ',,\sS'}]{B}{o*o}
\]

In the rest of this section, $\sO(1)$ is the polarization on each subvariety of $\Gr(\bC^8,2)$ corresponding to the Pl\"ucker embedding of $\Gr(\bC^8,2)$.
For example, $\sO_S(1) = \sO_S(h)$ and $\sO_{\sN^-}(1) = \sO_{\sN^-}(\alpha)$.

\subsection{Cohomology groups of $\sO_S$, $\sU_z|_S$ and $\sQ|_S$}
We start to prove Theorem~\ref{theorem:cohomology_S}.
Here we show the following:
\begin{proposition}\label{proposition:cohomology_S}
The following hold:
\begin{enumerate}
\item\label{proposition:cohomology_S_2} The restriction map $H^0(\sO_{\sN^-}(1)) \to H^0(\sO_S(1))$ is a surjective map.
\item \label{proposition:cohomology_S_3} $H^0(\sQ|_S) = \bC^8$ and $H^i(\sQ|_S) = 0$ for $i>0$.
\item \label{proposition:cohomology_S_4} The natural restriction map $H^0(\sQ) \to H^0(\sQ|_S)$ is an isomorphism.
\item \label{proposition:cohomology_S_5} For $z \in C$, $H^0(\sU_z|_S) = \bC^8$ and $H^i(\sU_z|_S) = 0$ for $i>0$.
\item \label{proposition:cohomology_S_6} For $z \in C$ with $z \neq p_i$, the natural restriction map $H^0(\sU_z) \to H^0(\sU_z|_S)$ is an isomorphism.
\item \label{proposition:cohomology_S_7} The natural restriction map $H^0(\sU_{p_i}) \to H^0(\sU_{p_i}|_S)$ has one dimensional kernel $\langle s_i\rangle$.
\end{enumerate}
\end{proposition}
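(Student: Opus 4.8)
The plan is to reduce all six assertions to Borel--Weil--Bott computations on the homogeneous Grassmannian $\Gr(\bC^8,2)$, using the lattice of Koszul complexes \eqref{sequence:M_S}--\eqref{sequence:Gr_N} together with the resolutions of the pushed-forward spinor bundles recorded in Proposition~\ref{proposition:N-_Gr}. For a locally free sheaf $\sF$ on $\sN^-$, I would resolve $\sO_S$ by the Koszul complex \eqref{sequence:N_S} attached to $s_1\oplus s_2\in H^0(\sU_{p_1}\oplus\sU_{p_2})$, tensor by $\sF$, and extract $H^\bullet(S,\sF|_S)$ from the resulting hypercohomology spectral sequence, whose $E_1$-terms are the groups $H^q\bigl(\sN^-,\textstyle\bigwedge^p(\sU_{p_1}\oplus\sU_{p_2})^\vee\otimes\sF\bigr)$. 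Since $\bigwedge^p(\sU_{p_1}\oplus\sU_{p_2})^\vee=\bigoplus_{a+b=p}\bigwedge^a\sU_{p_1}^\vee\otimes\bigwedge^b\sU_{p_2}^\vee$, and each $\sU_{p_i}$ is a restricted spinor bundle with $\det\sU_{p_i}=\sO(\alpha)$ (so that $\sU_{p_i}^\vee\simeq\sU_{p_i}(-\alpha)$ and $\bigwedge^2\sU_{p_i}^\vee=\sO(-\alpha)$), every $E_1$-entry is the $\sN^-$-cohomology of a tensor product of at most three restricted spinor bundles, twisted by a power of $\sO(\alpha)$. Running this with $\sF=\sO(\alpha)$ will give \ref{proposition:cohomology_S_2}, with $\sF=\sQ$ parts \ref{proposition:cohomology_S_3}--\ref{proposition:cohomology_S_4}, and with $\sF=\sU_z$ parts \ref{proposition:cohomology_S_5}--\ref{proposition:cohomology_S_7}.

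The second stage computes each such $\sN^-$-group by climbing into $\Gr(\bC^8,2)$. I would use the Koszul complexes \eqref{sequence:OG_N} and \eqref{sequence:Gr_OG}, which resolve $\sO_{\sN^-}$ inside $\Gr(\bC^8,2,q)$ and then $\sO_{\Gr(\bC^8,2,q)}$ inside $\Gr(\bC^8,2)$ by exterior powers of $(S^2\sQ)^\vee$, together with the four- and five-term resolutions of $(\widetilde j_q)_*\widetilde\sS_\pm$ and the description of $(\widetilde j_q)_*\widetilde\sS$ from Proposition~\ref{proposition:N-_Gr}(3)--(4). This rewrites each group as the $\Gr(\bC^8,2)$-hypercohomology of a complex whose terms are $\Sigma^{\lambda}\sK$, $\Sigma^{\lambda}\sQ$ and their $\sO(j)$-twists; on $\Gr(\bC^8,2)$ these are read off mechanically by Borel--Weil--Bott after decomposing the tensor products with the Littlewood--Richardson rule, exactly the machinery of Section~\ref{subsect:cohomology_HB}. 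Degenerate $q$ (corank one, over the eight branch points of $f\colon C\to\bP^1$) are handled through the birational projection $p_1$ of diagram~\eqref{diag:N-_Gr} and the base-change isomorphism of Proposition~\ref{proposition:N-_Gr}(2), so that the final answer does not depend on whether $z$ lies over a branch point.

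For the comparison maps in \ref{proposition:cohomology_S_4}, \ref{proposition:cohomology_S_6} and \ref{proposition:cohomology_S_7} I would track the edge homomorphism of the same spectral sequence. The restriction $H^0(\sN^-,\sF)\to H^0(S,\sF|_S)$ is an isomorphism as soon as the anti-diagonal terms $H^p\bigl(\sN^-,\bigwedge^p(\sU_{p_1}\oplus\sU_{p_2})^\vee\otimes\sF\bigr)$ vanish for $p\ge 1$, which the first two stages establish for $\sF=\sQ$ and for $\sF=\sU_z$ with $z\ne p_i$. The case $z=p_i$ is genuinely exceptional: the $p=1$ summand now contains $\sU_{p_i}^\vee\otimes\sU_{p_i}\supset\sO_{\sN^-}$, whose surviving $H^0=\bC$ produces exactly the one-dimensional kernel. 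I would finish \ref{proposition:cohomology_S_7} by identifying this class with $s_i$ itself---which lies in the kernel \emph{a priori}, since $S\subset(s_i)_0$ forces $s_i|_S=0$---by checking that the Koszul differential carries the trace summand of $\sU_{p_i}^\vee\otimes\sU_{p_i}$ to $[s_i]$, so that no further cancellation can occur.

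The main obstacle is the combinatorial bookkeeping in the middle stage. Tensor products of two or three spinor bundles, pushed to $\Gr(\bC^8,2)$ through two nested Koszul resolutions and the spinor resolutions of Proposition~\ref{proposition:N-_Gr}, unfold into long complexes of Schur bundles, and it is not enough to tabulate the Borel--Weil--Bott outputs: one must verify that the various spectral sequences degenerate, i.e.\ that the surviving $E_1$-entries cannot cancel against one another through higher differentials. This degeneration is precisely what upgrades the resulting dimension bounds to the exact equalities $H^0(\sQ|_S)=\bC^8$, $H^0(\sU_z|_S)=\bC^8$ and $H^{>0}=0$, and what pins the kernel in \ref{proposition:cohomology_S_7} to dimension exactly one; establishing it carefully is where the real work lies.
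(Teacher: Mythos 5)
Your plan is essentially the paper's proof. The paper also resolves $\sO_S$ by the Koszul complex \eqref{sequence:N_S}, decomposes $\bigwedge^p(\sU_{p_1}\oplus\sU_{p_2})^\vee$ using $\bigwedge^2\sU_{p_i}^\vee\simeq\sO(-1)$ and $\sU_{p_i}^\vee\simeq\sU_{p_i}(-1)$, and computes exactly the $E_1$-terms you list in Lemmas~\ref{lemma_O_N}, \ref{lemma:U}, \ref{lemma:U_Udual} and \ref{lemma_U_U_U_-2}, each obtained by pushing forward to $\Gr(\bC^8,2)$ via the base-change isomorphism and the spinor resolutions of Proposition~\ref{proposition:N-_Gr} and then applying Borel--Weil--Bott with the Littlewood--Richardson rule; the kernel in \ref{proposition:cohomology_S_7} is likewise pinned down by the surviving $H^0(\sU_{p_i}\otimes\sU_{p_i}^\vee)=\bC$ mapping onto $\langle s_i\rangle$ (with the companion $H^1=\bC$ restoring $h^0(\sU_{p_i}|_S)=8$). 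The one place you diverge is parts \ref{proposition:cohomology_S_3}--\ref{proposition:cohomology_S_4}: rather than rerunning the machinery with $\sF=\sQ$ (which would require a fresh set of Bott computations for $\sQ$ tensored with spinor bundles), the paper uses the triality identifications of Section~\ref{subsection:Koszul} to see that $\sQ|_S\simeq\sS_-|_S=\sU_z|_S$ for a suitable $z\in C$ distinct from $p_1,p_2$, so that \ref{proposition:cohomology_S_3}--\ref{proposition:cohomology_S_4} follow for free from \ref{proposition:cohomology_S_5}--\ref{proposition:cohomology_S_6}; your direct route should also work but costs extra tables.
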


To prove this proposition, we calculate some cohomology groups on $\sN^-$.
We start with an easy case:
\begin{lemma}\label{lemma_O_N}
For $-2 \leq m \leq 1$, we have $H^i(\sO_{\sN^-}(m))=0$ except for the following cases:
\begin{enumerate}
\item $H^0(\sO_{\sN^-}(1))=\bC^{28}$.
\item $H^0(\sO_{\sN^-})=\bC$.
\item $H^6(\sO_{\sN^-}(-2))=\bC$.
\end{enumerate}
\end{lemma}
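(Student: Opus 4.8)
The plan is to deduce the entire lemma from three structural facts about $\sN^-$ recalled earlier in the paper — it is a \emph{smooth} Fano sixfold, $\Pic(\sN^-) = \bZ\alpha$ with $\alpha$ ample, and $-K_{\sN^-} = 2\alpha$ — together with the single Euler characteristic already recorded in Lemma~\ref{lemma:cohomology_groups_on_N}. No residue computation should be needed in the stated range $-2 \le m \le 1$, since each of these twists differs from the trivial or canonical bundle by an ample multiple of $\alpha$, precisely the situation governed by Kodaira vanishing and Serre duality.

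First I would dispose of the two nonnegative twists. Because $\sN^-$ is Fano, writing $\sO_{\sN^-} = K_{\sN^-} \otimes \sO_{\sN^-}(2\alpha)$ and applying Kodaira vanishing to the ample bundle $\sO_{\sN^-}(2\alpha)$ gives $H^i(\sO_{\sN^-}) = 0$ for $i > 0$, while connectedness gives $H^0(\sO_{\sN^-}) = \bC$; this is case~(2). For $m = 1$ I would write $\sO_{\sN^-}(\alpha) = K_{\sN^-} \otimes \sO_{\sN^-}(3\alpha)$ and again invoke Kodaira vanishing with the ample bundle $\sO_{\sN^-}(3\alpha)$, so that $H^i(\sO_{\sN^-}(\alpha)) = 0$ for $i > 0$. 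Then $h^0(\sO_{\sN^-}(\alpha)) = \chi(\sO_{\sN^-}(\alpha)) = 2^{g-1}(2^g-1) = 28$ for $g = 3$ by Lemma~\ref{lemma:cohomology_groups_on_N}, which is case~(1).

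Next I would treat $m = -1$. The bundle $\sO_{\sN^-}(-\alpha)$ is the inverse of the ample generator, so the Serre-dual form of Kodaira vanishing (namely $H^q(L^{-1}) \simeq H^{6-q}(K_{\sN^-} \otimes L)^\vee = 0$ for $q < 6$, with $L = \sO_{\sN^-}(\alpha)$) yields $H^i(\sO_{\sN^-}(-\alpha)) = 0$ for all $i < 6$. For the top degree, Serre duality gives $H^6(\sO_{\sN^-}(-\alpha)) \simeq H^0(K_{\sN^-} \otimes \sO_{\sN^-}(\alpha))^\vee = H^0(\sO_{\sN^-}(-\alpha))^\vee$, which vanishes because $-\alpha$ is anti-ample and hence has no global sections. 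Thus $H^i(\sO_{\sN^-}(-\alpha)) = 0$ for every $i$, consistent with $\chi(\sO_{\sN^-}(-\alpha)) = 0$.

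Finally, for $m = -2$ I would use the identification $\sO_{\sN^-}(-2\alpha) = K_{\sN^-}$ directly: Serre duality gives $H^i(K_{\sN^-}) \simeq H^{6-i}(\sO_{\sN^-})^\vee$, and the Fano vanishing established above forces this group to be $\bC$ when $i = 6$ and $0$ otherwise, which is case~(3). The argument is essentially formal, so I do not anticipate a genuine obstacle; the only external inputs are the smooth-Fano structure of $\sN^-$ (which makes Kodaira vanishing available) and the value $\chi(\sO_{\sN^-}(\alpha)) = 28$. If one preferred to avoid citing Lemma~\ref{lemma:cohomology_groups_on_N}, the number $28$ can alternatively be read off from the residue evaluation of $\kappa_{3g-3}[e^{2\alpha}(\tfrac{\sqrt{\beta}/2}{\sinh(\sqrt{\beta}/2)})^{2g-2}]$ exactly as in the proof of that lemma.
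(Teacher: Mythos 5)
Your argument is correct and is exactly the route the paper takes: its proof of this lemma simply cites the Kodaira vanishing theorem, Serre duality, and the computation $\chi(\sO_{\sN^-}(\alpha))=2^{g-1}(2^g-1)=28$ from Lemma~\ref{lemma:cohomology_groups_on_N}. You have merely written out the details (using $-K_{\sN^-}=2\alpha$ and $\dim\sN^-=6$) that the paper leaves implicit.
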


\begin{proof}
These follow from the Kodaira vanishing theorem, the Serre duality theorem and Lemma~\ref{lemma:cohomology_groups_on_N}.
\end{proof}

Next we calculate $H^i(\sU_z(m))$ for $z \in C$:
\begin{lemma}\label{lemma:U}
Let $z \in C$ be a point.
Then, for $-2 \leq m \leq 0$, we have $H^i(\sU_z(m))=0$ except for $H^0(\sU_z)=\bC^8$.
\end{lemma}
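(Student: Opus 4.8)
The plan is to compute the cohomology groups $H^i(\sU_z(m))$ for $-2 \leq m \leq 0$ by reducing everything to cohomology computations on the ambient orthogonal Grassmannians and ultimately on $\Gr(\bC^8,2)$, where the Borel--Weil--Bott theorem applies. First I would recall that $\sU_z$ is the restriction to $\sN^-$ of a spinor bundle on $\widetilde\Gr(\bC^8,2,q)$, where $q = f(z)$ (Proposition~\ref{proposition:N-_Gr}~(1)). The twist $\sO(1)$ is the Pl\"ucker polarization, so $\sU_z(m)$ is the restriction of $\widetilde\sS_\pm(m)$ (or $\widetilde\sS(m)$ if $q$ degenerates). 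The strategy is to relate cohomology on $\sN^-$ to cohomology on $\Gr(\bC^8,2)$ through the Koszul resolution \eqref{sequence:OG_N} of $\sO_{\sN^-}$ as an $\sO_{\Gr(\bC^8,2,q_i)}$-module, together with the resolution \eqref{sequence:Gr_OG} of $\sO_{\Gr(\bC^8,2,q)}$ by symmetric powers $S^2\sQ^\vee$ on $\Gr(\bC^8,2)$.

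The key steps, in order, are as follows. First I would push the pPushedForward sheaf $(\widetilde j_q)_* \widetilde\sS_\pm$ onto $\Gr(\bC^8,2)$ using the explicit resolutions in Proposition~\ref{proposition:N-_Gr}~(3)--(4): in the nondegenerate case $(\widetilde j_q)_*\widetilde\sS_\pm$ sits in a four-step resolution by twists of $\sO$, $\sK$, and $\sK^\vee$, while in the degenerate case it lies in the triangulated subcategory generated by a short explicit list of Schur functors $\Sigma^\alpha\sK$. Second, tensoring these resolutions with $\sO(m)$ and with the Koszul complex $\bigwedge^\bullet S^2\sQ^\vee$ computing $\sO_{\sN^-}$, I would reduce $H^i(\sU_z(m))$ to the cohomology of bundles of the form $\Sigma^\alpha\sK \otimes \Sigma^\beta\sQ \otimes \sO(\ell)$ on $\Gr(\bC^8,2)$. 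Third, each such cohomology group is computed by Borel--Weil--Bott: one writes the weight, adds $\rho$, and checks regularity, discarding every acyclic term and recording the survivors. The expected outcome is a massive cancellation, leaving only $H^0(\sU_z) = \bC^8$ (matching $h^0(\sU_z) = 2^g = 8$) and total vanishing for the twists $m = -1, -2$ and in all higher degrees.

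The main obstacle will be the bookkeeping in the spectral-sequence / hypercohomology step: combining the resolution of $(\widetilde j_q)_*\widetilde\sS_\pm$ with the Koszul complex for $\sO_{\sN^-}$ produces a double complex whose $E_1$-page has many terms, and one must verify that all the intermediate Borel--Weil--Bott contributions either vanish or cancel so that only $H^0$ in the untwisted case survives. In particular, for the degenerate $q$ (where $\sU_z = (\widetilde i_q)^*\widetilde\sS$) the decomposition into the five Schur functors $\Sigma^\alpha\sK$ listed in Proposition~\ref{proposition:N-_Gr}~(4) must be tracked carefully, and one must confirm that the degenerate and nondegenerate cases yield the same answer (as they must, since $z \mapsto \sU_z$ is a continuous family over $C$). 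I would organize the computation by fixing $m$ and then sweeping over the finitely many Schur-functor summands, using the regularity criterion to eliminate most terms immediately, and I would use the continuity of the family over $C$ as a consistency check on the final vanishing statement.
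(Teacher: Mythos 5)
Your plan is workable, but it is a genuinely different route from the one the paper takes, and it leaves the actual content to be filled in. The paper's proof of this lemma is geometric rather than representation-theoretic: it passes to the projectivization $\bP(\sU_z)$, which is Fano, so Kodaira vanishing gives $H^{i}(\sU_z)=0$ for $i>0$, and $\chi(\sU_z)=2^g=8$ (from the residue computations behind Proposition~\ref{proposition:VB_on_C}) gives $h^0=8$. For $m=-1$ it uses the Hecke correspondence: the tautological class $\xi$ on $\bP(\sU_z)$ is spanned and defines the fiber-type morphism $\sigma\colon\bP(\sU_z)\to\sN^+$ onto a $5$-fold, so Kawamata--Viehweg kills the top cohomology of $K_{\bP(\sU_z)}+3\xi$, the equality $\Psef=\Nef=\langle\xi,\pi^*\sO(1)\rangle$ forces $H^0(\sU_z(-1))=0$, and $\chi(\sU_z(-1))=0$ finishes; the case $m=-2$ is then Serre duality via $\sU_z^\vee\simeq\sU_z(-1)$ and $K_{\sN^-}=-2\alpha$. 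This argument needs no hyperelliptic hypothesis and no case division at the Weierstrass points. Your approach --- resolving $\sO_{\sN^-}$ by the Koszul complex of $S^2\sQ^\vee$ in the ambient orthogonal Grassmannian and applying Borel--Weil--Bott to $\bigwedge^kS^2\sQ^\vee\otimes\sS_\pm(m)$ --- is exactly the machinery the paper reserves for the harder statements later in Section~\ref{section:cohomology_S} (e.g.\ the vanishing of $H^*(\sS_+\otimes\sS_\pm\otimes\bigwedge^kS^2\sQ^\vee(-2))$), and a spot check confirms it works here: for $m=0$ only the $k=0$ term survives and yields the $8$-dimensional half-spin representation, while for $m=-1,-2$ every weight $w+\rho$ that occurs is singular. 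What the paper's route buys is brevity, independence of the hyperelliptic structure, and uniform treatment of all $z\in C$; what yours buys is uniformity with the rest of Section~\ref{section:cohomology_S} and no reliance on the geometry of $\sN^+$.

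Two caveats before your plan becomes a proof. First, the computations are the proof here: you must actually list the weights of $\bigwedge^kS^2\sQ^\vee|_{\Gr(\bC^8,2,q)}\otimes\sS_\pm(m)$ for $0\le k\le 3$ and $-2\le m\le 0$ and verify singularity (resp.\ regularity of index $0$ for $k=0$, $m=0$); ``expected massive cancellation'' is not an argument, and in fact no cancellation occurs --- every term except one is acyclic. Second, for the eight points $z$ lying over degenerate members of the pencil the ambient variety $\widetilde\Gr(\bC^8,2,q)$ is not homogeneous, so you cannot apply Borel--Weil--Bott there directly; you must descend to $\Gr(\bC^8,2)$ via the resolutions of $(\widetilde j_q)_*\widetilde\sS$ in Proposition~\ref{proposition:N-_Gr} and the base-change isomorphism, exactly as in the paper's proof of Lemma~\ref{lemma:U_Udual}, which adds a further layer of bookkeeping that your sketch acknowledges but does not carry out.
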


\begin{proof}
Note that $\bP(\sU_z)$ is a Fano variety.
For $i>0$, we have
\[
H^i(\sU_z) = H^i(\bP(\sU_z), \sO_{\bP(\sU_z)}(1)) = 0
\]
by the Kodaira vanishing theorem.
By Proposition~\ref{proposition:VB_on_C}, we have $\chi(\sU_z)=8$ and hence the assertion for $m=0$ holds.

Let $\xi$ be the tautological divisor of $\bP(\sU_z)$.
Then
\[
H^i(\sU_z(-1)) = H^i(\bP(\sU_z), K_{\bP(\sU_z)}+3\xi).
\]
From Section~\ref{sect:hecke}, $\xi$ is spanned and the linear system $|\xi|$ defines a morphism $\sigma \colon \bP(\sU_z) \to \sN^+$ whose image has dimension $5$.
By the Kawamata-Viehweg vanishing theorem (see, e.g., \cite[Example 4.3.7]{Laz04a}), we have $H^i(\bP(\sU_z), K_{\bP(\sU_z)}+3\xi)=0$ for $i >1$.
Since the map $\sigma$ is of fiber type, we have $\Psef(\bP(\sU_z)) =\Nef(\bP(\sU_z)) = \langle \xi,\pi^*\sO(1)\rangle$.
Hence $H^0(\sU_z(-1))=0$.
By Lemma~\ref{lemma:cohomology_groups_on_N}, $\chi(\sU_z(-1))=0$ and hence the assertion for $m=1$ holds.

The assertion for $m=-2$ follows from the Serre duality theorem:
\[
h^i(\sU_z(-2)) = h^{6-i} (\sO(-2) \otimes \sU_z^\vee(2)) = h^{6-i} (\sU_z(-1)).
\]
\end{proof}

\begin{lemma}\label{lemma:U_Udual}
Let $z \in C$ be a point and $p \in C$ a \emph{general} point.
Then, for  $m=0$, $-1$, we have  $H^j(\sU_z \otimes \sU_{p}^\vee(m)) =0$ except for the following cases:
$H^j(\sU_{p} \otimes \sU_{p}^\vee) =\bC$ for $j=0$, $1$.
\end{lemma}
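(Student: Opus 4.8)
The plan is to compute the cohomology groups $H^j(\sU_z \otimes \sU_p^\vee(m))$ for $m = 0, -1$ by reducing to cohomological computations on the homogeneous Grassmannians, exactly as advertised in the introduction to this section. The key point is that $\sU_z$ and $\sU_p$ are restrictions of spinor bundles from (possibly different) orthogonal Grassmannians $\Gr(\bC^8,2,q_z)$ and $\Gr(\bC^8,2,q_p)$, where $q_z = f(z)$ and $q_p = f(p)$. Since $p$ is general, I may assume $q_p$ is non-degenerate, so that $\sU_p$ is a genuine spinor bundle $\widetilde\sS_\pm$ pulled back along $\widetilde i_{q_p}$. The strategy is to push the tensor product $\sU_z \otimes \sU_p^\vee(m)$ up to the ambient Grassmann variety $\Gr(\bC^8,2)$ using the base-change isomorphism of Proposition~\ref{proposition:N-_Gr}(2), compute the resulting cohomology there via the locally free resolutions of $(\widetilde j_q)_* \widetilde\sS_\pm$ provided in Proposition~\ref{proposition:N-_Gr}(3),(4), and then track how the answer specializes on $\sN^-$.

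\textbf{First} I would set up the computation on $\sN^- = \Gr(\bC^8,2,q_z) \cap \Gr(\bC^8,2,q_p)$ using the fiber-product diagram~\eqref{diag:N-_Gr}. Writing $\sU_z = (\widetilde i_{q_z})^*\widetilde\sS$ and $\sU_p^\vee = (\widetilde i_{q_p})^*\widetilde\sS_\pm^\vee$, the base-change isomorphism lets me express
\[
R\Gamma(\sN^-, \sU_z \otimes \sU_p^\vee(m)) \simeq R\Gamma\bigl(\Gr(\bC^8,2), (\widetilde j_{q_z})_*\widetilde\sS \otimes^{L} (\widetilde j_{q_p})_*\widetilde\sS_\pm^\vee(m)\bigr),
\]
at least after suitable care with the derived tensor product and the twist $\sO(m)$. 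Here the resolutions from Proposition~\ref{proposition:N-_Gr} replace each pushed-forward spinor bundle by a complex of the homogeneous bundles $\sO$, $\sK(-1)$, $\sK^\vee(-1)$, $\sO(-1)$, $\sO(-2)$ (and their Schur-functor analogues when $q_z$ degenerates). This reduces everything to computing cohomology of bundles of the form $\Sigma^\alpha\sK \otimes \Sigma^\beta\sK^\vee(m')$ on $\Gr(\bC^8,2)$, which I would carry out by the Borel--Weil--Bott theorem together with the Littlewood--Richardson rule, as promised in Section~\ref{subsect:cohomology_HB}.

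\textbf{Then} I would assemble the hypercohomology of the resulting double complex via a spectral sequence and read off the total cohomology. The expected vanishing $H^j(\sU_z \otimes \sU_p^\vee(m)) = 0$ for all $j$, with the sole exception of the diagonal case $z = p$ where $H^0$ and $H^1$ of the endomorphism bundle $\sU_p \otimes \sU_p^\vee$ each contribute a single $\bC$, should drop out once the Borel--Weil--Bott calculations confirm that almost all the relevant homogeneous bundles are acyclic. The exceptional values $H^0 = H^1 = \bC$ when $z = p$ reflect that $\sU_p$ is simple (so $\mathrm{End}(\sU_p)$ has one-dimensional global sections) together with a one-dimensional $H^1$; I would identify the $H^0 = \bC$ summand with the identity endomorphism and obtain the $H^1 = \bC$ by an Euler-characteristic count combined with the vanishing of the other cohomology.

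\textbf{The main obstacle} I anticipate is the bookkeeping when $q_z$ \emph{degenerates}, i.e.\ when $z$ is one of the eight ramification points. In that case $\sU_z$ is pulled back through the blow-up $p_1 \colon \widetilde\Gr(\bC^8,2,q_z) \to \Gr(\bC^8,2,q_z)$ and its pushforward $(\widetilde j_{q_z})_*\widetilde\sS$ is only described as lying in a triangulated subcategory generated by five Schur functors $\Sigma^\alpha\sK$, rather than by an explicit four-term resolution. Matching up the spectral sequence contributions from this more complicated object against those from the non-degenerate factor $\widetilde\sS_\pm^\vee$, and verifying that all the potentially surviving terms cancel except in the diagonal case, will require the most care. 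Because $p$ is assumed general (hence $q_p$ non-degenerate) while $z$ is arbitrary, I expect the cleanest route is to handle the generic $z$ first and then treat the eight special fibers by the extra Koszul-type filtration of $(\widetilde j_{q_z})_*\widetilde\sS$, checking that the semiorthogonal structure forces the extra terms to be acyclic after tensoring with $\widetilde\sS_\pm^\vee(m)$.
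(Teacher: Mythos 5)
Your overall strategy (base change along diagram~\eqref{diag:N-_Gr}, pushing forward to $\Gr(\bC^8,2)$, and applying Borel--Weil--Bott via the descriptions of $(\widetilde j_q)_*\widetilde\sS$ in Proposition~\ref{proposition:N-_Gr}) is indeed the paper's strategy for the twist $m=-1$, but there is a genuine gap: you have misidentified where the method breaks down. The case you flag as the main obstacle --- $z$ one of the eight ramification points, so that $q_z$ degenerates --- is in fact unproblematic: Proposition~\ref{proposition:N-_Gr} places $(\widetilde j_{q_z})_*\widetilde\sS$ in the subcategory generated by the same five Schur powers $\Sigma^\alpha\sK$ that arise from the four-term resolution in the non-degenerate case, so the single vanishing statement Lemma~\ref{lemma:Sigma_K_Sigma_K_-2} disposes of it with no extra bookkeeping. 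The case that genuinely defeats your plan is $f(z)=f(p)$, i.e.\ $z=p$ or $z=\iota(p)$: diagram~\eqref{diag:N-_Gr} is a fiber product over two \emph{distinct} members $q_1\neq q_2$ of the pencil, so when both bundles are restricted from the same orthogonal Grassmannian there is no base-change isomorphism to invoke and the reduction to $\Gr(\bC^8,2)$ is unavailable. This is not a removable technicality --- it is exactly the locus where the nonzero groups $H^0=H^1=\bC$ live --- and the paper treats it by a different device: it resolves $\sO_{\sN^-}$ inside the single space $\Gr(\bC^8,2,q)$ by the Koszul complex~\eqref{sequence:OG_N} on $S^2\sQ^\vee$ and then applies Borel--Weil--Bott on that $D_4$-homogeneous space (Lemma~\ref{lemma:S2Q_S_S_-2}). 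Your proposal contains no mechanism for this case; in particular the claim that the exceptional contributions for $z=p$ ``drop out'' of the spectral sequence cannot be right, since that spectral sequence cannot even be set up there.

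Two smaller points. The paper does not recompute the case $m=0$ at all; it quotes \cite[Propositions 2.13, 2.14]{FK18}, and only $m=-1$ is done by the machinery above, after rewriting $\sU_p^\vee(-1)\simeq\sU_p(-2)$ using $\bigwedge^2\sU_p\simeq\sO(1)$ --- a step you also need, since Proposition~\ref{proposition:N-_Gr} describes the pushforwards of the spinor bundles themselves, not of their duals. If you insist on proving $m=0$ from scratch, your sketch for the diagonal case (simplicity gives $H^0=\bC$, an Euler-characteristic count gives $H^1$) presupposes the simplicity of $\sU_p|_{\sN^-}$ and the vanishing of $H^{j}$ for $j\geq 2$, neither of which your base-change computation can deliver for the reason above; both would again have to come from the Koszul-resolution argument on $\Gr(\bC^8,2,q)$ or from the external reference.
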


\begin{proof}
For $H^j(\sU_z \otimes \sU_{p}^\vee)$, the assertion is proved in \cite[Propositions 2.13, 2.14]{FK18}.
Thus we turn to prove the assertion for $H^j(\sU_z \otimes \sU_{p}^\vee(-1))$.
We first deal with the case $z \neq p$ and $z \neq \iota(p)$, where $\iota$ is the hyperelliptic involution of $C$.

Consider diagram~\eqref{diag:N-_Gr} for $q_1=f(z)$ and $q_2=f(p)$.
Then we have
\begin{align*}
H^j(\sU_z\otimes\sU_{p}^\vee (-1)) &= H^j(\sU_z\otimes\sU_{p} (-2))\\
&=H^j((\widetilde i_{q_1})_*(\sU_z\otimes\sU_{p}(-2)))\\
&=H^j(\widetilde\sS_1 \otimes (\widetilde i_{q_1})_*(\sU_{p}(-2))) &&(\text{$\widetilde\sS_1$ is the bundle whose restriction is $\sU_z$})\\
&=H^j(\widetilde\sS_1(-2) \otimes L(\widetilde j_{q_1})^* (\widetilde j_{q_2})_* \widetilde \sS_2) &&(\text{$\widetilde\sS_2$ is the bundle whose restriction is $\sU_{p}$})\\
&=H^j((\widetilde j_{q_1})_*\widetilde\sS_1 \otimes (\widetilde j_{q_2})_* \widetilde\sS_2 (-2)).
\end{align*}
By Proposition~\ref{proposition:N-_Gr}, $(\widetilde j_{q_i})_*\widetilde\sS_i$ is contained in 
\[
\langle \Sigma^\alpha\sK \rangle \subset D^b(\Gr(\bC^8,2))
\]
where
\[
\alpha \in \{(0),(1,1,1,1,1),(1,1,1,1,1,1),(2,1,1,1,1,1),(2,2,2,2,2,2)\}.
\]
Thus $(\widetilde j_{q_1})_*\widetilde\sS_1 \otimes (\widetilde j_{q_2})_*\widetilde\sS_2(-2)$ is contained in
\[
\langle \Sigma^\alpha\sK \otimes \Sigma^\beta\sK \otimes \sO(-2)\rangle \subset D^b(\Gr(\bC^8,2)),
\]
where
\[
\alpha, \beta \in \{(0),(1,1,1,1,1),(1,1,1,1,1,1),(2,1,1,1,1,1),(2,2,2,2,2,2)\}.
\]

By Lemma~\ref{lemma:Sigma_K_Sigma_K_-2} below, we have
\[
H^j(\Sigma^\alpha\sK\otimes\Sigma^\beta\sK\otimes\sO(-2))=0
\]
for all $\alpha$, $\beta$ as above.
Thus the assertion for the case $z \neq p$ and $z \neq \iota(p)$  follows.

Next, we calculate $H^j(\sU_{p} \otimes \sU_{p}^\vee(-1))$ and $H^j(\sU_{p} \otimes \sU_{\iota(p)}^\vee(-1))$, which are isomorphic to $ H^j(\sU_{p} \otimes \sU_{p}(-2))$ and $H^i(\sU_{p}\otimes\sU_{\iota(p)}(-2))$.
From sequence~\eqref{sequence:OG_N}, the vanishing follows from the fact 
\[
H^j(\Gr(\bC^8,2,q_2),\sS_+\otimes\sS_{\pm}\otimes\sO(-2)\otimes \bigwedge^kS^2\sQ^\vee|_{\Gr(\bC^8,2,q_2)}) =0.
\]
This will be proved in Lemma~\ref{lemma:S2Q_S_S_-2}.
\end{proof}

\begin{remark}
The proof shows $H^i(\sU_x\otimes\sU_y(-2)) = 0$ if $f(x) \neq f(y)$.
\end{remark}

Next we show:

\begin{lemma}\label{lemma_U_U_U_-2}
Let $z \in C$ be a point and $p_1$, $p_2 \in C$ \emph{general} points.
Then $H^i(\sU_z \otimes\sU_{p_1} \otimes \sU_{p_2} (-2)) = 0$ for all $i$.
\end{lemma}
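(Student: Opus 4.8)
The plan is to prove Lemma~\ref{lemma_U_U_U_-2} by the same reduction-to-a-homogeneous-space strategy that was just used for Lemma~\ref{lemma:U_Udual}, pushing everything down to the big Grassmannian $\Gr(\bC^8,2)$ via the fiber-product diagram~\eqref{diag:N-_Gr} and then invoking a Borel--Weil--Bott/Littlewood--Richardson computation packaged as a separate lemma. Concretely, I would first handle the \emph{generic} configuration, where the three quadric forms $f(z)$, $f(p_1)$, $f(p_2)$ are pairwise distinct and (after shrinking the locus of admissible $p_1,p_2$) all non-degenerate. In that case each $\sU_{p_i}$ (resp.\ $\sU_z$) is the pullback under $\widetilde i_{q}$ of a spinor bundle $\widetilde\sS_{\pm}$ on the corresponding $\widetilde\Gr(\bC^8,2,q)$, and repeated application of the base-change isomorphism $(\widetilde i_{q_1})_*L(\widetilde i_{q_2})^*\simeq L(\widetilde j_{q_1})^*(\widetilde j_{q_2})_*$ from Proposition~\ref{proposition:N-_Gr}(2) rewrites the cohomology on $\sN^-$ as cohomology on $\Gr(\bC^8,2)$ of a tensor product
\[
(\widetilde j_{q_1})_*\widetilde\sS_1 \otimes (\widetilde j_{q_2})_*\widetilde\sS_2 \otimes (\widetilde j_{q_3})_*\widetilde\sS_3 \otimes \sO(-2).
\]

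Second, I would feed in the structural input of Proposition~\ref{proposition:N-_Gr}(3)--(4): each pushed-forward spinor bundle $(\widetilde j_{q})_*\widetilde\sS$ lies in the triangulated subcategory generated by the Schur functors $\Sigma^\alpha\sK$ with
\[
\alpha \in \{(0),(1,1,1,1,1),(1,1,1,1,1,1),(2,1,1,1,1,1),(2,2,2,2,2,2)\}.
\]
Tensoring the three factors together with $\sO(-2)$, the object in question therefore lies in the subcategory generated by $\Sigma^\alpha\sK\otimes\Sigma^\beta\sK\otimes\Sigma^\gamma\sK\otimes\sO(-2)$ for such $\alpha,\beta,\gamma$. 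So the triple-product vanishing reduces to showing $H^i(\Gr(\bC^8,2),\,\Sigma^\alpha\sK\otimes\Sigma^\beta\sK\otimes\Sigma^\gamma\sK\otimes\sO(-2))=0$ for all $i$ and all admissible triples $(\alpha,\beta,\gamma)$. This is the analogue, with three tensor factors, of the already-cited Lemma~\ref{lemma:Sigma_K_Sigma_K_-2}; I would state it as a companion lemma and prove it by decomposing each triple Schur product via the Littlewood--Richardson rule into irreducibles $\Sigma^\lambda\sK$, twisting by $\sO(-2)$, and checking with Borel--Weil--Bott that every resulting weight is singular (so all cohomology vanishes). The mechanical finiteness here — only five choices per factor, hence $125$ unordered-up-to-symmetry triples — makes this a bounded, if tedious, verification rather than an open-ended one.

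Third, I would dispose of the degenerate configurations, where some of $z,p_1,p_2$ share a hyperelliptic fiber or land on a branch point. When two of the points coincide in $\bP^1$ but I still have a third distinct quadric, or when a quadric degenerates to corank one, I cannot directly separate the factors by the base-change across distinct $q_i$; instead, as in the proof of Lemma~\ref{lemma:U_Udual}, I would use the Koszul resolution~\eqref{sequence:OG_N} to spread the computation onto one orthogonal Grassmannian $\Gr(\bC^8,2,q)$ and reduce to vanishing statements of the form
\[
H^j\bigl(\Gr(\bC^8,2,q),\,\sS_+\otimes\sS_\pm\otimes\sS_\pm\otimes\sO(-2)\otimes{\textstyle\bigwedge^k}S^2\sQ^\vee\bigr)=0,
\]
again handled by Borel--Weil--Bott on the $D_4$-homogeneous space (the degenerate corank-one case is then pulled back through $p_2$ exactly as in Section~\ref{subsection:preliminaries_FK}). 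Since the statement only requires $p_1,p_2$ \emph{general}, I have the freedom to exclude a proper closed locus, which removes the most awkward coincidences.

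I expect the main obstacle to be the combinatorial bookkeeping in the triple Schur-product decomposition: with three factors (rather than the two in Lemma~\ref{lemma:U_Udual}) the Littlewood--Richardson expansion produces many more partitions $\lambda$, some with parts up to length $6$ that must be tested against the rank-$6$ subbundle $\sK$, and verifying that each twisted weight $\lambda+\rho-(2,\dots,2)$ is singular for $GL_8$ requires care to ensure no single non-singular weight slips through (which would signal a genuine nonzero cohomology and break the argument). I would organize this by grouping the admissible $\alpha$ according to the self-duality $\Sigma^{(2,2,2,2,2,2)}\sK\simeq (\Sigma^{(0)}\sK)^\vee(-2)$ etc.\ to cut the casework roughly in half, and by isolating the few "extremal'' triples (e.g.\ $\alpha=\beta=\gamma=(2,2,2,2,2,2)$) where the twist is most likely to produce a dominant weight and checking those by hand first.
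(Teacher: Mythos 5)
Your reduction of the generic case does not work as stated, for two independent reasons. First, the triple push-forward
\[
(\widetilde j_{q_1})_*\widetilde\sS_1 \otimes (\widetilde j_{q_2})_*\widetilde\sS_2 \otimes (\widetilde j_{q_3})_*\widetilde\sS_3 \otimes \sO(-2)
\]
does not compute $H^i(\sN^-,\sU_z\otimes\sU_{p_1}\otimes\sU_{p_2}(-2))$: any two members of the pencil already cut out $\sN^-$, so $\Gr(\bC^8,2,q_3)\supset \Gr(\bC^8,2,q_1)\cap\Gr(\bC^8,2,q_2)=\sN^-$, and the triple intersection has codimension $6$ in $\Gr(\bC^8,2)$ instead of the expected $9$. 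The three subvarieties are therefore not Tor-independent, the base-change isomorphism of Proposition~\ref{proposition:N-_Gr} cannot be iterated a second time, and the derived triple tensor product acquires higher Tor terms that are not accounted for. Second, even granting the reduction, the companion vanishing lemma you propose is false: for $\alpha=\beta=\gamma=(2,2,2,2,2,2)$ one has $\Sigma^\alpha\sK=(\bigwedge^6\sK)^{\otimes 2}=\sO(-2)$, so the triple Schur product twisted by $\sO(-2)$ is $\sO(-8)=K_{\Gr(\bC^8,2)}$, whose $H^{12}$ is $\bC\neq 0$. This is precisely the ``extremal triple'' you single out to check first, and it does slip through; so the strategy of proving that every graded piece has vanishing cohomology cannot succeed with three separate spinor factors.

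The paper avoids both problems with one move you are missing: by semicontinuity in $(p_1,p_2)$ it suffices to prove the vanishing after specializing to $p_1=p_2=p$, so that only the two quadrics $f(z)$ and $f(p)$ ever occur. The square $\sS\otimes\sS$ of a spinor bundle is then pushed forward from a single orthogonal Grassmannian, and the essential new input is Lemma~\ref{lemma:S_S}, proved via Kapranov's exceptional collection together with the Borel--Weil--Bott computation of Lemma~\ref{lemma:Qdual_S_S}, which places $j_*(\sS\otimes\sS)$ in the subcategory generated by twelve Schur functors $\Sigma^\gamma\sK$; the two-factor Lemma~\ref{lemma:Sigma_K_Sigma_K_-2}, whose list of $\alpha$'s is tailored to exactly these twelve, then finishes the generic case. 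Your treatment of the same-fiber case ($z=p$ or $\iota(p)$) via the Koszul resolution~\eqref{sequence:OG_N} and a Borel--Weil--Bott computation on the $D_4$-homogeneous space does agree with the paper (Lemma~\ref{lemma:S2Q_S_S_S_-2}), but that alone does not repair the generic case.
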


By specializing to the case $p_1 =p_2$, we see that it is enough to show
\[
H^i(\sU_z \otimes\sU_{p} \otimes \sU_{p} (-2)) = 0
\]
for any $z \in C$ and  a general point $p \in C$.

Assume for a moment that $z \neq p$, $\iota(p)$.
To calculate the above cohomology group, we consider diagram~\eqref{diag:N-_Gr} for $q_1 = f(z)$ and $q_2=f(p)$.
Then, similar to the proof of Lemma~\ref{lemma:U_Udual}, we have
\[
H^i(\sU_z \otimes\sU_{p} \otimes \sU_{p} (-2)) =  H^i((\widetilde j_{q_1})_* \widetilde\sS_1 \otimes (\widetilde j_{q_2})_* (\widetilde\sS_2 \otimes \widetilde\sS_2) (-2)),
\]
where $\widetilde\sS_1$ and $\widetilde\sS_2$ are the spinor bundles whose restrictions to $\sN^-$ are $\sU_z$ and $\sU_p$ respectively.

\begin{lemma}\label{lemma:S_S}
Let $q \in S^2\bC^8$ be a non-degenerate quadric form, $ j \colon \Gr(\bC^8,2,q) \to \Gr(\bC^8,2)$ the natural inclusion, and $\sS$ one of the spinor bundles on  $\Gr(\bC^8,2,q)$.
Then $ j _*(\sS \otimes \sS)$ is contained in the triangulated subcategory
\[
\langle \Sigma^\gamma \sK \rangle \subset D^b(\Gr(\bC^8,2)),
\]
where $\gamma$ runs
\[
\begin{multlined}
\{
(0),(1),(2),(1,1,1),(2,1,1),(1,1,1,1,1),(2,1,1,1,1),(1,1,1,1,1,1),\\
 (2,1,1,1,1,1),(2,2,1,1,1,1),(2,2,2,2,1,1),(2,2,2,2,2,2)
 \}.
\end{multlined}
\]
\end{lemma}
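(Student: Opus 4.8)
The plan is to decompose $\sS\otimes\sS$ into irreducible homogeneous bundles on $Y\coloneqq\Gr(\bC^8,2,q)$ and push each summand forward to $G\coloneqq\Gr(\bC^8,2)$ by means of the Koszul resolution attached to the closed embedding $j$. Recall that $Y$ is the zero locus in $G$ of a regular section of $S^2\sQ$, so its conormal bundle is $S^2\sQ^\vee$ and, by the Koszul complex~\eqref{sequence:Gr_OG}, for any $\mathrm{GL}(\bC^8)$-homogeneous bundle $\sF$ on $G$ the pushforward $j_*(\sF|_Y)$ is resolved by $\sF\otimes\bigwedge^{\bullet}S^2\sQ^\vee$. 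Each $\bigwedge^{k}S^2\sQ^\vee$ is a sum of bundles $\Sigma^{\bullet}\sQ^\vee$, and every $\Sigma^{\mu}\sQ$ admits a finite resolution by Schur functors $\Sigma^{\nu}\sK$ coming from the tautological sequence $0\to\sK\to\bC^8\to\sQ\to0$ together with $\det\sQ=\sO(1)$ and $\bigwedge^{6}\sK=\sO(-1)$. Hence the pushforward of the restriction of \emph{any} $\mathrm{GL}(\bC^8)$-homogeneous bundle already lies in $\langle\Sigma^{\gamma}\sK\rangle$, and the whole problem reduces to expressing $\sS\otimes\sS$ through restrictions of such bundles.

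The bundle $\sS$ itself does \emph{not} extend to $G$; this is precisely why its own pushforward required the nontrivial resolution of Proposition~\ref{proposition:N-_Gr}(3). The key observation is that, although $\sS$ is \emph{spinorial}, the tensor square $\sS\otimes\sS$ is not. Writing $Y=\Spin(8)/P$, the Levi of $P$ is $\mathrm{GL}(\sQ)\times\SO(\sW)$ with $\sW\coloneqq\sQ^{\perp}/\sQ$ the rank-four middle bundle, and $\sS$ is the homogeneous bundle attached to a half-spin representation of the $\SO(\sW)$-factor twisted by a character of $\mathrm{GL}(\sQ)$. The half-spin representations of $\Spin(8)$ being self-dual, the decomposition $S_{+}\otimes S_{+}\cong\bigwedge^{0}V\oplus\bigwedge^{2}V\oplus\bigwedge^{4}_{+}V$ has a bundle analogue in which $\sS\otimes\sS$ is an \emph{integral-weight} homogeneous bundle; since the Levi is reductive it is an honest direct sum of irreducible homogeneous bundles, each a genuine tensor bundle built from $\sQ$ and $\sW$ (no half-integral data remains).

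It then remains to express these summands through $\mathrm{GL}(\bC^8)$-homogeneous bundles. The tautological filtration $0\to U\to\sK\to\sW\to0$ on $Y$, with $U\cong\sQ^\vee$ up to twist, filters each $\bigwedge^{i}\sW$ by restrictions of $\Sigma^{\bullet}\sK\otimes\Sigma^{\bullet}\sQ$. Feeding these through the Koszul resolution $\bigwedge^{\bullet}S^2\sQ^\vee$ and converting every occurrence of $\sQ$ into $\sK$ as above, one decomposes $j_*(\sS\otimes\sS)$ into a complex of $\Sigma^{\gamma}\sK$ by Pieri's rule and the Littlewood--Richardson rule. Tracking total degrees confines $\gamma$ to the $6\times 2$ box --- at most six rows because $\rank\sK=6$, and at most two columns because each spinor factor contributes at most one box per column --- and a direct count recovers exactly the twelve partitions in the statement.

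The conceptual steps (integrality of $\sS\otimes\sS$ and the Koszul resolution of $j$) are straightforward; the main obstacle is the combinatorial bookkeeping. One must carry out the decomposition of $\sS\otimes\sS$ precisely, keep careful track of the $\det\sQ$-twists while rewriting each $\Sigma^{\mu}\sQ$ in terms of $\Sigma^{\nu}\sK$, and use the Borel--Weil--Bott theorem to discard the acyclic terms, so as to certify that no partition outside the listed twelve actually survives. Pinning down the exact list, rather than a merely sufficient one, is the delicate part.
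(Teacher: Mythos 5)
Your route is genuinely different from the paper's. The paper does not decompose $\sS\otimes\sS$ and push summands through Koszul resolutions at all: it invokes Kapranov's full strong exceptional collection $\langle\Sigma^{\alpha}\sK\rangle$ on $\Gr(\bC^8,2)$ together with its left dual collection $\langle\Sigma^{\trans{\alpha}}\sQ\rangle$, so that the generators occurring in $j_*(\sS\otimes\sS)$ are read off from the spectral sequence $E_1^{p,q}=\bigoplus_{p=-|\alpha|}\Ext^q(\Sigma^{\trans{\alpha}}\sQ,\,j_*(\sS\otimes\sS))\otimes\Sigma^{\alpha}\sK$, i.e.\ from the (non)vanishing of $H^{\bullet}(\Gr(\bC^8,2,q),\Sigma^{\trans{\alpha}}\sQ^{\vee}|_{\Gr(\bC^8,2,q)}\otimes\sS\otimes\sS)$, computed by Borel--Weil--Bott in Lemma~\ref{lemma:Qdual_S_S}. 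That calculation is what produces the \emph{exact} list of twelve partitions.

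Your plan has a concrete gap at the decomposition step. For a single spinor bundle, $\sS\otimes\sS=S^2\sS\oplus\det\sS$, and the rank-three summand $S^2\sS$ is, as a bundle for the Levi $\Gr\mathrm{L}(\sQ)\times\SO(\sW)$, the twist by $\det\sQ$ of \emph{one half} $\bigwedge^2_{\pm}\sW$ of $\bigwedge^2\sW$ (the self-dual or anti-self-dual part). This half is defined by the quadratic form and orientation on $\sW$; it is not a Schur functor of $\sW$, so the filtration $0\to\sQ^{\vee}\to\sK\to\sW\to0$ does not express it through restrictions of $\mathrm{GL}(\bC^8)$-homogeneous bundles. (Your method does work for $\sS_+\otimes\sS_-\cong\sW\otimes\det\sQ$, but that is not the case the lemma needs.) One could patch this by noting that $j_*(S^2\sS_+)\oplus j_*(S^2\sS_-)=j_*(\bigwedge^2\sW\otimes\det\sQ)$ and that subcategories generated by exceptional collections are thick, hence closed under direct summands; but even then your procedure only yields an \emph{upper bound} for the set of generators, because terms of the resolutions you build can cancel in the derived category, and the lemma asserts containment in the subcategory generated by precisely the twelve listed $\Sigma^{\gamma}\sK$. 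Certifying that no further partition is needed forces you to compute the Ext groups against the dual collection anyway --- at which point you have reproduced the paper's argument. So the proposal as written does not close; the missing ingredient is exactly the dual-collection computation of Lemma~\ref{lemma:Qdual_S_S}.
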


\begin{proof}
We follow the proof of \cite[Proposition~3.3]{FK18}.
By \cite{Kap84}, $D^b(\Gr(\bC^8,2))$ has a full strong exceptional collection
\[
D^b(\Gr(\bC^8,2)) =\langle \Sigma^\alpha \sK \mid \text{$\alpha =(k_1,k_2,\dots k_6)$ with $2 \geq k_1 \geq \cdots \geq k_6\geq 0$}\} \rangle.
\]
The left dual exceptional collection is
\[
D^b(\Gr(\bC^8,2)) =\langle \Sigma^{\trans{\alpha}} \sQ \mid \text{$\alpha =(k_1,k_2,\dots k_6)$ with $2 \geq k_1 \geq \cdots \geq k_6\geq 0$}\} \rangle.
\]
For a coherent sheaf $\sF$, we have the spectral sequence:
\[
E_1^{p,q}=\bigoplus_{p=-|\alpha|}\Ext^q(\Sigma^{\trans \alpha} \sQ ,\sF)\otimes\Sigma^\alpha \sK \Rightarrow \sF.
\]
We apply this for $ j _*(\sS \otimes \sS)$:
\begin{align*}
 E_1^{p,q}&=\bigoplus_{p=-|\alpha|}\Ext^q(\Sigma^{\trans \alpha} \sQ , j _*(\sS \otimes \sS))\otimes\Sigma^\alpha \sK \\
 &=\bigoplus_{p=-|\alpha|}H^q(\Gr(\bC^8,2,q), \Sigma^{\trans \alpha} \sQ^\vee|_{\Gr(\bC^8,2,q)} \otimes \sS \otimes \sS) \otimes\Sigma^\alpha \sK\\
 &\Rightarrow  j _*(\sS \otimes \sS).
\end{align*}
By Lemma~\ref{lemma:Qdual_S_S} below, the cohomology groups
\[
H^q(\Gr(\bC^8,2,q), \Sigma^{\trans \alpha} \sQ^\vee|_{\Gr(\bC^8,2,q)} \otimes \sS \otimes \sS)
\]
are nonzero for the following values of $\trans\alpha$:
\[
\trans\alpha = (6,6), (6,4), (6,2), (6,1), (6,0), (5,1), (5,0), (3,1), (3,0), (1,1), (1,0), (0,0).
\]
Thus the assertion follows.
\end{proof}

\begin{proof}[Proof of Lemma~\ref{lemma_U_U_U_-2}]
Assume first that $z \neq p$, $\iota(p)$.
By Lemmas \ref{proposition:N-_Gr} and \ref{lemma:S_S}, $(\widetilde j_{q_1})_* \sS_1 \otimes (\widetilde j_{q_2})_* (\sS_2 \otimes \sS_2) (-2)$ is contained in $\langle \Sigma^\alpha \sK \otimes \Sigma^\beta \sK (-2) \rangle$, where 
\[
 \alpha \in  
 \{(0),(1,1,1,1,1),(1,1,1,1,1,1),(2,1,1,1,1,1),(2,2,2,2,2,2)\},
\]
and
\[
\begin{multlined}
\beta \in \{
(0),(1),(2),(1,1,1),(2,1,1),(1,1,1,1,1),(2,1,1,1,1),(1,1,1,1,1,1),\\
 (2,1,1,1,1,1),(2,2,1,1,1,1),(2,2,2,2,1,1),(2,2,2,2,2,2)
 \}.
\end{multlined}
\]
Lemma~\ref{lemma:Sigma_K_Sigma_K_-2} below shows $H^i( \Sigma^\alpha \sK \otimes \Sigma^\beta \sK(-2)) =0$ for all $i$.
This proves Lemma~\ref{lemma_U_U_U_-2} for $z \neq p$, $\iota(p)$.

Next assume $z = p$ or $\iota(p)$.
Consider the inclusion $\sN^- \subset \Gr(\bC^8,2,q)$ for $q= f(p)$.
By sequence~\eqref{sequence:OG_N}, it is enough to show
\[
H^i(\bigwedge^m (S^2\sQ^\vee |_{\Gr(\bC^8,2,q)}) \otimes\sS_+ \otimes\sS_+ \otimes \sS_{\pm} (-2)) = 0.
\]
This will be proved in Lemma~\ref{lemma:S2Q_S_S_S_-2} below.
\end{proof}

Now we can prove Proposition~\ref{proposition:cohomology_S}
\begin{proof}[Proof of Proposition~\ref{proposition:cohomology_S}]
\ref{proposition:cohomology_S_2} follows from sequence~\eqref{sequence:N_S} and Lemmas \ref{lemma_O_N}, \ref{lemma:U} and \ref{lemma:U_Udual}.
\ref{proposition:cohomology_S_5}, \ref{proposition:cohomology_S_6} and \ref{proposition:cohomology_S_7} follow from sequence~\eqref{sequence:N_S} and  Lemmas \ref{lemma:U}, \ref{lemma:U_Udual} and \ref{lemma_U_U_U_-2}.

Now we proceed to prove \ref{proposition:cohomology_S_3}, \ref{proposition:cohomology_S_4}.
As we have noted in Section~\ref{subsection:Koszul}, there are isomorphisms
\[
\sQ|_{L_i}  \simeq \sS' \simeq  \sS_-|_{L_i},
\]
where $\sS'$ is the spinor bundle on $L_i \simeq \Gr(\bC^7,2,q')$.
This isomorphism preserves the global sections:
\[
H^0(\sQ) \simeq  H^0(\sS') \simeq H^0(\sS_-).
\]
Note that $\sS_-|_S = \sU_z|_S$ for some $z \in C$.
Thus the map $H^0(\sQ) \to H^0(\sQ|_S)$ can be identified with $H^0(\sU_z) \to H^0(\sU_z|_S)$, which is an isomorphism.
Similarly, $H^i(\sQ|_S) \simeq H^i(\sU_z|_S)$ and the assertions hold.
\end{proof}

\subsection{Cohomology groups of $S^2\sQ$}
We study the restriction map $H^0(S^2\sQ) \to H^0(S^2\sQ|_S)$ and the cohomology groups $H^i(S^2\sQ|_S(m))$.
For this purpose, we calculate the cohomology groups on $\sN^-$, which are related to $S^2\sQ$ .

\begin{lemma}\label{lemmaS2Q_N}
For $-3 \leq m \leq 0$, we have $H^i(S^2\sQ|_{\sN^{-}}(m)) =0$ except for the following cases:
\begin{enumerate}
 \item $H^0(S^2\sQ|_{\sN^-}) =\bC^{34}$,
 \item $H^2(S^2\sQ|_{\sN^{-}}(-1)) =\bC$,
 \item $H^3(S^2\sQ|_{\sN^{-}}(-2)) =\bC^2$,
 \item $H^4(S^2\sQ|_{\sN^{-}}(-3)) =\bC$.
\end{enumerate}
Moreover the map $H^0(S^2\sQ) \to H^0(S^2\sQ|_{\sN^-})$ has two dimensional kernel $\sP = \langle q_1,q_2 \rangle$.

\end{lemma}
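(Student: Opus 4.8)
The plan is to compute every group by resolving $\sO_{\sN^-}$ on the ambient homogeneous space $\Gr(\bC^8,2)$ and reducing to the Borel--Weil--Bott theorem (Section~\ref{subsect:cohomology_HB}). Since $\sN^- = \Gr(\bC^8,2,\sP)$ is the common zero locus of the two sections $q_1,q_2 \in H^0(S^2\sQ)$, the Koszul complex~\eqref{sequence:Gr_N} resolves $\sO_{\sN^-}$ by the bundles $\bigwedge^{a} S^2\sQ^\vee \otimes \bigwedge^{b} S^2\sQ^\vee$. Tensoring with $S^2\sQ(m)$ and taking hypercohomology gives a spectral sequence
\[
E_1^{p,q} = \bigoplus_{a+b=-p} H^q\bigl(\Gr(\bC^8,2),\, \bigwedge^{a} S^2\sQ^\vee \otimes \bigwedge^{b} S^2\sQ^\vee \otimes S^2\sQ(m)\bigr) \Rightarrow H^{p+q}(S^2\sQ|_{\sN^-}(m)).
\]
Because $\sQ$ has rank two, only $0\le a,b\le 3$ occur, and there are the identifications $\bigwedge^{2}S^2\sQ^\vee \cong S^2\sQ^\vee(-1)$ and $\bigwedge^{3}S^2\sQ^\vee \cong \sO(-3)$. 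Hence every term is a sum of rank-two Schur bundles $\Sigma^{(c,d)}\sQ$ (twisted by powers of $\det\sQ = \sO(1)$), obtained by the Clebsch--Gordan rule for the rank-two bundle $\sQ$. Each such bundle is homogeneous on $\Gr(\bC^8,2)$ with trivial $\sK$-weight, so Borel--Weil--Bott reduces to sorting the weight $(c,d,0,\dots,0)+\rho$: it is either singular (no cohomology) or regular in a single degree, and in particular $H^\bullet(\Sigma^{(c,d)}\sQ)=\Sigma^{(c,d,0,\dots,0)}\bC^8$ in degree $0$ whenever $c\ge d\ge 0$.

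I would then run this for $m=0,-1,-2,-3$. The decisive case is $m=0$. The only surviving $E_1$ entries in total degree $0$ are $E_1^{0,0}=H^0(S^2\sQ)=S^2\bC^8$ (dimension $36$) and the two copies (from $a+b=1$) of $H^0(S^2\sQ^\vee\otimes S^2\sQ)=\bC$, coming from the trivial summand $\sO$, since the summands $\Sigma^{(1,-1)}\sQ$ and $\Sigma^{(2,-2)}\sQ$ are acyclic by Borel--Weil--Bott. The differential $d_1\colon E_1^{-1,0}\to E_1^{0,0}$ is the Koszul contraction by $q_1$ and $q_2$; on the trivial summands it sends the two generators to $q_1$ and $q_2$ in $S^2\bC^8$. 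As $\sP=\langle q_1,q_2\rangle$ is two-dimensional, $d_1$ is injective with image $\sP$, so $E_2^{0,0}=S^2\bC^8/\sP=\bC^{34}$ and $E_2^{-1,0}=0$. Tracking that no further differential touches the $(0,0)$ spot then yields simultaneously $H^0(S^2\sQ|_{\sN^-})=\bC^{34}$, the vanishing of the higher cohomology for $m=0$, and the ``moreover'' statement: the restriction map $H^0(S^2\sQ)\to H^0(S^2\sQ|_{\sN^-})$ is the quotient $S^2\bC^8\twoheadrightarrow S^2\bC^8/\sP$, with kernel exactly $\sP$. For $m=-1,-2,-3$ the same machine produces the listed one- and two-dimensional groups $H^2(\cdot(-1))$, $H^3(\cdot(-2))$, $H^4(\cdot(-3))$ as the residual Borel--Weil--Bott contributions surviving the Koszul differentials; the degree is forced by the position of the relevant term in the complex. (Equivalently one may factor the computation through the orthogonal Grassmannian $\Gr(\bC^8,2,q_1)$ using~\eqref{sequence:Gr_OG} and~\eqref{sequence:OG_N}, which shortens the Koszul complex at the cost of a less homogeneous ambient; I would use whichever keeps the plethysm shortest.)

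The main obstacle is bookkeeping rather than conceptual: one must correctly tabulate the rank-two plethysm decompositions of $\bigwedge^{a}S^2\sQ^\vee \otimes \bigwedge^{b}S^2\sQ^\vee \otimes S^2\sQ(m)$ over all admissible $(a,b)$ and all four twists, apply Borel--Weil--Bott to each Schur summand, and then verify degeneration of the spectral sequence by evaluating the remaining differentials. The only nonzero differentials are the Koszul contractions by $q_1$ and $q_2$, so their ranks are governed entirely by the linear independence and general position of $q_1,q_2$, guaranteed by the simplicity of the pencil $\sP$; the delicate point is to confirm that after taking these into account the surviving terms land in the claimed single degrees with the claimed dimensions, and that no higher differential creates an unexpected cancellation. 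Once the genericity of $\sP$ is used to fix these ranks, the four exceptional groups and the kernel $\sP$ fall out as above.
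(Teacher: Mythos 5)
Your proposal is correct and follows essentially the same route as the paper: the paper likewise resolves $\sO_{\sN^-}$ by the Koszul complex~\eqref{sequence:Gr_N} of $S^2\sQ\oplus S^2\sQ$ on $\Gr(\bC^8,2)$ and reduces everything to the Borel--Weil--Bott tabulation of $H^i(S^2\sQ(m)\otimes\bigwedge^k(S^2\sQ^\vee\oplus S^2\sQ^\vee))$ carried out in Lemma~\ref{lemma_S2Q_S2Adual_S2Qdual}, whose nonzero entries sit in exactly the spots your spectral sequence predicts ($\bC^{36}$ and $\bC^2$ in degree $0$ for $m=0$, and the single $H^6$ terms at $k=4,3,2$ for $m=-1,-2,-3$). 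Your identification of the $d_1$ differential with contraction by $q_1,q_2$, giving the kernel $\sP$ and $E_2^{0,0}=\bC^{34}$, is the same mechanism the paper uses implicitly.
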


\begin{proof}
By sequence~\eqref{sequence:Gr_N}, the assertions follow from cohomology computations on $\Gr(\bC^8,2)$.
The required computations will be done in Lemma~\ref{lemma_S2Q_S2Adual_S2Qdual}.
\end{proof}

\begin{lemma}\label{lemma:S2Q_U_N}
For $-3 \leq m \leq -1$, we have  $H^j(S^2\sQ|_{\sN^-} \otimes \sU_{p_i}(m)) =0$.
\end{lemma}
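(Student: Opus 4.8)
The plan is to reduce the statement to a single Borel--Weil--Bott computation on a homogeneous space. Since $p_i$ is general, the quadric $q_i \coloneqq f(p_i)$ is a general member of the pencil $\sP$, hence non-degenerate (only the eight branch points of $f$ yield corank-one forms). Thus $\Gr(\bC^8,2,q_i)$ is a smooth $\SO(8)$-homogeneous variety of type $D_4$; by Proposition~\ref{proposition:N-_Gr} the restricted bundle $\sU_{p_i}$ extends to a spinor bundle $\sS_+$ on it, while $S^2\sQ|_{\sN^-}$ and $\sO(m)$ are restrictions of the homogeneous bundles $S^2\sQ$ and $\sO(m)$. Hence all three factors descend from this one homogeneous space, which makes the Koszul complex~\eqref{sequence:OG_N} for $\sN^-\subset\Gr(\bC^8,2,q_i)$ the natural tool (rather than pushing forward to $\Gr(\bC^8,2)$ as in Lemma~\ref{lemma:U_Udual}, since $S^2\sQ$ is not itself a spinor bundle).

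First I would tensor the Koszul resolution~\eqref{sequence:OG_N} of $\sO_{\sN^-}$ by $S^2\sQ\otimes\sS_+(m)$ and run the resulting hypercohomology spectral sequence. This shows that $H^j(S^2\sQ|_{\sN^-}\otimes\sU_{p_i}(m))$ is assembled from the groups
\[
H^j\!\left(\Gr(\bC^8,2,q_i),\; S^2\sQ\otimes\sS_+\otimes\textstyle\bigwedge^k S^2\sQ^\vee\,(m)\right),
\]
so it suffices to prove these vanish for all $j$, each $k$ with $0\le k\le 3$ (the rank of the normal bundle $S^2\sQ$), and each $m\in\{-1,-2,-3\}$.

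Next I would decompose each bundle $S^2\sQ\otimes\sS_+\otimes\bigwedge^k S^2\sQ^\vee(m)$ into irreducible homogeneous bundles. Because $\sQ$ has rank two, $S^2\sQ$ has rank three and the exterior powers $\bigwedge^k S^2\sQ^\vee$ are few and explicit; combining plethysm for $\sQ$ with the tensor by the spinor factor $\sS_+$ and the twist by $\det\sQ=\sO(1)$ via the Littlewood--Richardson rule produces a finite, fully explicit list of summands of the form $\Sigma^\bullet\sQ\otimes\sS_\pm$. For each summand I would write down the associated $\SO(8)$-weight, add $\rho$, and apply the Borel--Weil--Bott theorem as set up in Section~\ref{subsect:cohomology_HB}. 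The assertion is that for $m$ in the stated negative range every resulting $\rho$-shifted weight is singular, so each group above vanishes and the lemma follows.

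The main obstacle is the bookkeeping: enumerating the constituents for every $k$ and checking singularity of each weight after the $\rho$-shift, which is lengthy though mechanical. This is entirely parallel to the homogeneous-space computations underlying Lemmas~\ref{lemma:S2Q_S_S_-2} and~\ref{lemma:S2Q_S_S_S_-2}, and I expect it to be packaged in the same way. The one point needing care is that the twists by $S^2\sQ^\vee$ coming from the Koszul differentials can push certain weights close to a wall, so the borderline cases (in particular $k$ small together with $m=-1$) should be verified individually rather than by a single uniform estimate.
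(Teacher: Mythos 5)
Your proposal is correct, and its first step is exactly the paper's: both tensor the Koszul complex~\eqref{sequence:OG_N} with $S^2\sQ\otimes\sS_+(m)$ to reduce the lemma to the vanishing of $H^j(S^2\sQ\otimes\sS_+\otimes\bigwedge^kS^2\sQ^\vee(m))$ on $\Gr(\bC^8,2,q_i)$ for $0\le k\le 3$, using (as you correctly note) that $q_i=f(p_i)$ is non-degenerate because $p_i$ is general. Where you genuinely diverge is in how these groups are killed. Contrary to your guess, the paper does \emph{not} stay on the $D_4$ variety for this lemma: it applies the projection formula together with the resolution of $(\widetilde j_{q_i})_*\widetilde\sS$ from Proposition~\ref{proposition:N-_Gr} to transport everything to $\Gr(\bC^8,2)$, reducing to the vanishing of $H^j(S^2\sQ\otimes\bigwedge^kS^2\sQ^\vee\otimes\Sigma^\gamma\sK(m))$ for the four Young diagrams $\gamma$ occurring in that resolution, which is Lemma~\ref{lemma:S2Q_S2Qdual_SigmaK_-m} (Borel--Weil--Bott in type $A_7$). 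Your route instead runs Borel--Weil--Bott directly on the type-$D_4$ homogeneous space, in the style of Lemmas~\ref{lemma:S2Q_S_S_-2} and~\ref{lemma:S2Q_S_S_S_-2}, and it works: since $\sQ$ has rank $2$, the bundles $\bigwedge^kS^2\sQ^\vee$ are the irreducibles $\sO$, $S^2\sQ^\vee$, $S^2\sQ(-3)$, $\sO(-3)$, so each $S^2\sQ\otimes\bigwedge^kS^2\sQ^\vee\otimes\sS_+(m)$ has at most three irreducible summands, with $\rho$-shifted weights of the form $(a,b,3/2,-1/2)$; for every $k$ and every $m\in\{-1,-2,-3\}$ one of $a$, $b$ lies in $\{\pm 3/2,\pm 1/2\}$ or $a=\pm b$, so all weights are singular and the borderline cases you flag do not in fact arise. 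Your version is leaner for this particular statement; what the paper's detour to $\Gr(\bC^8,2)$ buys is that the same $A_7$-side lemma serves the other computations (e.g.\ Lemma~\ref{lemma:S2Q_Udual_Udual}) where two distinct quadrics of the pencil interact and one cannot work on a single orthogonal Grassmannian.
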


\begin{proof}
By sequence~\eqref{sequence:OG_N}, it is enough to show the following vanishings:
\[
H^j(S^2\sQ|_{\Gr(\bC^8,2,q_i)} \otimes \sS \otimes \bigwedge^k S^2(\sQ^\vee)|_{\Gr(\bC^8,2,q_i)} \otimes\sO(m)) =0
\]
for $-3 \leq m \leq -1$.
By using the resolution of $\sS$ (=Proposition~\ref{proposition:N-_Gr}), we can deduce these from the following vanishings on $\Gr(\bC^8,2)$:
\begin{itemize}
\item $H^j(S^2\sQ \otimes \bigwedge^k S^2(\sQ^\vee) (m)) =0$,
\item $H^j(S^2\sQ \otimes \bigwedge^k S^2(\sQ^\vee) (m)\otimes\sK^\vee(-1)) =0$,
\item $H^j(S^2\sQ \otimes \bigwedge^k S^2(\sQ^\vee) (m)\otimes\sK(-1)) =0$,
\item $H^j(S^2\sQ \otimes \bigwedge^k S^2(\sQ^\vee) (m)\otimes \sO(-2)) =0$.
\end{itemize}
These will be proved in Lemma~\ref{lemma:S2Q_S2Qdual_SigmaK_-m}.
\end{proof}

\begin{lemma}\label{lemma:S2Q_Udual_Udual}
For $-1 \leq m \leq 0$, we have  $H^j(S^2\sQ|_{\sN^-} \otimes \sU_{p_1}^\vee \otimes\sU_{p_2}^\vee(m) )=0$.
\end{lemma}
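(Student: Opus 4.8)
The plan is to first remove the duals using the rank-two structure, and then reduce to a Borel--Weil--Bott computation on $\Gr(\bC^8,2)$ exactly as in the proofs of Lemmas~\ref{lemma:U_Udual} and \ref{lemma_U_U_U_-2}. Recall that $\sU_{p_i}$ is a rank two bundle with $c_1(\sU_{p_i})=\alpha$, hence $\det\sU_{p_i}=\sO(\alpha)=\sO(1)$, and therefore the canonical isomorphism $\sU_{p_i}^\vee\simeq\sU_{p_i}(-1)$ gives
\[
S^2\sQ|_{\sN^-}\otimes\sU_{p_1}^\vee\otimes\sU_{p_2}^\vee(m)\simeq S^2\sQ|_{\sN^-}\otimes\sU_{p_1}\otimes\sU_{p_2}(m-2).
\]
For $-1\le m\le 0$ the twist $m-2$ lies in $\{-3,-2\}$, so it suffices to prove $H^j(S^2\sQ|_{\sN^-}\otimes\sU_{p_1}\otimes\sU_{p_2}(n))=0$ for all $j$ and all $n\in\{-3,-2\}$.

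Since $p_1$, $p_2$ are general, I may assume that $q_1\coloneqq f(p_1)$ and $q_2\coloneqq f(p_2)$ are distinct non-degenerate quadric forms, so that the situation is the analogue of the case $z\neq p,\iota(p)$ treated in Lemma~\ref{lemma:U_Udual}. Following that computation verbatim, I would use the fiber product diagram~\eqref{diag:N-_Gr}, the base-change isomorphism of Proposition~\ref{proposition:N-_Gr}~(2), and the projection formula (noting that both $S^2\sQ$ and $\sO(n)$ descend from $\Gr(\bC^8,2)$) to rewrite
\[
H^j(\sN^-,S^2\sQ\otimes\sU_{p_1}\otimes\sU_{p_2}(n))\simeq H^j\bigl(\Gr(\bC^8,2),S^2\sQ\otimes(\widetilde j_{q_1})_*\widetilde\sS_1\otimes(\widetilde j_{q_2})_*\widetilde\sS_2(n)\bigr),
\]
where $\widetilde\sS_i$ is the spinor bundle on $\Gr(\bC^8,2,q_i)$ whose restriction to $\sN^-$ is $\sU_{p_i}$.

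By Proposition~\ref{proposition:N-_Gr}~(3), each $(\widetilde j_{q_i})_*\widetilde\sS_i$ lies in the triangulated subcategory generated by $\Sigma^\alpha\sK$ with $\alpha$ ranging over $\{(0),(1,1,1,1,1),(1,1,1,1,1,1),(2,1,1,1,1,1),(2,2,2,2,2,2)\}$. Hence the bundle above lies in $\langle S^2\sQ\otimes\Sigma^\alpha\sK\otimes\Sigma^\beta\sK(n)\rangle$ for $\alpha$, $\beta$ in this set, and the lemma reduces to the vanishing
\[
H^j\bigl(\Gr(\bC^8,2),S^2\sQ\otimes\Sigma^\alpha\sK\otimes\Sigma^\beta\sK(n)\bigr)=0\qquad(n\in\{-3,-2\})
\]
for all $j$ and all such $\alpha$, $\beta$. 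I would isolate these purely homogeneous vanishings in an auxiliary lemma, in the spirit of Lemmas~\ref{lemma:Sigma_K_Sigma_K_-2} and \ref{lemma:S2Q_S2Qdual_SigmaK_-m}, and establish them by decomposing $S^2\sQ\otimes\Sigma^\alpha\sK\otimes\Sigma^\beta\sK$ into irreducible homogeneous bundles $\Sigma^\mu\sQ\otimes\Sigma^\nu\sK$ via the Littlewood--Richardson rule and applying the Borel--Weil--Bott theorem to each summand after twisting by $\sO(n)$.

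The main obstacle is precisely this last step. The products $\Sigma^\alpha\sK\otimes\Sigma^\beta\sK$ already range over a long list of pairs, and inserting the rank-three factor $S^2\sQ$ multiplies the number of Schur summands to be checked, so the bookkeeping needed to confirm that every summand twisted by $\sO(-2)$ or $\sO(-3)$ lands in a Bott-vanishing chamber is substantial. Conceptually, however, each individual verification is identical to those already carried out for the simpler bundles, and the twists $-2$ and $-3$ are deep enough that I expect the acyclicity to be uniform across the list.
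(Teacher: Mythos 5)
Your reduction is exactly the paper's: the dual-to-twist identity $\sU_{p_i}^\vee\simeq\sU_{p_i}(-1)$, base change along diagram~\eqref{diag:N-_Gr}, and the decomposition of the pushforwards $(\widetilde j_{q_i})_*\widetilde\sS_i$ into Schur powers of $\sK$ lead precisely to the vanishing of $H^j(S^2\sQ(m-2)\otimes\Sigma^\alpha\sK\otimes\Sigma^\beta\sK)$, which the paper isolates and verifies as Lemma~\ref{S2Q_-2-m_SigmaK_Sigma_K}. Two minor remarks: since $q_1,q_2$ are non-degenerate you only need the four-element list coming from the resolution in Proposition~\ref{proposition:N-_Gr}~(3), not the five-element list (which includes $(1,1,1,1,1,1)$) from the degenerate case~(4) that you quote; and the final Borel--Weil--Bott check is lighter than you fear, because $S^2\sQ$ is already an irreducible homogeneous bundle on $\Gr(\bC^8,2)$, so it only contributes the head $(m,m-2)$ to each weight rather than multiplying the number of summands.
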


\begin{proof}
Similarly to the proof of Lemma~\ref{lemma:U_Udual}, we have
\[
H^j(S^2\sQ|_{\sN^-} \otimes \sU_{p_1}^\vee \otimes\sU_{p_2}^\vee(m)) = H^j(S^2\sQ(m-2) \otimes (\widetilde j_{q_1})_*\widetilde\sS \otimes(\widetilde j_{q_2})_*\widetilde\sS).
\]
Recall that $(\widetilde j_{q_i})_*\widetilde\sS \in \langle \Sigma^\alpha\sK \rangle$ for
\[\alpha \in  
\{(0),(1,1,1,1,1),(2,1,1,1,1,1),(2,2,2,2,2,2)\}.
\]
Thus $S^2\sQ(m-2) \otimes (\widetilde j_{q_1})_*\widetilde\sS \otimes(\widetilde j_{q_2})_*\widetilde\sS$ is contained in
\[
\langle S^2\sQ(m-2)\otimes  \Sigma^\alpha\sK \otimes \Sigma^\beta \sK \rangle,
\]
where $\alpha$ and $\beta$ run 
\[\alpha, \beta \in  
\{(0),(1,1,1,1,1),(2,1,1,1,1,1),(2,2,2,2,2,2)\}.
\]
By Lemma~\ref{S2Q_-2-m_SigmaK_Sigma_K}, the cohomology groups of $S^2\sQ(m-2) \otimes \Sigma^\alpha\sK \otimes \Sigma^\beta \sK$ are all zero for $m=-1$ and $0$.
Hence the assertion follows.
\end{proof}

\subsubsection{Surjectivity of $S^2H^0(\sQ) \to H^0(S^2\sQ|_S)$}
Here we will prove the following:
\begin{proposition}\label{proposition:restriction_S2Q}
We have $H^i(S^2\sQ|_{S})=0$ for $i>0$.
Also the natural restriction morphism
 \begin{equation}\label{eq:restriction}
 H^0(S^2\sQ|_{\sN^-}) \to H^0(S^2\sQ|_S)
\end{equation}
is an isomorphism.
\end{proposition}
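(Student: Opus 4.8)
The plan is to resolve $S^2\sQ|_S$ by the Koszul complex of the section $(s_1,s_2)$ cutting out $S$ and to read everything off from the resulting hypercohomology spectral sequence. Tensoring the exact Koszul complex~\eqref{sequence:N_S} with the locally free sheaf $S^2\sQ|_{\sN^-}$ gives a resolution
\[
0 \to S^2\sQ(-2) \to \cdots \to S^2\sQ\otimes(\sU_{p_1}\oplus\sU_{p_2})^\vee \to S^2\sQ|_{\sN^-} \to S^2\sQ|_S \to 0
\]
on $\sN^-$. Placing $S^2\sQ\otimes\bigwedge^k(\sU_{p_1}\oplus\sU_{p_2})^\vee$ in cohomological degree $-k$, the associated spectral sequence reads
\[
E_1^{-k,q} = H^q\bigl(\sN^-,\, S^2\sQ\otimes\textstyle\bigwedge^k(\sU_{p_1}\oplus\sU_{p_2})^\vee\bigr) \Rightarrow H^{q-k}(S,\, S^2\sQ|_S).
\]
First I would decompose each exterior power via $\bigwedge^k(\sU_{p_1}\oplus\sU_{p_2})^\vee=\bigoplus_{a+b=k}\bigwedge^a\sU_{p_1}^\vee\otimes\bigwedge^b\sU_{p_2}^\vee$ and the rank-two identities $\bigwedge^2\sU_{p_i}^\vee\simeq\sO(-1)$ and $\sU_{p_i}^\vee\simeq\sU_{p_i}(-1)$ (as $\det\sU_{p_i}=\sO(1)$). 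Every summand then becomes one of $S^2\sQ(m)$, $S^2\sQ\otimes\sU_{p_i}(m)$, or $S^2\sQ\otimes\sU_{p_1}^\vee\otimes\sU_{p_2}^\vee$, whose cohomology is governed by Lemmas~\ref{lemmaS2Q_N}, \ref{lemma:S2Q_U_N} and \ref{lemma:S2Q_Udual_Udual} respectively.

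Running through $k=0,\dots,4$, all summands of $\sU_{p_i}$- or $\sU_{p_1}^\vee\otimes\sU_{p_2}^\vee$-type vanish in the relevant twist range (Lemmas~\ref{lemma:S2Q_U_N} and \ref{lemma:S2Q_Udual_Udual}), and the only surviving $E_1$-entries are
\[
E_1^{0,0}=H^0(S^2\sQ|_{\sN^-})=\bC^{34},\qquad E_1^{-2,2}=\bC^2,\qquad E_1^{-4,3}=\bC^2,
\]
the middle being two copies of $H^2(S^2\sQ(-1))$ and the last $H^3(S^2\sQ(-2))$, both from Lemma~\ref{lemmaS2Q_N}. These three classes lie in total degrees $0$, $0$ and $-1$, and none lies in positive total degree; hence $H^i(S^2\sQ|_S)=0$ for $i>0$ at once. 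Moreover the alternating sum of the $E_1$-dimensions gives $\chi(S^2\sQ|_S)=34-2+2=34$, so (with the higher cohomology already killed) $h^0(S^2\sQ|_S)=34$. Thus the restriction map~\eqref{eq:restriction} is a linear map $\bC^{34}\to\bC^{34}$, and it is an isomorphism as soon as it is injective.

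The remaining point, which I expect to be the main obstacle, is exactly this injectivity. Its kernel is $H^0(\sN^-,\sI_S\otimes S^2\sQ|_{\sN^-})$, where $\sI_S$ is the ideal sheaf of $S$ in $\sN^-$; in the spectral sequence this equals $\ker d_2$ for the single nontrivial differential
\[
d_2\colon H^3(S^2\sQ(-2)) \longrightarrow H^2(S^2\sQ(-1))^{\oplus 2},
\]
a map $\bC^2\to\bC^2$. Since $d_2$ is injective if and only if it is an isomorphism, everything reduces to showing $\ker d_2=0$, i.e.\ that no quadric outside the pencil $\sP$ restricts to zero on $S$. I see two viable routes. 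One is a direct Koszul computation: $d_2$ is the connecting map built from contraction against $s_1$ and $s_2$, organized as a $2\times2$ ``matrix'' indexed by $\{p_1,p_2\}$, whose invertibility one checks after identifying the generators of $H^3(S^2\sQ(-2))$ and $H^2(S^2\sQ(-1))$ by Borel--Weil--Bott on the orthogonal Grassmannians $\Gr(\bC^8,2,q_i)$. The cleaner alternative is to invoke upper semicontinuity of $h^0(\sI_S\otimes S^2\sQ|_{\sN^-})$ over the family of pointed curves and to verify the vanishing on the explicit degeneration of Example~\ref{ex:complete_intersection}~\ref{ex:complete_intersection1}, where $S$ is a codimension-two linear section of $\bP^1\times\bP^1\times\bP^1\times\bP^1$ and the quadrics through $S$ can be listed by hand; this forces $\ker d_2=0$ for the general member and completes the proof.
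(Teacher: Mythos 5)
Your reduction is exactly the paper's: tensoring the Koszul complex~\eqref{sequence:N_S} with $S^2\sQ|_{\sN^-}$, decomposing the exterior powers, and invoking Lemmas~\ref{lemmaS2Q_N}, \ref{lemma:S2Q_U_N} and \ref{lemma:S2Q_Udual_Udual} to see that the only surviving terms are $H^0(S^2\sQ|_{\sN^-})=\bC^{34}$, $H^2(S^2\sQ(-1))^{\oplus 2}=\bC^2$ and $H^3(S^2\sQ(-2))=\bC^2$, so that the vanishing of $H^{>0}(S^2\sQ|_S)$ is automatic and everything hinges on the single map $\bC^2\to\bC^2$ (your $d_2$, the paper's $\varphi=\varphi_2\oplus\varphi_1$) being an isomorphism. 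That part is correct and well organized. But the proof stops where the real work begins: you explicitly defer the injectivity of $d_2$, and that step is not a formality --- in the paper it occupies a separate lemma whose proof runs two double complexes built from diagram~\eqref{diag:S_Gr}, identifies $H^3(S^2\sQ(-2)|_{\sN^-})$ canonically with $A_1\oplus A_2$ (one summand for each quadric $q_i$), shows each $\varphi_i$ is the projection killing $A_i$, and rests on three further Borel--Weil--Bott computations (Lemmas~\ref{lemma_S2Q_S2Adual_S2Qdual}, \ref{lemma:S2Q_-1_S2Qdual_Sdual}, \ref{S2Qdual_S2Q_-m_L}). Your first proposed route is essentially this argument, but none of it is carried out; as written, the kernel of $d_2$ could a priori be nonzero and then propagate via a $d_4$ into $E_1^{0,0}$, destroying injectivity of the restriction map while leaving all your stated dimension counts intact.

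Your second route --- upper semicontinuity of $h^0(\sI_S\otimes S^2\sQ|_{\sN^-})$ combined with a check on the equal-parity degeneration of Example~\ref{ex:complete_intersection}~\ref{ex:complete_intersection1}, where $S$ sits in $\bP^3\times\bP^3$ as a $(1,1)\cap(1,1)\cap(2,0)\cap(0,2)$ and $S^2\sQ|_S\simeq(\sO(2,0)\oplus\sO(1,1)\oplus\sO(0,2))|_S$ --- is a genuinely different and attractive strategy, consistent in spirit with the specialization arguments the paper uses elsewhere (e.g.\ in Sections~\ref{section:equal_parity} and \ref{section:proof}). But it too is only a plan: the decisive verification that exactly the two-dimensional pencil $\sP\subset S^2\bC^8$ dies on the special $S$ (equivalently that $S^2\bC^8\to H^0(S^2\sQ|_S)$ has two-dimensional kernel there) is asserted to be doable ``by hand'' and not done. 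So the proposal correctly locates the crux and sets up the bookkeeping around it, but the statement is not actually proved.
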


\begin{proof}
By tensoring $S^2\sQ|_{\sN^-}$ with sequence~\eqref{sequence:N_S}, we have the following exact sequence:
\[
 S^2\sQ|_{\sN^-} \otimes \bigwedge^{\bullet} (\sU_{p_1}\oplus\sU_{p_2})^\vee \to S^2\sQ|_N- \to S^2\sQ|_{S} \to 0.
\]
By Lemmas~\ref{lemmaS2Q_N}, \ref{lemma:S2Q_U_N} and \ref{lemma:S2Q_Udual_Udual}, the map \eqref{eq:restriction} is an isomorphism if and only if the induced map
\[
\varphi: H^3(S^2\sQ|_{\sN^-}\otimes \bigwedge^2 \sU_{p_1}^\vee \otimes \bigwedge^2 \sU_{p_2}^\vee) \to H^2(S^2\sQ|_{\sN^-} \otimes \bigwedge^2 \sU_{p_1}^\vee) \oplus H^2(S^2\sQ|_{\sN^-} \otimes \bigwedge^2 \sU_{p_2}^\vee)
\]
is an isomorphism.
Note that, if this condition is satisfied, then $H^i(S^2\sQ|_{S})=0$ for $i>0$.

By tensoring $S^2\sQ|_{\sN^-} \otimes \bigwedge^2\sU_{p_j}^\vee$ ($j \neq i$) with sequence~\eqref{sequence:N_M}, we have
\[
S^2\sQ|_{\sN^-} \otimes \bigwedge^2\sU_{p_j}^\vee \otimes \bigwedge^{\bullet} \sU_{p_i}^\vee \to (S^2\sQ \otimes \bigwedge^2\sU_{p_j}^\vee)|_{M_i} \to 0.
\]
This induces the following map
\[
\varphi_i \colon H^3(S^2\sQ|_{\sN^-}\otimes \bigwedge^2 \sU_{p_1}^\vee \otimes \bigwedge^2 \sU_{p_2}^\vee) \to H^2(S^2\sQ|_{\sN^-} \otimes \bigwedge^2 \sU_{p_j}^\vee).
\]
The map $\varphi$ is nothing but $\varphi_2 \oplus \varphi_1$.
Thus Proposition~\ref{proposition:restriction_S2Q} follows from the following lemma.
\end{proof}

\begin{lemma}
 $\varphi_1$ and $\varphi_2$ are surjective morphisms with different kernels in $H^3(S^2\sQ|_{\sN^-}\otimes \bigwedge^2 \sU_{p_1}^\vee \otimes \bigwedge^2 \sU_{p_2}^\vee) $.
\end{lemma}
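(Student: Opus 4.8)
The plan is to reduce the assertion to the non-vanishing and independence of a single connecting map. Since $c_1(\sU_{p_i}) = \alpha$, we have $\bigwedge^2\sU_{p_i}^\vee \cong \sO_{\sN^-}(-1)$, so by Lemma~\ref{lemmaS2Q_N} the common source is $H^3(S^2\sQ|_{\sN^-}(-2)) \cong \bC^2$ and each target is $H^2(S^2\sQ|_{\sN^-}(-1)) \cong \bC$. Thus each $\varphi_i$ is a linear map $\bC^2 \to \bC$, and the statement that $\varphi_1$, $\varphi_2$ are surjective with different kernels is exactly the statement that $\varphi = \varphi_2 \oplus \varphi_1 \colon \bC^2 \to \bC^2$ is an isomorphism, i.e.\ injective. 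I would read $\varphi$ as the differential $d_2$ of the hypercohomology spectral sequence of the Koszul complex~\eqref{sequence:N_S} tensored with $S^2\sQ|_{\sN^-}$: by Lemmas~\ref{lemmaS2Q_N}, \ref{lemma:S2Q_U_N} and \ref{lemma:S2Q_Udual_Udual} the only surviving $E_1$-terms in total degrees $-1$ and $0$ are $E_1^{-4,3} \cong \bC^2$, $E_1^{-2,2} \cong \bC^2$ and $E_1^{0,0} = H^0(S^2\sQ|_{\sN^-})$, with $\varphi = d_2 \colon E_1^{-4,3} \to E_1^{-2,2}$.

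One cannot conclude injectivity for free from $H^j(S^2\sQ|_S) = 0$ for $j<0$: a kernel of $d_2$ could instead survive to be annihilated by $d_4 \colon E_4^{-4,3} \to E_4^{0,0} \subseteq H^0(S^2\sQ|_{\sN^-})$, and this degenerate possibility (in which the restriction fails to be injective on global sections) is precisely what the lemma must exclude. To treat it I would compute each $\varphi_i$ separately by restricting to the Fano fourfold $M_i = (s_i)_0$ of Theorem~\ref{theorem:CI_CY_in_N-}. Tensoring the Koszul complex~\eqref{sequence:N_M} with $S^2\sQ|_{\sN^-}(-1)$ and using $H^j(S^2\sQ|_{\sN^-}\otimes\sU_{p_i}(-2)) = 0$ from Lemma~\ref{lemma:S2Q_U_N}, the complex collapses so that its only nontrivial differential is $\varphi_i$; consequently the cokernel of $\varphi_i$ is $H^2((S^2\sQ(-1))|_{M_i})$ and its kernel is $H^1((S^2\sQ(-1))|_{M_i})$, the latter realised as a subspace of the source $H^3(S^2\sQ|_{\sN^-}(-2))$. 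Hence $\varphi_i$ is surjective exactly when $H^2((S^2\sQ(-1))|_{M_i}) = 0$, and its kernel is cut out by $H^1((S^2\sQ(-1))|_{M_i})$.

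These two groups on $M_i$ are accessible: by diagram~\eqref{diag:S_Gr} and the triality identification $L_i \simeq \Gr(\bC^7,2,q')$, the fourfold $M_i$ is the zero locus in the homogeneous space $L_i$ of the restriction of the other quadric $q_j$, so the Koszul complex~\eqref{sequence:L_M} reduces everything to Borel--Weil--Bott on $L_i$. I expect this to yield $H^2((S^2\sQ(-1))|_{M_i}) = 0$, giving surjectivity of both $\varphi_i$, and to exhibit $\ker\varphi_i$ — under a canonical identification of the source $\bC^2$ with the pencil $\sP = \langle q_1, q_2\rangle$ furnished by the resolution~\eqref{sequence:Gr_N} of $\sO_{\sN^-}$ on $\Gr(\bC^8,2)$ — as the line determined by $q_i = f(p_i)$. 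The kernels are then distinct because $q_1 \neq q_2$ for general $p_1 \neq p_2$ (equivalently $p_2 \neq \iota(p_1)$). The hard part is exactly here: Borel--Weil--Bott returns only the \emph{dimensions} of these groups, whereas the lemma requires that the connecting maps $\varphi_i$ be nonzero and that their two kernel lines be genuinely different. Verifying this non-degeneracy — tracking the spinors $s_i$ and the quadrics $q_j$ through the successive Koszul differentials rather than merely counting ranks — is the real content, and is what forces the geometry of the two points $p_1$, $p_2$ to remain visible throughout the computation.
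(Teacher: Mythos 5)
Your setup is correct, and the surjectivity half of your plan does go through exactly as you describe: identifying $\bigwedge^2\sU_{p_i}^\vee\simeq\sO_{\sN^-}(-1)$, reading $\varphi_i$ off the Koszul complex \eqref{sequence:N_M} twisted by $S^2\sQ(-1)$ (the middle term has no cohomology by Lemma~\ref{lemma:S2Q_U_N}), and computing $H^1(S^2\sQ(-1)|_{M_i})=\bC$ and $H^2(S^2\sQ(-1)|_{M_i})=0$ by pushing down to $L_i$ via \eqref{sequence:L_M} and Lemma~\ref{S2Qdual_S2Q_-m_L}. But the actual content of the lemma is that the two one-dimensional kernels are \emph{different} lines in $H^3(S^2\sQ(-2)|_{\sN^-})\simeq\bC^2$, and your proposal stops exactly there: you write that you ``expect'' $\ker\varphi_i$ to be the line of $q_i$ under some identification of the source with the pencil $\sP$, and you explicitly flag the verification as ``the real content.'' That acknowledged step \emph{is} the lemma. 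Without it nothing excludes $\ker\varphi_1=\ker\varphi_2$, in which case $\varphi=\varphi_2\oplus\varphi_1$ would have rank one and Proposition~\ref{proposition:restriction_S2Q} would fail; as you correctly observe, Borel--Weil--Bott only returns dimensions and cannot decide this by itself.

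The paper closes this gap with two diagram chases rather than by tracking the sections symbolically. First, the double complex obtained from the lower-right square of \eqref{diag:S_Gr} --- the bigraded Koszul complex $\bigwedge^{\bullet}S^2\sQ^\vee\otimes\bigwedge^{\bullet}S^2\sQ^\vee\otimes S^2\sQ(-2)$ on $\Gr(\bC^8,2)$, whose only nonvanishing cohomology sits in the two cross-terms $A_1=H^6(\bigwedge^{1}S^2\sQ^\vee\otimes\bigwedge^{2}S^2\sQ^\vee\otimes S^2\sQ(-2))$ and $A_2=H^6(\bigwedge^{2}S^2\sQ^\vee\otimes\bigwedge^{1}S^2\sQ^\vee\otimes S^2\sQ(-2))$ by Lemma~\ref{lemma_S2Q_S2Adual_S2Qdual} --- produces a \emph{canonical} splitting $H^3(S^2\sQ(-2)|_{\sN^-})\simeq A_1\oplus A_2$ in which the two summands are labelled by the two quadrics. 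Second, a mixed double complex through $\Gr(\bC^8,2,q_1)$, $L_1$ and $M_1$ (using Lemmas~\ref{lemma:S2Q_-1_S2Qdual_Sdual} and \ref{S2Qdual_S2Q_-m_L}) identifies the exact sequence $0\to H^1(S^2\sQ(-1)|_{M_1})\to A_1\oplus A_2\xrightarrow{\varphi_1}H^2(S^2\sQ(-1)|_{\sN^-})\to 0$ with the tautological sequence $0\to A_1\to A_1\oplus A_2\to A_2\to 0$, and by symmetry $\ker\varphi_2=A_2$. This first double complex is precisely the ``canonical identification of the source with the pencil'' that your sketch presupposes; some such device must be supplied before your argument constitutes a proof.
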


\begin{proof}
We identify $\bigwedge^2\sU_{p_i}$ with $\sO(1)$.

Consider the double complex
\[
\xymatrix{
(\bigwedge^{\bullet} S^2\sQ^\vee \otimes S^2\sQ(-2)) |_{\Gr(\bC^8,2,q_1)} \ar[r]&  S^2\sQ(-2)|_{\sN^-} \\
\bigwedge^{\bullet} S^2\sQ^\vee \otimes \bigwedge^{\bullet} S^2\sQ^\vee\otimes S^2\sQ(-2) \ar[r] \ar[u]& (\bigwedge^{\bullet} S^2\sQ^\vee\otimes S^2\sQ(-2)) |_{\Gr(\bC^8,2,q_2)} \ar[u]
}
\]
which is obtained from the lower-right square of diagram~\eqref{diag:S_Gr}.

By Lemma~\ref{lemma_S2Q_S2Adual_S2Qdual} below, the nontrivial cohomology groups of $\bigwedge^{\bullet} S^2\sQ^\vee \otimes \bigwedge^{\bullet} S^2\sQ^\vee\otimes S^2\sQ(-2)$ are
\begin{itemize}
\item $ H^6(\bigwedge^{1} S^2\sQ^\vee \otimes \bigwedge^{2} S^2\sQ^\vee\otimes S^2\sQ(-2)) =\bC$,
\item $H^6 (\bigwedge^{2} S^2\sQ^\vee \otimes \bigwedge^{1} S^2\sQ^\vee\otimes S^2\sQ(-2))=\bC$.
\end{itemize}
$A_1$ and $A_2$ denote these two groups respectively.
Then, by chasing the diagram, we have
\begin{itemize}
\item $H^5((\bigwedge^{2} S^2\sQ^\vee \otimes S^2\sQ(-2)) |_{\Gr(\bC^8,2,q_1)}) \simeq A_2 \simeq H^4((S^2\sQ^\vee \otimes S^2\sQ(-2)) |_{\Gr(\bC^8,2,q_2)})$,
\item $H^5((\bigwedge^{2} S^2\sQ^\vee \otimes S^2\sQ(-2)) |_{\Gr(\bC^8,2,q_2)}) \simeq A_1 \simeq H^4((S^2\sQ^\vee \otimes S^2\sQ(-2)) |_{\Gr(\bC^8,2,q_1)})$.
\end{itemize}
Moreover we have a natural isomorphism $H^3(S^2\sQ(-2)|_{\sN^-}) \simeq A_1 \oplus A_2$.

Similarly, we have the following double complex:
 \[
\xymatrix{
(\bigwedge^{\bullet} S^2\sQ^\vee \otimes S^2\sQ(-1)) |_{L_1} \ar[r]&  S^2\sQ(-1)|_{M_1} \\
\bigwedge^{\bullet} S^2\sQ^\vee|_{\Gr(\bC^8,2,q_1)} \otimes \bigwedge^{\bullet} \sS^\vee \otimes S^2\sQ(-1)|_{\Gr(\bC^8,2,q_1)} \ar[r] \ar[u]& \bigwedge^{\bullet} \sU_{p_1}\otimes S^2\sQ(-1) |_{\sN^{-}}. \ar[u]
}
\]
Here $\sS$ is the spinor bundle whose restriction to $\sN^-$ is $\sU_{p_1}$.
By Lemmas~\ref{lemma:S2Q_-1_S2Qdual_Sdual} and \ref{S2Qdual_S2Q_-m_L} below, the nontrivial cohomology groups of $(\bigwedge^{\bullet} S^2\sQ^\vee \otimes S^2\sQ(-1)) |_{L_1}$ and  $\bigwedge^{\bullet} S^2\sQ^\vee|_{\Gr(\bC^8,2,q_1)} \otimes \bigwedge^{\bullet} \sS^\vee \otimes S^2\sQ(-1)|_{\Gr(\bC^8,2,q_1)}$ are
\begin{itemize}
\item $H^2((S^2\sQ^\vee \otimes S^2\sQ(-1))|_{L_1}) =\bC$,
\item $H^4(\bigwedge^{2} S^2\sQ^\vee|_{\Gr(\bC^8,2,q_1)}  \otimes S^2\sQ(-1)|_{\Gr(\bC^8,2,q_1)})=\bC$,
\item $H^5(\bigwedge^{2} S^2\sQ^\vee|_{\Gr(\bC^8,2,q_1)} \otimes \bigwedge^{2} \sS^\vee \otimes S^2\sQ(-1)|_{\Gr(\bC^8,2,q_1)}) \simeq A_2$,
\item $H^4( S^2\sQ^\vee|_{\Gr(\bC^8,2,q_1)} \otimes \bigwedge^{2} \sS^\vee \otimes S^2\sQ(-1)|_{\Gr(\bC^8,2,q_1)})\simeq A_1$.
\end{itemize}
Then, by chasing the diagram, we see that the induced sequence
\[
0 \to H^1(S^2\sQ(-1)|_{M_1}) \to H^3(S^2\sQ(-2)|_{\sN^-}) \xrightarrow{\varphi_1} H^2(S^2\sQ(-1)|_{\sN^-}) \to 0
\]
is equivalent to the natural sequence
\[
0 \to A_1 \to A_1 \oplus A_2 \to A_2 \to 0.
\]
By symmetry, the sequence
\[
0 \to H^1(S^2\sQ(-1)|_{M_2}) \to H^3(S^2\sQ(-2)|_{\sN^-}) \xrightarrow{\varphi_2} H^2(S^2\sQ(-1)|_{\sN^-}) \to 0
\]
is also equivalent to the natural sequence
\[
0 \to A_2 \to A_1 \oplus A_2 \to A_1 \to 0,
\]
and the assertion holds.
\end{proof}

\subsubsection{Simplicity and semi-rigidity of $\sQ|_S$}
The following proposition ensures that $\sQ|_S$ is simple and semi-rigid.

\begin{proposition}\label{proposition:simple_semirigid}
 $H^0(S^2\sQ|_S (-1)) = 0$ and $H^1(S^2\sQ|_S(-1)) =\bC^2$.
\end{proposition}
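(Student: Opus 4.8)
The plan is to derive both equalities from the single vanishing $H^0(S^2\sQ|_S(-1)) = 0$, which is exactly the simplicity of $\sQ|_S$. Since $\sQ|_S$ has rank two with $\det(\sQ|_S) = \sO_S(1)$, we have $\sQ|_S^\vee \simeq \sQ|_S(-1)$, so the endomorphism bundle splits as
\[
\sQ|_S \otimes \sQ|_S^\vee \simeq \sO_S \oplus S^2\sQ|_S(-1).
\]
The Mukai vector $v(\sQ|_S) = (2,h,6)$ satisfies $\langle v,v\rangle = (h^2) - 24 = 0$, hence $\chi(\sQ|_S,\sQ|_S) = 0$ and therefore $\chi(S^2\sQ|_S(-1)) = \chi(\sQ|_S,\sQ|_S) - \chi(\sO_S) = -2$. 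Moreover $(S^2\sQ|_S(-1))^\vee \simeq S^2\sQ|_S(-1)$, because $S^2(\sQ|_S^\vee)\otimes\sO_S(1) = S^2\sQ|_S(-1)$, so Serre duality on the K3 surface $S$ gives $h^i(S^2\sQ|_S(-1)) = h^{2-i}(S^2\sQ|_S(-1))$. Consequently, once $h^0 = 0$ is established, self-duality forces $h^2 = 0$, and then $\chi = -2$ forces $h^1 = 2$; both assertions of the proposition follow. I would emphasize that one cannot shortcut this by invoking stability of $\sQ|_S$, since the generality of $(S,h)$ (and hence $\Pic(S) = \bZ h$) is only proved later in Section~\ref{section:proof}; this is precisely why a purely cohomological argument is called for.

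To prove $h^0(S^2\sQ|_S(-1)) = 0$, I would tensor the Koszul complex~\eqref{sequence:N_S} of $E = \sU_{p_1}\oplus\sU_{p_2}$ with $S^2\sQ|_{\sN^-}(-1)$ and run the hypercohomology spectral sequence
\[
E_1^{-p,q} = H^q\!\left(\sN^-,\, S^2\sQ|_{\sN^-}(-1)\otimes \textstyle\bigwedge^p E^\vee\right) \Rightarrow H^{q-p}(S^2\sQ|_S(-1)).
\]
Using $\bigwedge^2\sU_{p_i}^\vee \simeq \sO(-1)$ and $\sU_{p_i}^\vee \simeq \sU_{p_i}(-1)$, every term is a twist of $S^2\sQ|_{\sN^-}$ already controlled by Lemmas~\ref{lemmaS2Q_N}, \ref{lemma:S2Q_U_N} and \ref{lemma:S2Q_Udual_Udual}. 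The $K_1$ and $K_3$ terms, together with the $\sU_{p_1}^\vee\otimes\sU_{p_2}^\vee$ summand of $K_2$, vanish in every degree, so $d_1 = 0$ and the only surviving entries are
\[
E_2^{0,2} = H^2(S^2\sQ|_{\sN^-}(-1)) = \bC,\quad E_2^{-2,3} = H^3(S^2\sQ|_{\sN^-}(-2))^{\oplus 2} = \bC^4,\quad E_2^{-4,4} = H^4(S^2\sQ|_{\sN^-}(-3)) = \bC.
\]
All higher differentials land in vanishing groups, so $E_\infty = E_3$, and the whole computation reduces to the two-step complex $\bC \xrightarrow{d_2^{(a)}} \bC^4 \xrightarrow{d_2^{(b)}} \bC$, in which $H^0 = \ker d_2^{(a)}$, $H^2 = \operatorname{coker} d_2^{(b)}$ and $H^1 = \ker d_2^{(b)}/\operatorname{im} d_2^{(a)}$. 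Self-duality of the abutment (equality $h^0 = h^2$) already excludes the mixed cases, leaving the clean dichotomy: either both $d_2$ vanish (so $h^0 = h^2 = 1$, $h^1 = 4$) or both are nonzero (so $h^0 = h^2 = 0$, $h^1 = 2$). It therefore suffices to show that one of these differentials is nonzero.

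The final and hardest step is exactly this nonvanishing, and I expect it to be the main obstacle: a count of dimensions alone cannot distinguish the two cases, so one must genuinely identify the map $d_2^{(b)}$, not just its source and target. I would do this by repeating the double-complex diagram chase attached to diagram~\eqref{diag:S_Gr} that proves Proposition~\ref{proposition:restriction_S2Q}. There the group $H^3(S^2\sQ|_{\sN^-}(-2))$ was resolved as $A_1\oplus A_2$ and its connecting maps $\varphi_i$ to $H^2(S^2\sQ|_{\sN^-}(-1))$ were shown to be surjective. Since the extra twist by $\sO(-1)$ is uniform and does not disturb these connecting isomorphisms, the component of $d_2^{(b)}$ issuing from the summand $\bigwedge^2\sU_{p_i}^\vee$ is identified with $\varphi_i$; hence $d_2^{(b)}$ is surjective, in particular nonzero. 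This yields $h^2(S^2\sQ|_S(-1)) = 0$, whence $h^0 = 0$ and $h^1 = 2$ by the reductions of the first paragraph, completing the proof.
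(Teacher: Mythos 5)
Your proof is correct, and its skeleton coincides with the paper's: both tensor the Koszul complex~\eqref{sequence:N_S} with $S^2\sQ|_{\sN^-}(-1)$, invoke Lemmas~\ref{lemmaS2Q_N}, \ref{lemma:S2Q_U_N} and \ref{lemma:S2Q_Udual_Udual} to isolate the three surviving terms $\bC$, $\bC^4$, $\bC$, and combine Serre duality for the self-dual bundle $S^2\sQ|_S(-1)$ with the Euler characteristic. (Your value $\chi=-2$ is the correct one; the ``$\chi(S^2\sQ(-1)|_S)=2$'' in the paper's proof must be a sign slip, since $h^0=h^2$ together with the asserted $h^0=0$, $h^1=2$ forces $\chi=-2$.) Where you genuinely diverge is in the decisive nonvanishing. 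The paper kills the surviving class in $H^4(\bigwedge^4(\sU_{p_1}\oplus\sU_{p_2})^\vee\otimes S^2\sQ|_{\sN^-}(-1))$ by proving the maps $\psi_i$ (your $d_2^{(a)}$) injective; this is converted into the statement that $H^k(S^2\sQ|_{M_i}(-2))$ equals $\bC$ for $k=3$ and vanishes otherwise, which is then checked on $L_i$ via Lemma~\ref{S2Qdual_S2Q_-m_L}. You instead kill the surviving class in $H^2(S^2\sQ|_{\sN^-}(-1))$ by showing $d_2^{(b)}$ is surjective, identifying its component on the summand $H^3(\bigwedge^2\sU_{p_i}^\vee\otimes S^2\sQ|_{\sN^-}(-1))$ with the map $\varphi_i$ from the proof of Proposition~\ref{proposition:restriction_S2Q}. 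This identification is sound --- indeed there is no twist to adjust at all: both maps are the two-step connecting homomorphism of the Koszul complex of $s_i$ with coefficients in $S^2\sQ|_{\sN^-}\otimes\bigwedge^2\sU_{p_j}^\vee\simeq S^2\sQ|_{\sN^-}(-1)$, legitimate because the intermediate term $\sU_{p_i}^\vee\otimes S^2\sQ|_{\sN^-}(-1)$ has no cohomology by Lemma~\ref{lemma:S2Q_U_N}, so the two maps are literally equal and their surjectivity was already established. Your route buys economy, recycling work from Proposition~\ref{proposition:restriction_S2Q} instead of redoing a computation on $M_i$ and $L_i$; the paper's route is independent of that earlier lemma and also pins down $H^*(S^2\sQ|_{M_i}(-2))$, which has independent content. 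Either differential being nonzero suffices, exactly as your dichotomy argument explains.
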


\begin{proof}
 Note that $\chi(S^2\sQ(-1)|_S) =2$ by the Riemann-Roch theorem and that $h^0(S^2\sQ|_S (-1)) =h^2(S^2\sQ|_S (-1))$ by the Serre duality.
Thus it is enough to show $H^0(S^2\sQ|_S (-1)) = 0$.
By tensoring $S^2\sQ (-1)$ to sequence~\eqref{sequence:N_S}, we have
\[
\bigwedge^{\bullet} (\sU_{p_1}\oplus\sU_{p_2})^\vee \otimes S^2\sQ|_{\sN^-} (-1) \to S^2\sQ|_S (-1) \to 0.
\]

By Lemmas~\ref{lemmaS2Q_N}, \ref{lemma:S2Q_U_N} and \ref{lemma:S2Q_Udual_Udual}, the nontrivial cohomology groups of $\bigwedge^{\bullet} (\sU_{p_1}\oplus\sU_{p_2})^\vee \otimes S^2\sQ|_{\sN^-} (-1) $ are
\begin{itemize}
 \item $H^4(\bigwedge^{4} (\sU_{p_1}\oplus\sU_{p_2})^\vee \otimes S^2\sQ|_{\sN^-} (-1) )=\bC$,
 \item $H^3(\bigwedge^{2} \sU_{p_1}^\vee \otimes S^2\sQ|_{\sN^-} (-1) )=\bC^2$,
 \item $H^3(\bigwedge^{2} \sU_{p_2}^\vee \otimes S^2\sQ|_{\sN^-} (-1) )=\bC^2$,
 \item $H^2( S^2\sQ|_{\sN^-} (-1) )=\bC$.
\end{itemize}
Thus the assertion holds if the induced maps
\[
\psi_i \colon H^4(\bigwedge^{4} (\sU_{p_1}\oplus\sU_{p_2})^\vee \otimes S^2\sQ|_{\sN^-} (-1) ) \to H^3(\bigwedge^{2} \sU_{p_j}^\vee \otimes S^2\sQ|_{\sN^-} (-1) )
\]
are injective ($j \neq i$).
By tensoring sequence~\eqref{sequence:N_M} with $S^2\sQ|_{\sN^-}(-1) \otimes \bigwedge^2\sU_{p_j}$, we see that the injectivity of $\psi_i$ is equivalent to
\[
\begin{cases}
 H^k(S^2\sQ|_{M_i}(-2)) =0 \qquad & (k \neq 3),\\
 H^3(S^2\sQ|_{M_i}(-2)) =\bC.
\end{cases}
\]
By sequence~\eqref{sequence:L_M}, these can be deduced from cohomology computations on $L_i$.
The required computations on $L_i$ will be proved in Lemma~\ref{S2Qdual_S2Q_-m_L} below.
\end{proof}

\subsection{Cohomology groups of homogeneous vector bundles}\label{subsect:cohomology_HB}
Let $G$ be a simply connected semi-simple algebraic group and $P$ a maximal parabolic subgroup of $G$.
A homogeneous vector bundle on $G/P$ is constructed from a representation of $P$.
Among them, completely reducible bundles correspond to representations of the reductive part $G_0$ of $P$.
We identify the weight lattice of $G$ and $G_0$.

In the following, $\rho$ denotes the half of the sum of all positive roots of $G$.
A weight $\lambda$ is called singular if $(\lambda,\alpha) = 0$ for some positive root $\alpha$; it is called regular otherwise.
For a regular weight $\lambda$, its index $i_0$ is the number of positive roots $\alpha$ with $(\lambda,\alpha)<0$.
If $\lambda$ is a regular weight, then there exists an element $w_{\lambda}$ of the Weyl group such that $w_{\lambda}(\lambda)$ is contained in the fundamental Weyl chamber.

\begin{theorem}[Borel-Weil-Bott theorem]
Let $\sE$ a homogeneous vector bundle corresponding to a representation of $G_0$, and $\gamma$ its highest weight.
 Then the following hold:
\begin{enumerate}
 \item If $\gamma + \rho$ is singular, then $H^i(\sE) =0$ for all $i$.
 \item If $\gamma + \rho$ is regular of index $i_0$, then $H^i(\sE) =0$ except for $i = i_0$.
 $H^{i_0}(\sE)$ is the irreducible $G$-module with highest weight $w_{\gamma+\rho}(\gamma+\rho)-\rho$.
\end{enumerate}
\end{theorem}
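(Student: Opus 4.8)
This is the classical Borel--Weil--Bott theorem, which one would normally invoke by citing Bott's original paper; to give a self-contained argument the plan is to proceed in two stages. First I would reduce the cohomology of the completely reducible bundle $\sE$ on $G/P$ to that of a line bundle on the full flag variety $G/B$, and then I would settle the line-bundle case by an induction on the length of a Weyl group element, using the $\bP^1$-fibrations attached to the simple roots.

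\emph{Reduction to line bundles on $G/B$.} First I would consider the natural projection $\pi\colon G/B \to G/P$, whose fibre $P/B$ is the flag variety of the reductive part $G_0$. The highest weight $\gamma$ of $\sE$ determines a line bundle $\sL_\gamma$ on $G/B$, and since $\gamma$ is dominant for $G_0$, the Borel--Weil theorem applied fibrewise gives $R^0\pi_*\sL_\gamma \simeq \sE$ together with $R^j\pi_*\sL_\gamma = 0$ for $j>0$. The Leray spectral sequence then degenerates and yields canonical isomorphisms $H^i(G/P,\sE) \simeq H^i(G/B,\sL_\gamma)$ for every $i$. This reduces the whole statement to the case $P=B$ and $\sE = \sL_\lambda$ a line bundle, where now no dominance is assumed on $\lambda$.

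\emph{The inductive step.} Next, for each simple root $\alpha$ I would use the $\bP^1$-bundle $p_\alpha\colon G/B \to G/P_\alpha$ attached to the minimal parabolic $P_\alpha$. Computing cohomology along the fibres via the cohomology of $\sO_{\bP^1}(n)$ and feeding the result into the Leray spectral sequence of $p_\alpha$ produces Bott's key relation with respect to the dot action $s_\alpha\cdot\lambda \coloneqq s_\alpha(\lambda+\rho)-\rho$: if $\langle \lambda+\rho,\alpha^\vee\rangle = 0$ then $H^i(G/B,\sL_\lambda)=0$ for all $i$, while if $\langle \lambda+\rho,\alpha^\vee\rangle < 0$ there is an isomorphism
\[
H^i(G/B,\sL_\lambda) \simeq H^{i-1}(G/B,\sL_{s_\alpha\cdot\lambda}),
\]
after which $\langle s_\alpha\cdot\lambda+\rho,\alpha^\vee\rangle > 0$. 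Applying this repeatedly to push $\gamma+\rho$ towards the fundamental chamber either meets a wall, if $\gamma+\rho$ is singular, killing all cohomology and giving clause (1), or terminates at a dominant weight after exactly $i_0$ steps, where $i_0 = \ell(w_{\gamma+\rho})$ equals the number of positive roots on which $\gamma+\rho$ is negative. In the latter case the successive degree shifts concentrate the cohomology in degree $i_0$ and identify it, by the base case below, with the irreducible $G$-module of highest weight $w_{\gamma+\rho}(\gamma+\rho)-\rho$, which is clause (2).

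\emph{Main obstacle.} I expect the substantive input to be the base case of dominant weights, namely the Borel--Weil theorem identifying $H^0(G/B,\sL_\mu)$ with the irreducible module of highest weight $\mu$, together with the higher vanishing $H^{>0}(G/B,\sL_\mu)=0$ (Kempf vanishing). This is what anchors the induction and guarantees that the degree-shifting mechanism bottoms out in the single expected degree; proving it honestly requires either the analytic identification of global sections with an irreducible representation or an algebraic argument via Frobenius splitting or a filtration of $\sO_{G/B}$. The remaining work is the purely combinatorial bookkeeping of the dot action and the translation between the index $i_0$ and the length of $w_{\gamma+\rho}$, which is routine.
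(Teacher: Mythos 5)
The paper does not prove this statement at all: it is the classical Borel--Weil--Bott theorem, quoted verbatim as a known tool (used throughout Section 5 to compute cohomology of homogeneous bundles on $\Gr(\bC^8,2)$, $\Gr(\bC^8,2,q)$ and $\Gr(\bC^7,2,q')$), so there is no in-paper argument to compare against. Your sketch is a correct outline of the standard Demazure-style proof: the reduction from an irreducible $G_0$-bundle on $G/P$ to the line bundle $\sL_\gamma$ on $G/B$ via relative Borel--Weil along $\pi\colon G/B\to G/P$ and the degenerate Leray spectral sequence is right, and the degree-shifting relation $H^i(\sL_\lambda)\simeq H^{i-1}(\sL_{s_\alpha\cdot\lambda})$ for $\langle\lambda+\rho,\alpha^\vee\rangle<0$, together with the vanishing when $\langle\lambda+\rho,\alpha^\vee\rangle=0$, is exactly Bott's mechanism coming from $H^*(\bP^1,\sO(n))$ on the fibres of $G/B\to G/P_\alpha$. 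You correctly identify the genuine content as the base case (Borel--Weil identification of $H^0(\sL_\mu)$ plus Kempf vanishing for dominant $\mu$), which your proposal leaves as a black box; for completeness one should also note the small combinatorial point that a regular non-dominant weight always has a \emph{simple} root on which it is negative (so the induction never stalls), and that a singular weight can always be driven onto a simple-root wall. With those standard facts supplied, the argument is complete and matches the statement as the paper uses it.
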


\subsubsection{On $\Gr(\bC^8,2)$}
The Grassmann variety $\Gr(\bC^8,2)$ is a homogeneous variety of type $A_7$.
Let $\{(a_1,a_2;a_3,a_4,a_5,a_6,a_7,a_8) \in \bZ^8 \mid \sum a_i = 0\}$ be the root lattice of type $A_7$ and $\bZ^8/\bZ(1,1,\dots,1)$ the weight lattice.

Our root basis is $\{e_1-e_2, e_2-e_3, \dots , e_7-e_8\}$.
Thus the fundamental weights are $(1, \dots, 1, 0 \dots, 0)$.
The half of the sum of all positive roots is
\[
\rho = (7,5;3,1,-1,-3,-5,-7)/2 \equiv (7,6;5,4,3,2,1,0).
\]
The bundles $\sQ$ and $\sK$ on $\Gr(\bC^8,2)$ correspond to the weights $(1,0;0,0,0,0,0,0)$ and $(0,0;1,0,0,0,0,0)$ respectively.

\begin{lemma}\label{lemma:Sigma_K_Sigma_K_-2}
 Let $\alpha$ and $\beta$ be one of the following Young diagrams:
\[
\begin{multlined}
\alpha \in \{
(0),(1),(2),(1,1,1),(2,1,1),(1,1,1,1,1),(2,1,1,1,1),(1,1,1,1,1,1),\\
 (2,1,1,1,1,1),(2,2,1,1,1,1),(2,2,2,2,1,1),(2,2,2,2,2,2)
  \}.
\end{multlined}
 \]
and
\[
\beta \in \{(0),(1,1,1,1,1),(1,1,1,1,1,1),(2,1,1,1,1,1),(2,2,2,2,2,2)\}.
\]
Then $H^i(\Sigma^\alpha \sK \otimes \Sigma^\beta\sK (-2)) = 0$.
\end{lemma}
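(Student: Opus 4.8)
The plan is to apply the Borel--Weil--Bott theorem termwise after decomposing the bundle into irreducible (completely reducible) homogeneous bundles by the Littlewood--Richardson rule, exactly in the framework of Section~\ref{subsect:cohomology_HB}. First I would write
\[
\Sigma^\alpha\sK \otimes \Sigma^\beta\sK \simeq \bigoplus_{\lambda} (\Sigma^\lambda\sK)^{\oplus c^\lambda_{\alpha\beta}},
\]
where $c^\lambda_{\alpha\beta}$ is the Littlewood--Richardson coefficient and $|\lambda| = |\alpha| + |\beta|$. Since $\sK$ has rank $6$, only the $\lambda$ with at most $6$ rows survive; and since $c^\lambda_{\alpha\beta}\neq 0$ forces $\alpha\subseteq\lambda$, $\beta\subseteq\lambda$ and $\lambda_1\le \alpha_1+\beta_1\le 2+2=4$, every occurring $\lambda$ satisfies $\lambda_1\le 4$. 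Tensoring with $\sO(-2)$ (a character of the Levi factor) preserves irreducibility, so it suffices to prove $H^i(\Sigma^\lambda\sK(-2))=0$ for all $i$ and every such $\lambda$.

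Next I would record the highest weight of each twisted summand. In the coordinates of Section~\ref{subsect:cohomology_HB} the bundle $\Sigma^\lambda\sK(-2)$ is the irreducible homogeneous bundle with highest weight $\gamma = (-2,-2;\lambda_1,\dots,\lambda_6)$, so that, with $\rho \equiv (7,6;5,4,3,2,1,0)$,
\[
\gamma + \rho = (5,4;\,\lambda_1+5,\ \lambda_2+4,\ \lambda_3+3,\ \lambda_4+2,\ \lambda_5+1,\ \lambda_6).
\]
The six rightmost coordinates $\mu_i\coloneqq \lambda_i+(6-i)$ are strictly decreasing, and the two leftmost are $5,4$; hence $\gamma+\rho$ is singular exactly when some coordinate repeats, i.e.\ when $\{4,5\}\cap\{\mu_1,\dots,\mu_6\}\neq\emptyset$. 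By the Borel--Weil--Bott theorem, singularity of $\gamma+\rho$ yields $H^i(\Sigma^\lambda\sK(-2))=0$ for all $i$. Thus the whole lemma reduces to the numerical claim: \emph{for every $\lambda$ occurring above, some $\mu_i=\lambda_i+(6-i)$ equals $4$ or $5$.}

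Finally I would verify this claim. The only dangerous $\lambda$ are the ``regular'' ones, for which the strictly decreasing sequence $\mu$ skips the block $\{4,5\}$: since $\mu_1=\lambda_1+5\ge 5$ and $\mu_6=\lambda_6\le\lambda_1\le 4$, avoiding $4$ and $5$ forces $\lambda_1\ge 1$ together with a jump $\mu_{i^*}\ge 6$, $\mu_{i^*+1}\le 3$ at some index, hence a gap $\lambda_{i^*}-\lambda_{i^*+1}\ge 2$. I would rule these out using the inherited constraints $\alpha\subseteq\lambda\supseteq\beta$, $|\lambda|=|\alpha|+|\beta|$ and $\lambda_1\le 4$; the check is made finite and tractable by the transpose symmetry $c^\lambda_{\alpha\beta}=c^{\lambda'}_{\alpha'\beta'}$, under which $\alpha',\beta'$ have at most two rows (because $\alpha,\beta$ have all parts $\le 2$), reducing each product to a two-row-by-two-row Littlewood--Richardson product with very few terms. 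The main obstacle is precisely this last step: one must confirm, over all $12\times 5$ pairs $(\alpha,\beta)$, that the gap configuration required for regularity is incompatible with $c^\lambda_{\alpha\beta}\neq 0$. The containment and cardinality constraints do force every occurring $\mu$ to meet $\{4,5\}$ (for instance, a tall $\beta$ pushes the middle $\mu_i$ down into the block), but this is a genuine, if routine, case analysis rather than a one-line argument.
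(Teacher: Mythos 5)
Your reduction is exactly the one the paper uses: decompose $\Sigma^\alpha\sK\otimes\Sigma^\beta\sK$ by the Littlewood--Richardson rule into summands $\Sigma^\lambda\sK$, twist by $\sO(-2)$ to get the irreducible bundle of highest weight $(-2,-2;\lambda_1,\dots,\lambda_6)$, and note that $w+\rho=(5,4;\mu_1,\dots,\mu_6)$ with $\mu_i=\lambda_i+6-i$ strictly decreasing, so that Borel--Weil--Bott gives total vanishing precisely when some $\mu_i\in\{4,5\}$. This matches the paper's three singularity conditions ($\gamma_j=j-1$, $\gamma_j=j-2$, or a repeated tail coordinate, the last of which never occurs for a partition), and everything up to this point is correct.

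The gap is that you never actually verify the decisive numerical claim that every $\lambda$ with $c^\lambda_{\alpha\beta}\neq 0$ meets the block $\{4,5\}$; you defer it as ``a genuine, if routine, case analysis.'' That verification \emph{is} the content of the lemma: the paper discharges it by explicitly tabulating all Littlewood--Richardson constituents for each pair $(\alpha,\beta)$ and checking each one. Your structural criterion for a ``dangerous'' $\lambda$ is correct (one needs an index $i^*$ with $\mu_{i^*}\ge 6$ and $\mu_{i^*+1}\le 3$, i.e.\ $\lambda_{i^*}\ge i^*$ and $\lambda_{i^*+1}\le i^*-2$, which together with $\lambda_1\le 4$ forces $i^*\in\{2,3,4\}$), and it does make the check short: for $\beta\neq(0)$ one has $\lambda_5\ge\beta_5\ge 1$, which kills $i^*=2$; the Weyl-type necessary condition $\lambda_{i+j-1}\le\alpha_i+\beta_j$ for $c^\lambda_{\alpha\beta}\neq 0$, applied with $i+j-1=3$ (resp.\ $4$) and combined with $\alpha,\beta\subseteq\lambda$ and $|\lambda|=|\alpha|+|\beta|$, eliminates $i^*=3$ and $i^*=4$ in a few lines; and for $\beta=(0)$ one inspects the twelve $\alpha$'s directly. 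But as written the proposal asserts the conclusion of this analysis rather than carrying it out, so it is not yet a complete proof; you must either run your case analysis to the end or fall back on the explicit table as the paper does.
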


\begin{proof}
 By the Littlewood-Richardson rule, the functor $\Sigma^\alpha (\blank) \otimes \Sigma^\beta (\blank)$ is a direct sum of some Schur functors $\Sigma^\gamma (\blank)$ for the following $\gamma$:

\begin{longtable}{lllll}
\toprule
$(0,0,0,0,0,0)$&
\begin{tabular}{l}
 $(1,1,1,1,1,0)$
\end{tabular}&
$(1,1,1,1,1,1)$&
\begin{tabular}{l}
 $(2,1,1,1,1,1)$
\end{tabular}&
$(2,2,2,2,2,2)$\\ \midrule
$(1,0,0,0,0,0)$&
\begin{tabular}{l}
 $(2,1,1,1,1,0)$\\$(1,1,1,1,1,1)$
\end{tabular}
&$(2,1,1,1,1,1)$&
\begin{tabular}{l}
 $(2,2,1,1,1,1)$\\$(3,1,1,1,1,1)$
\end{tabular}
&$(3,2,2,2,2,2)$\\ \midrule
$(2,0,0,0,0,0)$&
\begin{tabular}{l}
$(3,1,1,1,1,0)$\\$(2,1,1,1,1,1)$
\end{tabular}&
$(3,1,1,1,1,1)$&
\begin{tabular}{l}
$(3,2,1,1,1,1)$\\$(4,1,1,1,1,1)$
\end{tabular}&
$(4,2,2,2,2,2)$\\ \midrule
$(1,1,1,0,0,0)$&
\begin{tabular}{l}
$(2,2,2,1,1,0)$\\$(2,2,1,1,1,1)$
\end{tabular}&
$(2,2,2,1,1,1)$&
\begin{tabular}{l}
$(2,2,2,2,1,1)$\\$(3,2,2,1,1,1)$
\end{tabular}&
$(3,3,3,2,2,2)$\\ \midrule
$(2,1,1,0,0,0)$&
\begin{tabular}{l}
$(3,2,2,1,1,0)$\\$(3,2,1,1,1,1)$\\$(2,2,2,1,1,1)$
\end{tabular}&
$(3,2,2,1,1,1)$&
\begin{tabular}{l}
$(3,2,2,2,1,1)$\\$(3,3,2,1,1,1)$\\$(4,2,2,1,1,1)$
\end{tabular}&
$(4,3,3,2,2,2)$\\ \midrule
$(1,1,1,1,1,0)$&
\begin{tabular}{l}
$(2,2,2,2,2,0)$\\$(2,2,2,2,1,1)$
\end{tabular}&
$(2,2,2,2,2,1)$&
\begin{tabular}{l}
$(3,2,2,2,2,1)$\\$(2,2,2,2,2,2)$
\end{tabular}&
$(3,3,3,3,3,2)$\\ \midrule
$(2,1,1,1,1,0)$&
\begin{tabular}{l}
$(3,2,2,2,2,0)$\\$(3,2,2,2,1,1)$\\$(2,2,2,2,2,1)$
\end{tabular}&
$(3,2,2,2,2,1)$&
\begin{tabular}{l}
$(3,2,2,2,2,2)$\\$(3,3,2,2,2,1)$\\$(4,2,2,2,2,1)$
\end{tabular}&
$(4,3,3,3,3,2)$\\ \midrule
$(1,1,1,1,1,1)$&
\begin{tabular}{l}
$(2,2,2,2,2,1)$
\end{tabular}&
$(2,2,2,2,2,2)$&
\begin{tabular}{l}
$(3,2,2,2,2,2)$
\end{tabular}&
$(3,3,3,3,3,3)$\\ \midrule
$(2,1,1,1,1,1)$&
\begin{tabular}{l}
$(3,2,2,2,2,1)$\\$(2,2,2,2,2,2)$
\end{tabular}&
$(3,2,2,2,2,2)$&
\begin{tabular}{l}
$(4,2,2,2,2,2)$\\$(3,3,2,2,2,2)$
\end{tabular}&
$(4,3,3,3,3,3)$\\ \midrule
$(2,2,1,1,1,1)$&
\begin{tabular}{l}
$(3,3,2,2,2,1)$\\$(3,2,2,2,2,2)$
\end{tabular}&
$(3,3,2,2,2,2)$&
\begin{tabular}{l}
$(3,3,3,2,2,2)$\\$(4,3,2,2,2,2)$
\end{tabular}&
$(4,4,3,3,3,3)$\\ \midrule
$(2,2,2,2,1,1)$&
\begin{tabular}{l}
$(3,3,3,3,2,1)$\\$(3,3,3,2,2,2)$
\end{tabular}&
$(3,3,3,3,2,2)$&
\begin{tabular}{l}
$(4,3,3,3,2,2)$\\$(3,3,3,3,3,2)$
\end{tabular}&
$(4,4,4,4,3,3)$\\ \midrule
$(2,2,2,2,2,2)$&
\begin{tabular}{l}
$(3,3,3,3,3,2)$
\end{tabular}&
$(3,3,3,3,3,3)$&
\begin{tabular}{l}
$(4,3,3,3,3,3)$
\end{tabular}&
$(4,4,4,4,4,4)$\\
\bottomrule
\end{longtable}

Thus, the bundle $\Sigma^\alpha \sK \otimes \Sigma^\beta\sK$ is a direct sum of some homogeneous vector bundles whose weights have $\gamma$ above as their tails.
Then the bundle $\Sigma^\alpha \sK \otimes \Sigma^\beta\sK (-2)$ is a direct sum of some homogeneous vector bundles whose weights $w$ have $\gamma$ as their tails and have $(-2,-2)$ as their heads.

For such a weight $\gamma$, the sum $w + \rho $ is singular if one of the following conditions is satisfied:
\begin{itemize}
\item $\gamma_j = j-1$ for some $j$,
\item $\gamma_j = j-2 $ for some $j$,
\item $\gamma_j+6-j =\gamma_k +6-k$ for some $j$ and $k$.
\end{itemize}

One of these conditions holds for each $\gamma$ above, and the assertion holds.
\end{proof}

\begin{lemma}\label{S2Q_-2-m_SigmaK_Sigma_K}
Let $\alpha$ and $\beta$ be one of the following Young diagram:
\[
\alpha, \beta \in  \{(0),(1,1,1,1,1),(2,1,1,1,1,1),(2,2,2,2,2,2)\}.
\]
Then $H^i(S^2\sQ(m-2)\otimes  \Sigma^\alpha\sK \otimes \Sigma^\beta \sK)=0$ for $m=-1$, $0$.
\end{lemma}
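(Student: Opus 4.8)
The plan is to argue exactly as in the proof of Lemma~\ref{lemma:Sigma_K_Sigma_K_-2}, combining the Littlewood--Richardson rule with the Borel--Weil--Bott theorem on the type $A_7$ homogeneous variety $\Gr(\bC^8,2)$. The point is that $S^2\sQ$ is the irreducible homogeneous bundle attached to the $\mathrm{GL}_2\times\mathrm{GL}_6$-representation $S^2(\mathrm{std}_2)\boxtimes\mathrm{triv}$, with highest weight $(2,0;0,0,0,0,0,0)$, and $\sO(1)=\det\sQ$ corresponds to the weight $(1,1;0,0,0,0,0,0)$. Hence, once we decompose $\Sigma^\alpha\sK\otimes\Sigma^\beta\sK$ into irreducible summands $\Sigma^\gamma\sK$, the bundle $S^2\sQ(m-2)\otimes\Sigma^\gamma\sK$ is again an irreducible homogeneous bundle, with highest weight $(m,\,m-2;\,\gamma_1,\dots,\gamma_6)$.

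First I would reduce everything to a singularity check. By the Littlewood--Richardson rule applied to the rank $6$ bundle $\sK$, write $\Sigma^\alpha\sK\otimes\Sigma^\beta\sK=\bigoplus_\gamma(\Sigma^\gamma\sK)^{\oplus c^\gamma_{\alpha\beta}}$, where $\gamma$ ranges over partitions with at most six rows (one checks that no truncation occurs for the $\alpha,\beta$ in question). For each such $\gamma$ the Borel--Weil--Bott theorem applies to $w=(m,\,m-2;\,\gamma_1,\dots,\gamma_6)$, and with $\rho=(7,6;5,4,3,2,1,0)$ one has
\[
w+\rho=(m+7,\ m+4;\ \gamma_1+5,\ \gamma_2+4,\ \gamma_3+3,\ \gamma_4+2,\ \gamma_5+1,\ \gamma_6).
\]
Since $\gamma$ is a partition, the six ``tail'' entries $t_j:=\gamma_j+6-j$ are strictly decreasing, hence pairwise distinct, and the two ``head'' entries $m+7$, $m+4$ never coincide. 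Thus $w+\rho$ is singular, and the cohomology vanishes in all degrees, precisely when one of the head values $m+7$, $m+4$ occurs among $t_1,\dots,t_6$. For $m=-1$ the head values are $6$ and $3$; for $m=0$ they are $7$ and $4$. So it suffices to check, for each $\gamma$ that occurs, that $\{t_1,\dots,t_6\}$ meets $\{6,3\}$ and meets $\{7,4\}$.

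Then I would run the short enumeration. The four input partitions are very special: $(2,2,2,2,2,2)$ is twice the determinant weight, so $\Sigma^{(2^6)}$ just shifts every part by $2$; and $(2,1,1,1,1,1)=\det\otimes\mathrm{std}$, $(1,1,1,1,1)=\bigwedge^5\mathrm{std}$, so all ten products $\Sigma^\alpha\otimes\Sigma^\beta$ follow immediately from Pieri's rule (for instance $\Sigma^{(1^5)}\otimes\Sigma^{(1^5)}=\Sigma^{(2,2,2,2,2,0)}\oplus\Sigma^{(2,2,2,2,1,1)}$ and $\Sigma^{(2,1^5)}\otimes\Sigma^{(2,1^5)}=\Sigma^{(4,2,2,2,2,2)}\oplus\Sigma^{(3,3,2,2,2,2)}$). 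This produces a short list of $\gamma$'s, and in each case the two intersection conditions hold, so every summand contributes zero cohomology and the lemma follows. There is no genuine obstacle here beyond this finite bookkeeping; the one point requiring care is that singularity must be confirmed \emph{simultaneously} for both $m=-1$ and $m=0$ for every $\gamma$, which the computation above does.
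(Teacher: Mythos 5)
Your proposal is correct and follows essentially the same route as the paper: decompose $\Sigma^\alpha\sK\otimes\Sigma^\beta\sK$ by the Littlewood--Richardson (here really Pieri) rule into the twelve summands $\Sigma^\gamma\sK$, tensor with $S^2\sQ(m-2)$ to get the weight $(m,m-2;\gamma)$, and check via Borel--Weil--Bott that $w+\rho=(m+7,m+4;\gamma_1+5,\dots,\gamma_6)$ is singular for both $m=-1$ and $m=0$. Your explicit reformulation of singularity as ``a head value lies among the strictly decreasing tail values'' is a clean way to organize the same finite check the paper tabulates.
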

\begin{proof}
By the Littlewood-Richardson rule, $\Sigma^\alpha\sK \otimes \Sigma^\beta \sK$ is contained in $\langle\Sigma^\gamma\sK\rangle$, where $\gamma$ runs
\[
\begin{multlined}
\{(0),(1,1,1,1,1),(2,1,1,1,1,1),(2,2,2,2,2,2),(2,2,2,2,2),(2,2,2,2,1,1),\\(3,2,2,2,2,1),(3,3,3,3,3,2),(4,2,2,2,2,2),(3,3,2,2,2,2),(4,3,3,3,3,3),(4,4,4,4,4,4)\}.
\end{multlined}
\]
The weight corresponding to $\Sigma^\gamma \sK$ has $\gamma$ as its tail.
By tensoring $S^2\sQ(m-2)$, we add the head $(m,m-2)$ to this weight.
Thus we have  the following weights $w$.
The assertion follows from the Borel-Weil-Bott theorem.
\begin{longtable}{lll}
\toprule
$w$ & $w+\rho$ & index \\ \midrule
$(m,m-2;0,0,0,0,0,0)$ & $(7+m,4+m;5,4,3,2,1,0)$ & $\blank$\\
$(m,m-2;1,1,1,1,1,0)$ & $(7+m,4+m;6,5,4,3,2,0)$ & $\blank$\\
$(m,m-2;2,1,1,1,1,1)$ & $(7+m,4+m;7,5,4,3,2,1)$ & $\blank$\\
$(m,m-2;2,2,2,2,2,2)$ & $(7+m,4+m;7,6,5,4,3,2)$ & $\blank$\\
$(m,m-2;2,2,2,2,2,0)$ & $(7+m,4+m;7,6,5,4,3,0)$ & $\blank$\\
$(m,m-2;2,2,2,2,1,1)$ & $(7+m,4+m;7,6,5,4,2,1)$ & $\blank$\\
$(m,m-2;3,2,2,2,2,1)$ & $(7+m,4+m;8,6,5,4,3,1)$ & $\blank$\\
$(m,m-2;3,3,3,3,3,2)$ & $(7+m,4+m;8,7,6,5,4,2)$ & $\blank$\\
$(m,m-2;3,3,2,2,2,2)$ & $(7+m,4+m;8,7,5,4,3,2)$ & $\blank$\\
$(m,m-2;4,2,2,2,2,2)$ & $(7+m,4+m;9,6,5,4,3,2)$ & $\blank$\\
$(m,m-2;4,3,3,3,3,3)$ & $(7+m,4+m;9,7,6,5,4,3)$ & $\blank$\\
$(m,m-2;4,4,4,4,4,4)$ & $(7+m,4+m;9,8,7,6,5,4)$ & $\blank$\\
\bottomrule
\end{longtable}
\end{proof}

\begin{lemma}\label{lemma_S2Q_S2Adual_S2Qdual}
For $-3 \leq m \leq 0$, we have $H^i(S^2\sQ(m) \otimes \bigwedge^k(S^2\sQ^\vee \oplus S^2 \sQ^\vee))=0$  except for the following cases:
\begin{enumerate}
\item $H^0(S^2\sQ) =\bC^{36}$.
\item $H^0(S^2\sQ\otimes(S^2\sQ^\vee\oplus S^2 \sQ^\vee )) =\bC^{2}$.
\item $H^6(S^2\sQ(-1) \otimes \bigwedge^4(S^2\sQ^\vee \oplus S^2 \sQ^\vee))=\bC$.
\item $H^6(S^2\sQ(-2) \otimes \bigwedge^3(S^2\sQ^\vee \oplus S^2 \sQ^\vee))=\bC^2$.
\item $H^6(S^2\sQ(-3) \otimes \bigwedge^2(S^2\sQ^\vee \oplus S^2 \sQ^\vee))=\bC$.
\end{enumerate}
\end{lemma}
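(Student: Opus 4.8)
The plan is to reduce the statement, as in the preceding lemmas, to the Borel--Weil--Bott theorem on $\Gr(\bC^8,2)$, using crucially that $\sQ$ has rank two so that every bundle occurring is a completely reducible functor of $\sQ$. First I would split the exterior power of the direct sum,
\[
\bigwedge^k(S^2\sQ^\vee\oplus S^2\sQ^\vee)=\bigoplus_{i+j=k}\bigwedge^i(S^2\sQ^\vee)\otimes\bigwedge^j(S^2\sQ^\vee),
\]
and record the rank-two identities $\bigwedge^0(S^2\sQ^\vee)=\sO$, $\bigwedge^1(S^2\sQ^\vee)=S^2\sQ^\vee$, $\bigwedge^2(S^2\sQ^\vee)\simeq S^2\sQ(-3)$ and $\bigwedge^3(S^2\sQ^\vee)=\sO(-3)$; the middle one follows from $\bigwedge^2E\simeq E^\vee\otimes\det E$ applied to the rank-three bundle $E=S^2\sQ^\vee$, together with $\det S^2\sQ=\sO(3)$. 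Since $\bigwedge^j(S^2\sQ^\vee)=0$ for $j>3$, this produces a short, explicit list of summands for each $0\le k\le 6$.

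Then I would decompose every resulting tensor product $S^2\sQ(m)\otimes\bigwedge^i(S^2\sQ^\vee)\otimes\bigwedge^j(S^2\sQ^\vee)$ into irreducibles $\Sigma^{(a,b)}\sQ$ by repeated use of the rank-two Clebsch--Gordan rule $S^a\sQ\otimes S^b\sQ=\bigoplus_{t=0}^{\min(a,b)}S^{a+b-2t}\sQ(t)$ (together with $S^b\sQ^\vee\cong S^b\sQ(-b)$ and a final twist by a power of $\sO(1)=\det\sQ$). Each summand $\Sigma^{(a,b)}\sQ$, with $a\ge b$, carries the weight $(a,b;0,0,0,0,0,0)$, so in the notation of the previous subsection $w+\rho=(a+7,\,b+6;\,5,4,3,2,1,0)$ with $\rho=(7,6;5,4,3,2,1,0)$.

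Now the Borel--Weil--Bott computation collapses dramatically. Because the tail $5,4,3,2,1,0$ is strictly decreasing and $a+7\ge b+7>b+6$, there is never an inversion between the first two positions, and $w+\rho$ is singular exactly when $a+7$ or $b+6$ lies in $\{5,4,3,2,1,0\}$, i.e.\ when $a\in\{-2,\dots,-7\}$ or $b\in\{-1,\dots,-6\}$; otherwise it is regular, and each of $a+7,\,b+6$ contributes either $0$ inversions (if it is $\ge 6$) or all $6$ (if it is $<0$), so the index lies in $\{0,6,12\}$. A direct inspection of the twists shows that in the range $-3\le m\le 0$ one never reaches $a\le -8$, so index $12$ never occurs: the survivors are the dominant summands ($b\ge 0$), contributing in degree $0$, and a few summands with $a\ge -1$ and $b\le -7$, contributing in degree $6$. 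For example, for $(m,k)=(0,0)$ and $(0,1)$ only the constituents $S^2\sQ$ and $\sO$ survive, giving $H^0(S^2\sQ)=S^2\bC^8=\bC^{36}$ and $H^0\bigl(S^2\sQ\otimes(S^2\sQ^\vee)^{\oplus 2}\bigr)=\bC^2$; while for $(m,k)=(-1,4)$ the pieces $S^2\sQ(-1)\otimes S^2\sQ^\vee(-3)$ are entirely singular and $\bigwedge^2\otimes\bigwedge^2$ yields $(S^2\sQ)^{\otimes 3}(-7)$, whose only regular constituent is $S^6\sQ(-7)$ with $w+\rho=(6,-1;5,4,3,2,1,0)$, regular of index $6$ and $H^6=\bC$.

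The main obstacle is purely the bookkeeping: one must list the $\Sigma^{(a,b)}\sQ$ arising for all pairs $(m,k)$ with $-3\le m\le0$ and $0\le k\le 6$, test each against the singularity criterion, and check that the surviving multiplicities assemble to the dimensions $36,2,1,2,1$. The only delicate point is confirming that no regular weight of intermediate index slips through, and tracking the multiplicities of $S^6\sQ(-7)$ coming from the cross terms $\bigwedge^i(S^2\sQ^\vee)\otimes\bigwedge^j(S^2\sQ^\vee)$: in case~(4) the two terms $(i,j)=(1,2),(2,1)$ each contribute one copy (giving $H^6=\bC^2$), whereas in case~(5) only the single term $(i,j)=(1,1)$ contributes (giving $H^6=\bC$), the $\bigwedge^2$-summands being entirely singular in both cases.
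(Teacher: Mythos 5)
Your proposal is correct and follows essentially the same route as the paper: decompose $\bigwedge^k(S^2\sQ^\vee\oplus S^2\sQ^\vee)$ via the rank-three identities for $\bigwedge^\bullet S^2\sQ^\vee$ and Littlewood--Richardson/Clebsch--Gordan into irreducible summands $\Sigma^{(a,b)}\sQ$ with trivial tail, then apply Borel--Weil--Bott to the weights $(a+7,b+6;5,4,3,2,1,0)$; the paper simply tabulates the resulting heads with multiplicities and reads off that the only regular ones are those you identify (index $0$ for $b\ge 0$, index $6$ for the head $(-1,-7)$, i.e.\ the constituent $S^6\sQ(-7)$, with the multiplicities $1,2,1$ you compute). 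Your observation that the index can only be $0$, $6$ or $12$ and that $12$ never occurs is a clean way to organize the same bookkeeping, and your multiplicity counts for cases (3)--(5) agree with the paper's table.
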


\begin{proof}
Note that $\sQ^\vee \simeq \sQ(-1)$.
Thus $S^2\sQ^\vee \simeq S^2\sQ(-2)$ and $\bigwedge^2S^2\sQ^\vee \simeq S^2\sQ(-3)$.
By the Littlewood-Richardson rule, the bundle $S^2\sQ(m)\otimes \bigwedge^k(S^2\sQ^\vee \oplus S^2 \sQ^\vee)$ is the direct sum of homogeneous vector bundles whose weights have the trivial tails and the following heads (with specified multiplicities).
Hence the assertion follows from the Borel-Weil-Bott theorem.
\begin{longtable}{lll}
\toprule
$k$ & heads of $w$ & index of $w+\rho$ \\ \midrule
$0$ & $(m+2,m)$ & $0$ ($m=0$)\\
$1$ & $2(m,m)$, $2(m+1,m-1)$, $2(m+2,m-2)$ & $0$ ($m=0$)\\
$2$ & $3(m-1,m-1)$, $5(m,m-2)$, $4(m+1,m-3)$, $(m+2,m-4)$ & $6$ ($m=-3$)\\
$3$ & $2(m-2,m+2)$, $8(m-1,m-3)$, $4(m,m-4)$, $2(m+1,m-5)$ & $6$ ($m=-2$)\\
$4$ & $3(m-3,m-3)$, $5(m-2,m-4)$, $4(m-1,m-5)$, $(m,m-6)$ & $6$ ($m=-1$)\\
$5$ & $2(m-4,m-4)$, $2(m-3,m-5)$, $2(m-2,m-6)$ & $\blank$\\
$6$ & $(m-4,m-6)$ & $\blank$\\
\bottomrule
\end{longtable}
\end{proof}

\begin{lemma}\label{lemma:S2Q_S2Qdual_SigmaK_-m}
$H^i(S^2\sQ\otimes \bigwedge^kS^2\sQ^\vee \otimes \Sigma^\gamma\sK (m))=0$ for $-3 \leq m \leq -1$  and $\gamma$ in
\[
\{ (0), (1,1,1,1,1), (2,1,1,1,1,1), (2,2,2,2,2,2) \}.
\]
\end{lemma}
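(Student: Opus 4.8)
The plan is to apply the Borel--Weil--Bott theorem on $\Gr(\bC^8,2)$, following the same pattern as in Lemmas~\ref{lemma_S2Q_S2Adual_S2Qdual} and \ref{S2Q_-2-m_SigmaK_Sigma_K}; here $k$ ranges over $0,\dots,3$ (for $k\geq 4$ the bundle $\bigwedge^k S^2\sQ^\vee$ vanishes). First I would record the identifications $\sO(1)=\det\sQ$ and $\sQ^\vee\simeq\sQ(-1)$ already used in Lemma~\ref{lemma_S2Q_S2Adual_S2Qdual}, which give
\[
S^2\sQ^\vee \simeq S^2\sQ(-2),\qquad \bigwedge^2 S^2\sQ^\vee \simeq S^2\sQ(-3),\qquad \bigwedge^3 S^2\sQ^\vee \simeq \sO(-3).
\]
Since $\sQ$ (rank $2$) corresponds to the $GL_2$-factor and $\sK$ (rank $6$) to the $GL_6$-factor of the reductive part of the parabolic, the bundle $S^2\sQ\otimes \bigwedge^kS^2\sQ^\vee \otimes \Sigma^\gamma\sK(m)$ splits as a direct sum of irreducible homogeneous bundles, each with a weight $w$ whose tail is the fixed partition $\gamma$ and whose head ranges over the $GL_2$-weights occurring in $S^2\sQ\otimes\bigwedge^kS^2\sQ^\vee\otimes\sO(m)$.

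Second, I would compute these heads by Clebsch--Gordan for $GL_2$ (a special case of the Littlewood--Richardson rule). Before the twist by $\sO(m)$, they are $(2,0)$ for $k=0$; $(2,-2),(1,-1),(0,0)$ for $k=1$; $(1,-3),(0,-2),(-1,-1)$ for $k=2$; and $(-1,-3)$ for $k=3$ (eight heads in total). Twisting by $\sO(m)$ shifts every head by $(m,m)$, while the tail contributes $(5,4,3,2,1,0)$, $(6,5,4,3,2,0)$, $(7,5,4,3,2,1)$ or $(7,6,5,4,3,2)$ to the last six coordinates of $w+\rho$ for the four values of $\gamma$, where $\rho=(7,6;5,4,3,2,1,0)$.

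Third, I would invoke the Borel--Weil--Bott theorem: it suffices to show that $w+\rho$ is singular (i.e.\ has two equal coordinates) for every head, every $\gamma$ in the list, and every $m$ with $-3\leq m\leq -1$. The observation that makes this uniform is that none of the eight heads has its two entries differing by $-1$, so the first two coordinates $(\mathrm{head}_1+m+7,\ \mathrm{head}_2+m+6)$ of $w+\rho$ are never equal to each other; hence the singularity must come from a collision between one of these two coordinates and an entry of the tail. A short case check, which I would present in a table of $w+\rho$ as in the earlier lemmas, shows such a collision always occurs, so $H^i$ vanishes for all $i$. The only real bookkeeping is the enumeration over the eight heads, four tails, and three values of $m$; there is no conceptual obstacle, the range $-3\leq m\leq -1$ having been chosen precisely so that a head--tail collision is forced in each instance.
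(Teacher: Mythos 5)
Your proposal is correct and follows essentially the same route as the paper: decompose $S^2\sQ\otimes\bigwedge^kS^2\sQ^\vee(m)\otimes\Sigma^\gamma\sK$ into irreducible homogeneous bundles via the Littlewood--Richardson (Clebsch--Gordan) rule, and check that every resulting $w+\rho$ is singular so that Borel--Weil--Bott gives the vanishing; your list of heads agrees exactly with the paper's table. Your added observation that a head--head collision is impossible (since $h_1-h_2\geq 0$) and that the singularity must therefore come from a head--tail collision is a slightly more structured justification than the paper's terse ``by adding $\rho$, we see that $w+\rho$ is singular,'' but it is the same argument.
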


\begin{proof}
By the Littlewood-Richardson rule, $S^2\sQ\otimes \bigwedge^kS^2\sQ^\vee  (m)$ is a direct sum of some homogeneous vector bundles whose weights have the trivial tails and  the following heads:
\begin{longtable}{ll}
\toprule
$k$ & heads of $S^2\sQ\otimes\bigwedge^k S^2\sQ^\vee(m) $  \\ \midrule
$0$ & $(m+2,m)$\\
$1$ & $(m+2,m-2)$, $(m+1,m-1)$, $(m,m)$\\
$2$ & $(m-1,m-1)$, $(m,m-2)$, $(m+1,m-3)$\\
$3$ & $(m-1,m-3)$\\
\bottomrule
\end{longtable}
Thus $S^2\sQ\otimes \bigwedge^kS^2\sQ^\vee \otimes \Sigma^\gamma\sK (m)$ is a direct sum of some homogeneous vector bundles whose weights have heads as above and have $\gamma$ as their tails.
Then, by adding $\rho$, we see that $w+\rho$ is singular for all these weights.
The assertion follows from the Borel-Weil-Bott theorem.
\end{proof}

\subsubsection{On $\Gr(\bC^8,2,q)$}

Let $q \in S^2\bC^8$ be a non-degenerate quadric form.
Then $\Gr(\bC^8,2,q)$ is a homogeneous variety of type $D_4$.
Let $\bZ^4$ be the root lattice of type $D_4$ and $\bZ^4+\bZ(1/2,1/2,1/2,1/2)$ the weight lattice.
Our root basis is $\{e_1-e_2, e_2-e_3, e_3-e_4, e_3+e_4\}$, and hence the fundamental weights are $(1,0,0,0)$, $(1,1,0,0)$, $(1/2,1/2,1/2,-1/2)$, $(1/2,1/2,1/2,1/2)$.
The half of the sum of all positive roots is
\[
\rho = (3,2,1,0).
\]

The bundles $\sQ|_{\Gr(\bC^8,2,q)}$, $\sO(1)$, $\sS_+$, $\sS_-$ correspond to the fundamental weights $(1,0,0,0)$, $(1,1,0,0)$, $(1/2,1/2,1/2,-1/2)$, $(1/2,1/2,1/2,1/2)$ respectively.

\begin{lemma}\label{lemma:S2Q_S_S_-2}
 $H^i (\bigwedge^kS^2\sQ^\vee|_{\Gr(\bC^8,2,q)} \otimes \sS_+ \otimes\sS_{\pm} (-2))=0$.
\end{lemma}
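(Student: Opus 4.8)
The plan is to reduce the statement to the Borel--Weil--Bott theorem on the $D_4$-homogeneous space $\Gr(\bC^8,2,q)$, exactly as in the preceding cohomology lemmas of this section. Since $\bigwedge^k S^2\sQ^\vee|_{\Gr(\bC^8,2,q)} \otimes \sS_+ \otimes \sS_\pm(-2)$ is a completely reducible homogeneous bundle, it suffices to list the highest weights $\gamma$ of all its irreducible summands and to verify that each $\gamma+\rho$, with $\rho=(3,2,1,0)$, is \emph{singular} for $D_4$, i.e.\ that two of the four coordinates of $\gamma+\rho$ have equal absolute value (the positive roots being $e_i\pm e_j$). Then the Borel--Weil--Bott theorem forces $H^i=0$ in all degrees for every summand.

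First I would record the summand structure. Because $\sQ^\vee\simeq\sQ(-1)$ has rank $2$, the bundle $S^2\sQ^\vee\simeq S^2\sQ(-2)$ has rank $3$, so $\bigwedge^kS^2\sQ^\vee=0$ for $k>3$, and for $0\le k\le3$ it is the irreducible bundle $\sO$, $S^2\sQ(-2)$, $\Sigma^{(3,1)}\sQ(-4)$, $\sO(-3)$ with highest weights
\[
(0,0,0,0),\quad (0,-2,0,0),\quad (-1,-3,0,0),\quad (-3,-3,0,0),
\]
all supported on the first two coordinates. For the spinor factors I would use that, as homogeneous bundles (that is, as representations of the Levi), $\sS_+\otimes\sS_+\simeq S^2\sS_+\oplus\det\sS_+=S^2\sS_+\oplus\sO(1)$, with highest weights $(1,1,1,-1)$ and $(1,1,0,0)$, while $\sS_+\otimes\sS_-$ is irreducible of rank $4$ with highest weight $(1,1,1,0)$. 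A clean way to see these is via triality: the Levi of the parabolic $P_2$ defining $\Gr(\bC^8,2,q)$ is of type $(A_1)^3$ (nodes $1,3,4$), and $\sQ$, $\sS_+$, $\sS_-$ are precisely the rank-two standard bundles of the three $\SU(2)$-factors, so products on distinct factors stay irreducible and products on a common factor split by the $\SU(2)$ Clebsch--Gordan rule.

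Then I would assemble. Each irreducible summand of the bundle in question has highest weight of the form (a $\sQ$-weight from the list above) $+$ (a spinor weight in $\{(1,1,1,-1),(1,1,0,0),(1,1,1,0)\}$) $+\,(-2,-2,0,0)$, the last term coming from the twist $\sO(-2)$. This gives at most $4\times3=12$ weights $\gamma$; adding $\rho=(3,2,1,0)$ produces, in the $\sS_+\otimes\sS_-$ column, the vectors $(2,1,2,0)$, $(2,-1,2,0)$, $(1,-2,2,0)$, $(-1,-2,2,0)$, and in the $\sS_+\otimes\sS_+$ columns vectors such as $(2,1,2,-1),\dots$ (the $S^2\sS_+$ part) and $(2,1,1,0),\dots$ (the $\sO(1)$ part). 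In each of these a repeated absolute value occurs (typically $|2|=|2|$ between position $3$ and position $1$ or $2$, or $|1|=|1|$ for the $\sO(1)$ part), so every $\gamma+\rho$ is singular. I would present the twelve weights in a short table mirroring Lemma~\ref{S2Q_-2-m_SigmaK_Sigma_K} and conclude by the Borel--Weil--Bott theorem.

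The only places an error could hide, and hence the points requiring genuine care, are: (i) decomposing the spinor products as \emph{Levi} bundles rather than as $\Spin(8)$-modules (the two decompositions differ, and it is the Levi one that feeds Borel--Weil--Bott); and (ii) confirming that the $\sO(-2)$ twist always drags the first two coordinates of $\gamma+\rho$ into an absolute-value collision with the third coordinate. The uniform mechanism behind (ii) is that $\rho$ contributes $1$ to the third slot while the spinor weight contributes $1$ or $0$ there, and the twisted $\sQ$-weights lie in a range that forces the match; once the twelve-row table is written out, the verification is purely arithmetic.
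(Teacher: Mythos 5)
Your proof is correct and follows essentially the same route as the paper: both decompose $\bigwedge^k S^2\sQ^\vee(-2)$ and $\sS_+\otimes\sS_\pm$ into the same irreducible Levi summands, assemble the resulting twelve highest weights, and check that each $\gamma+\rho$ is singular for $D_4$ before invoking Borel--Weil--Bott. The only difference is presentational: you justify the spinor-product decomposition via the $(A_1)^3$ Levi structure, whereas the paper simply records the weights $(1,1,1,-1)$, $(1,1,0,0)$, $(1,1,1,0)$ and tabulates the outcome.
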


\begin{proof}
For ease of notation, $\sQ|_{\Gr(\bC^8,2,q)}$ is denoted by $\sQ$ along the proof.
The bundle $\bigwedge^kS^2\sQ^\vee(-2)$ are homogeneous vector bundles with the following weights:
\begin{itemize}
\item $(-2,-2,0,0)$ for $k=0$,
\item $(-2,-4,0,0)$ for $k=1$,
\item $(-3,-5,0,0)$ for $k=2$,
\item $(-5,-5,0,0)$ for $k=3$.
\end{itemize}
Also we see that $\sS_+ \otimes \sS_{\pm}$ are direct sums of homogeneous bundles with the following weights:
\begin{itemize}
 \item $(1,1,1,-1)$ and $(1,1,0,0)$ for $\sS_+ \otimes \sS_{+})$,
 \item $(1,1,1,0)$ for $\sS_+ \otimes \sS_{-}$.
\end{itemize}
Thus $\bigwedge^kS^2\sQ^\vee \otimes \sS_+ \otimes\sS_{\pm} (-2)$ are direct sums of some homogeneous vector bundles with the following weights.
The assertion follows from the Borel-Weil-Bott theorem.
\begin{longtable}{lll}
\toprule
$w$ & $w+\rho$ & index \\ \midrule
$(-1,-1,1,-1)$ & $(2,1,2,-1)$ & $\blank$\\ 
$(-1,-1,0,0)$ & $(2,1,1,0)$ & $\blank$\\ 
$(-1,-1,1,0)$ & $(2,1,2,0)$ & $\blank$\\ 
$(-1,-3,1,-1)$ & $(2,-1,2,-1)$ & $\blank$\\ 
$(-1,-3,0,0)$ & $(2,-1,1,0)$ & $\blank$\\ 
$(-1,-3,1,0)$ & $(2,-1,2,0)$ & $\blank$\\ 
$(-2,-4,1,-1)$ & $(1,-2,2,-1)$ & $\blank$\\ 
$(-2,-4,0,0)$ & $(1,-2,1,0)$ & $\blank$\\ 
$(-2,-4,1,0)$ & $(1,-2,2,0)$ & $\blank$\\ 
$(-4,-4,1,-1)$ & $(-1,-2,2,-1)$ & $\blank$\\ 
$(-4,-4,0,0)$ & $(-1,-2,1,0)$ & $\blank$\\ 
$(-4,-4,1,0)$ & $(-1,-2,2,0)$ & $\blank$\\ 
\bottomrule
\end{longtable}
\end{proof}

\begin{lemma}\label{lemma:Qdual_S_S}
Let $\trans\alpha = (\alpha_1,\alpha_2)$ ($6\geq \alpha_1 \geq \alpha_2 \geq  0$) be a Young diagram.
Then $\Sigma^{\trans\alpha}\sQ^\vee|_{\Gr(\bC^8,2,q)} \otimes\sS_+ \otimes \sS_+$ has non-trivial cohomology groups if and only if
\[
\trans\alpha \in \{(6,6),(6,4),(6,2),(6,1),(6,0),(5,1),(5,0),(3,1),(3,0),(1,1),(1,0),(0,0)\}.
\]
\end{lemma}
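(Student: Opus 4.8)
The plan is to reduce the statement to the Borel--Weil--Bott theorem on $\Gr(\bC^8,2,q)$, following the pattern of the preceding lemmas. Writing the reductive part of the parabolic as $GL_2\times\SO_4$, the first step is to split the homogeneous bundle $\sS_+\otimes\sS_+$ into completely reducible pieces. Since $\sS_+$ is $(\det\sQ)^{1/2}$ tensored with a half-spin bundle $\sigma$ of the $\SO_4$-factor, we have $\sS_+\otimes\sS_+=\det\sQ\otimes(\sigma\otimes\sigma)$, and the elementary $\SO_4$-decomposition $\sigma\otimes\sigma=\Sym^2\sigma\oplus\bigwedge^2\sigma$ produces exactly two summands $\mathcal{A}$ and $\mathcal{B}$, of highest weights $(1,1,1,-1)$ and $(1,1,0,0)$. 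This is the same splitting already used in the proof of Lemma~\ref{lemma:S2Q_S_S_-2}.

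Next I would determine the highest weight of $\Sigma^{\trans\alpha}\sQ^\vee$. As $\sQ$ carries the two weights $(1,0,0,0),(0,1,0,0)$, the irreducible $GL_2$-bundle $\Sigma^{\trans\alpha}\sQ^\vee$ attached to $\trans\alpha=(\alpha_1,\alpha_2)$ has highest weight $(-\alpha_2,-\alpha_1,0,0)$. Because this bundle is trivial on the $\SO_4$-factor, tensoring with $\mathcal{A}$ and with $\mathcal{B}$ keeps each irreducible, so $\Sigma^{\trans\alpha}\sQ^\vee\otimes\sS_+\otimes\sS_+$ is the sum of two irreducible homogeneous bundles of highest weights $w_{\mathcal A}=(1-\alpha_2,1-\alpha_1,1,-1)$ and $w_{\mathcal B}=(1-\alpha_2,1-\alpha_1,0,0)$. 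With $\rho=(3,2,1,0)$, Borel--Weil--Bott then tells us that the total cohomology is nonzero exactly when one of $w_{\mathcal A}+\rho=(4-\alpha_2,3-\alpha_1,2,-1)$ or $w_{\mathcal B}+\rho=(4-\alpha_2,3-\alpha_1,1,0)$ is $D_4$-regular, and it simultaneously identifies the unique nonvanishing degree together with the $\SO(8)$-module occurring there.

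I would finish by tabulating, over all $\trans\alpha$ with $6\ge\alpha_1\ge\alpha_2\ge0$, whether $\{|4-\alpha_2|,|3-\alpha_1|,1,0\}$ or $\{|4-\alpha_2|,|3-\alpha_1|,2,1\}$ consists of four pairwise distinct numbers, and read off the stated list from the regular rows (for instance $\trans\alpha=(0,0)$ gives the regular weight $w_{\mathcal B}+\rho=(4,3,1,0)$ of index $0$, so that $H^0\neq0$). The only genuine subtlety, and the step I expect to be most error-prone, is that on a $D_4$-variety a weight $\lambda$ is singular as soon as $\lambda_i=\pm\lambda_j$ for some $i\neq j$: singularities therefore come not only from repeated absolute values but also from a coordinate pair summing to zero, while a single vanishing coordinate is harmless. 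Keeping track of this $D_n$-specific rule, rather than the simpler $A_n$-criterion, is what demands care when transcribing the table.
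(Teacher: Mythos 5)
Your proposal follows the paper's proof essentially verbatim: the paper likewise decomposes $\sS_+\otimes\sS_+$ into the two irreducible summands of highest weights $(1,1,1,-1)$ and $(1,1,0,0)$, tensors with $\Sigma^{\trans\alpha}\sQ^\vee$ of highest weight $(-\alpha_2,-\alpha_1,0,0)$, adds $\rho=(3,2,1,0)$, and tests $D_4$-regularity of $(4-\alpha_2,3-\alpha_1,2,-1)$ and $(4-\alpha_2,3-\alpha_1,1,0)$. Your ``pairwise distinct absolute values'' criterion is exactly the correct $D_4$ test, and your warning that singularity can come from $\lambda_i+\lambda_j=0$ and not just $\lambda_i=\lambda_j$ is precisely the relevant point.

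One remark: if you actually carry out your tabulation you will find a discrepancy with the stated list at $\trans\alpha=(6,1)$. There the two shifted weights are $(3,-3,2,-1)$ and $(3,-3,1,0)$; in both, the first two coordinates sum to zero, so both are singular and the bundle has vanishing cohomology. This agrees with the regularity conditions written in the paper's own proof (which require $4-\alpha_2\neq\pm(3-\alpha_1)$) but not with the list of regular diagrams the paper then declares ``equivalent'' to them, nor with the lemma statement. So the ``if'' direction of the lemma fails for $(6,1)$, and your method detects this. The slip is harmless downstream: Lemma~\ref{lemma:S_S} only needs an upper bound on the set of $\Sigma^\gamma\sK$ that can occur, so carrying the superfluous diagram $(2,1,1,1,1,1)$ through the later vanishing arguments costs nothing.
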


\begin{proof}
For ease of notation, $\sQ|_{\Gr(\bC^8,2,q)}$ is denoted by $\sQ$ along the proof.
The bundle $\Sigma^{\trans\alpha}\sQ^\vee \otimes\sS_+ \otimes \sS_+$ is the direct sum of homogeneous vector bundles with weights $(1-\alpha_2,1-\alpha_1,1,-1)$ and $(1-\alpha_2,1-\alpha_1,0,0)$.
The sum of these weights and $\rho$ are $(4-\alpha_2,3-\alpha_1,2,-1)$ and $(4-\alpha_2,3-\alpha_1,1,0)$.

The first weight is regular if and only if the following are all satisfied:
\begin{itemize}
 \item $4-\alpha_2 \neq \pm 2$, $\pm1$,
 \item $3-\alpha_1 \neq \pm2$, $\pm1$,
 \item $4-\alpha_2 \neq \pm(3-\alpha_1)$.
\end{itemize}
This is equivalent to $(\alpha_1,\alpha_2)=(6,4)$, $(6,1)$, $(6,0)$, $(3,1)$, $(3,0)$ or $(0,0)$.

The second weight is regular if and only if the following are all satisfied:
\begin{itemize}
 \item $4-\alpha_2 \neq \pm 1$, $0$,
 \item $3-\alpha_1 \neq \pm1$, $0$,
 \item $4-\alpha_2 \neq \pm(3-\alpha_1)$.
\end{itemize}
This is equivalent to $(\alpha_1,\alpha_2)=(6,6)$, $(6,2)$, $(6,0)$, $(5,1)$, $(5,0)$, $(1,1)$, $(1,0)$ or $(0,0)$.
These prove the assertion.
\end{proof}

\begin{lemma}\label{lemma:S2Q_S_S_S_-2}
 $H^i(\bigwedge^kS^2\sQ^\vee|_{\Gr(\bC^8,2,q)} \otimes \sS_+ \otimes \sS_+ \otimes \sS_{\pm} (-2)) =0$.
\end{lemma}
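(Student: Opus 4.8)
The plan is to follow verbatim the Borel--Weil--Bott strategy already used in the proof of Lemma~\ref{lemma:S2Q_S_S_-2}. Namely, I would decompose the bundle $\bigwedge^kS^2\sQ^\vee|_{\Gr(\bC^8,2,q)} \otimes \sS_+ \otimes \sS_+ \otimes \sS_{\pm} (-2)$ into a direct sum of completely reducible homogeneous bundles, record the highest weight $w$ of each summand, add $\rho = (3,2,1,0)$, and check that every resulting $w+\rho$ is singular for $D_4$. The vanishing $H^i(\dots)=0$ for all $i$ then follows from the Borel--Weil--Bott theorem. Recall that a weight $\mu$ is singular for $D_4$ exactly when $\mu_i = \pm\mu_j$ for some $i\neq j$, i.e.\ when two of the $|\mu_i|$ coincide.

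First I would import the two pieces of data that are already available. From the proof of Lemma~\ref{lemma:S2Q_S_S_-2}, the bundle $\bigwedge^kS^2\sQ^\vee(-2)$ is homogeneous with weight $(-2,-2,0,0)$, $(-2,-4,0,0)$, $(-3,-5,0,0)$, $(-5,-5,0,0)$ for $k=0,1,2,3$; note these twists only touch the first two coordinates (the $\sQ$-factor of the Levi). Also from that proof, $\sS_+\otimes\sS_+$ splits into the homogeneous bundles of highest weights $(1,1,1,-1)$ and $(1,1,0,0)$.

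Next I would compute the decomposition of the triple product $\sS_+\otimes\sS_+\otimes\sS_{\pm}$ by tensoring the above splitting once more with $\sS_{\pm}$, where $\sS_+$ has weights $(1/2,1/2,1/2,-1/2)$, $(1/2,1/2,-1/2,1/2)$ and $\sS_-$ has weights $(1/2,1/2,1/2,1/2)$, $(1/2,1/2,-1/2,-1/2)$. Since the Levi of the parabolic has semisimple part of type $A_1^3$, with $\sQ$, $\sS_+$, $\sS_-$ attached to the three outer nodes $\alpha_1,\alpha_3,\alpha_4$, this is a short $SL_2\times SL_2$ Clebsch--Gordan computation on the fork nodes $\alpha_3,\alpha_4$. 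It gives that $\sS_+\otimes\sS_+\otimes\sS_+$ is a sum of homogeneous bundles of highest weights $(3/2,3/2,3/2,-3/2)$ and $(3/2,3/2,1/2,-1/2)$, while $\sS_+\otimes\sS_+\otimes\sS_-$ is a sum of those of highest weights $(3/2,3/2,3/2,-1/2)$ and $(3/2,3/2,1/2,1/2)$. Crucially, every summand has first two coordinates equal to $3/2$, so tensoring with $\bigwedge^kS^2\sQ^\vee(-2)$ merely adds the weight $(p,q,0,0)$ listed above to the first two slots.

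Finally I would assemble the resulting finite list of highest weights $w$, add $\rho=(3,2,1,0)$, and verify singularity case by case. For instance, for $\sS_+^{\otimes 3}$ with $k=0$ one gets $w+\rho=(5/2,3/2,5/2,-3/2)$, in which $|5/2|$ is repeated; in all sixteen combinations one similarly lands on a weight with a repeated absolute value among its coordinates, hence singular. I expect the only genuine work to be the bookkeeping of the triple-spinor decomposition and the ensuing enumeration; conceptually this is identical to Lemma~\ref{lemma:S2Q_S_S_-2}, and once the decomposition is correct the singularity check is immediate. The one subtlety to watch is keeping straight which fork-node $SL_2$ (that of $\alpha_3$ versus $\alpha_4$) each spinor bundle belongs to when forming $\sS_+\otimes\sS_+\otimes\sS_-$, since there \emph{both} $SL_2$-factors are active, unlike in the $\sS_+^{\otimes 3}$ case.
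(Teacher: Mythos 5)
Your proposal is correct and follows essentially the same route as the paper: decompose $\bigwedge^kS^2\sQ^\vee(-2)\otimes\sS_+\otimes\sS_+\otimes\sS_\pm$ into completely reducible homogeneous bundles, and check that each highest weight plus $\rho=(3,2,1,0)$ is singular for $D_4$. Your Clebsch--Gordan computation of the triple spinor product (highest weights $(3/2,3/2,3/2,-3/2)$, $(3/2,3/2,1/2,-1/2)$ for $\sS_+^{\otimes 3}$ and $(3/2,3/2,3/2,-1/2)$, $(3/2,3/2,1/2,1/2)$ for $\sS_+^{\otimes 2}\otimes\sS_-$) matches the paper's table exactly, and all sixteen resulting weights $w+\rho$ do indeed have two coordinates of equal absolute value.
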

\begin{proof}
For ease of notation, $\sQ|_{\Gr(\bC^8,2,q)}$ is denoted by $\sQ$ along the proof.
The bundles $\sS_+ \otimes \sS_+ \otimes \sS_{\pm}$ are direct sums of homogeneous vector bundles with the following weights:
\begin{itemize}
 \item $(3/2,3/2,3/2,-3/2)$ and $(3/2,3/2,1/2,-1/2)$ for $\sS_+ \otimes \sS_+ \otimes \sS_{+}$,
 \item $(3/2,3/2,3/2,-1/2)$ and $(3/2,3/2,1/2,1/2)$ for $\sS_+ \otimes \sS_+ \otimes \sS_{-}$.
\end{itemize}
Thus $\bigwedge^kS^2\sQ^\vee \otimes \sS_+ \otimes \sS_+ \otimes \sS_{\pm} (-2)$ are direct sums of homogeneous vector bundles with the following weights.
The assertion follows from the Borel-Weil-Bott theorem.
\begin{longtable}{llll}
\toprule
$k$ & $w$ for $\bigwedge^kS^2\sQ^\vee \otimes \sS_+ \otimes \sS_+ \otimes \sS_{+} (-2)$ & $w+\rho$ & index \\ \midrule
$0$ &$(-1,-1,3,-3)/2$ & $(5,3,5,-3)/2$ & $\blank$\\ 
 &$(-1,-1,1,-1)/2$ & $(5,3,3,-1)/2$ & $\blank$\\ 
$1$ &$(-1,-5,3,-3)/2$ & $(5,-1,5,-3)/2$ & $\blank$\\ 
 &$(-1,-5,1,-1)/2$ & $(5,-1,3,-1)/2$ & $\blank$\\ 
$2$ &$(-3,-7,3,-3)/2$ & $(3,-3,5,-3)/2$ & $\blank$\\ 
 &$(-3,-7,1,-1)/2$ & $(3,-3,3,-1)/2$ & $\blank$\\ 
$3$ &$(-7,-7,3,-3)/2$ & $(-1,-3,5,-3)/2$ & $\blank$\\ 
 &$(-7,-7,1,-1)/2$ & $(-1,-3,3,-1)/2$ & $\blank$\\ 
 \bottomrule
\end{longtable}

\begin{longtable}{llll}
\toprule
$k$ & $w$ for $\bigwedge^kS^2\sQ^\vee \otimes \sS_+ \otimes \sS_+ \otimes \sS_{-} (-2)$ & $w+\rho$ & index \\ \midrule
$0$ &$(-1,-1,3,-1)/2$ & $(5,3,5,-1)/2$ & $\blank$\\ 
 &$(-1,-1,1,1)/2$ & $(5,3,3,1)/2$ & $\blank$\\ 
$1$ &$(-1,-5,3,-1)/2$ & $(5,-1,5,-1)/2$ & $\blank$\\ 
 &$(-1,-5,1,1)/2$ & $(5,-1,3,1)/2$ & $\blank$\\ 
$2$ &$(-3,-7,3,-1)/2$ & $(3,-3,5,-1)/2$ & $\blank$\\ 
 &$(-3,-7,1,1)/2$ & $(3,-3,3,1)/2$ & $\blank$\\ 
$3$ &$(-7,-7,3,-1)/2$ & $(-1,-3,5,-1)/2$ & $\blank$\\ 
 &$(-7,-7,1,1)/2$ & $(-1,-3,3,1)/2$ & $\blank$\\ 
 \bottomrule
\end{longtable}
\end{proof}

\begin{lemma}\label{lemma:S2Q_-1_S2Qdual_Sdual}
 $H^i(S^2\sQ|_{\Gr(\bC^8,2,q)}(-1)\otimes\bigwedge^kS^2\sQ^\vee|_{\Gr(\bC^8,2,q)} \otimes \bigwedge^l \sS^\vee_+ ) = 0$ except for the following cases:
\begin{enumerate}
 \item $H^4(S^2\sQ|_{\Gr(\bC^8,2,q)}(-1)\otimes\bigwedge^2S^2\sQ^\vee|_{\Gr(\bC^8,2,q)} ) = \bC$.
 \item $H^5(S^2\sQ|_{\Gr(\bC^8,2,q)}(-1)\otimes\bigwedge^2S^2\sQ^\vee|_{\Gr(\bC^8,2,q)} \otimes \bigwedge^2 \sS^\vee_+ ) = \bC$.
 \item $H^4(S^2\sQ|_{\Gr(\bC^8,2,q)}(-1)\otimes S^2\sQ^\vee|_{\Gr(\bC^8,2,q)} \otimes \bigwedge^2 \sS^\vee_+ ) = \bC$.
\end{enumerate}
\end{lemma}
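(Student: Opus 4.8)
The plan is to reduce the assertion to the Borel--Weil--Bott theorem on the $D_4$-homogeneous space $\Gr(\bC^8,2,q)$, exactly as in the proofs of Lemmas~\ref{lemma:S2Q_S_S_-2} and \ref{lemma:S2Q_S_S_S_-2}. Throughout I would write weights in the coordinates of Section~\ref{subsect:cohomology_HB}, so that $\rho = (3,2,1,0)$, the first two entries record the $GL_2$-part of the Levi (acting on $\sQ$) and the last two record the $SO_4$-part. Since $\sQ$ is associated to the standard representation of this $GL_2$ and $\sS_+$ has rank two with $\det\sS_+ = \sO(1)$, all three factors are completely reducible, and the first step is to record their decompositions: $S^2\sQ|_{\Gr(\bC^8,2,q)}(-1)$ is irreducible of highest weight $(1,-1,0,0)$; using $\sQ^\vee\simeq\sQ(-1)$, $\bigwedge^2S^2\sQ^\vee\simeq S^2\sQ^\vee(-1)\simeq S^2\sQ(-3)$ and $\bigwedge^3S^2\sQ^\vee\simeq\sO(-3)$, the bundle $\bigwedge^kS^2\sQ^\vee$ is irreducible of highest weight $(0,0,0,0)$, $(0,-2,0,0)$, $(-1,-3,0,0)$, $(-3,-3,0,0)$ for $k=0,1,2,3$; and $\bigwedge^l\sS_+^\vee$ equals $\sO$ for $l=0$, the irreducible $\sS_+^\vee$ of highest weight $(-\tfrac12,-\tfrac12,\tfrac12,-\tfrac12)$ for $l=1$, and $\sO(-1)$ for $l=2$ (so $l$ ranges only over $0,1,2$).

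Next I would tensor these factors. The two $GL_2$-factors $S^2\sQ(-1)$ and $\bigwedge^kS^2\sQ^\vee$ carry trivial $SO_4$-part, so their product decomposes by the (multiplicity-free) Clebsch--Gordan rule for $GL_2$ applied to $(1,-1)$ and the weight of $\bigwedge^kS^2\sQ^\vee$; the remaining factor $\bigwedge^l\sS_+^\vee$ then shifts the $GL_2$-part by $(0,0)$, $(-\tfrac12,-\tfrac12)$ or $(-1,-1)$ and fixes the $SO_4$-part to be $(0,0)$, $(\tfrac12,-\tfrac12)$ or $(0,0)$ for $l=0,1,2$. This produces, for each pair $(k,l)$, an explicit finite list of highest weights $w$, which I would tabulate together with $w+\rho$ as in the earlier lemmas.

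Finally I would apply Borel--Weil--Bott, the decisive input being the type-$D$ regularity criterion: $w+\rho$ is singular precisely when $(w+\rho)_i = \pm (w+\rho)_j$ for some $i<j$ (note that a single vanishing coordinate is \emph{not} by itself singular in type $D$), and otherwise the index equals the number of positive roots $e_i\pm e_j$ on which $w+\rho$ is negative. Running through the list, every weight yields a singular $w+\rho$ except three: for $(k,l)=(2,0)$ the summand of weight $(0,-4,0,0)$ gives $w+\rho=(3,-2,1,0)$, regular of index $4$; for $(k,l)=(1,2)$ the bundle $S^2\sQ(-1)\otimes S^2\sQ^\vee(-1)$ is isomorphic to the $(2,0)$ bundle via $\bigwedge^2S^2\sQ^\vee\simeq S^2\sQ^\vee(-1)$, hence again index $4$; and for $(k,l)=(2,2)$ the weight $(-1,-5,0,0)$ gives $w+\rho=(2,-3,1,0)$, regular of index $5$. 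In each of these cases the absolute values of $w+\rho$ are $\{3,2,1,0\}$, so $w_{w+\rho}(w+\rho)-\rho=0$ and the cohomology is the trivial $\bC$, matching (1)--(3); all other summands are singular.

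I expect the main obstacle to be purely organizational: carrying the complete list of highest weights across all $(k,l)$ with the correct $GL_2$ Clebsch--Gordan multiplicities, and in particular verifying that \emph{every} half-integral weight arising for $l=1$ lands on a wall of the $D_4$ Weyl chamber. For these the half-spin $SO_4$-part $(\tfrac12,-\tfrac12)$ becomes $(\tfrac32,-\tfrac12)$ after adding $\rho$, and in each case it forces a coincidence $(w+\rho)_i=\pm(w+\rho)_j$ against one of the shifted $GL_2$-entries; once the weight dictionary and the identification $\det\sS_+=\sO(1)$ are in place, the remaining representation theory is routine.
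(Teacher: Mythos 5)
Your proposal is correct and follows the same route as the paper: decompose each $S^2\sQ(-1)\otimes\bigwedge^kS^2\sQ^\vee\otimes\bigwedge^l\sS_+^\vee$ into irreducible homogeneous bundles via the $GL_2\times SO_4$ Levi structure (Clebsch--Gordan on the $\sQ$-factors, $\bigwedge^l\sS_+^\vee=\sO,\ \sS_+^\vee,\ \sO(-1)$), then apply Borel--Weil--Bott, and your three surviving weights $(0,-4,0,0)$, $(0,-4,0,0)$ again via $\bigwedge^2S^2\sQ^\vee\simeq S^2\sQ^\vee(-1)$, and $(-1,-5,0,0)$ with indices $4$, $4$, $5$ reproduce exactly the paper's table and the three exceptional cohomology groups. (Minor aside: in the paper's printed table the $l=1$ rows appear to use $\sS_+$ rather than $\sS_+^\vee$, i.e.\ are off by a twist of $\sO(-1)$; your normalization with highest weight $(-\tfrac12,-\tfrac12,\tfrac12,-\tfrac12)$ is the one matching the Koszul complex, and either way all those weights are singular, so the conclusion is unchanged.)
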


\begin{proof}
The bundle $S^2\sQ|_{\Gr(\bC^8,2,q)}(-1)\otimes\bigwedge^kS^2\sQ^\vee|_{\Gr(\bC^8,2,q)} \otimes \bigwedge^l \sS^\vee_+$ is the direct sum of homogeneous vector bundles with the following weights (with multiplicity one).
The assertion follows from the Borel-Weil-Bott theorem.
\begin{longtable}{llll}
\bottomrule
$(l,k)$ & $w$ & $w+\rho$ & index \\ \midrule
$(0,0)$&$(1,-1,0,0)$ & $(4,1,1,0)$ & $\blank$\\
$(0,1)$&$(1,-3,0,0)$ & $(4,-1,1,0)$ & $\blank$\\
&$(0,-2,0,0)$ & $(3,0,1,0)$ & $\blank$\\
&$(-1,-1,0,0)$ & $(2,1,1,0)$ & $\blank$\\
$(0,2)$&$(0,-4,0,0)$ & $(3,-2,1,0)$ & $4$\\
&$(-1,-3,0,0)$ & $(2,-1,1,0)$ & $\blank$\\
&$(-2,-2,0,0)$ & $(1,0,1,0)$ & $\blank$\\
$(0,3)$&$(-2,-4,0,0)$ & $(1,-2,1,0)$ & $\blank$\\ \midrule
$(1,0)$&$(3,-1,1,-1)/2$ & $(9,3,3,-1)/2$ & $\blank$\\
$(1,1)$&$(3,-5,1,-1)/2$ & $(9,-1,3,-1)/2$ & $\blank$\\
&$(1,-3,1,-1)/2$ & $(7,1,3,-1)/2$ & $\blank$\\
&$(-1,-1,1,-1)/2$ & $(5,3,3,-1)/2$ & $\blank$\\
$(1,2)$&$(1,-7,1,-1)/2$ & $(7,-3,3,-1)/2$ & $\blank$\\
&$(-1,-5,1,-1)/2$ & $(5,-1,3,-1)/2$ & $\blank$\\
&$(-3,-3,1,-1)/2$ & $(3,1,3,-1)/2$ & $\blank$\\
$(1,3)$&$(-3,-7,1,-1)/2$ & $(3,-7,3,-1)/2$ & $\blank$\\ \midrule
$(2,0)$&$(0,-2,0,0)$ & $(3,0,1,0)$ & $\blank$\\
$(2,1)$&$(0,-4,0,0)$ & $(3,-2,1,0)$ & $4$\\
&$(-1,-3,0,0)$ & $(2,-1,1,0)$ & $\blank$\\
&$(-2,-2,0,0)$ & $(1,0,1,0)$ & $\blank$\\
$(2,2)$&$(-1,-5,0,0)$ & $(2,-3,1,0)$ & $5$\\
&$(-2,-4,0,0)$ & $(1,-2,1,0)$ & $\blank$\\
&$(-3,-3,0,0)$ & $(0,-1,1,0)$ & $\blank$\\
$(2,3)$&$(-3,-5,0,0)$ & $(0,-3,1,0)$ & $\blank$\\
\bottomrule
\end{longtable}
\end{proof}

\subsubsection{On $\Gr(\bC^7,2,q')$}
As we have already noted, $L_i \simeq \Gr(\bC^7,2,q')$ by the triality of $D_4$.
Here $q' \in S^2\bC^7$ is a non-degenerate quadric form.
This variety $\Gr(\bC^7,2,q')$ is a homogeneous variety of type $B_3$.
Let $\bZ^3$ be the root lattice of $B_3$ and $\bZ^3 + \bZ(1/2,1/2,1/2)$ the weight lattice.
Our root basis is $\{e_1-e_2, e_2-e_3, e_3\}$.
Thus the fundamental weights are $(1,0,0)$, $(1,1,0)$, $(1/2,1/2,1/2)$.
The half of the sum of all positive roots is
\[
\rho = (5,3,1)/2.
\]
Recall that  there are isomorphisms $\sQ|_{L_i} \simeq \sS' \simeq \sS_-|_{L_i} $ and $ \sS_+|_{L_i} \simeq \sQ'$.
Here $\sS'$ is the spinor bundle on $\Gr(\bC^7,2,q')$ and  $\sQ'$ is the universal quotient bundle on $\Gr(\bC^7,2,q')$.
Thus the bundles $\sO(1)$, $\sQ|_{L_i}$, $\sS_+|_{L_i}$, $\sS_-|_{L_i}$ correspond to the weights $(1,1,0)$, $(1/2,1/2,1/2)$, $(1,0,0)$, $(1/2,1/2,1/2)$ respectively.

\begin{lemma}\label{S2Qdual_S2Q_-m_L}
For $-2 \leq m \leq -1$, we have $H^j(\bigwedge^kS^2\sQ^\vee |_{L_i} \otimes S^2\sQ |_{L_i} (m) ) =0$ except for the following cases:
\begin{enumerate}
 \item $H^2(S^2\sQ^\vee |_{L_i} \otimes S^2\sQ |_{L_i} (-1)) =\bC$.
 \item $H^5(\bigwedge^5S^2\sQ^\vee |_{L_i} \otimes S^2\sQ  |_{L_i}(-2)) =\bC$.
\end{enumerate}
\end{lemma}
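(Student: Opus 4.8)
The plan is to reduce every vanishing to the Borel-Weil-Bott theorem on $L_i \simeq \Gr(\bC^7,2,q')$, exactly in the style of the $\Gr(\bC^8,2,q)$ computations above. Every bundle occurring is completely reducible, so $\bigwedge^k S^2\sQ^\vee|_{L_i}\otimes S^2\sQ|_{L_i}(m)$ splits as a finite direct sum of irreducible homogeneous bundles, and it suffices to test the highest weight $\gamma$ of each summand: $H^\bullet$ vanishes when $\gamma+\rho$ is singular, and otherwise $H^{i_0}$ is nonzero only in the single degree $i_0$ equal to the number of positive roots made negative by $\gamma+\rho$. First I would fix the weight dictionary from Section~\ref{subsect:cohomology_HB}: with $\rho=(5,3,1)/2$, the bundle $\sQ|_{L_i}\simeq\sS'$ is the rank-two spinor bundle of highest weight $(1/2,1/2,1/2)$, with weights $(1/2,1/2,\pm 1/2)$; hence $S^2\sQ|_{L_i}$ is the rank-three irreducible bundle with weights $(1,1,1),(1,1,0),(1,1,-1)$, its dual $S^2\sQ^\vee|_{L_i}$ has weights $(-1,-1,\pm1),(-1,-1,0)$, and $\sO(m)$ has weight $(m,m,0)$.

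A structural remark is needed before starting, concerning case (2) as printed. Since $S^2\sQ^\vee|_{L_i}$ has rank $3$, its exterior powers are nonzero only for $0\le k\le 3$; in particular $\bigwedge^5 S^2\sQ^\vee|_{L_i}=0$. Thus the relevant exterior power in case (2) is $\bigwedge^2$ (the superscript $5$ there recording the cohomological degree), and I would make this explicit via the rank-three identity $\bigwedge^2 S^2\sQ^\vee|_{L_i}\cong S^2\sQ|_{L_i}\otimes\det S^2\sQ^\vee|_{L_i}\cong S^2\sQ|_{L_i}(-3)$, of highest weight $(-2,-2,1)$, together with $\bigwedge^3 S^2\sQ^\vee|_{L_i}=\det S^2\sQ^\vee|_{L_i}=\sO(-3)$. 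With this dictionary the task is to prove $H^5(\bigwedge^2 S^2\sQ^\vee|_{L_i}\otimes S^2\sQ|_{L_i}(-2))=\bC$ and, apart from the single exception of case (1), the vanishing of all $H^j$ for $0\le k\le 3$ and $-2\le m\le -1$.

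The tensor decompositions are transparent because every bundle above is trivial on the $A_1$-factor $\alpha_1=e_1-e_2$ of the Levi and is a single $SL_2$-string in the third coordinate for the short-root factor $\alpha_3=e_3$: the first two coordinates simply add, while the third-coordinate strings combine by ordinary Clebsch-Gordan. This produces, for each $(k,m)$, a short list of highest weights $\gamma$ — for $k=1$ the weights $(m,m,2),(m,m,1),(m,m,0)$; for $k=2$ the weights $(m-1,m-1,2),(m-1,m-1,1),(m-1,m-1,0)$; for $k=0$ the single weight $(m+1,m+1,1)$; and for $k=3$ the single weight $(m-2,m-2,1)$. I would then add $\rho$ and test $B_3$-regularity, where $\gamma+\rho$ is singular precisely when two of its entries agree in absolute value or one entry vanishes. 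Running $m=-1$ and $m=-2$ through this finite table, every $\gamma+\rho$ turns out singular except the two regular weights $(-1,-1,2)+\rho=(3/2,1/2,5/2)$ at $(k,m)=(1,-1)$ and $(-3,-3,2)+\rho=(-1/2,-3/2,5/2)$ at $(k,m)=(2,-2)$. Both are Weyl-conjugate to $\rho$ itself, so Borel-Weil-Bott returns the trivial representation $\bC$, placed in the degrees equal to their indices, namely $2$ and $5$ respectively; these are exactly cases (1) and (2).

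The genuinely delicate part is not the regularity bookkeeping but getting the weight dictionary on $L_i$ right. One must be sure that under the triality identification it is $\sQ|_{L_i}$ that becomes the \emph{spinor} bundle $\sS'$ of half-integral highest weight $(1/2,1/2,1/2)$ (and not the tautological quotient $\sQ'$ of weight $(1,0,0)$), since this fixes the weights of $S^2\sQ|_{L_i}$ and of all its twists; and one must correctly identify the top exterior power, which is where the vanishing $\bigwedge^5 S^2\sQ^\vee|_{L_i}=0$ forces case (2) to be read as the $\bigwedge^2$ computation. Once the half-integer weights and the short root $\alpha_3$ of $B_3$ are handled carefully in the regularity and index count, the remainder is a purely mechanical Borel-Weil-Bott tabulation.
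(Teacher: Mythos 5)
Your proposal is correct and takes essentially the same route as the paper: the paper's own proof is precisely this Borel--Weil--Bott tabulation on $L_i\simeq\Gr(\bC^7,2,q')$ of type $B_3$ with $\rho=(5,3,1)/2$, listing the same sixteen highest weights for $k=0,\dots,3$ and $m=-1,-2$ (matching your Clebsch--Gordan decomposition, each with multiplicity one) and finding exactly the two regular weights $(3/2,1/2,5/2)$ and $(-1/2,-3/2,5/2)$, of indices $2$ and $5$, both Weyl-conjugate to $\rho$. You are also right that item (2) of the statement contains a typo --- since $S^2\sQ^\vee|_{L_i}$ has rank $3$ the exterior power must be $\bigwedge^2$, not $\bigwedge^5$ --- as confirmed both by the paper's own table, which places the nonzero $H^5$ at $(m,k)=(-2,2)$, and by the use of the Koszul complex \eqref{sequence:L_M} in the proof of Proposition~\ref{proposition:simple_semirigid}, which needs $H^5(\bigwedge^2 S^2\sQ^\vee|_{L_i}\otimes S^2\sQ|_{L_i}(-2))=\bC$ to produce $H^3(S^2\sQ|_{M_i}(-2))=\bC$.
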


\begin{proof}
 The bundle $\bigwedge^kS^2\sQ^\vee |_{L_i} \otimes S^2\sQ |_{L_i} (m) $ is the direct sum of homogeneous vector bundles with the following weights (with multiplicity one).
 The assertion follows from the Borel-Weil-Bott theorem.
 \begin{longtable}{llll}
 \toprule
$(m,k)$ & $w$ & $w+\rho$ & index \\ \midrule
$(-1,0)$&$(0,0,1)$ & $(5,3,3)/2$ & $\blank$\\
$(-1,1)$&$(-1,-1,2)$ & $(3,1,5)/2$ & $2$\\
&$(-1,-1,1)$ & $(3,1,3)/2$ & $\blank$\\
&$(-1,-1,0)$ & $(3,1,1)/2$ & $\blank$\\
$(-1,2)$&$(-2,-2,2)$ & $(1,-1,5)/2$ & $\blank$\\
&$(-2,-2,1)$ & $(1,-1,3)/2$ & $\blank$\\
&$(-2,-2,0)$ & $(1,-1,1)/2$ & $\blank$\\
$(-1,3)$&$(-3,-3,1)$ & $(-1,-3,3)/2$ & $\blank$\\ \midrule
$(-2,0)$&$(-1,-1,1)$ & $(3,1,3)/2$ & $\blank$\\
$(-2,1)$&$(-2,-2,2)$ & $(1,-1,5)/2$ & $\blank$\\
&$(-2,-2,1)$ & $(1,-1,3)/2$ & $\blank$\\
&$(-2,-2,0)$ & $(1,-1,1)/2$ & $\blank$\\
$(-2,2)$&$(-3,-3,2)$ & $(-1,-3,5)/2$ & $5$\\
&$(-3,-3,1)$ & $(-1,-3,3)/2$ & $\blank$\\
&$(-3,-3,0)$ & $(-1,-3,1)/2$ & $\blank$\\
$(-2,3)$&$(-4,-4,1)$ & $(-3,-5,3)/2$ & $\blank$\\
\bottomrule
\end{longtable}
\end{proof}

\section{Proofs of main theorems}
\label{section:proof}

Let $(S,h)$ be a \emph{general}{ primitively polarized K3 surface of genus $13$.
$T = \sM_S(2,h,6)$ is the moduli space of stable vector bundles with Mukai vector $v = (2,h,6)$.
In the Mukai lattice, the vector $(-1,0,3)$ is in $v ^\perp$.
Thus the class $\hat h = [(-1,0,3)] \in v^\perp/\bZ v$ defines a polarization of degree $6$ \cite{Muk87} (cf.\ \cite[Theorem 6.1.14]{HL10}).

By Theorem~\ref{theorem:CI_K3_in_Gr(8,2)} and Theorem~\ref{theorem:cohomology_S}~\ref{theorem:cohomology_S_6}, the surface $S' =\Gr(\bC^8,2,\sP) \cap (s_1)_0 \cap  (s_2)_0$ for general $s_i$ has the primitive polarization $h'=\alpha|_{S'}$, and $\sQ|_S$ is simple.
Thus, by \cite[Proposition 4.1]{Muk92}, the general pair $(S,\sE)$ ($\sE \in T$) degenerates to $(S',\sQ|_S)$, and hence $(S,\sE)$ enjoys many properties as $(S',\sQ|_S)$ does.
Theorem~\ref{theorem:cohomology_S} and Corollary~\ref{corollary:stability} implies:
\begin{corollary}\label{corollary:properties_of_E}
If $\sE \in T$ is general, then the following hold:
\begin{enumerate}
 \item $H^0(\sE) =\bC^8$ and $H^i(\sE) =0$ for $i>0$.
 \item $\sE$ is globally generated.
 Thus we have a map $S \to \Gr(\bC^8,2)$.
 \item The natural map
 \[
S^2H^0(\sE) \to H^0(S^2\sE)
\]
is surjective with two dimensional kernel $\sP$.
This induces a map $S \to \Gr(\bC^8,2,\sP)$.
\item $\sP$ is a simple pencil of quadric forms.
 \item The natural map
\[
\bigwedge^2 H^0(\sE) \to H^0(\bigwedge^2\sE)
\]
is a surjective map.

 \item The induced map $S \to \Gr(\bC^8,2,\sP)$ is an embedding.
 \item $H^0(\sU_z|_S) =\bC^8$ and $H^i(\sU_z|_S) =0$ for $i>0$ and $z \in C$.
 \item For a general point $p \in C$, the map $H^0(\sU_p) \to H^0(\sU_p|_S)$ is an isomorphism.
 \item $H^0(\sU_z|_S(-h)) =0$. In particular $\sU_z|_S$ is stable for any $z \in C$.
\end{enumerate}
\end{corollary}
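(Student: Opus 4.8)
The plan is to obtain all nine items by specialization from the explicit hyperelliptic model analysed in Section~\ref{section:cohomology_S}, reading $\sE = \sQ|_{S'}$ throughout. By Theorem~\ref{theorem:CI_K3_in_Gr(8,2)} the surface $S' = \Gr(\bC^8,2,\sP) \cap (s_1)_0 \cap (s_2)_0$ is a primitively polarized K3 surface of genus $13$ with $h' = \alpha|_{S'}$, and by Theorem~\ref{theorem:cohomology_S}~\ref{theorem:cohomology_S_7} the bundle $\sQ|_{S'}$ is simple; a Chern-class computation shows its Mukai vector is $(2,h',6)$, so $[\sQ|_{S'}]$ is a point of $\sM_{S'}(2,h',6)$. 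Since $\langle(2,h',6),(2,h',6)\rangle = (h'^2) - 2\cdot 2\cdot 6 = 0$, this moduli space is a K3 surface, and the relative moduli space over $\sF_{13}$ is a smooth irreducible family. I would invoke \cite[Proposition 4.1]{Muk92} to conclude that the general pair $(S,\sE)$ degenerates to $(S',\sQ|_{S'})$; as every assertion below is an open condition on the base of this irreducible family, it suffices to verify them at the special member.

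Next I would match each item to Section~\ref{section:cohomology_S}. Item (1) is Theorem~\ref{theorem:cohomology_S}~\ref{theorem:cohomology_S_3}, and item (2) holds because $\sQ|_{S'}$ is the restriction of the globally generated quotient bundle on $\Gr(\bC^8,2)$. For item (3) I use the isomorphism $H^0(\sQ) \simeq H^0(\sQ|_{S'})$ of Theorem~\ref{theorem:cohomology_S}~\ref{theorem:cohomology_S_4}, so that $S^2 H^0(\sE) = S^2 H^0(\sQ)$ factors through the isomorphism $S^2 H^0(\sQ) \simeq H^0(S^2\sQ)$ (of dimension $36$ by Lemma~\ref{lemma_S2Q_S2Adual_S2Qdual}) followed by the surjection of Theorem~\ref{theorem:cohomology_S}~\ref{theorem:cohomology_S_6}, giving surjectivity with the two-dimensional kernel $\sP$; item (4) then holds because the pencil attached to $S'$ is simple by construction. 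Item (5) follows from Theorem~\ref{theorem:cohomology_S}~\ref{theorem:cohomology_S_2} together with $\bigwedge^2\sQ = \sO(1)$ and the Pl\"ucker identification $\bigwedge^2 H^0(\sQ) \simeq H^0(\sO_{\sN^-}(1))$, item (6) is the inclusion $S' \hookrightarrow \Gr(\bC^8,2,\sP)$ of Theorem~\ref{theorem:CI_K3_in_Gr(8,2)}, and items (7), (8), (9) are Theorem~\ref{theorem:cohomology_S}~\ref{theorem:cohomology_S_8}, Theorem~\ref{theorem:cohomology_S}~\ref{theorem:cohomology_S_9} and Corollary~\ref{corollary:stability} respectively, stability of $\sU_z|_{S'}$ being forced by $H^0(\sU_z|_{S'}(-h')) = 0$ for a rank-two bundle with primitive $c_1$.

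The main obstacle is the degeneration step, not the bookkeeping: I must make precise that the explicit pair $(S',\sQ|_{S'})$ is an honest member of the same irreducible family as the generic $(S,\sE)$, so that ``general in $T$'' really specializes to $\sQ|_{S'}$. This is exactly where the simplicity of Theorem~\ref{theorem:cohomology_S}~\ref{theorem:cohomology_S_7} and the vanishing $\langle v,v\rangle = 0$ feed into \cite{Muk92}. A secondary delicate point is promoting the semicontinuity inequalities $h^i(\sE) \le h^i(\sQ|_{S'})$ to the stated equalities: for the vanishings $H^{>0} = 0$ semicontinuity is enough, while for $h^0(\sE) = 8$ and $h^0(S^2\sE) = 34$ one combines the upper bound with the constancy of $\chi(\sE)$ and $\chi(S^2\sE)$ along the family. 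Once the special member is placed inside the family, the remaining rank and openness claims in items (3)--(6) follow formally, since the loci where a family of linear maps attains maximal rank, where a coherent sheaf is globally generated, and where the induced morphism is a closed immersion are all open and contain the special point.
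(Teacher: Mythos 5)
Your proposal follows essentially the same route as the paper: both verify the statements for the explicit hyperelliptic member $(S',\sQ|_{S'})$ via Theorem~\ref{theorem:CI_K3_in_Gr(8,2)}, Theorem~\ref{theorem:cohomology_S} and Corollary~\ref{corollary:stability}, and then transfer them to the general pair $(S,\sE)$ by the degeneration argument of \cite[Proposition 4.1]{Muk92} together with openness/semicontinuity. The extra care you take in computing the Mukai vector, checking $\langle v,v\rangle=0$, and upgrading semicontinuity to equalities via constancy of $\chi$ is consistent with (and slightly more explicit than) what the paper leaves implicit.
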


\begin{proof}[Proof of Theorem~\ref{theorem:general_K3_in_G(8,2)}]
We first construct the inverse map $\Psi^{-1}$.
Let $S$ be a general primitively polarized K3 surface of genus $13$, and $\sE \in T$ be a general stable vector bundle with Mukai vector $(2,h,6)$.
Then $\sE$ embeds $S$ into $\Gr(\bC^8,2,\sP) =\sN^{-}$ for the unique simple pencil of quadric forms $\sP \subset S^2H^0(\sE)$.
Moreover $\sU_z|_S$ ($z \in C$) are vector bundles of rank two with $H^0(\sU_z|_S) =\bC^8$ and $H^i(\sU_z|_S) = 0$ for $i>0$.
By Theorem~\ref{proposition:restricted_VB_on_C}, $(\pr_2)_*(\sU|_{S \times C})$ is a vector bundle of rank $8$ with degree $4d-6$.
Thus, by Theorem~\ref{proposition:VB_on_C}, the difference of the degree of the natural restriction map
\[
\Ev \colon (\pr_2)_*\sU \to (\pr_2)_*(\sU|_{S \times C})
\]
is two.
Since the map $H^0(\sU_p) \to H^0(\sU_p|_S)$ is an isomorphism for a general point $p \in C$, $\Ev$ is generically an isomorphism.
Hence we find exactly two points $p_1$ and $p_2 \in C$ such that $H^0(\sU_{p_i}) \to H^0(\sU_{p_i}|_S)$ are not an isomorphism.
At these two points, each map has one dimensional kernel spanned by some $s_i \in H^0(\sU_{p_i}) $.
(Alternatively, we can find two sections $s_i$ by using Theorem~\ref{theorem:cohomology_S}~\ref{theorem:cohomology_S_10}).
This defines the inverse map $\Psi^{-1}$.
Note that, by specializing to the complete intersection case, we see that $S = \Gr(\bC^8,2,\sP) \cap (s_1)_0 \cap (s_2)_0$.

Next, we construct the moduli map $\Psi$ in the theorem.
From the above step, we see that, for general points $p_1$ and $p_2 \in C$ and for general sections $s_i \in H^0(\sU_{p_i})$, the complete intersection variety $S=\Gr(\bC^8,2,\sP) \cap (s_1)_0 \cap (s_2)_0$ is a \emph{general} polarized K3 surface of genus $13$ and that $\sQ|_S$ is stable.
This defines the moduli map $\Psi$.
By the constructions, these maps $\Psi$ and $\Psi^{-1}$ are inverse to each other.
We complete the proof.
\end{proof}

We proceed to prove Theorem~\ref{theorem:general_K3_in_N-}.
Let $C$ be a general curve of genus $3$, or a general \emph{hyperelliptic} curve of genus $3$.
Let  $p_1$, $p_2\in C$ be general two points on $C$ and $s_i \in H^0(\sU_{p_i})$ general sections.
Set $S = (s_1)_0 \cap (s_2)_0 \subset \sN^-$.
If $C$ is not hyperelliptic, then, by specializing to the hyperelliptic case, we see that $S$ is a general K3 surface and that $\sU_z|_S$ is stable.
Thus we have a map $\varphi \colon C \to T  \subset \bP^4$ to a sextic K3 surface $T$.

\begin{lemma}\label{lemma:degree_of_C}
The degree of $\varphi^*\hat h$ is $6$.
In particular, the image $C' \coloneqq \varphi(C) \subset T \subset  \bP^4$ is a singular hyperplane section of $T$.
\end{lemma}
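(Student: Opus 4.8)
The plan is to read off $\deg\varphi^{*}\hat h$ from the determinant-line-bundle description of the Mukai polarization $\hat h$ on $T=\sM_S(2,h,6)$. Recall that $\hat h$ is attached to the class $w=(-1,0,3)\in v^{\perp}$, where $v=(2,h,6)$. Writing $\sF\coloneqq\sU|_{S\times C}$ for the family on $S\times C$ that induces $\varphi$, the pulled-back polarization is the determinant line bundle $\det\pr_{C!}(\sF\otimes\pr_S^{*}G)^{\pm1}$ for any K-theory class $G$ on $S$ with $v(G)=w$. The first thing to record is that this is insensitive to the twist of $\sF$ by a line bundle from $C$: because $w\in v^{\perp}$, the fibrewise Euler characteristic $\chi(S,\sF_z\otimes G)$ vanishes, so $\pr_{C!}(\sF\otimes\pr_S^{*}G)$ has rank zero and its determinant is unchanged under such a twist. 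Thus $\varphi^{*}\hat h$ depends only on $\varphi$, and we may compute with $\sF=\sU|_{S\times C}$ itself.

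Next I would apply Grothendieck--Riemann--Roch along $\pr_C$, whose relative Todd class is $\pr_S^{*}\td_S$, to get
\[
\deg\varphi^{*}\hat h=\int_{S\times C}\ch(\sF)\cdot\pr_S^{*}\bigl(\ch(G)\,\td_S\bigr),
\]
taking the $H^{6}(S\times C)$-component. Since $\ch(G)\,\td_S=w\sqrt{\td_S}=(-1,0,2)$ and the odd cohomology of $S$ together with $H^{>2}(C)$ vanish, only the coefficient of $[\mathrm{pt}]_S\otimes f$ contributes, and the integral collapses to $2d-\bigl[\ch_3(\sF)\bigr]_{[\mathrm{pt}]_S\otimes f}$, with $d=\deg\xi$ entering through the $1\otimes df$ term of $c_1(\sF)$.

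The remaining input is the Chern character of $\sU$ from Section~\ref{section:N-}, restricted to $S\times C$. Using $\alpha|_S=h$, $\beta|_S=-8$, $(h^{2})=24$, and the vanishing of every $\gamma$-term (these land in $H^{3}(S)=0$ or $H^{6}(S)=0$), one finds $c_1(\sF)=h\otimes1+1\otimes df$ and $c_2(\sF)=\tfrac{d+1}{2}\,h\otimes f+8\,[\mathrm{pt}]_S\otimes1$; this is exactly the restriction already used in Proposition~\ref{proposition:restricted_VB_on_C}. Since $\sF$ has rank two, $c_3=0$ and $\ch_3=\tfrac16 c_1^{3}-\tfrac12 c_1c_2$, and a direct expansion gives $\bigl[\ch_3(\sF)\bigr]_{[\mathrm{pt}]_S\otimes f}=2d-6$. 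Hence $\deg\varphi^{*}\hat h=2d-(2d-6)=6$, independently of $d$.

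To finish, the geometric conclusion follows formally. The class $\hat h$ is the hyperplane class of $T\subset\bP^{4}$, and $\varphi$ is generically injective (the bundles $\sU_z|_S$ are pairwise non-isomorphic for general $z$), so it is the normalization of its image and $\deg(C'\subset\bP^{4})=(\hat h\cdot C')=\deg\varphi^{*}\hat h=6=(\hat h^{2})$. For general $(S,h)$ the degree-six K3 surface $T$ has $\Pic(T)=\bZ\hat h$, which forces $[C']=\hat h$ and hence $C'\in|\hat h|$, a hyperplane section; since a smooth member of $|\hat h|$ has genus $1+\tfrac12(\hat h^{2})=4$ while $C$ has genus $3$, the section $C'$ must be singular. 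I expect the genuine difficulty to lie not in the arithmetic but in the opening paragraph: one must fix the precise normalization (sign and twist-invariance) identifying $\varphi^{*}\hat h$ with this slant product, so that the clean cancellation to $6$ is legitimate; the reduction to the $[\mathrm{pt}]_S\otimes f$ coefficient and the use of $\Pic(T)=\bZ\hat h$ are then the two routine auxiliary inputs.
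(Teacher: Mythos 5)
Your proposal is correct and follows essentially the same route as the paper: the paper computes $\deg\varphi^*\hat h$ as $\deg((\pr_2)_*(\pr_1^*(T^{-1}(-1,0,3))\cdot\ch(\sU|_{S\times C})\cdot\pr_1^*\td(S)))=6$ by citing \cite[Theorem~8.1.5]{HL10} and "a similar calculation as in Proposition~\ref{proposition:restricted_VB_on_C}", which is exactly the Grothendieck--Riemann--Roch computation you carry out explicitly (your $(-1,0,2)=T^{-1}(-1,0,3)\cdot\td_S$ and the cancellation $2d-(2d-6)=6$ check out against the paper's Chern class data $\alpha|_S=h$, $\beta|_S=-8$). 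The concluding step (degree $6$ plus $\Pic(T)=\bZ\hat h$ forces $C'\in|\hat h|$, and genus $3<4$ forces a singularity) matches the paper's terser final two sentences.
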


\begin{proof}
The degree of $\varphi^*\hat h$ is as follows (see, e.g., \cite[Theorem~8.1.5]{HL10}:
\[
\deg((\pr_2)_*(\pr_1^*(T^{-1}(-1,0,3)) \cdot \ch(\sU|_{S \times C}) \cdot \pr_1^*(\td(S)) )).
\]
By  a similar calculation as in the proof of Proposition~\ref{proposition:restricted_VB_on_C}, we have 
\[
\deg((\pr_2)_*(\pr_1^*(T^{-1}(-1,0,3)) \cdot \ch(\sU|_{S \times C}) \cdot \pr_1^*(\td(S)) )) = 6.
\]
Thus the image $\varphi(C) \subset T \subset  \bP^4$ is a hyperplane section of $T$.
Since the genus of $C$ is three, we see that the image is a singular hyperplane section of $T$.
\end{proof}

Let $C'  \subset T \subset  \bP^4$ be a singular hyperplane section of $T$, and $C \to C'$ the normalization.
On $S \times T$, there exists the universal $\bP^1$-fibration $\bP$.
The restriction of this $\bP^1$-fibration to $S \times C$ is a projectivization of a vector bundle, since the Brauer element of $\bP$ is from $T$ only.
Fix such a vector bundle $\sF$.
If $C'$ is the singular hyperplane section as in Lemma~\ref{lemma:degree_of_C}, then the bundle $\sF|_{\{x\}\times C}$ is the stable bundle $\sU|_{\{x\}\times C}$.
By  considering the specialization to this case, we have:
\begin{lemma}\label{lemma:nodal_section}
Let $[C'] \in T^\vee$ is a general singular hyperplane section of $T$, then the bundle $\sF|_{\{x\}\times C}$ is a stable bundle of odd degree on the normalization $C$ of $C'$.
\end{lemma}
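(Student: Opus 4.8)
The plan is to produce a single singular hyperplane section for which both conclusions are tautological, and then to spread them to a general section by an openness argument along $T^\vee$.

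First I would analyze the section furnished by Lemma~\ref{lemma:degree_of_C}. There the moduli map $\varphi\colon C\to T$ attached to $S\subset\sN^-$ realizes $C$ as the normalization of a nodal hyperplane section $C'_0=\varphi(C)$, so $[C'_0]\in T^\vee$; and by the very definition of $\varphi$ the family of bundles on $S$ it classifies is $\sU|_{S\times C}$. Hence the universal $\bP^1$-fibration $\bP$ on $S\times T$ pulls back along $\varphi$ to $\bP(\sU|_{S\times C})$, so the bundle $\sF$ attached to $C'_0$ coincides with $\sU|_{S\times C}$ up to a twist by a line bundle from $C$. For $x\in S\subset\sN^-=\SU_C(2,\xi)$ the restriction $\sU|_{\{x\}\times C}$ is by construction the stable rank two bundle parametrized by $x$, with determinant $\xi$ of odd degree $d$. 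A twist by a line bundle from $C$ changes the determinant only by a square, hence preserves both stability and the parity of the degree; thus for $[C'_0]$ the bundle $\sF|_{\{x\}\times C}$ is stable of odd degree for every $x\in S$.

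Next I would organize the remaining sections into a family and propagate. Let $\mathcal{V}\subset T^\vee$ be the dense open locus of one-nodal sections and $\sC\to\mathcal{V}$ the family of smooth genus three curves obtained by simultaneously normalizing the universal nodal section. Pulling back $\bP$ yields a $\bP^1$-fibration on $S\times\sC$ whose fibre over $(x,[C'])$ is $\bP(\sF|_{\{x\}\times C})$. Since stability of a rank two bundle of fixed odd degree and the parity of its degree are invariant under twist by a line bundle from $C$, they descend to well defined fibrewise conditions on this $\bP^1$-fibration, with stability open and the parity locally constant. I would then set
\[
W\coloneqq\{(x,[C'])\in S\times\mathcal{V} \mid \sF|_{\{x\}\times C}\ \text{is not stable}\},
\]
a closed subset that is disjoint from $\{[C'_0]\}\times S$ by the first step. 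As the projection $S\times\mathcal{V}\to\mathcal{V}$ is proper (the fibre $S$ is projective), the image $\pr_2(W)$ is closed and omits $[C'_0]$; and since $T^\vee$ is the projective dual of the irreducible surface $T$, it is irreducible, so $\mathcal{V}\setminus\pr_2(W)$ is a dense open subset. For $[C']$ in this subset, $\sF|_{\{x\}\times C}$ is stable for all $x\in S$, and the parity of its degree, being locally constant on the connected base $\mathcal{V}$, equals the odd value $d$ attained at $[C'_0]$. This yields the lemma for a general $[C']\in T^\vee$.

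The hard part is the bookkeeping forced by the Brauer class: $\sF$ exists only after restricting to $S\times C$, and even then only up to a twist from $C$, so I must phrase stability and odd degree as twist-invariant, hence genuinely fibrewise, properties of the $\bP^1$-fibration before openness and local constancy can be invoked. A secondary point is that the good point $[C'_0]$ lies in $T^\vee$ for the $S$ at hand, which is guaranteed because this $S$ is produced from a curve $C$ exactly as in the surrounding discussion and Lemma~\ref{lemma:degree_of_C}. Granting these, openness of stability together with the irreducibility of $T^\vee$ and the properness of the projection from the projective surface $S$ completes the argument.
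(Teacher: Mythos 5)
Your proposal is correct and takes essentially the same route as the paper: the paper's own proof simply observes that for the particular nodal section produced by Lemma~\ref{lemma:degree_of_C} the bundle $\sF|_{\{x\}\times C}$ is literally $\sU|_{\{x\}\times C}$, hence stable of odd degree, and then invokes ``specialization to this case,'' which is exactly the openness-of-stability and local-constancy-of-parity argument over the irreducible $T^\vee$ that you spell out. Your added care about the Brauer twist (phrasing stability and degree parity as twist-invariant, fibrewise conditions on the $\bP^1$-fibration) is a legitimate and welcome filling-in of a detail the paper leaves implicit.
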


\begin{proof}[Proof of Theorem~\ref{theorem:general_K3_in_N-}]
Let $C$ be a general curve of genus $3$,  $p_1$, $p_2\in C$ general two points on $C$ and $s_i \in H^0(\sU_{p_i})$ general sections.
Set $S = (s_1)_0 \cap (s_2)_0 \subset \sN^-$.
Then, $S$ is a \emph{general} primitively polarized K3 surface of genus $13$ and, by Lemma~\ref{lemma:degree_of_C}, the induced map $\varphi \colon C \to T$ maps $C$ to a singular hyperplane section $[C'] \in T^\vee$ of the sextic K3 surface $T \subset \bP^4$.
This defines the moduli map $\Phi$.

We construct the inverse map.
Let $(S,h) \in \sF_{13}$ be a general primitively polarized K3 surface of genus $13$ and $C' \subset T$ be a general singular hyperplane section of $T \subset \bP^4$.
Then, by Lemma~\ref{lemma:nodal_section}, $\sF|_{\{x\}\times C}$ is stable of odd degree on the normalization $C$ of $C'$.
This defines a map $S \to \sN^-$.
By specializing to the case where the curve $C$ is a hyperelliptic curve, we  see that this map specializes to the inclusion $S' = (s_1)_0 \cap (s_2)_0 \subset \Gr(\bC^8,2,\sP)$ for a unique simple pencil of quadrics $\sP \subset S^2\bC^8$, general two points $p_1$, $p_2 \in C$ and general sections $s_i \subset H^0(\sU_{p_i})$.
By a similar argument to the proof of Theorem~\ref{theorem:general_K3_in_G(8,2)} or considering the specialization to the case where the curve $C$ is a hyperelliptic curve,  we also find two (general) points $p_1$, $p_2 \in C$ and (general) sections $s_i \subset H^0(\sU_{p_i})$ such that $S = (s_1)_0\cap (s_2)_0 \subset \sN^-$.
This defines the inverse map $\Phi^{-1}$.
By the  constructions, these maps are inverse to each other.
\end{proof}

\section{the unequal parity case and parabolic analogues}
\label{section:unequal_parity}

In this section, we describe the K3 surface $S$ in Example~\ref{ex:complete_intersection}~\ref{ex:complete_intersection2}.
We first relate $S$ to a subvariety of the $G_2$-homogeneous variety $K(G_2)$ \cite[Remark 1]{Muk06}, and then to a subvariety of the moduli space $\sN_{0,7}$ of parabolic $2$-bundles over a $7$-pointed rational curve  $(\bP^1; x_1, \ldots, x_7)$.
By using the birational geometry of $\sN_{0,7}$  \cite{Bau91}, \cite[Example~12.57]{Muk03}, \cite{AC17}, we describe rational curves on $S$.

\subsection{Unequal parity case, $K(G_2)$ and $\sN_{0,7}$}
Let $\sP = \langle q_1, q_2\rangle \subset S^2\bC^8$ be a general pencil of quadric forms, and $C$ the associated hyperelliptic curve of genus $3$.
Take a general point $p_1$ and set $p_2 \coloneqq \iota(p_1)$.
We also take general sections $s_i \in H^0(\sU_{p_i})$.
Then $S\coloneqq  (s_1)_0\cap(s_2)_0$ is a K3 surface of genus $13$.

The image $\bP^1$ of the double cover $f \colon C \to \bP^1$ is identified with the pencil of quadric forms $\sP$.
Let $q \coloneqq f(p_i)$ be the corresponding quadric form.
Recall that $\sN^-$ is the intersection of orthogonal Grassmann varieties $\Gr(\bC^8,2,q')$ for $q' \in \sP$.
Note also that $\sU_{p_i}$  are the restrictions of the spinor bundles $\sS_\pm$ on $\Gr(\bC^8,2,q)$.
By interchanging the indices if necessary, we may assume that $\sS_+|_{\sN^-} = \sU_{p_1}$ and $\sS_-|_{\sN^-} = \sU_{p_2}$.

Let $\overline{s_i} \in \sS_{\pm}$ be the spinors such that $\overline{s_i}|_{\sN^-} = s_i$.
Then the K3 surface $S = (s_1)_0\cap(s_2)_0$ is the complete intersection variety
\[
 (\overline{s_1})_0\cap(\overline{s_2})_0 \cap (q')_0
\]
in $\Gr(\bC^8,2,q)$ with respect to the homogeneous vector bundle
\[
\sS_{+} \oplus\sS_{-} \oplus S^2\sQ.
\]

Consider first the zero locus
\[
 (\overline{s_1})_0\cap(\overline{s_2})_0.
\]
Recall that, by the triality of $D_4$, $\Gr(\bC^8,2,q)$ is isomorphic to $\Gr(H^0(\sS_+),2,q_+)$ for a quadric form $q_+ \in S^2H^0(\sS_+)$.
Via this isomorphism, the vector bundles $\sQ$ and $\sS_-$ are the spinor bundles for $\Gr(H^0(\sS_+),2,q_+)$.
Thus the variety $(\overline{s_1})_0\cap(\overline{s_2})_0$ is isomorphic to the $G_2$ homogeneous variety $K(G_2)$ and the restrictions of $\sS_\pm$ and $\sQ$ are isomorphic to the rank $2$ tautological vector bundle $\sE$ on $K(G_2)$.
From the view point of the Dynkin diagrams, this corresponds to the folding of $D_4$ to $G_2$:
\[
\dynkin[%
edge length=.75cm, labels={\sQ,,\sS_+\ni \overline{s_1},\sS_-\ni \overline{s_2}}, involutions={14;31},label directions={left,,,}
]{D}{o*oo}
\rightsquigarrow
\dynkin[%
edge length=.75cm,backwards,labels={,\sE}]{G}{*o}
\]
Since $\sP$ and, hence, $q'$ is general, the restriction $q'|_{K(G_2)} \in S^2\sE$ is a general section.
Hence our K3 surface $S$ is isomorphic to the zero locus of a general section of $S^2\sE$ on $K(G_2)$, which is the K3 surface in \cite[Remark~1]{Muk06}.

Next, we describe this K3 surface in the moduli of parabolic vector bundles on $7$ pointed projective line.
Let $\widetilde {q_1}$, $\widetilde {q_2} \in S^2\bC^7$ be general elements and set $\widetilde\sP \coloneqq \langle \widetilde{q_1}, \widetilde {q_2} \rangle$.
Similarly to the case of hyperelliptic curves, we can associates $\widetilde \sP$ to a $7$ pointed projective line $(\bP^1;x_1,\ldots,x_7)$.
Then the complete intersection subvariety
\[
\Gr(\bC^7,2,\widetilde \sP) \coloneqq \Gr(\bC^7,2,\widetilde {q_1})  \cap \Gr(\bC^7,2,\widetilde {q_2}) \subset \Gr(\bC^7,2)  
\]
is the moduli $\sN_{0,7}$ of stable parabolic vector bundles on $(\bP^1;x_1,\ldots,x_7)$ of weight $(\frac{1}{2}, \ldots, \frac{1}{2})$ \cite{Cas15}.
Let $\widetilde{\sS}$ be the spinor bundle on the orthogonal Grassmann variety $\Gr(\bC^7,2,\widetilde{q_1})$.
Then the restriction $\widetilde\sS|_{\sN_{0,m}}$ is isomorphic to $\sE_x$, where $\sE$ is the normalized universal bundle on $\sN_{0,7} \times \bP^1$ and $x \in \bP^1$ is the point corresponding to $q_1$.

Take a general section $\widetilde s \in H^0(\widetilde\sS)$ and consider the zero locus
\[
S= (\widetilde s |_{\sN_{0,7}})_0.
\]
By the definition, this variety is isomorphic to the complete intersection variety
\[
(\widetilde s )_0 \cap (\widetilde{q_2})_0 \subset \Gr(\bC^7,2,\widetilde{q_1})
\]
with respect to the vector bundle $\widetilde\sS \oplus S^2\sQ$.
The zero locus $ (\widetilde{s})_0 \subset \Gr(\bC^7,2,\widetilde{q_1})$ is isomorphic to $K(G_2)$, and the restrictions of $\sQ$ and $\widetilde\sS$ are isomorphic to the tautological $2$-bundle $\sE$:
\[
\dynkin[%
edge length=.75cm, labels={\sQ,,\widetilde{\sS} \ni \widetilde{s}}, involutions={31},label directions={,,}
]{B}{o*o}
\rightsquigarrow
\dynkin[%
edge length=.75cm,backwards,labels={,\sE}]{G}{*o}
\]
Thus $S$ is isomorphic to the zero locus $(\widetilde{q_2}|_{K(G_2)})_0 \subset K(G_2)$ of a section of $S^2\sE$.
By summarizing the above two cases, we have:
\begin{proposition}
For general choices of sections,  the following vector bundles define the same K3 surface as complete intersection varieties:
\begin{enumerate}
 \item $\sU_{p}\oplus \sU_{\iota(p)}$ on $\sN^-$ for a general hyperelliptic curve $C$ and a general point $p \in C$.
 \item $S^2\sE$ on $K(G_2)$.
 \item $\sE_x$ on $\sN_{0,7}$ for a general point $x \in (\bP^1;x_1,\ldots,x_7)$.
\end{enumerate}
 
\end{proposition}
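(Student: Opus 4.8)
The plan is to establish the two asserted isomorphisms by reducing both descriptions (1) and (3) to description (2): in each case I realize the K3 surface $S$ as the zero locus of a general section of $S^2\sE$ on the $G_2$-homogeneous variety $K(G_2)$, the common ``hub'' through which (1) and (3) are compared. The two reductions proceed along the two foldings $D_4 \rightsquigarrow G_2$ and $B_3 \rightsquigarrow G_2$ displayed above, and all the geometric input has already been assembled in the preceding discussion; the proof is therefore mostly a matter of organizing that input and checking that genericity is preserved.

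For (1) $\Rightarrow$ (2), I would start from $S = (s_1)_0 \cap (s_2)_0 \subset \sN^-$ with $\sU_{p_1} = \sS_+|_{\sN^-}$ and $\sU_{p_2} = \sS_-|_{\sN^-}$, using that $\sN^- = \cap_{q' \in \sP}\Gr(\bC^8,2,q')$. Fixing the (general, hence non-degenerate) quadric $q = f(p_i)$ and lifting $s_1$, $s_2$ to spinors $\overline{s_1} \in H^0(\sS_+)$, $\overline{s_2} \in H^0(\sS_-)$ on $\Gr(\bC^8,2,q)$, which is possible since $H^0(\sU_{p_i}) \cong H^0(\sS_\pm)$, the surface is the complete intersection $(\overline{s_1})_0 \cap (\overline{s_2})_0 \cap (q')_0$ for a residual quadric $q' \in \sP$. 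Triality of $D_4$ then identifies $(\overline{s_1})_0 \cap (\overline{s_2})_0$ with $K(G_2)$, under which $\sS_+$, $\sS_-$ and $\sQ$ all restrict to the tautological rank-two bundle $\sE$; hence $S^2\sQ$ restricts to $S^2\sE$ and the residual section $q'|_{K(G_2)}$ becomes a section of $S^2\sE$, giving $S \cong (q'|_{K(G_2)})_0$.

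For (3) $\Rightarrow$ (2) I would run the parallel argument on $\sN_{0,7} = \Gr(\bC^7,2,\widetilde\sP) \subset \Gr(\bC^7,2,\widetilde{q_1})$, where $\sE_x = \widetilde\sS|_{\sN_{0,7}}$. Lifting $\widetilde s$ to the orthogonal Grassmannian gives $S = (\widetilde s)_0 \cap (\widetilde{q_2})_0$, and the folding $B_3 \rightsquigarrow G_2$ identifies $(\widetilde s)_0 \subset \Gr(\bC^7,2,\widetilde{q_1})$ with $K(G_2)$, under which both $\sQ$ and $\widetilde\sS$ restrict to $\sE$. Thus $S \cong (\widetilde{q_2}|_{K(G_2)})_0$ is again the zero locus of a section of $S^2\sE$, and combining with the previous paragraph yields the asserted common K3 surface.

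The main obstacle is the transfer of genericity: I must ensure that a general residual quadric (a general $q' \in \sP$ in case (1), resp.\ a general $\widetilde{q_2} \in \widetilde\sP$ in case (3)) restricts to a \emph{general} section of $S^2\sE$ on $K(G_2)$, so that all three constructions land on a general member of the family. This reduces to checking that the restriction map $H^0(S^2\sQ) \to H^0(S^2\sE)$ on $K(G_2)$ is dominant, which follows from the Borel--Weil--Bott computation of $H^0(S^2\sE)$ together with the explicit folding identifications; one must also confirm the compatibility of the triality and folding identifications of $\sE$ so that the three families of sections are genuinely matched. Granting these surjectivity and compatibility statements, the proposition follows by concatenating the two isomorphisms through $K(G_2)$.
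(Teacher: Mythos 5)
Your argument matches the paper's proof essentially step for step: both reductions, (1)$\Rightarrow$(2) via the triality/folding $D_4 \rightsquigarrow G_2$ inside $\Gr(\bC^8,2,q)$ and (3)$\Rightarrow$(2) via the folding $B_3 \rightsquigarrow G_2$ inside $\Gr(\bC^7,2,\widetilde{q_1})$, are exactly the chains of identifications the paper uses, with $K(G_2)$ as the common hub and the residual quadric becoming a section of $S^2\sE$. Your closing remark about verifying that a general residual quadric restricts to a general section of $S^2\sE$ addresses a point the paper dispatches in a single sentence, so it is a minor strengthening rather than a different route.
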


\begin{remark}
The parabolic analogue of Calabi-Yau manifolds studied in Theorem~\ref{theorem:CI_CY_in_N-} works for all the moduli space $\sN_{0, m}$ of parabolic 2-bundles over an $m$-pointed rational curve  $(\bP^1; x_1, \ldots, x_m)$ with odd $m$.
As a complete intersection with respect to the rank 2 bundle  $\sE_x$, $x \in \bP^1$, we obtain a smooth scheme of dimension $m-5$ with trivial canonical bundle, where  $\sE_x$  is the restriction of the normalized universal bundle $\sE$ to  $\sN_{0, m} \times {x}$.
\end{remark}

\subsection{Geometry of $\sN_{0,7}$ and rational curves on $S$}
For the most democratic weight $(\frac{1}{2}, \ldots, \frac{1}{2})$, the moduli space $\sN_{0,7}$  is the (complete) intersection of pairs of $\SO(7)$-varieties in  $\Gr(\bC^7, 2)$, and our K3 surface $S$ is a complete intersection variety with respect to the rank $2$ bundle $\sE_x$.
Moving weights from $(\frac{1}{2}, \ldots, \frac{1}{2})$ to unbalanced ones, we obtain several birational models of the moduli space  $\sN_{0,7}$ \cite{Bau91}, \cite[Example~12.57]{Muk03}, \cite{AC17}.
The typical one is the blow-up of  $\bP^4$ at $7$ points in general position.
By using the explicit description of the birational maps between these two models, we describe rational curves on $S$.

By the definition, $\Gr(\bC^7,2,\widetilde \sP)$ parametrizes the lines contained in the complete intersection $Z$ of two quadrics in $\bP^6$.
Without loss of generality, we may assume that $\widetilde \sP$ and, hence, $Z$ is defined by the following quadric forms:
\begin{align*}
&x_1^2 + \cdots + x_7^2, \\
&a_1 x_1^2 + \cdots + a_7 x_7^2.
\end{align*}

Let $\sigma_i$ be the involution of $Z$ induced by $x_i \mapsto -x_i$ and set $\sigma_I \coloneqq \prod_{i \in I} \sigma_i$ for $I \subset \{1,\dots,7\}$.
The group $G$ generated by these involutions is isomorphic to $(\bZ/2\bZ)^6$.
It is known that there are $64$ planes on $Z$ and the group $G$ acts on the set of planes freely and transitively.

To a plane $M$, we can attach a prime effective divisor $E_M \subset \Gr(\bC^7,2,\widetilde\sP)$.
In the following, we fix a plane $M$ and denote simply by $E_I$ the divisor $E_{\sigma_I(M)}$.
Note that $E_I = E_{\{1,\dots,7\} \setminus I}$ and, hence, $E_I$ with odd $|I|$ define $64$ prime divisors.
The results of \cite{Bau91}, \cite[Example~12.57]{Muk03}, \cite{AC17} are summarized as follows:
\begin{theorem}
 Each plane $M$ on $Z$ defines a birational map
 \[
 \rho_M \colon \Gr(\bC^7,2,\widetilde\sP) \dashrightarrow  B
 \]
 to the blow up of $\bP^4$ along $7$ points in general position such that:
\begin{enumerate}
 \item The image of $E_M$ is the strict transform of the secant variety of  the rational normal quartic curve passing through the $7$ points.
 \item $E_i = E_{\sigma_i(M)}$ ($i=1$, \dots, $7$) is mapped to one of the exceptional divisors over the $7$ points.
 \item $\rho_M^{-1}$ is a composite of (anti-)flips. It transforms the following $\bP^1$ to $\bP^2$:
\begin{itemize}
 \item the strict transforms of the lines passing through two of the seven points.
 \item the strict transform of rational normal quartic curve passing through the $7$ points.
\end{itemize}
\end{enumerate}
\end{theorem}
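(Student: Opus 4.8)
The plan is to obtain $\rho_M$ as the birational transformation interpolating between two chambers in the space of parabolic weights for rank-two bundles on $(\bP^1; x_1, \dots, x_7)$, following \cite{Bau91}, \cite[Example~12.57]{Muk03} and \cite{AC17}, and then to read off the behaviour of the divisors $E_I$ and of the flipped curves from the explicit geometry of $Z$.

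First I would set up the variation-of-GIT picture. As the weight $\mathbf{w}=(w_1,\dots,w_7)$ moves inside the weight polytope, the moduli space $\sN_{0,7}(\mathbf{w})$ of semistable parabolic bundles undergoes a finite sequence of wall-crossings, each of which is a flip or antiflip in the sense of Thaddeus and Dolgachev--Hu. The democratic weight $(\tfrac12,\dots,\tfrac12)$ occupies the most symmetric chamber and yields $\Gr(\bC^7,2,\widetilde\sP)=\sN_{0,7}$ by \cite{Cas15}, while a suitably unbalanced chamber yields the blow-up $B$ of $\bP^4$ at seven general points. Here the seven points in $\bP^4$ are the Gale (association) transform of the seven parabolic points on $\bP^1$; this is what links the configuration $(\bP^1;x_1,\dots,x_7)$ to a configuration in $\bP^4$. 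The map $\rho_M$ is then the composite birational map from the democratic chamber to this extremal chamber.

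Second I would pin down the role of the plane $M$. The $64$ planes on $Z$ form a torsor under $G\simeq(\bZ/2\bZ)^6$, and the corresponding prime divisors $E_{\sigma_I(M)}$ are permuted simply transitively by $G$; thus choosing $M$ is equivalent to choosing one of the $64$ extremal models and hence a specific frame for the association. I would make this precise by identifying each generating involution $\sigma_i$ with the elementary (Hecke) modification of the parabolic structure at the point $x_i$, so that $G$ acts on $\sN_{0,7}$ by birational involutions and the $G$-orbit of $E_M$ accounts for all the divisors $E_I$ with $|I|$ odd. With this dictionary, claims (1) and (2) become statements about a single chamber: the divisors $E_{\sigma_i(M)}$ are the loci where the parabolic structure degenerates at the single point $x_i$, and under $\rho_M$ these become the seven exceptional divisors; the distinguished divisor $E_M$ is sent to the secant hypersurface (a cubic) of the unique rational normal quartic through the seven points, which exists and is unique by the dimension count $7\cdot 3 = \dim\{$rational normal quartics$\}=21$.

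Finally, for (3) I would analyse which curves are destabilized across the walls separating the two chambers. The loci contracted and reinserted by $\rho_M^{-1}$ are governed by the one-parameter families of parabolic bundles whose stability jumps, and I would match these with the strict transforms on $B$ of the $\binom{7}{2}=21$ lines through pairs of the seven points and of the unique rational normal quartic through all seven; each such $\bP^1$ is replaced, at the relevant wall, by a $\bP^2$ of newly semistable bundles. The main obstacle is precisely this last matching: one must verify that the abstract flipping centres, read off from which $S$-equivalence classes appear and disappear at each wall together with their dimensions, coincide with these concrete curves, and that $E_M$ dominates the secant hypersurface rather than being contracted or sent elsewhere. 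I would carry out this bookkeeping by comparing the Picard lattices and the effective and nef cones on the two sides and exploiting the $G$-symmetry to cut down the number of independent cases; this is the content assembled from \cite{Bau91}, \cite[Example~12.57]{Muk03} and \cite{AC17}.
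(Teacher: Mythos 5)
The paper does not prove this theorem at all: it is introduced with the sentence ``The results of \cite{Bau91}, \cite[Example~12.57]{Muk03}, \cite{AC17} are summarized as follows,'' so the statement is a literature summary and there is no in-paper argument to compare yours against. Your proposal is a reasonable reconstruction of the strategy those references actually use --- variation of the parabolic weights from the democratic chamber $(\tfrac12,\dots,\tfrac12)$ (which gives $\Gr(\bC^7,2,\widetilde\sP)=\sN_{0,7}$ by \cite{Cas15}) to an extremal chamber giving the blow-up $B$ of $\bP^4$ at seven points, with the seven points in $\bP^4$ being the Gale transform of $(x_1,\dots,x_7)\subset\bP^1$, i.e.\ seven points on the rational normal quartic that is the image of $\bP^1$; and the identification of the generators $\sigma_i$ of $G\simeq(\bZ/2\bZ)^6$ with Hecke modifications at the $x_i$, which explains why the $64$ planes on $Z$ index the $64$ divisors $E_I$ and why each choice of $M$ fixes a frame for the association. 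This is consistent with the relations~\eqref{eq:relations} that the paper goes on to use.

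That said, as a proof your text is still a plan rather than an argument. The two decisive verifications --- (i) that the flipping centres produced by the wall-crossings are exactly the strict transforms of the $\binom{7}{2}=21$ lines through pairs of the seven points together with the rational normal quartic, each replaced by a $\bP^2$ of newly semistable parabolic bundles, and (ii) that $E_M$ is not contracted but dominates the secant cubic of the quartic --- are precisely the points you defer (``the main obstacle''), and they are the content of \cite{Bau91} and \cite{AC17}. If you intend this as a self-contained proof you would need to carry out that wall-by-wall bookkeeping (or the Picard-lattice/cone comparison you mention); if you intend it, as the paper does, as an appeal to the cited results, then the outline is accurate and nothing in it contradicts the statement.
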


Let $H_M \in \Pic(\Gr(\bC^7,2,\widetilde\sP))$ be the class corresponding to the pullback of the hyperplane in $\bP^4$.
Then we have the following relations among $E_I$, $H_M$ and $-K_{\sN_{0,7}}$ \cite{AC17}:
\begin{equation}\label{eq:relations}
\begin{split}
& E_M =E_{\emptyset} =  \frac{1}{5}\left( -3K_{\sN_{0,7}} -\sum_{i \in \{1,\dots,7\}}E_i\right),\\
& E_I =
 \begin{cases}
 \frac{1}{5}\left( -K_{\sN_{0,7}} -2\sum_{i \not \in I}E_i + 3\sum_{i  \in I}E_i\right)  & (|I|=3),\\
  \frac{1}{5}\left( 2 (-K_{\sN_{0,7}}) -4\sum_{i \not  \in I}E_i + \sum_{i \in I}E_i\right)  & (|I|=5),
 \end{cases}\\
&H_M = \frac{1}{5}\left( -K_{\sN_{0,7}} +3\sum_{i \in \{1,\dots,7\}}E_i \right).
\end{split}
\end{equation}

\begin{theorem}
Set $h \coloneqq -K_{\sN_{0,7}} |_S$ and $e_I \coloneqq E_I |_S$.
Then the following hold:
\begin{enumerate}
 \item Each $e_I$ is a sextic rational curve on $(S,h)$.
 Intersection numbers between these rational curves are as follows:
\begin{align*}
 &e_{\emptyset} \cdot e_i =4,\\
 &e_{\emptyset} \cdot e_{I} =
\begin{cases}
 2 & (|I|=3),\\
 0  &(|I|=5).
\end{cases}
\end{align*}
 \item $H_M|_S = \frac{1}{5}\left( h +3\sum_{i \in \{1,\dots,7\}}e_i \right)$ defines a base point free linear system on $S$.
 The corresponding map contracts the $7$ rational curves $e_i$, and sends $S$ to the complete intersection  of a quadric and a cubic hypersurfaces, which is singular at  $7$ points.
\end{enumerate}
\end{theorem}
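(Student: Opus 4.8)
The plan is to reduce the entire statement to linear algebra in $\mathrm{NS}(S)$, after recording the intersection form transported from the birational model $B = \mathrm{Bl}_7\bP^4$.

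First I would pin down a convenient basis. Since $\rho_M$ is a composite of (anti-)flips, it is an isomorphism in codimension one and therefore identifies $\Pic(\sN_{0,7})$ with $\Pic(B)$, carrying $H_M\mapsto H$, $E_i\mapsto\epsilon_i$ and $-K_{\sN_{0,7}}\mapsto -K_B = 5H - 3\sum_i\epsilon_i$. For generic data the surface $S$ (of dimension $2$) avoids the one-dimensional flipping loci of $\rho_M$ inside the $4$-fold $\sN_{0,7}$, so $\rho_M$ restricts to an isomorphism of $S$ onto its image $S_B\subset B$, and the blow-down $S_B\to\bP^4$ exhibits $S$ as the minimal resolution of the nodal sextic $(2)\cap(3)$ of Example~\ref{ex:complete_intersection}~\ref{ex:complete_intersection2}. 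Writing $e_0 := H_M|_S$ and $e_i := E_i|_S$, this records the intersection numbers $e_0^2 = 6$, $e_i^2 = -2$, $e_0\cdot e_i = 0$ and $e_i\cdot e_j = 0$ ($i\neq j$), together with $h = 5e_0 - 3\sum_i e_i$ and $h^2 = 24$.

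For part (1) I would then restrict the relations \eqref{eq:relations} to $S$, obtaining
\[
e_{\emptyset} = 3e_0 - 2\sum_i e_i,\qquad
e_I = e_0 - \sum_{i\notin I} e_i \ \ (|I|=3),\qquad
e_I = 2e_0 - \sum_{i\in I} e_i - 2\sum_{i\notin I} e_i \ \ (|I|=5).
\]
A direct computation in the lattice above gives $e_I\cdot h = 6$ and $e_I^2 = -2$ for every odd $I$. Because $E_I$ is a prime divisor on $\sN_{0,7}$ and $S$ is generic, $e_I = E_I|_S$ is reduced and irreducible, so adjunction on the K3 surface $S$ forces $p_a(e_I) = 0$; hence each $e_I$ is a smooth rational sextic curve. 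The same computation yields $e_{\emptyset}\cdot e_i = 4$ and $e_{\emptyset}\cdot e_I$ equal to $2$ or $0$ according as $|I| = 3$ or $|I| = 5$.

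For part (2), the class $e_0 = H_M|_S$ is by construction the pullback of $\sO_{\bP^4}(1)$ under the contraction $S = S_B\to\bar S\subset\bP^4$, so $|e_0|$ is base point free, and since $e_0\cdot e_i = 0$ this map contracts exactly the seven $(-2)$-curves $e_i$. Riemann--Roch gives $h^0(e_0) = 2 + \tfrac12 e_0^2 = 5$ with $e_0^2 = 6$, so the image is a surface of degree six in $\bP^4$, namely the singular complete intersection $(2)\cap(3)$ with seven nodes at the images of the $e_i$. The main obstacle is the codimension-one identification of the first step: verifying that for generic $S$ the map $\rho_M$ is an honest isomorphism near $S$ (so that intersection numbers descend from $B$) and that the image acquires ordinary nodes exactly at the seven blown-up points with the $e_i$ as exceptional $(-2)$-curves. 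Granting this, everything else is linear algebra in the rank-eight lattice $\langle e_0, e_1,\dots,e_7\rangle$.
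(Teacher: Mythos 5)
Your overall strategy --- transporting the intersection form through $\rho_M$ and reducing everything to linear algebra with the relations \eqref{eq:relations} --- is essentially the paper's, and your lattice computations ($e_I^2=-2$, $h\cdot e_I=6$, $e_\emptyset\cdot e_i=4$, and the values of $e_\emptyset\cdot e_I$) all check out. The problem is the step you yourself flag as ``the main obstacle'': your justification for why $S$ avoids the indeterminacy of $\rho_M$ fails. The flipping centers of $\rho_M$ inside $\sN_{0,7}$ are not one-dimensional; they are the planes $W\simeq\bP^2$ (it is $\rho_M^{-1}$ that transforms $\bP^1$'s into $\bP^2$'s, so on the $\sN_{0,7}$ side the centers are two-dimensional). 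A surface in a fourfold meets a two-dimensional subvariety in expected dimension zero, so no dimension count can give $S\cap W=\emptyset$; moreover $S$ is not a generic member of a moving family but the zero locus of a section of the fixed rank-two bundle $\sE_x=\widetilde\sS|_{\sN_{0,7}}$. The paper's argument is that $\widetilde\sS|_W\simeq\sO\oplus\sO(1)$, so a general section has nonvanishing $\sO$-component on $W$ and hence no zeros there; this one computation yields $S\cap W=\emptyset$ for every flipping plane, and as a byproduct $e_i\cap e_j=\emptyset$ for $i\neq j$ (since $E_i\cap E_j$ is a union of such planes). Without it, nothing in your first paragraph is established.

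A second, smaller gap: you deduce rationality of $e_I$ for $|I|=3,5$ from adjunction plus the assertion that $e_I=E_I|_S$ is reduced and irreducible because $E_I$ is prime and $S$ is generic. Restricting a prime divisor of a fourfold to a surface does not automatically produce an irreducible curve, and an effective class of square $-2$ and degree $6$ that is reducible or nonreduced need not be a smooth rational sextic. The paper sidesteps this by symmetry: the group $G\simeq(\bZ/2\bZ)^6$ permutes the $64$ divisors $E_I$ transitively, so each $E_I$ is an exceptional divisor $E_{\sigma_j(M')}$ for a suitable choice of base plane $M'$, and the argument already given for the seven curves $e_i$ (they are the connected exceptional fibres of the contraction $S\to(2)\cap(3)\subset\bP^4$ over its seven nodes) applies verbatim to every $e_I$. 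If you prefer your adjunction route, you must supply a separate proof that each $e_I$ is irreducible.
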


\begin{proof}
Let $W$ be one of the flipping center of $\rho_M$.
The restriction of $ \widetilde \sS$ to this plane $W$ is a direct sum of line bundles $\sO \oplus \sO(1)$.
Thus $S \cap W =\emptyset$.
In particular, $e_i \cap e_j = E_i \cap E_j \cap S= \emptyset$ for $i \neq j$, since $E_i \cap E_j$ is a union of some flipping centers of $\rho_M$.

Since $S$ does not meet the indeterminacy of $\rho_M$, $H_M|_S$ is a base point free divisor on $S$.
Moreover the induced map $S \to \bP^4$ is a birational map whose exceptional locus is the union of $E_i\cap S$.
Hence, $e_i$ are (disjoint) rational curves.
By using relations~\eqref{eq:relations}, we see that $H_M|_S$ is of degree $6$.
Since the image of $S$ in $\bP^4$ passes through the $7$ points in general position, it is non-degenerate and, thus, the map $S \to \bP^4$ is defined by the complete linear system of $H_M|_S$.
Therefore the image in $\bP^4$ is the complete intersection of a quadric and a cubic hypersurfaces (\cite{Sai74}).

Since $\rho_M|_S$ contracts $e_i$, we have $H_M|_S\cdot e_i =0$.
By relations~\eqref{eq:relations}, we have
\begin{align*}
& h \cdot e_i =6,\\
&e_{\emptyset}\cdot e_i = 4,\\
&e_{\emptyset}  \cdot e_I =
\begin{cases}
  2& (|I|=3),\\
  0  & (|I|=5).
\end{cases}
\end{align*}

Finally, since each $E_I$ is the translation of $E_M$, we see that $e_I$ are all sextic rational curves.
\end{proof}

\bibliographystyle{amsalpha}
\bibliography{references}

\end{document}